\newtheorem{theorem}{Theorem}[section]
\newtheorem{corollary}[theorem]{Corollary}
\newtheorem{lemma}[theorem]{Lemma}
\newtheorem{proposition}[theorem]{Proposition}
\theoremstyle{definition}
\newtheorem{definition}[theorem]{Definition}
\newtheorem{remark}[theorem]{Remark}
\newtheorem*{theorem*}{Theorem}
\def\Xint#1{\mathchoice
	{\XXint\displaystyle\textstyle{#1}}%
	{\XXint\textstyle\scriptstyle{#1}}%
	{\XXint\scriptstyle\scriptscriptstyle{#1}}%
	{\XXint\scriptscriptstyle\scriptscriptstyle{#1}}%
	\!\int}
\def\XXint#1#2#3{{\setbox0=\hbox{$#1{#2#3}{\int}$ }
		\vcenter{\hbox{$#2#3$ }}\kern-.57\wd0}}
\def\dashint{\Xint-}
\newcommand{\dx}{\,\mathrm{d}x}
\newcommand{\e}{\varepsilon}
\newcommand{\dist}{{\rm{dist}}}
\newcommand{\Lw}{\mathcal{L}(\omega)}
\newcommand{\R}{\mathbb{R}}
\newcommand{\w}{\omega}
\newcommand{\Ard}{\mathcal{A}^R(D)}
\newcommand\blfootnote[1]{%
	\begingroup
	\renewcommand\thefootnote{}\footnote{#1}%
	\addtocounter{footnote}{-1}%
	\endgroup
}
\begin{document}

\author{Matthias Ruf}
\address[Matthias Ruf]{D\'epartement Math\'ematique, Universit\'e libre de Bruxelles (ULB), CP 214, boulevard du Triomphe, 1050 Brussels, Belgium}
\email{matthias.ruf@ulb.ac.be}

\title{Discrete stochastic approximations of the Mumford-Shah functional}

\begin{abstract}
We propose a new $\Gamma$-convergent discrete approximation of the Mumford-Shah functional. The discrete functionals act on functions defined on stationary stochastic lattices and take into account general finite differences through a non-convex potential. In this setting the geometry of the lattice strongly influences the anisotropy of the limit functional. Thus we can use statistically isotropic lattices and stochastic homogenization techniques to approximate the vectorial Mumford-Shah functional in any dimension.
\end{abstract}

\subjclass[2010]{49M25, 68U10, 49J55, 49J45}
\keywords{Mumford-Shah functional, discrete approximation, $\Gamma$-convergence, stochastic homogenization}

\maketitle

\blfootnote{\\ \copyright 2019. This version is made available under the CC-BY-NC-ND 4.0 license \url{http://creativecommons.org/licenses/by-nc-nd/4.0/}}

%\tableofcontents
\section{Introduction}
The Mumford-Shah functional has its origin in image segmentation problems \cite{MS}. Given a rectangle (or more generally a bounded domain $D\subset\R^2$) and a function $g:D\to\R$ representing the gray level of an image, one aims at minimizing the functional
\begin{equation*}
M\!S(u,K)=\int_{D\backslash K}|\nabla u|^2\,\mathrm{d}x+\beta\,\mathcal{H}^1(K)+\gamma\int_D|u-g|^2\,\mathrm{d}x.
\end{equation*}
Here $K$ is the union of a finite number of points and a finite set of smooth arcs joining these points with no other intersections. The function $u$ is supposed to be differentiable on $D\backslash K$, but may have discontinuities on $K$. Then the pair $(u,K)$ is an approximation of the image. $K$ represents the sharp edges in the image, while the smooth part $u$ yields a cartoon-like total image since it rules out fine textures away from $K$. Existence and regularity of minimizing pairs $(u,K)$ is far from being trivial. In \cite{DeG-Ca-Le} it was shown that there exists a minimizing pair $(u,K)$ among all closed sets $K$ and $u\in C^1(D\backslash K)$ provided $g\in L^{\infty}(D)$. As commonly done in variational problems, one first has to enlarge the set of competitors in order to obtain compactness of minimizing sequences. This leads to the nowadays well-known formulation of the Mumford-Shah functional for $SBV$-functions, which was first introduced in \cite{AmDeG}: given $u\in SBV(D)$, the Mumford-Shah functional takes the form
\begin{equation}\label{intro:weakMS}
M\!S(u)=\int_{D}|\nabla u|^2\,\mathrm{d}x+\beta\,\mathcal{H}^1(S_u)+\gamma\int_D|u-g|^2\,\mathrm{d}x.
\end{equation}
Here $S_u$ denotes the discontinuity set of $u$. The closed set $K$ then can be recovered setting $K=\overline{S_u}$ since minimizers have an essentially closed discontinuity set (see \cite[Lemma 5.2]{DeG-Ca-Le}). However it is still unknown if $K$ can be taken as a finite union of regular arcs. We refer the interested reader to the recent survey articles \cite{Fo,Le} for known regularity results for minimizers.

Besides the regularity of minimizers, there is the natural question how to minimize the Mumford-Shah functional (\ref{intro:weakMS}) in practice. A very popular approach is given by the Ambrosio-Tortorelli approximation \cite{AmTo,AmTo2}, where the surface term is replaced by a Modica-Mortola-type approximation with an additional variable. More precisely, given a small parameter $\e>0$ and $0<\eta_{\e}\ll\e$ one defines an elliptic approximation $AT_{\e}:W^{1,2}(D)\times W^{1,2}(D)\to [0,+\infty]$ by
\begin{equation*}
AT_{\e}(u,v)=\int_D (\eta_{\e}+v^2)|\nabla u|^2\,\mathrm{d}x+\frac{\beta}{2}\int_D\e|\nabla v|^2+\frac{1}{\e}(v-1)^2\,\mathrm{d}x+\gamma\int_D|u-g|^2\,\mathrm{d}x.
\end{equation*}
In \cite{AmTo2} it is shown that the family $AT_{\e}$ approximates the Mumford-Shah functional (\ref{intro:weakMS}) in the sense of $\Gamma$-convergence (we refer to the monographs \cite{GCB,DM} for details on this type of convergence). In particular, up to subsequences, the $u$-component of any global minimizer $(u_{\e},v_{\e})$ of $AT_{\e}$ converges to a global minimizer of $M\!S$. This approach was recently extended to second order penalizations, that means to replace the term $\e|\nabla v|^2$ either by $\e^3|\nabla^2 v|^2$ or $\e^3(\Delta v)^2$, where in the second case one puts additional boundary conditions on $v$ (see \cite{BuEpZe} for more details or \cite{Ba} for an anisotropic version).

Instead of introducing a second variable, Braides and Dal Maso constructed non-local approximations. In \cite{BrDM} they showed that the sequence of functionals $N\!L_{\e}:W^{1,2}(D)\to [0,+\infty)$ defined by
\begin{equation*}
N\!L_{\e}(u)=\frac{1}{\e}\int_D f\Big(\e\dashint_{B_{\e}(x)\cap D}|\nabla u(y)|^2\,\mathrm{d}y\Big)\,\mathrm{d}x+\gamma\int_D|u-g|^2\,\mathrm{d}x
\end{equation*}
$\Gamma$-converges to $M\!S$ provided $f$ is continuous, increasing and satisfies
\begin{equation}\label{intro:f}
\lim_{t\to 0}\frac{f(t)}{t}=1,\quad\quad\lim_{t\to +\infty}f(t)=f_{\infty}<\infty.
\end{equation}
In this case it turns out that $\beta=2f_{\infty}$.

Note that both approximations are defined on more regular, but still infinite-dimensional spaces. Hence one has to discretize these spaces to numerically solve the minimization problems for the approximating functionals. On the one hand, $\e$ should be taken very small in order to obtain almost sharp interfaces. On the other hand, to guarantee that finite elements/differences yield the same asymptotic behavior as the continuum approximations, it is proposed to take the mesh-size to be infinitesimal with respect to $\e$ (see \cite{BeCo,Bou}). Indeed, for the Modica-Mortola approximation of the perimeter, such a choice is known to be necessary to preserve isotropy \cite{BrYi}. Very recently, in \cite{BBZ} this result was extended to finite difference discretizations of the Ambrosio-Tortorelli functional. Thus continuum approximations require in general a very fine mesh, which increases the computational effort. However, in dimension one there exists a direct approximation based on finite differences. In this case the small parameter $\e$ represents the mesh-size of a one-dimensional grid $\e\mathbb{Z}$. Given a function $u:\e\mathbb{Z}\cap(0,1)\to\R$, we define the functional $F_{\e}$ by
\begin{equation*}
F_{\e}(u)=\sum_{i=1}^{\lceil 1/\e\rceil-2}\min\left\{\e\Big|\frac{u(\e (i+1))-u(\e i)}{\e}\Big|^2,\beta\right\}+\gamma\sum_{i=1}^{\lceil 1/\e\rceil-1}\e |u(\e i)-g_{\e}(\e i)|^2,
\end{equation*}
where $g_{\e}$ is a suitable discretized version of $g\in L^{\infty}$. The proof of convergence to the one-dimensional version of the Mumford-Shah functional can be found for example in \cite[Chapter 8.3]{GCB}. The functional $F_{\e}$ above has a natural extension to higher dimensions. Indeed, given $u:\e\mathbb{Z}^d\cap D\to\R$, one sets
\begin{equation}\label{intro:discrete}
F_{\e}(u)=\frac{1}{2}\sum_{\substack{\e i,\e j\in\e\mathbb{Z}^d\cap D\\ |i-j|=1}}\e^{d-1}\min\left\{\e\Big|\frac{u(\e i)-u(\e j)}{\e}\Big|^2,\beta\right\}+\gamma\sum_{\e i\in\e\mathbb{Z}^d\cap D}\e^d |u(\e i)-g_{\e}(\e i)|^2.
\end{equation} 
However, in higher dimensions the anisotropy of the lattice $\mathbb{Z}^d$ leads to anisotropic surface integrals. For $d=2$ Chambolle proved in \cite{Ch95} that the functionals $F_{\e}$ $\Gamma$-converge to an anisotropic version of the Mumford-Shah functional given by
\begin{equation*}
F(u)=\int_D|\nabla u|^2\,\mathrm{d}x+\beta\int_{S_u}|\nu_u|_1\,\mathrm{d}\mathcal{H}^1+\gamma\int_D|u-g|^2\,\mathrm{d}x,
\end{equation*}
where $|\cdot|_1$ denotes the $l_1$-norm of the normal vector $\nu_u$ at $x\in S_u$ (we remark that with the results obtained in this paper the $\Gamma$-convergence above can be extended to any dimension). 

In order to avoid the anisotropy, there have been found two approaches: on the one hand, inspired by the nonlocal approximation of the Mumford-Shah functional studied in \cite{Go}, one can consider long-range interactions in (\ref{intro:discrete}) instead of only nearest neighbors. This has been analyzed in \cite{Ch99} and indeed anisotropy can be reduced but the functional to be minimized gets more complex as the number of interactions grows. On the other hand, Chambolle and Dal Maso considered functionals defined on piecewise affine functions with respect to a whole class of two-dimensional triangulations. More precisely, let $\mathcal{T}_{\e}(D,\theta)$ be the set of all finite triangulations containing $D$ such that for each triangle the inner angles are at least $\theta$ and the side lengths are between $\e$ and $w(\e)$, where $w(\e)\geq 6\e$ satisfies $\lim_{\e\to 0}w(\e)=0$. Denote by $V_{\e}(D,\theta)$ the set of continuous functions that are piecewise affine with respect to some $\mathcal{T}\in\mathcal{T}_{\e}(D,\theta)$. In \cite{ChDM} it is shown that there exists $0<\theta_0<60^{\circ}$ such that for all $0<\theta<\theta_0$ the functionals $F_{\e,\theta}$ defined on $V_{\e}(D,\theta)$ by
\begin{equation*}
F_{\e,\theta}(u)=
\frac{1}{\e}\int_D f(\e |\nabla u|^2)\,\mathrm{d}x+\gamma\int_D|u-g|^2\,\mathrm{d}x
\end{equation*}
$\Gamma$-converge to the Mumford-Shah functional when $f$ satisfies (\ref{intro:f}). In this case it holds that $\beta=f_{\infty}\sin(\theta)$. We remark that the triangulation is not fixed, so it is part of the minimization problem to find the optimal one (see \cite{BoCh} for details on numerical minimization for slightly modified functionals). Moreover, in contrast to the approximations mentioned before, this result is restricted to dimension two.
 
There are also different approaches to minimize the Mumford-Shah functional, for instance level-set methods, graph cut algorithms or convex relaxation techniques. We do not go into details but refer the reader to \cite{PCBC} and references therein.

The motivation for this work relies on the more recent paper \cite{StCr}, in which Cremers and Strekalovskiy propose another discrete functional based on finite differences along with a very fast algorithm to compute its minimizers in real-time. Although convergence of the algorithm has not been proven so far, it is demonstrated that it works well in practice. The discrete functional has a similar form to (\ref{intro:discrete}), but takes into account non-pairwise interactions and suitably scaled it reads as
\begin{equation}\label{fd:intro}
\tilde{F}_{\e,g}(u)=\sum_{\e x\in\e \mathbb{Z}^d\cap D}\e^{d-1}\min\Big\{\alpha\e \sum_{i=1}^d\Big|\frac{u(\e x+\e e_i)-u(\e x)}{\e}\Big|^2,1\Big\}+\gamma\sum_{\e x\in\e\mathbb{Z}^d\cap D}\e^d |u(\e x)-g_{\e}(\e x)|^2.
\end{equation}
The authors of \cite{StCr} conjecture that $\tilde{F}_{\e,g}$ approximates the Mumford-Shah functional.  

In this paper we analyze the asymptotic behavior of a more general family of discrete functionals. Inspired by the structure of $\tilde{F}_{\e,g}$ above, we allow the functionals to depend not only on pairwise but general finite differences. Furthermore, we replace the periodic lattice by so-called stochastic lattices. As a first consequence of our analysis, which is described more in detail below, we can identify the $\Gamma$-limit of the functionals $\tilde{F}_{\e,g}$ for $d=2$. In particular, we show that it differs from the Mumford-Shah functional due to an anisotropic surface integral (the last point can be verified in any dimension; see Remark \ref{higherdim}). Motivated by the fast algorithm for discrete approximations presented in \cite{StCr}, we then construct a random family of discrete functionals for that we can prove $\Gamma$-convergence to the Mumford-Shah functional almost surely (a.s.). The basic idea is quite simple: since the anisotropy in the $\Gamma$-limit of the family of functionals in (\ref{intro:discrete}) stems mostly from the lattice, we replace $\mathbb{Z}^d$ by a more isotropic point set. Since there exist no isotropic, countable sets, we need to go beyond deterministic models and consider realizations of random point sets. Those have the flexibility to be isotropic at least in distribution.
 
\bigskip
\noindent{\bf The main approximation result} 

\medskip
\noindent We consider random, countable point sets $\Lw\subset\R^d$ that satisfy the following geometric constraints:
\begin{itemize}
	\item[(i)] There exists $R>0$ such that $\dist(x,\mathcal{L}(\w))< R$ for all $x\in\R^d$;
	\item [(ii)] There exists $r>0$ such that $\dist(x,\Lw\backslash\{x\})\geq r$ for all $x\in\Lw$.
\end{itemize}
Given a small parameter $\e>0$ (again representing a kind of mesh-size) and $q>1$, one possible approximation is the family of random functionals $F_{\e,g}(\w)$ defined on functions $u:\e\Lw\cap D\to\R^m$ by 
\begin{equation}\label{intro:defF}
F_{\e,g}(\w)(u)=\sum_{\substack{(x,y)\in\mathcal{N}(\w)\\ \e x,\e y\in D}}\e^{d-1}f\Big(\e\Big|\frac{u(\e x)-u(\e y)}{\e}\Big|^{2}\Big)+\sum_{\e x\in\e\Lw\cap D}\e^d|u(\e x)-g_{\e}(\w)(\e x)|^q,
\end{equation}
where $\mathcal{N}(\w)$ denotes the set of Voronoi neighbors (see Definition \ref{defgoodedges}), $f$ is a function satisfying (\ref{intro:f}) and $g_{\e}(\w)$ is a suitable discretization of some given $g\in L^q(D,\R^m)$. We require that the random point set $\mathcal{L}$ is stationary and isotropic, that means $\mathcal{L}$ and $R\mathcal{L}+z$ have the same statistics for all $z\in\mathbb{Z}^d$ and all $R\in SO(d)$. If the shift operation is realized by an ergodic group action and $g_{\e}(\w)\to g$ in $L^q(D,\R^m)$, our main result, which is stated in full generality in Theorem \ref{MSapprox}, can be summarized as follows:
\begin{theorem*}
Under the above assumptions, there exist three positive constants $c_1,c_2,c_3$ such that with probability $1$ the functionals $F_{\e,g}(\w)$ $\Gamma$-converge with respect to the $L^1(D,\R^m)$-topology to the deterministic functional $F_g$ defined by
\begin{equation*}
F_g(u)=
\begin{cases}
\displaystyle c_1\int_D|\nabla u|^2\,\mathrm{d}x+c_2\mathcal{H}^{d-1}(S_u)+c_3\int_D|u-g|^q\,\mathrm{d}x &\mbox{if $u\in L^q(D,\R^m)\cap GSBV^2(D,\R^m)$,}
\\
+\infty &\mbox{otherwise.}
\end{cases}
\end{equation*} 
\end{theorem*}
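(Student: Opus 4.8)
The plan is to combine a discrete-to-continuum compactness and integral-representation result with the subadditive ergodic theorem and, finally, to exploit the statistical isotropy of the lattice; I would organise the argument in four steps. \emph{Step 1 (compactness and the $GSBV^2$ structure).} To each discrete map $u\colon\e\Lw\cap D\to\R^m$ I would associate a canonical interpolation $\tilde u_\e$ --- piecewise constant on the Voronoi cells of $\e\Lw$, or piecewise affine on a compatible triangulation --- and then, using the uniform geometric bounds (i)--(ii) on $\Lw$ together with $\lim_{t\to0}f(t)/t=1$ and $\lim_{t\to+\infty}f(t)=f_\infty$, show that $\sup_\e F_{\e,g}(\w)(u_\e)<+\infty$ forces estimates of the form $\int_{D'}|\nabla\tilde u_\e|^2\,\mathrm{d}x+\mathcal{H}^{d-1}(S_{\tilde u_\e})\le C$ on every $D'\Subset D$. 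Truncating at level $k$, Ambrosio's $SBV$ compactness theorem then yields precompactness in $L^1$, and letting $k\to\infty$ places the limit in $GSBV^2(D,\R^m)$; the fidelity term together with $g_\e(\w)\to g$ in $L^q$ supplies the $L^q$ bound. The delicate point is the separation of "elastic" edges, where the scaled finite difference is of order $\e^{-1/2}$ and $f(t)\approx t$ governs the bulk term, from "brittle" edges, where it is larger and $f$ saturates near $f_\infty$: one must localise so that the brittle edges form a discrete hypersurface of controlled $\mathcal{H}^{d-1}$-measure and contribute no diffuse Cantor part to the limit.

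\emph{Step 2 (abstract $\Gamma$-convergence and integral representation).} I would localise, writing $F_{\e,g}(\w)(\cdot,A)$ for open $A\subset D$, extract (for a.e. $\w$ and along a subsequence) a $\Gamma$-limit $F(\w)(\cdot,A)$, and check, via the fundamental estimate and the De\,Giorgi--Letta criterion, that $A\mapsto F(\w)(u,A)$ is the trace of a Borel measure. The Bouchitt\'e--Fonseca--Mascarenhas / Braides--Defranceschi integral representation for free-discontinuity functionals then gives
\[
F(\w)(u)=\int_D\phi(\w;x,\nabla u)\,\mathrm{d}x+\int_{S_u}\psi(\w;x,\nu_u)\,\mathrm{d}\mathcal{H}^{d-1}+c_3\int_D|u-g|^q\,\mathrm{d}x
\]
on $L^q(D,\R^m)\cap GSBV^2(D,\R^m)$. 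The quadratic growth of $\phi$ in $\nabla u$ and the independence of $\psi$ from the jump amplitude $|u^+-u^-|$ both follow from the scaling $\e^{-1}f(\e t^2)$ and the two limits in $(\ref{intro:f})$; the structural bulk and surface bounds come from Step 1 and from an explicit recovery competitor (the interpolation of $u$, with the jump realised on a discrete hypersurface close to $S_u$).

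\emph{Step 3 (homogenisation of the densities).} The densities are identified by asymptotic cell formulas $\phi(\w;x,\xi)=\lim_{t\to\infty}t^{-d}\mathbf{m}^{\w}_{\mathrm{b}}(\xi,tx,t)$ and $\psi(\w;x,\nu)=\lim_{t\to\infty}t^{-(d-1)}\mathbf{m}^{\w}_{\mathrm{s}}(\nu,tx,t)$, where $\mathbf{m}^{\w}_{\mathrm{b}},\mathbf{m}^{\w}_{\mathrm{s}}$ denote the minimal discrete energies on cubes of side $t$ with, respectively, affine and pure-jump boundary data. Stationarity of $\Lw$ turns these into subadditive stochastic processes, and the Akcoglu--Krengel subadditive ergodic theorem, together with ergodicity of the group action, shows that the limits exist a.s., are deterministic, and do not depend on $x$; hence $\phi(\w;x,\xi)=\phi_{\hom}(\xi)$ and $\psi(\w;x,\nu)=\psi_{\hom}(\nu)$ with probability $1$. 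Since they do not depend on the extracting subsequence, the whole families $F_{\e,g}(\w)$ $\Gamma$-converge a.s. The constant $c_3$ equals the a.s. intensity $\lim_{\e\to0}\e^d\#(\e\Lw\cap A)/|A|$ of the point process, delivered by the ergodic theorem and transferred through $g_\e(\w)\to g$. \emph{Step 4 (isotropy).} Because $\mathcal{L}$ and $R\mathcal{L}+z$ have the same law for every $R\in SO(d)$, the homogenised densities are rotation invariant: $\phi_{\hom}(R\xi)=\phi_{\hom}(\xi)$ and $\psi_{\hom}(R\nu)=\psi_{\hom}(\nu)$ for all $R\in SO(d)$. An $SO(d)$-invariant density which is quadratic and $2$-homogeneous in $\xi$ must equal $c_1|\xi|^2$ (here one also uses that the finite differences involve only $|u(\e x)-u(\e y)|^2$, so the components of $u$ decouple), while a rotation-invariant surface density depending only on the direction $\nu$ reduces to a constant $c_2$, so that $\int_{S_u}\psi_{\hom}(\nu_u)\,\mathrm{d}\mathcal{H}^{d-1}=c_2\mathcal{H}^{d-1}(S_u)$. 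Positivity of $c_1,c_2,c_3$ comes from the lower bounds of Step 1, and one obtains exactly $F_g$.

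I expect the main obstacle to lie in Steps 1--2: on a genuinely random, non-periodic lattice one must control the interplay between the elastic and brittle regimes of the non-convex potential --- proving the a priori estimates that make Ambrosio's theorem applicable, ruling out the Cantor part, and verifying the fundamental estimate and the De\,Giorgi--Letta hypotheses needed for the measure property and the integral representation in this discrete stochastic setting. Once the abstract representation is established, Steps 3 and 4 follow the by-now-standard homogenisation-plus-symmetrisation scheme.
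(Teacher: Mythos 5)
Your proposal takes a genuinely different route from the paper in the central Step~3: you set up abstract cell formulas for the full non-convex energy $F_\e(\w)$ itself and invoke the Akcoglu--Krengel theorem directly on them, whereas the paper first proves a pointwise \emph{separation-of-scales} result (Theorem~\ref{t.sepofscales}) identifying the bulk density of any $\Gamma$-limit of $F_\e(\w)$ with the density of the $\Gamma$-limit of the \emph{convex} functional $E_\e(\w)$ and the surface density with that of the Ising-type energy $I_\e(\w)$, and then simply cites the already-established stochastic homogenization theorems for $E_\e$ and $I_\e$ from~\cite{ACG2,ACR}. Your route is in the spirit of the continuum approach of \cite{CDMSZ17}; the paper explicitly argues that transferring it verbatim to the discrete random setting is problematic (degenerate Delaunay simplices in $d\geq 3$, short Voronoi faces), so even Steps~1--2 contain hidden work that you are glossing over. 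Still, the decisive problem is elsewhere.

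The real gap is in Step~4, in the identification $\phi_{\hom}(\xi)=c_1|\xi|^2$. You deduce this from $SO(d)$-invariance on the right, $O(m)$-invariance on the left, and $2$-homogeneity, parenthetically invoking that $\phi_{\hom}$ is ``quadratic''. But the quadratic-form structure is not free: the BFLM representation gives quadratic \emph{growth}, not a quadratic form. Invariance plus $2$-homogeneity plus quasiconvexity alone do not force $c_1|\xi|^2$; for $m=d$ the function $\xi\mapsto|\xi|^2+c\,|\det\xi|$ is $2$-homogeneous, polyconvex hence quasiconvex for small $c>0$, invariant under $\xi\mapsto Q\xi R$ for $Q\in O(m),\ R\in O(d)$, and is not a multiple of $|\xi|^2$. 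In the paper, the quadratic-form structure of the bulk density is obtained precisely from the separation-of-scales theorem: $h$ equals the density $q$ of the $\Gamma$-limit of the convex energies $E_\e(\w)$, which for $p=2$ are non-negative quadratic forms, and \cite[Theorem 11.1]{DM} propagates that structure to $q$. Your proposal discards exactly the ingredient that delivers this; without a substitute argument proving that the directly-homogenized $\phi_{\hom}$ is a quadratic form (or providing some other route to $c_1|\xi|^2$), Step~4 does not close. This is not a cosmetic issue: the corresponding result Proposition~\ref{p.gradientpartsequal} in the paper, with the equiintegrability modification of recovery sequences on stochastic graphs, is flagged by the authors as the most delicate part of the entire paper.

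Two further, smaller omissions: the independence of the surface density from the jump amplitude $|u^+-u^-|$, which your Step~2 attributes to ``the scaling of $f$'', actually requires a coarea-type truncation argument (see Proposition~\ref{separationofscales1}); and applying the BFLM representation needs the growth hypotheses to hold with a linear term in $|u^+-u^-|$, which the paper arranges by the perturbation trick $\mathcal{F}_\eta=F(\w)+\eta\int_{S_u}|u^+-u^-|\,\mathrm{d}\mathcal{H}^{d-1}$ and then letting $\eta\to 0$. Your sketch of Step~2 does not account for either.
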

Given the probabilistic assumptions above, the result is quite robust. For example, the same limit (with different constant $c_2$) can be proven for the random version of (\ref{fd:intro}) given by
\begin{equation*}
\tilde{F}_{\e,g}(\w)=\sum_{\e x\in\e\Lw\cap D}\e^{d-1}f\Big(\sum_{\substack{(x,y)\in\mathcal{N}(\w)\\ \e y\in D}}\e\Big|\frac{u(\e x)-u(\e y)}{\e}\Big|^{2}\Big)+\sum_{\e x\in\e\Lw\cap D}\e^d|u(\e x)-g_{\e}(\w)(\e x)|^q.
\end{equation*} 
Some remarks are in order:
\begin{itemize}
	\item[(i)] A point process that satisfies all our assumptions is given by the random parking process \cite{glpe,Pe}.
	\item[(ii)] The coefficients $c_i$ are not explicit but are derived from three abstract homogenization formulas. However, for fixed $\mathcal{L}$ one can still tune them since $c_1$ and $c_2$ are proportional to $f^{\prime}(0)$ and $f_{\infty}$ respectively, while for $c_3$ we can multiply the second term in (\ref{intro:defF}) by some factor.
	\item[(iii)] We will prove the convergence for more general finite differences (see Section \ref{Sec:prelim}). Those require more technical notation that we want to avoid in this introduction. Hence we restrict the description of our analysis below to pairwise interactions via Voronoi neighbors.
	\item[(iv)] In the proof we will identify $u$ with a piecewise constant function on Voronoi cells. However this is not needed for minimizing the functional $F_{\e,g}(\w)$. In particular one only has to determine the Voronoi neighbors, but no volume of cells or piecewise affine interpolations on Delaunay triangulations. One can also avoid the determination of the Voronoi neighbors using a $k$-NN algorithm, but $k$ should not be too small (see also Remark \ref{r.measurability} (ii) \& (iii)).
	\item[(v)] We prove that global minimizers of $F_{\e,g}$ converge to minimizers of $F_g$ in $L^q(D,\R^m)$. Note that this is not the natural compactness to be expected from finite energy sequences.
	\item[(vi)] The discrete functionals are still non-convex which cannot be avoided since the $\Gamma$-limit of any sequence of convex functionals remains convex.
	\item[(vii)] In our setting the case of vector-valued $u$ corresponds to color images. Our arguments cannot be generalized straightforward to models for linearized elasticity.
	\item[(viii)] A different randomization of the functional (\ref{intro:discrete}) has been considered in another context in \cite{BraPi}, where a random choice between the potential $f(s)=\min\{s,1\}$ and $\tilde{f}(s)=s$ is analyzed. However, isotropy of the limit functional remained an open problem.
	\item[(ix)] Another approach to construct discrete approximations could be based on random point clouds similar to \cite{GTS}, where the authors prove an approximation result for total variation-type functionals. Point clouds have the advantage that they can be generated very fast. However, for point clouds one usually needs interactions with range $\sim (\log(n)/n)^{\frac{1}{d}}$ for $n$ points compared to $\sim(1/n)^{\frac{1}{d}}$ for stochastic lattices. Otherwise the corresponding graph will not be connected. %Mathematically this means that discrete functionals on point clouds probably can be compared via the techniques developed in \cite{GTS} to the non-local functionals discussed in \cite[Section 7]{Go}. 
	Besides the nonlocal structure there are also many redundant points due to clustering.
\end{itemize}
\medskip
\noindent {\bf Plan of the paper}

\medskip
\noindent We now give a short overview of the paper and explain briefly the steps to prove our main approximation theorem. Section \ref{Sec:prelim} is divided into three preliminary parts. First we recall the necessary function spaces that we need for our analysis. In the second part we introduce in a rigorous way the stochastic point sets that we use to define our approximating functionals $F_{\e,g}(\w)$. In the last part we introduce the class of functionals under consideration (we omit the fidelity term in most parts of the paper). For this introduction we assume the functionals to be of the form 
\begin{equation*}
F_{\e}(\w)(u)=\sum_{\substack{(x,y)\in\mathcal{N}(\w)\\ \e x,\e y\in D}}\e^{d-1}f\left(\e\Big|\frac{u(\e x)-u(\e y)}{\e}\Big|^{p}\right),
\end{equation*}
where for the sake of generality we take a general exponent $p>1$. The localized versions of these functionals will be the main objects to be studied in the subsequent sections.

In Section \ref{s.presentation} we present three general results that we prove on the way towards the main approximation result and which might be of independent interest. Assuming only the geometric properties of a single realization $\Lw$, we prove in Theorem \ref{t.sepofscales} that (up to subsequences) the $\Gamma$-limit of the family $F_{\e}(\w)$ always has the form of a free discontinuity functional, that means it is finite only on $GSBV^p(D,\R^m)$, where it can be written as
\begin{equation}\label{intro:subseq}
F(\w)(u)=\int_Dh(x,\nabla u)\,\mathrm{d}x+\int_{S_u}\varphi(x,\nu_u)\,\mathrm{d}\mathcal{H}^{d-1}.
\end{equation}
Moreover, the density $h$ of the $\Gamma$-limit coincides with the density of the $\Gamma$-limit of the convex functionals 
\begin{equation*}
E_{\e}(\w)(u)=f^{\prime}(0)\sum_{\substack{(x,y)\in\mathcal{N}(\w)\\ \e x,\e y\in D}}\e^{d}\Big|\frac{u(\e x)-u(\e y)}{\e}\Big|^{p},
\end{equation*}
while the surface density $\varphi$ of the functional $F(\w)$ in \eqref{intro:subseq} is given by surface density of the $\Gamma$-limit of the Ising-type energies defined on functions $u:\e\Lw\to\{\pm e_1\}$ by
\begin{equation*}
I_{\e}(\w)(u)=\frac{f_{\infty}}{2}\sum_{\substack{(x,y)\in\mathcal{N}(\w)\\ \e x,\e y\in D}}\e^{d-1}|u(\e x)-u(\e y)|.
\end{equation*}
In Theorem \ref{mainthm1} we state a general stochastic homogenization result for the functionals $F_{\e}(\w)$ in the case of stationary, ergodic stochastic lattices. In particular, the $\Gamma$-limit of $F_{\e}(\w)$ exists a.s., is deterministic and on its domain it is of the form
\begin{equation*}
F(u)=\int_Dh(\nabla u)\,\mathrm{d}x+\int_{S_u}\varphi(\nu_u)\,\mathrm{d}\mathcal{H}^{d-1},
\end{equation*}
that means in contrast to \eqref{intro:subseq} the densities $h$ and $\varphi$ do not depend on $x$ and are deterministic. Theorem \ref{convfull} contains the $\Gamma$-convergence including the convergence of minimizers when we add the discrete fidelity term in the stationary, ergodic setting.

In Section \ref{s.applications} we apply the results of Section \ref{s.presentation}. On the one hand, we identify the $\Gamma$-limit of the functionals in  \eqref{fd:intro}. On the other hand, with Theorem \ref{MSapprox} we obtain our main approximation result about the Mumford-Shah functional  when we assume additionally that the stochastic lattice is isotropic in distribution.

Section \ref{s.proofs} contains the proof of Theorem \ref{t.sepofscales}. While the integral form \eqref{intro:subseq} of any $\Gamma$-limit
is obtained by standard techniques combining the abstract methods of $\Gamma$-convergence with an integral representation theorem, the characterizations of the integrands $h$ and $\varphi$ by the $\Gamma$-limits of the sequences $E_{\e}(\w)$ and $I_{\e}(\w)$ is the most delicate step in this paper. Although similar results have been obtained in a continuum setting (see \cite{BrDeVi, CDMSZ17, GiPo}), we cannot use interpolation and copy the argument. This has several reasons: on the one hand, in dimensions larger than two, piecewise affine interpolations on Delaunay tessellations might be degenerate due to very flat tetrahedrons. On the other hand, even in a planar setting Voronoi cells can have very short boundary sides, so that the discrete functional overestimates the length of interfaces. Moreover, fine constructions based on geometric measure theory can be incompatible with the prescribed lattice structure. Thus our arguments, which are nevertheless inspired by the continuum case, need to use the discrete environment as long as possible. The complete strategy is explained more in detail at the beginning of Section \ref{s.proofs}.

In Section \ref{s.stochhom} we prove the remaining results of Section \ref{s.presentation}. With the characterizations proven in the previous section, Theorem \ref{mainthm1} is a straightforward consequence of the results on discrete-to-continuum stochastic homogenization for elastic and Ising-type energies obtained in the two papers \cite{ACG2,ACR}, respectively. Also adding the fidelity term is quite straightforward.

The appendix contains a technical argument how to choose $\Gamma$-converging diagonal sequences in our special setting when the functionals are not equicoercive.

\section{Setting of the problem and preliminaries }\label{Sec:prelim}
We first introduce some notation that will be used in this paper. Given a measurable set $B\subset\R^d$ we denote by $|B|$ its $d$-dimensional Lebesgue measure, while more generally $\mathcal{H}^{k}(A)$ stands for the $k$-dimensional Hausdorff measure. We denote by $\mathds{1}_B$ the characteristic function of $B$. If $B$ is finite, $\#B$ means its cardinality. Given an open set $O\subset\R^d$, we denote by $\mathcal{A}(O)$ the family of all bounded, open subsets of $O$, while $\mathcal{A}^R(O)$ means the bounded, open subsets with Lipschitz boundary. For $x\in\R^d$ or $y\in\R^m$ we denote by $|x|$ and $|y|$ the Euclidean norm. Given a matrix $\xi\in\mathbb{R}^{m\times d}$, we let $|\xi|$ be its Frobenius norm. As usual $B_{\varrho}(x_0)$ denotes the open ball with radius $\varrho$ centered at $x_0\in\R^d$. We simply write $B_{\varrho}$ when $x_0=0$. Given $\nu\in S^{d-1}$, we let $\nu_1=\nu,\nu_2,\dots,\nu_d$ be an orthonormal basis of $\R^d$ and we define the cube $Q_{\nu}$ as
\begin{equation*}
Q_{\nu}=\left\{z\in\R^d:\;|\langle z,\nu_i\rangle| <1/2\right\},
\end{equation*}
where the brackets $\langle\cdot\rangle$ denote the scalar product. Given $x_0\in\R^d$ and $\varrho>0$, we set $Q_{\nu}(x_0,\varrho)=x_0+\varrho Q_{\nu}$. For $x_0\in \R^d$, $\nu\in S^1$ and $a,b\in\R^m$ we define the function $u_{x_0,\nu}^{a,b}$ by the formula
\begin{equation}\label{eq:purejump}
u_{x_0,\nu}^{a,b}(x):=\begin{cases} a &\mbox{if $\langle x-x_0,\nu\rangle >0$,}
\\
b &\mbox{otherwise.}
\end{cases}
\end{equation}
The notation ${\rm co}(x_1,\dots,x_d)$ means the convex hull of finitely many points in $\R^d$. We will use $\|u\|_{L^p(A)}$ for the $L^p(A,\R^m)$-norm. There should be no confusion about the dimension $m$. The symbol $\otimes$ stands for the outer product of vectors, that is, for any $a\in\R^m,b\in\R^{d}$ we have $a\otimes b\in\mathbb{R}^{m\times d}$ with $(a\otimes b)_{ij}:=a_{i}b_{j}$.
Finally, the letter $C$ stands for a generic positive constant that may change every time it appears.
\subsection{Generalized special functions of bounded variation}\label{subsec:bv}
We briefly recall the function spaces we are going to use in this paper. We refer to \cite{Amb,AFP,DMFrTo} for more details. 

Let $O\subset\R^d$ be an open set. We denote by $BV(O,\R^m)\subset L^1(O,\R^m)$ the space of vector-valued {\it functions of bounded variation}. We write $Du$ for the matrix-valued distributional derivative of $u$, which can be decomposed as
\begin{equation*}
Du(B)=\int_B\nabla u\,\mathrm{d}x+\int_{S_u\cap B}(u^+(x)-u^-(x))\otimes\nu_u(x)\,\mathrm{d}\mathcal{H}^{d-1}+D^cu(B).
\end{equation*}
In the above formula $\nabla u$ denotes the absolutely continuous part of $Du$ with respect to the Lebesgue measure, $S_u$ is the so-called jump set of $u$ with (measure-theoretic) normal vector $\nu_u\in S^{d-1}$ and $u^-,u^+$ denote the one-sided traces of $u$ at $\mathcal{H}^{d-1}$-a.e. $x\in S_u$. The remainder $D^cu$ is called the Cantor-part, but it will play no role in this paper.

Indeed, the space of {\it special functions of bounded variation} is defined as the set of those $u\in BV(O,\R^m)$ such that $D^cu=0$. We write $u\in SBV(O,\R^m)$. Given $p\in(1,+\infty)$, we define $SBV^p(O,\R^m)\subset SBV(O,\R^m)$ as the set of those functions such that $\nabla u\in L^p(O,\mathbb{R}^{m\times d})$ and $\mathcal{H}^{d-1}(S_u)<+\infty$. Due to a lack of compactness in many free discontinuity problems, we have to enlarge this class. We say that a Borel-function $u:O\to\R^m$ is a {\it generalized special function of bounded variation}, if $\Phi\circ u\in SBV_{\rm loc}(O,\R^m)$ for every function $\Phi\in C^1(\R^m,\R^m)$ such that $\nabla\Phi$ has compact support and write $u\in GSBV(O,\R^m)$. In this case, the approximate differential $\nabla u(x)$ still exists a.e. and there is a well-defined jump set $S_u$, which is countably $(d-1)$-rectifiable. Finally, we set $GSBV^p(O,\R^m)$ as those functions $u\in GSBV(O,\R^m)$ such that $\nabla u\in L^p(O,\mathbb{R}^{m\times d})$ and $\mathcal{H}^{d-1}(S_u)<+\infty$. As shown in \cite[Section 2]{DMFrTo}, the set $GSBV^p(O,\R^m)$ is a vector space and, if $\Phi\in C^1(\R^m,\R^m)$ is such that $\nabla\Phi$ has compact support, then $\Phi\circ u\in SBV^p(O,\R^m)$. Moreover, given $u\in GSBV^p(O,\R^m)$, one can define a Borel-function $\nu_u:S_u\to S^{d-1}$ and two Borel-functions $u^+,u^-:S_u\to\R^m$ still satisfying a weak trace condition for $\mathcal{H}^{d-1}$-a.e. $x\in S_u$.

For our analysis we make use of a special family of smooth truncations as in \cite{CDMSZ17} which essentially allows to reduce many proofs to the space $SBV^p$. Consider a function $\phi\in C^{\infty}_c(\R)$ such that $\phi(t)=t$ for all $|t|\leq 1$, $\phi(t)=0$ for $t\geq 3$ and $\|\phi^{\prime}\|_{\infty}\leq 1$. We define the function $\Phi\in C^{\infty}_c(\R^m,\R^m)$ by
\begin{equation*}
\Phi(u)=\begin{cases}
\phi(|u|)\frac{u}{|u|} &\mbox{if $u\neq 0$,}
\\
0 &\mbox{if $u=0$.}
\end{cases}
\end{equation*} 
As shown at the beginning of \cite[Section 4]{CDMSZ17} the function $\Phi$ is $1$-Lipschitz. Given $k>0$ we further set $\Phi_k(u)=k\Phi(\frac{u}{k})$, which is still $1$-Lipschitz. Then we have the following approximation result, which is a consequence of dominated convergence and \cite[Propositions 1.1-1.3 \& Theorem 3.7]{Amb}.
\begin{lemma}\label{truncation}
Let $u\in GSBV^p(D,\R^m)\cap L^1(D,\R^m)$ and let $k>0$. Defining the truncation $T_k u=\Phi_k(u)$, the function $T_k u$ belongs to $SBV^p(D,\R^m)\cap L^{\infty}(D,\R^m)$ and 
\begin{itemize}
\item[(i)] $\displaystyle\lim_{k\to +\infty}T_ku = u\quad$ a.e. and in $L^1(D,\R^m)$;
\item[(ii)] $\nabla T_ku(x)=\nabla\Phi_k(u(x))\nabla u(x)\quad$ for a.e. $x\in D$;
\item[(iii)] $\displaystyle S_{T_ku}\subset S_u$, $\lim_{k\to +\infty}\mathcal{H}^{d-1}(S_{T_ku})=\mathcal{H}^{d-1}(S_u)$ and $\nu_{u}=\pm\nu_{T_ku}\quad$  $\mathcal{H}^{d-1}$-a.e. on $S_{T_ku}$.
\end{itemize}
\end{lemma}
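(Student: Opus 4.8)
The plan is to reduce the statement to the known approximation properties of the truncation operator $\Phi_k$, whose regularity properties are essentially established in \cite{Amb} and \cite{CDMSZ17}. First I would fix $u\in GSBV^p(D,\R^m)\cap L^1(D,\R^m)$ and $k>0$, and observe that $\Phi_k\in C^\infty_c(\R^m,\R^m)$ with $\nabla\Phi_k$ of compact support, so that by the very definition of $GSBV$ we have $T_k u=\Phi_k\circ u\in SBV_{\rm loc}(D,\R^m)$; since $\Phi_k$ is bounded, $T_k u\in L^\infty(D,\R^m)\subset L^1_{\rm loc}$, and because $\Phi_k$ is $1$-Lipschitz the chain rule in $BV$ (\cite[Propositions 1.1--1.3 \& Theorem 3.7]{Amb}) applies, yielding $T_k u\in BV(D,\R^m)$, the vanishing of the Cantor part $D^c(T_k u)=0$, the gradient identity $\nabla T_k u(x)=\nabla\Phi_k(u(x))\nabla u(x)$ a.e., and the jump-set inclusion $S_{T_k u}\subset S_u$ together with the normal relation $\nu_u=\pm\nu_{T_k u}$ on $S_{T_k u}$. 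The bound $\|\nabla\Phi_k\|_\infty\le 1$ gives $|\nabla T_k u|\le|\nabla u|\in L^p$, while $\mathcal H^{d-1}(S_{T_k u})\le\mathcal H^{d-1}(S_u)<+\infty$; hence $T_k u\in SBV^p(D,\R^m)\cap L^\infty(D,\R^m)$, which is the main structural claim, and statement (ii) as well as the inclusion in (iii) are proven.

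Next I would address the convergence statements (i) and the limit in (iii), which is where dominated convergence enters. Since $\phi(t)=t$ for $|t|\le 1$, the rescaled map satisfies $\Phi_k(v)=v$ for $|v|\le k$, so $\Phi_k(u(x))\to u(x)$ pointwise a.e. as $k\to\infty$; moreover $|\Phi_k(u(x))|\le|u(x)|$ because $\phi$ is $1$-Lipschitz with $\phi(0)=0$ (so $|\phi(t)|\le|t|$), giving the domination $|T_k u|\le|u|\in L^1(D)$, and the Dominated Convergence Theorem yields $T_k u\to u$ in $L^1(D,\R^m)$. For the convergence of the jump-set measures, I would note that $S_{T_k u}=\{x\in S_u: \Phi_k(u^+(x))\ne\Phi_k(u^-(x))\}$ up to $\mathcal H^{d-1}$-null sets; since $\Phi_k\to\mathrm{id}$ locally uniformly and, more to the point, $\Phi_k(a)\ne\Phi_k(b)$ whenever $|a|,|b|\le k$ and $a\ne b$, the sets $S_{T_k u}$ increase (modulo null sets) to a set that exhausts $S_u$ up to an $\mathcal H^{d-1}$-null set as $k\to\infty$, so monotone convergence of $\mathcal H^{d-1}\llcorner S_u$ gives $\mathcal H^{d-1}(S_{T_k u})\to\mathcal H^{d-1}(S_u)$.

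The one point that deserves care---and which I expect to be the only mild obstacle---is justifying that $T_k u$ lies in $SBV^p$ rather than merely $SBV_{\rm loc}^p$, i.e. that the \emph{global} bounds $\nabla T_k u\in L^p(D)$ and $\mathcal H^{d-1}(S_{T_k u})<\infty$ force membership in $SBV(D,\R^m)$; this is precisely the content of the structure theory recalled in \cite[Section 2]{DMFrTo} (every $u\in GSBV^p$ with these two finiteness properties and $u\in L^1$ is in $SBV^p$ after composing with such a $\Phi$), and since $T_k u=\Phi_k\circ u$ with $u\in GSBV^p\cap L^1$ this is exactly the cited fact that $\Phi\circ u\in SBV^p$. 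All the remaining work is the bookkeeping above: pointwise chain rule, the Lipschitz estimate $\|\phi'\|_\infty\le1$ to control the gradient, and two applications of dominated/monotone convergence for the limits. No finer geometric-measure-theoretic construction is needed, because $\Phi_k$ is smooth and compactly supported and the relevant properties transfer coordinatewise.
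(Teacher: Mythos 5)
Your proof is correct and takes essentially the same route as the paper, which states this lemma as a direct consequence of dominated convergence together with the chain-rule and structure results of \cite[Propositions 1.1--1.3 \& Theorem 3.7]{Amb} and \cite[Section 2]{DMFrTo}; you have simply filled in the bookkeeping that the paper leaves implicit. One small imprecision: the sets $S_{T_ku}$ need not be monotone in $k$ (since $\Phi_k$ is not injective outside $\{|v|\le k\}$), so ``increase to a set that exhausts $S_u$'' is not literally true; however, the auxiliary sets $A_k=\{x\in S_u:|u^+(x)|\le k,\ |u^-(x)|\le k\}$ \emph{are} increasing, satisfy $A_k\subset S_{T_ku}\subset S_u$, and exhaust $S_u$ up to an $\mathcal H^{d-1}$-null set, so the claimed convergence of $\mathcal H^{d-1}(S_{T_ku})$ follows by squeezing with monotone convergence applied to $\mathcal H^{d-1}\llcorner S_u$.
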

%\begin{proof}
%(i) follows by dominated convergence. (ii) is a consequence of \cite[Proposition 1.2 (i)]{Amb}. The  inclusion in (iii) is shown in \cite[Proposition 1.1 (iii)]{Amb}, while the convergence is a consequence of the inclusion and lower semicontinuity \cite[Theorem 3.7]{Amb}. The orientation property holds by the construction of the normal fields in \cite[Proposition 1.3]{Amb}.
%\end{proof}

\subsection{Stochastic lattices}
Next we introduce the random point sets that we use for the discrete approximations. Throughout this paper we let $\Omega$ be a probability space with a complete $\sigma$-algebra $\mathcal{F}$ and probability measure $\mathbb{P}$. We call a random variable $\mathcal{L}:\Omega\to(\R^d)^{\mathbb{N}}$ a stochastic lattice. The following definition, which has been introduced in \cite{BLBL} in the context of quantum models (and is known in a deterministic setting as Delone sets), essentially forbids clustering of points as well as arbitrarily big empty regions in space. 

\begin{definition}[Admissible lattices]\label{defadmissible}
Let $\mathcal{L}$ be a stochastic lattice. $\mathcal{L}$ is called admissible if there exist $R>r>0$ such that the following two conditions hold a.s.:
\begin{itemize}
	\item[(i)] $\dist(x,\mathcal{L}(\w))< R\quad$ for all $x\in\R^d$;
	\item [(ii)] $\dist(x,\Lw\backslash\{x\})\geq r\quad$ for all $x\in\Lw$.
\end{itemize}
\end{definition}

\begin{remark}\label{voronoi}
We also make use of the associated Voronoi tessellation $\mathcal{V}(\w)=\{\mathcal{C}(x)\}_{x\in\Lw}$, where the (random) Voronoi cells with nuclei $x\in\Lw$ are defined as
\begin{equation*}
\mathcal{C}(x)=\{z\in\R^d:\;|z-x|\leq |z-y|\quad\text{for all }y\in\Lw\}.
\end{equation*}	
If $\Lw$ is admissible, then \cite[Lemma 2.3]{ACR} yields the inclusions $B_{\frac{r}{2}}(x)\subset \mathcal{C}(x)\subset B_R(x)$.
\end{remark}
Next we introduce some notions from ergodic theory that build the basis for stochastic homogenization.

\begin{definition}
	We say that a family of measurable functions $\{\tau_z\}_{z\in \mathbb{Z}^d},\tau_z:\Omega\to\Omega$, is an additive group action on $\Omega$ if
	\begin{equation*}
	\tau_0={\rm id}\quad\text{and}\quad\tau_{z_1+z_2}=\tau_{z_2}\circ\tau_{z_1}\quad\text{for all  } z_1,z_2\in\mathbb{Z}^d.
	\end{equation*} 
	An additive group action is called measure preserving if
	\begin{equation*}
	\mathbb{P}(\tau_z^{-1} B)=\mathbb{P}(B)\quad \text{ for all  } B\in\mathcal{F},\,z\in\mathbb{Z}^d.
	\end{equation*}
	Moreover, $\{\tau_z\}_{z\in\mathbb{Z}^d}$ is called ergodic if, in addition, for all $B\in\mathcal{F}$ we have the implication
	\begin{equation*}
	(\tau_z(B)=B\quad\forall z\in \mathbb{Z}^d)\quad\Rightarrow\quad\mathbb{P}(B)\in\{0,1\}.
	\end{equation*}
\end{definition}

In terms of a stochastic lattice the probabilistic properties read as follows: 

\begin{definition}\label{defstatiolattice}
	A stochastic lattice $\mathcal{L}$ is said to be stationary if there exists an additive, measure preserving group action $\{\tau_z\}_{z\in\mathbb{Z}^d}$ on $\Omega$ such that for all $z\in\mathbb{Z}^d$
	\begin{equation*}
	\mathcal{L}\circ\tau_z=\mathcal{L}+z.
	\end{equation*}
	If in addition $\{\tau_z\}_{z\in\mathbb{Z}^d}$ is ergodic, then $\mathcal{L}$ is called ergodic, too.
	\\
	We call $\mathcal{L}$ isotropic, if for every $R\in SO(d)$ there exists a measure preserving function $\tau^{\prime}_R:\Omega\to\Omega$ such that
	\begin{equation*}
	\mathcal{L}\circ\tau^{\prime}_R=R\mathcal{L}.
	\end{equation*}
\end{definition}

In order to define gradient-like structures, we equip a stochastic lattice with a set of directed edges. We summarize the necessary properties in a separate definition:
\begin{definition}[Admissible edges]\label{defgoodedges}
Let $\mathcal{L}$ be an admissible stochastic lattice and $\mathcal{E}\subset\mathcal{L}^2$. We call $\mathcal{E}$ admissible edges if for all $i,j\in\mathbb{N}$ the set $\{\w\in\Omega:\;(\Lw_i,\Lw_j)\in\mathcal{E}(\w)\}$ is $\mathcal{F}$-measurable and
\begin{itemize}
	\item[(i)]  there exists $M>R$ such that a.s. \begin{equation}\label{finiterange}
	\sup\{|x-y|:\; (x,y)\in\mathcal{E}(\w)\}< M;
	\end{equation}
	\item[(ii)] the Voronoi neighbors defined by $\mathcal{N}(\w):=\{(x,y)\in\Lw^2:\;\mathcal{H}^{d-1}(\mathcal{C}(x)\cap\mathcal{C}(y))\in (0,+\infty)\}$ are contained in $\mathcal{E}(\w)$ up to symmetrizing, that means
	\begin{equation}\label{nncontained}
	\mathcal{N}(\w)\subset \mathcal{E}(\w)\cup \{(y,x)\in\Lw^2:\;(x,y)\in\mathcal{E}(\w)\}.
	\end{equation}
\end{itemize}
If $\mathcal{L}$ is stationary or isotropic, we say that the edges $\mathcal{E}$ are stationary or isotropic if $\mathcal{E}\circ\tau_z=\mathcal{E}+(z,z)$ for all $z\in\mathbb{Z}^d$ or $\mathcal{E}\circ\tau^{\prime}_R=R\mathcal{E}$ for all $R\in SO(d)$. 	
\end{definition}
Up to enlarging $M$, by Remark \ref{voronoi} we may assume in addition that
\begin{equation}\label{neighbours}
\sup_{x\in\Lw}\#\{y\in\Lw:\;(x,y)\in\mathcal{E}(\w)\text{ or }(y,x)\in\mathcal{E}(\w)\}\leq M.
\end{equation}
\begin{remark}\phantomsection\label{r.measurability}
\begin{itemize}
	\item[(i)] In the proof of \cite[Lemma A.2]{ACR} it is shown, that the choice $\mathcal{E}(\w)=\mathcal{N}(\w)$ satisfies the measurability assumption. Then we can add for example non-Voronoi neighbors by selecting them based on a maximal distance.
	\item[(ii)] In order to avoid the computation of the Voronoi neighbors (or the Delaunay triangulation), one can also use a $k$-NN algorithm (taking all points in case of a tie to avoid anisotropy). By a naive volume bound based on Remark \ref{voronoi} it suffices to take
	\begin{equation*}
	k= \left\lceil\left(4Rr^{-1}+1\right)^d-\left(2Rr^{-1}-1\right)^d\right\rceil-2.
	\end{equation*}
	Indeed, if $(x,y)\in\mathcal{N}(\w)$ is a pair of Voronoi neighbors, then for any point $z\in\mathcal{C}(x)\cap\mathcal{C}(y)$ it holds that $|z-x|< R$ and the ball $B_{|x-z|}(z)$ is contained in $B_{2|x-z|}(x)$ and contains no other point of $\Lw$. For all other points $x^{\prime}\in\Lw$ with $|x^{\prime}-x|\leq |y-x|$ (including $x$) the balls $B_{\frac{r}{2}}(x^{\prime})$ are pairwise disjoint and contained in the set $B_{2|x-z|+\frac{r}{2}}(x)\backslash B_{|x-z|-\frac{r}{2}}(z)$. The volume of this set is monotone in $|x-z|\geq r/2$, which implies the claimed bound. For the random parking model we have the optimal ratio $R/r=2$ at the packing limit, which yields $k=70$ for $d=2$. This bound is far from being optimal. Nevertheless, a more detailed treatment of numerical issues is beyond the scope of this paper.
	\item[(iii)] We use \eqref{nncontained} only for proving Lemma \ref{l.paths}. Hence in \eqref{nncontained} the set $\mathcal{N}(\w)$ can be replaced by any set of edges such that this lemma remains valid.  
\end{itemize}	
\end{remark}

Having introduced the random framework, we will need it again only in Section \ref{s.stochhom}. To simplify the notation, we will drop the dependence on $\w$ for some quantities. If so, we tacitly assume that we have a realization satisfying properties (i) and (ii) of both Definitions \ref{defadmissible} and \ref{defgoodedges}.

\subsection{A generalized weak-membrane energy}
In order to discretize vectorial Mumford-Shah-type functionals we basically follow the approach used on periodic lattices for the scalar case. However we go beyond pairwise interactions. Due to the possibly non-ordered edges $\mathcal{E}(\w)$ this requires some notation. For $M\in\mathbb{N}$ satisfying (\ref{neighbours}) we denote by 
\begin{equation*}
\mathcal{P}_{+}(M)=\{\mathfrak{p}:[0,+\infty)\to\mathbb{N}_0:\,\#\mathfrak{p}^{-1}(\mathbb{N})<+\infty,\sum_{v\in \mathfrak{p}^{-1}(\mathbb{N})}\mathfrak{p}(v)\leq M\}
\end{equation*}
the set of all multisets over $[0,+\infty)$ with at most $M$ elements. Note that if $v_1,\dots, v_k\in[0,+\infty)$ with $k\leq M$, then up to permutation we can identify these points with a unique $\mathfrak{p}\in\mathcal{P}_+(M)$ setting $\mathfrak{p}(v_i)=\#\{j:\,v_j=v_i\}$ for all $1\leq i\leq k$ and zero elsewhere. In this sense we sometimes use the more common notation $\mathfrak{p}=\{v_i\}_b$, where the $b$ indicates a badge in which elements can occur several times in contrast to ordinary sets. For $\mathfrak{p}\in\mathcal{P}_+(M)$ we set $\|\mathfrak{p}\|_1=\sum_{v\in \mathfrak{p}^{-1}(\mathbb{N})}\mathfrak{p}(v)v$. In this paper we fix a bounded function $f:\mathcal{P}_{+}(M)\to [0,+\infty)$ satisfying the following two structural assumptions: there exists $0<\alpha<+\infty$ such that
\begin{equation}\label{linearatzero}
\lim_{\|\mathfrak{p}\|_1\to 0}\frac{f(\mathfrak{p})}{\|\mathfrak{p}\|_1}=\alpha
\end{equation}
and $f$ is monotone increasing in the following sense: for all $v,v^{\prime}\in [0,+\infty)^M$ with $v_i\leq v_i^{\prime}$ for all $1\leq i\leq M$ and for any $1\leq k\leq M$ we have
\begin{equation}\label{monotone}
f(\{v_1,\dots,v_k\}_b)\leq f(\{v_1^{\prime},\dots,v_k^{\prime}\}_b).
\end{equation} 
We also assume that $f$ is lower semicontinuous in the sense that for all sequences $(v^n)_n\subset[0,+\infty)^M$ converging to some $v\in[0,+\infty)^M$ and for all $1\leq k\leq M$ it holds that
\begin{equation}\label{flsc}
f(\{v_1,\dots,v_k\}_b)\leq \liminf_{n\to +\infty}f(\{v_1^n,\dots,v_k^n\}_b).
\end{equation}

\begin{remark}\label{onf}
Note that from the boundedness of $f$, the monotonicity assumption and the property (\ref{linearatzero}) it follows that there exist constants $C_f>c_f>0$ such that
\begin{equation}\label{realcut}
c_f\min\{\|p\|_1,1\}\leq f(p)\leq C_f\min\{\|p\|_1,1\}.
\end{equation}
Moreover, again by boundedness and monotonicity, for every $1\leq l\leq k\leq M$ there exists the limit
\begin{equation}\label{limitatinf}
\beta(l,k)=\lim_{N\to +\infty}f(l\mathds{1}_{\{N\}}+(k-l)\mathds{1}_{\{0\}})>0.
\end{equation}
While we will frequently use the property (\ref{monotone}) for our analysis, the lower semicontinuity (\ref{flsc}) will just guarantee the existence of minimizers for the discrete approximations.
\end{remark}

In order to define the discrete approximation of a continuous functional we scale a stochastic lattice by a small parameter $\e>0$. Let us fix a reference set $D\subset\R^d$, which we assume to be a bounded Lipschitz domain, and a growth exponent $p\in (1,+\infty)$. Given $u:\e\Lw\to\R^m$, an open set $A\in\mathcal{A}(D)$ and $\eta>0$, we define the function $\eta|\nabla_{\w,\e}(u,A)|^p:\e\Lw\to\mathcal{P}_{+}(M)$ by 
\begin{equation*}
\eta|\nabla_{\w,\e}(u,A)|^p(\e x) = \Big\{\eta\Big|\frac{u(\e x)-u(\e y)}{\e}\Big|^p:\;(x,y)\in\mathcal{E}(\w),\,\e x,\e y\in A\Big\}_b.
\end{equation*}
Then we define the localized discrete approximations (which we also call energies) as
\begin{equation*}
F_{\e}(\w)(u,A)=\sum_{\e x\in \e\Lw\cap A}\e^{d-1} f\left(\e|\nabla_{\w,\e}(u,A)|^p(\e x)\right).
\end{equation*}
We simply write $F_{\e}(\w)$ for $F_{\e}(\w)(\cdot,D)$, which will be the functional of interest in this paper.
\begin{remark}\label{examples}
We chose the abstract framework	above for two reasons.
\begin{itemize}
	\item[(i)] We take directed edges to define $\eta|\nabla_{\w,\e}(u,A)|^p(\e x)$ in order to include the functional (\ref{fd:intro});
	\item[(ii)] we define the function $f$ on multisets to handle pairwise and non-pairwise gradients simultaneously. For pairwise interactions, we set $f(\mathfrak{p})=\sum_{v\in \mathfrak{p}^{-1}(\mathbb{N})}\mathfrak{p}(v)f_0(v)$ with $f_0$ satisfying (\ref{intro:f}). The other example of the introduction is given by $f(\mathfrak{p})=f_0(\|\mathfrak{p}\|_1)$. 
\end{itemize}
\end{remark}
As we aim at using the abstract theory of $\Gamma$-convergence, we will identify a discrete variable with its piecewise constant interpolation on the Voronoi cells, that means with functions of the class
\begin{equation*}
\mathcal{PC}_{\e}^{\w}=\{u:\R^d\to\R^m:\;u_{|\e \mathcal{C}(x)}\text{ is constant for all }x\in\Lw\}.
\end{equation*}
With a slight abuse of notation we extend the functional to $F_{\e}(\w):L^{1}(D,\R^m)\times\mathcal{A}(D)\to[0,+\infty]$ by
\begin{equation}\label{deffunctional}
F_{\e}(\w)(u,A)=
\begin{cases} 
F_{\e}(\w)(u,A) &\mbox{if $u\in\mathcal{PC}_{\e}^{\w}$,}\\
+\infty &\mbox{otherwise.}
\end{cases}
\end{equation}

\begin{remark}\label{lp}
We work in the space $L^{1}$ due to the applications we have in mind. A priori there is no equicoercivity in this space and therefore it seems more natural to define the functionals for example on measurable functions. However, we will show in Lemma \ref{compact} that this can be circumvented using a fidelity term that is part of the Mumford-Shah functional anyway.
\end{remark}

\section{Presentation of the general results}\label{s.presentation}
Having introduced the necessary notation, we now give an overview of the theoretical results that we prove on the way to obtain the discretization of the Mumford-Shah functional. Readers who are interested specifically in the Mumford-Shah functional can go to Section \ref{subsec:MS}.

%\subsection*{Individual integral representation} 

%\begin{theorem}\label{mainrep}
%Given any sequence $\e\to 0$ there exists a subsequence $\e_n$ such that for all $A\in\Ard$ the functionals $F_{\e_n}(\w)(\cdot,A)$ $\Gamma$-converge in the $L^1(D,\R^m)$-topology to a functional $F(\w)(\cdot,A):L^1(D,\R^m)\to[0,+\infty]$ that is finite only for $A\in\Ard$ such that $u\in GSBV^p(A,\R^m)$. For such $(u,A)$ it can be written as
%\begin{equation*}
%F(\w)(u,A)=
%\int_A h(x,\nabla u(x))\dx+\int_{S_u\cap A}\varphi(x,\nu)\,\mathrm{d}\mathcal{H}^{d-1},
%\end{equation*}
%where, for $x_0\in D$, $\nu\in S^1$ and $\xi\in\R^{d\times m}$, the integrands are given by
%\begin{equation*}
%\begin{split}
%h(x,\xi)&=\limsup_{\rho\to 0}\frac{1}{\rho_d}m(\w)(\xi(\cdot-x_0),Q_{\nu}(x_0,\rho)),
%\\
%\varphi(x_0,\nu)&=\limsup_{\rho\to 0}\frac{1}{\rho^{d-1}}m(\w)(u_{x_0,\nu}^{e_1,0},Q_{\nu}(x_0,\rho)),
%\end{split}
%\end{equation*} 
%with the function $u_{x_0,\nu}^{a,b}$ defined in \eqref{eq:purejump}
%and the function $m(\w)(v,A)$ defined for any $\bar{u}\in SBV^p(D,\R^m)$ and $A\in\Ard$ by
%\begin{equation*}
%m(\w)(\bar{u},A)=\inf\{F(\w)(v,A):\;v\in SBV^p(A,\R^m),\,v=\bar{u}\text{ in a neighborhood of }\partial A\}.
%\end{equation*}
%\end{theorem}

\subsection{Integral representation and separation of bulk and surface effects}
Our first result establishes a general $\Gamma$-convergence result (up to subsequences), which holds without any probabilistic assumptions on the lattice, but pointwise for admissible lattices and edges. We show that any $\Gamma$-limit has an integral representation which is characterized by the separate $\Gamma$-limits of an associated bulk energy and a surface energy.  More precisely, a formal linearization of $F_{\e}(\w)$ at $u=0$ yields the bulk energy $E_{\e}(\w)(\cdot,A):\mathcal{PC}_{\e}^{\w}\to[0,+\infty]$ defined by
\begin{equation}\label{auxelastic}
E_{\e}(\w)(u,A)=
\alpha\sum_{\substack{(x,y)\in\mathcal{E}(\w)\\ \e x,\e y\in A}}\e^d\Big|\frac{u(\e x)-u(\e y)}{\e}\Big|^p,
\end{equation}
where $\alpha$ is given by (\ref{linearatzero}). At least when $u$ is the discretization of a Lipschitz function, the functional $E_{\e}(\w)(u,A)$ should be a good approximation of $F_{\e}(\w)(u,A)$ since $\|\e|\nabla_{\w,\e}(u,A)|^p\|_1$ vanishes when $\e\to 0$. On the contrary, when $u$ is the discretization of a (macroscopically) piecewise constant function, the elements in $\e|\nabla_{\w,\e}(u,A)|^p$ either equal zero or blow up. Hence one expects that $F_{\e}(\w)(u,A)$ should be well-approximated by the functional $I_{\e}(\w):\{v:\e\Lw\to\{\pm e_1\}\}\to [0,+\infty]$ given by the formula
\begin{equation}\label{Ising}
I_{\e}(\w)(v,A)=\sum_{\e x\in \e\Lw\cap A}\e^{d-1}\beta\Big(\sum_{\substack{\e y\in\e\Lw\cap A\\ (x,y)\in\mathcal{E}(\w)}}\frac{1}{2}|v(\e x)-v(\e y)|,\#\{\e y\in\e\Lw\cap A:\, (x,y)\in\mathcal{E}(\w)\}\Big),
\end{equation}
where the function $\beta$ is given by (\ref{limitatinf}) and $\beta(0,k):=0$ for all $1\leq k\leq M$. Indeed, the function $\beta$ takes into account how many components of the discrete gradient blow up while all others remain zero. 

In order to formulate the theorem we recall the $\Gamma$-convergence results obtained for the energies $E_{\e}(\w)$ and $I_{\e}(\w)$ (both extended to $L^p(D)$ via $+\infty$). The following result was proven in \cite[Theorem 3]{ACG2}\footnote{Interactions via a random edge set $\mathcal{E}(\w)$ are not covered by the results in \cite{ACG2}. However, the proof works for general finite range interactions with $p$-growth and coercive Voronoi neighbor interactions. Due to (\ref{nncontained}) those assumptions are fulfilled.}:
\begin{theorem}[\cite{ACG2}]\label{ACGmain}
For every sequence $\e\to 0$ there exists a subsequence $\e_n$ such that for every $A\in\Ard$ the functionals $E_{\e_n}(\w)(\cdot,A)$ 
$\Gamma(L^p(D,\R^m))$-converge to a functional $E(\w):L^p(D,\R^m)\times\Ard\to [0,+\infty]$ that is finite only on $W^{1,p}(A,\R^m)$, where it takes the form
\begin{equation*}
E(\w)(u,A)=
\int_A q(x,\nabla u)\,\mathrm{d}x
\end{equation*}
for some non-negative Carath\'eodory-function $q$ that is quasiconvex in the second variable for a.e. $x\in D$ and satisfies the growth conditions
\begin{equation*}
\frac{1}{C}|\xi|^p-C\leq q(x,\xi)\leq C(|\xi|^p+1).
\end{equation*}
\end{theorem}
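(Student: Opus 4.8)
The plan is to follow the by-now-standard localization method for $\Gamma$-convergence combined with an integral representation theorem, as carried out in \cite{ACG2}; below I indicate the main steps and where the lattice structure enters. (As the footnote explains, \cite{ACG2} does not literally treat a random edge set $\mathcal{E}(\w)$, but the argument goes through once \eqref{nncontained} guarantees coercive Voronoi-neighbor interactions and \eqref{finiterange}, \eqref{neighbours} guarantee finite range and bounded degree.)

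\textbf{Step 1: a priori bounds.} First I would establish two-sided estimates comparing $E_{\e}(\w)(u,A)$ with the Dirichlet-type energy of an interpolant. Using the Delone property (Definition \ref{defadmissible}), the inclusions $B_{r/2}(x)\subset\mathcal{C}(x)\subset B_R(x)$ from Remark \ref{voronoi}, and the coercivity of Voronoi-neighbor interactions from \eqref{nncontained}, one builds for $u\in\mathcal{PC}_{\e}^{\w}$ an interpolant $\hat u$ (piecewise affine on a Delaunay-type triangulation, or a regularized interpolation to avoid degenerate simplices in $d\geq 3$) with $\|u-\hat u\|_{L^p(A)}\to 0$ and, for $A'\Subset A\Subset A''$,
\[
\tfrac1C\int_{A'}|\nabla\hat u|^p\,\dx - C\e \;\leq\; E_{\e}(\w)(u,A) \;\leq\; C\int_{A''}(|\nabla\hat u|^p+1)\,\dx .
\]
This yields equicoercivity in $W^{1,p}_{\mathrm{loc}}$ and is what forces the $\Gamma$-limit to be finite only on $W^{1,p}$; it also provides the growth bounds on $q$ at the end.

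\textbf{Step 2: compactness and the fundamental estimate.} By the general compactness theorem for $\Gamma$-convergence on separable metric spaces there is a subsequence $\e_n$ along which $E_{\e_n}(\w)(\cdot,A)$ $\Gamma(L^p(D,\R^m))$-converges for every $A$ in a countable family dense in $\Ard$; call the limit $E(\w)(\cdot,A)$. The core point is to upgrade this to all $A\in\Ard$ and to show $E(\w)(u,\cdot)$ is (the trace of) a Borel measure. For this I would prove the fundamental estimate: for $A'\Subset A$, any $B$, and $u,v$ of finite energy there is a \emph{discrete} cut-off on $\e\Lw$ between $A'$ and $A$ gluing $u,v$ into $w$ with
\[
E_{\e}(\w)(w,A'\cup B) \leq (1+\sigma)\bigl(E_{\e}(\w)(u,A)+E_{\e}(\w)(v,B)\bigr) + \tfrac{C}{\sigma}\,\|u-v\|_{L^p((A\setminus A')\cap B)}^p + \sigma .
\]
The delicate part is that finite differences of $w$ straddle a transition layer of width $\sim M\e$; one averages the cut-off over $\sim 1/\e$ parallel layers inside $A\setminus A'$ and uses the bounded-degree bound \eqref{neighbours} and the range bound \eqref{finiterange} to select a good layer with controlled extra energy. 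From this, the De Giorgi--Letta criterion gives the measure property, and a comparison argument extends the $\Gamma$-convergence to all of $\Ard$.

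\textbf{Step 3: integral representation.} With the measure property, locality (inherited from the discrete functionals, since $F_\e(\w)(u,A)$ only sees $u$ on $\e\Lw\cap A$), $L^p$-lower semicontinuity (automatic for $\Gamma$-limits), and the two-sided $p$-growth of Step 1, the integral representation theorem of Buttazzo--Dal Maso applies and gives $E(\w)(u,A)=\int_A q(x,\nabla u)\,\dx$ for $u\in W^{1,p}(A,\R^m)$ with $q$ a Carathéodory function satisfying $\tfrac1C|\xi|^p-C\leq q(x,\xi)\leq C(|\xi|^p+1)$; quasiconvexity of $q(x,\cdot)$ is forced by lower semicontinuity of the limit under weak $W^{1,p}$-convergence. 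Finiteness on all of $W^{1,p}$ follows by exhibiting recovery sequences for discretizations of smooth functions plus density, and the lower growth bound rules out finite energy outside $W^{1,p}$. \textbf{Main obstacle:} the genuinely hard step is the discrete fundamental estimate of Step 2 — making the cut-off compatible with the lattice and with non-pairwise differences through $\mathcal{E}(\w)$, so that only edges inside the thin transition layer are affected and their total contribution is controllable; a secondary technical nuisance is avoiding reliance on non-degenerate Delaunay interpolation in dimension $d\geq3$ in Step 1.
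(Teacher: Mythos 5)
Your outline is correct and matches the approach taken in the cited reference \cite{ACG2}: the paper does not actually prove Theorem~\ref{ACGmain} but invokes it as a black box (with a footnote noting that the extension to random edge sets $\mathcal{E}(\w)$ satisfying \eqref{nncontained}, \eqref{finiterange}, \eqref{neighbours} is straightforward), so there is no paper-internal proof to compare against line by line. Your three steps — (1) two-sided comparison with a continuum Dirichlet energy via a non-degenerate interpolant, (2) compactness on a countable dense family of sets plus a discrete fundamental estimate with averaged cut-offs, (3) De Giorgi--Letta and integral representation — are precisely the localization strategy of \cite{ACG2}, and they also mirror what the paper itself does for the harder functional $F_\e(\w)$ in Section~\ref{s.proofs} (Lemmata~\ref{compact}, \ref{bounds}, Proposition~\ref{subadd}, Lemma~\ref{almostmeasure}, Proposition~\ref{limitsbvp}, except that there the representation theorem of \cite{BFLM} for $SBV^p$ replaces the $W^{1,p}$ one). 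You correctly flag the two genuine technical points: avoiding degenerate Delaunay simplices in $d\ge 3$ (the paper sidesteps this with a fixed periodic triangulation scaled by $\e$ and nearest-lattice-point evaluation, exactly as you suggest as an alternative), and selecting a good cut-off layer — note, though, that one averages over a \emph{fixed} number $N$ of layers and then sends $N\to\infty$ after $\e\to 0$, rather than over $\sim 1/\e$ layers, to get the small error term without spoiling the convergence in $\e$ (this is also how Proposition~\ref{subadd} is set up).
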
 

The $\Gamma$-convergence of functionals of the type \eqref{Ising} was treated in \cite[Theorem 3.2]{ACR}\footnote{In \cite{ACR} the proofs are given only for pairwise interactions, but as already mentioned in \cite[Section 6.3]{ACR}, up to minor modifications they cover also multi-body interactions. Hence we decided not to repeat the arguments.}:
\begin{theorem}[\cite{ACR}]\label{t.ACRmain}
For every sequence $\e\to 0$ there exists a subsequence $\e_n$ such that for every $A\in\Ard$ the functionals $I_{\e_n}(\w)(\cdot,A)$ $\Gamma(L^p(D,\R^m))$-converge to a functional $I(\w):L^p(D,\R^m)\times\Ard\to [0,+\infty]$ that is finite only on $BV(A,\{\pm e_1\})$, where it takes the form
\begin{equation*}
I(\w)(u,A)=
\int_A s(x,\nu_u)\,\mathrm{d}\mathcal{H}^{d-1}
\end{equation*}
for some  measurable function $s$ that satisfies the growth conditions
\begin{equation*}
\frac{1}{C}\leq s(x,\nu)\leq C.
\end{equation*}
\end{theorem}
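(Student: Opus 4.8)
The plan is to follow the by now standard localization route for discrete-to-continuum $\Gamma$-convergence of interfacial energies. Throughout I identify a function $u\colon A\to\{\pm e_1\}$ with the set $E_u=\{x\in A\colon u(x)=e_1\}$, so that $BV(A,\{\pm e_1\})$ corresponds to the sets of finite perimeter in $A$, with $S_u=\partial^*E_u$ and $\nu_u=\nu_{E_u}$ (up to orientation), and a discrete configuration in $\mathcal{PC}_\e^\w$ to a union of rescaled Voronoi cells; on the bounded set $D$ the $L^p$- and $L^1$-topologies agree on $\{\pm e_1\}$-valued maps, so I may as well work in $L^1$. The first step is a priori bounds and compactness. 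From \eqref{limitatinf} and the convention $\beta(0,k)=0$ one gets constants $0<\underline\beta\le\overline\beta<\infty$, depending only on $M$, with $\underline\beta\,\mathds{1}_{\{l\ge1\}}\le\beta(l,k)\le\overline\beta$; hence, with $A$ fixed,
\[
\underline\beta\,\e^{d-1}\,\#\bigl\{x\in\Lw\colon \e x\in A,\ v(\e x)\neq v(\e y)\ \text{for some }(x,y)\in\mathcal{E}(\w)\text{ with }\e y\in A\bigr\}\ \le\ I_\e(\w)(v,A),
\]
while $I_\e(\w)(v,A)\le\overline\beta\,\e^{d-1}\sum_{(x,y)\in\mathcal{E}(\w),\,\e x,\e y\in A}\tfrac12|v(\e x)-v(\e y)|$. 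Using $B_{r/2}(x)\subset\mathcal{C}(x)\subset B_R(x)$ (Remark~\ref{voronoi}) and \eqref{neighbours}, the lower bound controls from below a fixed multiple of $\mathcal{H}^{d-1}$ of the portion of $\partial E_v$ separating cells of opposite sign, so any sequence with $\sup_\e I_\e(\w)(u_\e,A)<\infty$ has $\mathrm{Per}(E_{u_\e};A')$ uniformly bounded for $A'\Subset A$; by $BV$-compactness it is precompact in $L^1_{\mathrm{loc}}(A)$ and every cluster point lies in $BV(A,\{\pm e_1\})$.

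Next I would invoke the abstract machinery. Since the $\Gamma\text{-}\liminf$ and $\Gamma\text{-}\limsup$ are monotone set functions, a diagonalization over a countable family of sets together with an Urysohn argument produces a subsequence $\e_n$ along which they coincide for every $A$ in that family; call the common value $I(\w)(u,A)$. The crucial analytic point is the \emph{fundamental estimate}: given $A'\Subset A''$ and $B$, one must glue a near-optimal configuration on $A''$ to one on $B$ by switching between them at lattice points in the safety layer $A''\setminus A'$. On an irregular stochastic lattice the usual cut-off along level sets is not available, so the switch is carried out combinatorially on edges; its cost involves only edges meeting the layer, which is $O(\e^{d-1})$ times the number of vertices within distance $M\e$ of $\e^{-1}\partial A'$, and this is controlled by $\mathcal{H}^{d-1}(\partial A')$ through Remark~\ref{voronoi} and \eqref{neighbours}. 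Combined with the upper growth bound, this gives subadditivity and inner regularity, so by the De Giorgi--Letta criterion $A\mapsto I(\w)(u,A)$ is the restriction to $\Ard$ of a Borel measure, for each fixed $u$, and $I(\w)(u,\cdot)$ is local and $L^1$-lower semicontinuous.

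It remains to establish the two-sided growth bound and to represent $I(\w)$. The lower bound $I(\w)(u,A)\ge C^{-1}\mathcal{H}^{d-1}(S_u\cap A)$ is obtained by passing to the limit in the a priori estimate of the first paragraph and using lower semicontinuity of the perimeter, after a one-dimensional slicing argument to see that the separating-boundary term indeed controls $\mathcal{H}^{d-1}(\partial^*E_u\cap A)$. For the upper bound it suffices, by density of polyhedral sets and lower semicontinuity, to treat the pure jump $u=u_{x_0,\nu}^{e_1,-e_1}$ on a cube $Q_\nu(x_0,\varrho)$: the configuration $u_\e(\e x)=e_1$ if $\langle\e x-x_0,\nu\rangle>0$ and $-e_1$ otherwise converges to $u$ in $L^1$ and has energy at most $\overline\beta\,\e^{d-1}\#\{x\in\Lw\colon \e x\in Q_\nu(x_0,\varrho),\ \dist(\e x,x_0+\nu^\perp)<M\e\}$, and a volume count with the disjoint balls $B_{\e r/2}(\e x)$ bounds this by $C\varrho^{d-1}+o(1)=C\,\mathcal{H}^{d-1}(S_u\cap Q_\nu(x_0,\varrho))+o(1)$; crucially, very short Voronoi facets cannot inflate the count, since the estimate uses only the metric data $r,R,M$. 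With $I(\w)(\cdot,\cdot)$ now local, $L^1$-lower semicontinuous, a measure in the open set, vanishing on constants (hence with no bulk or Cantor part), and squeezed between $C^{-1}\mathcal{H}^{d-1}(S_u\cap A)$ and $C\mathcal{H}^{d-1}(S_u\cap A)$, an integral representation theorem for interfacial functionals on $BV(\cdot,\{\pm e_1\})$ (cf.\ \cite{AFP} and the references in \cite{ACR}, or a direct blow-up argument) yields a Borel function $s\colon D\times S^{d-1}\to[0,+\infty)$ with $I(\w)(u,A)=\int_{S_u\cap A}s(x,\nu_u)\,\mathrm{d}\mathcal{H}^{d-1}$, and the growth bounds translate at once into $C^{-1}\le s(x,\nu)\le C$. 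The part I expect to be the main obstacle is the combination of the fundamental estimate and the matching upper bound on an irregular lattice --- in both, the control must come purely from the metric constants $r,R,M$ rather than from any regularity of the Voronoi cells.
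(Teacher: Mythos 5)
The paper does not prove this theorem itself but cites \cite[Theorem 3.2]{ACR} (with a footnote noting that the pairwise-interaction proof there extends to multi-body interactions up to minor modifications). Your sketch faithfully reconstructs the standard localization/integral-representation strategy that \cite{ACR} employs --- a priori perimeter bounds and $BV$-compactness from the coercivity of $\beta$, a fundamental estimate on the irregular lattice carried out combinatorially in a layer of width $O(M\e)$, De Giorgi--Letta to obtain a measure, two-sided growth bounds from counting Voronoi cells via the metric constants $r,R,M$, and finally an integral representation theorem for interfacial functionals on $BV(\cdot,\{\pm e_1\})$ --- so it is essentially the same approach as the reference the paper relies on.
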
 
Our first main result states that the $\Gamma$-limit of $F_{\e}(\w)$ can be characterized by the $\Gamma$-limits of $E_{\e}(\w)$ and $I_{\e}(\w)$ in the following sense:
\begin{theorem}\label{t.sepofscales}
Let $\e_n\to 0$. The $\Gamma(L^1(D))$-limit of $F_{\e_n}(\w)$ exists if and only if the $\Gamma(L^p(D))$-limits of $E_{\e_n}(\w)(\cdot,D)$ and $I_{\e_n}(\w)(\cdot,D)$ exist. In this case the $\Gamma$-limit $F(\w)$ is finite only $GSBV^p(D,\R^m)$, where it is given by
\begin{equation*}
F(\w)(u)=\int_D q(x,\nabla u(x))\dx +\int_{S_u}s(x,\nu_u)\,\mathrm{d}\mathcal{H}^{d-1}
\end{equation*}	
with the integrands given by Theorems \ref{ACGmain} and \ref{t.ACRmain}.
\end{theorem}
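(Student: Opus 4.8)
The proof proceeds in two directions, matching the ``if and only if'' structure. The easier implication is that if the $\Gamma(L^1(D))$-limit of $F_{\e_n}(\w)$ exists, then so do the $\Gamma(L^p(D))$-limits of $E_{\e_n}(\w)(\cdot,D)$ and $I_{\e_n}(\w)(\cdot,D)$; this follows because (by Theorems \ref{ACGmain} and \ref{t.ACRmain}) these latter functionals always $\Gamma$-converge along a further subsequence, and the bulk/surface densities of any such limit are shown to be recoverable from $F(\w)$ via blow-up arguments—so all subsequential limits of $E_{\e_n}(\w)$ (resp.\ $I_{\e_n}(\w)$) must coincide. The substantial content is the converse together with the identification of the integrands. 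The first step there is to apply the abstract machinery of $\Gamma$-convergence: extract a subsequence along which $F_{\e_n}(\w)(\cdot,A)$ $\Gamma$-converges for all $A$ in a countable dense subfamily of $\Ard$, verify the standard measure-property and locality axioms of the $\Gamma$-limit as a set function of $A$, and invoke an integral representation theorem for free-discontinuity functionals (in the spirit of Bouchitté--Fonseca--Mascarenhas and the $GSBV$-variant used in \cite{CDMSZ17}) to conclude that on $GSBV^p(D,\R^m)$ the limit has the form $\int_D h(x,\nabla u)\dx+\int_{S_u}\varphi(x,\nu_u)\,\mathrm{d}\mathcal{H}^{d-1}$. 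The truncation Lemma \ref{truncation} is what reduces this step from $GSBV^p$ to $SBV^p$, controlling both the bulk term (via the chain rule (ii)) and the surface term (via (iii)).

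**Identification of the bulk density.** Here I would show $h(x,\xi)=q(x,\xi)$. One inequality, $h\le q$, comes from comparing energies directly: for a competitor $u_\e$ that is the discretization of an affine (or Lipschitz) function, the quantity $\|\e|\nabla_{\w,\e}(u_\e,A)|^p\|_1$ vanishes as $\e\to0$, so by the asymptotic linearity \eqref{linearatzero} (quantitatively \eqref{realcut}) the energy $F_\e(\w)(u_\e,A)$ is asymptotically bounded by $E_\e(\w)(u_\e,A)+o(1)$; passing to $\Gamma$-limits and using the blow-up characterization of $h(x,\xi)$ on affine target data gives $h(x,\xi)\le q(x,\xi)$. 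For the reverse inequality one localizes near a point of approximate differentiability of a recovery sequence: away from the (asymptotically negligible) jump set, a finite-energy sequence has only ``small'' finite differences on a set of nearly full measure, on which $f$ behaves like $\alpha\|\cdot\|_1$ from below by \eqref{realcut}, so the $F_\e$-energy dominates (a localized version of) the $E_\e$-energy up to a vanishing error; the lower semicontinuity \eqref{flsc} and a careful bookkeeping of the ``bad'' edges where a difference is not small then yields $h(x,\xi)\ge q(x,\xi)$.

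**Identification of the surface density.** This is the delicate part and I expect it to be the main obstacle. The goal is $\varphi(x,\nu)=s(x,\nu)$. The upper bound $\varphi\le s$ should again follow by using for $u=u_{x_0,\nu}^{a,b}$ recovery sequences built from the Ising problem: near the interface the discrete gradient has a bounded number of ``blown-up'' components per point, contributing via the constants $\beta(l,k)$ of \eqref{limitatinf}, and summing these reproduces exactly the functional $I_\e(\w)$; comparing with \eqref{realcut} (the cut-off upper bound $f\le C_f\min\{\|p\|_1,1\}$) shows the surface cost is asymptotically no larger than that of the Ising energy. The lower bound $\varphi\ge s$ is where the discrete geometry bites: one must show that any finite-energy sequence $u_\e\to u_{x_0,\nu}^{a,b}$ pays at least the Ising surface cost near the interface. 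The natural idea—project $u_\e$ onto the two-valued function taking value $e_1$ on one side and $-e_1$ on the other and compare with $I_\e$—is obstructed by the fact, noted in the introduction, that Voronoi cells may have arbitrarily short shared boundary faces (so $F_\e$ can a priori overestimate or, more dangerously here, the projection can create spurious structure), that Delaunay interpolants degenerate in $d\ge3$, and that fine rectifiability-based slicing constructions need not be compatible with the lattice. The strategy is therefore to stay discrete: use the path-type lemma (Lemma \ref{l.paths}, relying on \eqref{nncontained}) to propagate the jump in $u_\e$ along chains of Voronoi neighbors crossing $S_u$, use the monotonicity \eqref{monotone} to discard all but the ``large'' components of each discrete gradient, and bound from below the resulting energy by an $I_\e$-type energy for a suitably defined two-valued auxiliary variable; a mollification/averaging over translations and a covering argument over $S_u$ then transfer this to the density $s$. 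I would carry out this surface lower bound last, after the bulk identification is in place, since it is the step where the full force of the admissibility hypotheses and the structural assumptions on $f$ is needed; the complete strategy is detailed at the start of Section \ref{s.proofs}.
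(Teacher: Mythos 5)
Your high-level plan — extract a subsequence, apply a BFLM-type integral representation on $SBV^p$, reduce from $GSBV^p$ via the truncations $T_k$, and then identify the bulk and surface densities with those of $E_{\e_n}(\w)$ and $I_{\e_n}(\w)$ through blow-up formulas — is exactly the decomposition the paper uses (Proposition~\ref{limitsbvp}, then Propositions~\ref{p.gradientpartsequal} and~\ref{p.surfacedensityequal}), and the ``if'' direction is handled by the same subsequence-uniqueness argument you describe. The issues are in the two density identifications.

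\textbf{Bulk density.} Your sketch has a genuine gap. You write that ``a finite-energy sequence has only `small' finite differences on a set of nearly full measure, on which $f$ behaves like $\alpha\|\cdot\|_1$ from below by \eqref{realcut}.'' But \eqref{realcut} only gives $c_f\min\{\|\mathfrak p\|_1,1\}\le f(\mathfrak p)\le C_f\min\{\|\mathfrak p\|_1,1\}$ with $c_f<\alpha<C_f$, so it does \emph{not} deliver the asymptotic equality $f(\mathfrak p)\ge (1-\eta)\alpha\|\mathfrak p\|_1$ that you need; that only holds by \eqref{linearatzero}, i.e.\ when $\|\mathfrak p\|_1\to 0$. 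A recovery sequence for $F_\e(\w)$ near an affine function has no a priori reason to have $\e\,|\nabla_{\w,\e}u_\e|^p\to 0$ pointwise or in measure on all but a negligible set: the energy bound $\sum_{\e x}\e^{d-1}f(\cdot)\le C\varrho^d$ controls the \emph{number} of points where $\|\e|\nabla_{\w,\e}u_\e|^p\|_1$ is of order one, but says nothing about the many points where it sits at some intermediate scale $s\in(0,1)$ fixed; there $F_\e$ and $E_\e$ differ by the uncontrolled factor $\alpha/c_f$. Precisely to rule out this intermediate regime, the paper constructs a \emph{modified} competitor whose discrete gradient is $p$-equiintegrable, via a discrete Fonseca--M\"uller--Pedregal argument: a graph doubling property and Poincar\'e inequality (Lemma~\ref{graphprop}) give a maximal-function characterization of discrete Lipschitz functions, Lemma~\ref{l.fomupediscrete} truncates the maximal function and extends by Kirszbraun on a set of small complement, and only \emph{then} can $f$ be linearized. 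Implementing this requires the blow-up/diagonal machinery (Lemmas~\ref{cauchy-born}--\ref{diagonal} and the metric in the appendix) because there is no equicoercivity to guarantee diagonal $\Gamma$-convergent sequences. This entire layer is missing from your proposal, and the ``direct comparison'' you describe would not close.

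\textbf{Surface density.} Here your route genuinely diverges from the paper's, and the comparison is instructive. You reject the projection-onto-$\{0,a\}$ strategy as ``obstructed'' by short Voronoi faces and degenerate Delaunay simplices and instead propose a path-propagation argument via Lemma~\ref{l.paths}. The paper, on the contrary, \emph{does} carry out the projection, but carefully: Lemma~\ref{approxminprob} first shows that the continuum minimization $m(\w)(\bar u,Q)$ is approximated by discrete boundary-value problems (a fundamental-estimate argument), and Proposition~\ref{separationofscales1} then thresholds $u_\e$ by a well-chosen level $t_\e^i$ obtained from a discrete coarea inequality, averaging over levels to find one that cuts few ``small'' edges; monotonicity \eqref{monotone} and the $\beta(l,k)$-asymptotics \eqref{limitatinf} then compare the energy of the two-valued competitor with $I_\e(\w)$ up to $(1+\theta)$. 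Your path-based alternative is not obviously wrong, but it is speculative as stated, and the averaging-over-translations step you gesture at would need to handle the fact that the target surface measure $s(x,\nu)$ is itself only defined through a discrete minimization formula (Remark~\ref{r.blowupformulas}). Note also that the integral representation a priori yields a surface integrand $\varphi(x,u^+-u^-,\nu)$ depending on the jump amplitude; showing it is constant in $a\neq 0$ (so that it equals $s(x,\nu)$) is part of Proposition~\ref{p.surfacedensityequal} and is not free, since it relies on $\beta(l,k)$ being attained regardless of the precise large value of the discrete difference.
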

\begin{remark}\label{r.blowupformulas}
Both Theorems \ref{ACGmain} and \ref{t.ACRmain} provide asymptotic formulas for the integrands $q$ and $s$, respectively. To state them, we need some notation. Given a set $A\in\Ard$ and $\delta>0$, we set
\begin{equation}\label{discretebdry}
\partial_{\delta}A:=\{x\in\R^d:\;\dist(x,\partial A)\leq \delta\}.
\end{equation} 
For a pointwise well-defined function $\bar{u}\in L^{\infty}_{\rm loc}(\R^d,\R^m)$, we define the set of discrete functions taking the boundary value $\bar{u}$ on $\partial_{\delta}A$ as
\begin{equation}\label{bdryclass}
\mathcal{PC}_{\e,\delta}^{\w}(\bar{u},A)=\{u\in\mathcal{PC}_{\e}^{\w}:\,u(\e x)=\bar{u}(\e x)\text{ for all }\e x\in\e\Lw\cap\partial_{\delta}A\}.
\end{equation}
Then, as shown in Step 1 of the proof of \cite[Theorem 2]{ACG2}, for a.e. $x_0\in D$ and every $\xi\in\mathbb{R}^{m\times d}$ it holds that
\begin{equation*}
q(x_0,\xi)=\lim_{\varrho\to 0}\varrho^{-d}\lim_{n\to+\infty}\left(\inf\left\{E_{\e_n}(\w)(v,Q_{e_1}(x_0,\varrho)):\,v\in\mathcal{PC}_{\e_n,M\e_n}^{\w}(\xi(\cdot-x_0),Q_{e_1}(x_0,\varrho))\right\}\right),
\end{equation*}	
where $M$ denotes the maximal range of interactions given by Definition \ref{defgoodedges}. 
\\
The formula we use for $s$ can be found at the beginning of the proof of \cite[Theorem 5.8]{BCR}. It states that for all $x_0\in D$ and $\nu\in S^1$ we have 
\begin{equation*}
s(x_0,\nu)=\limsup_{\varrho\to 0}\frac{1}{\varrho^{d-1}}\lim_{\delta\to 0}\limsup_{n\to+\infty}\left(\inf\left\{I_{\e_n}(\w)(v,Q_{\nu}(x_0,\varrho)):\,v\in\mathcal{PC}_{\e_n,\delta}^{\w}(u^{-e_1,e_1}_{x_0,\nu},Q_{e_1}(x_0,\varrho))\right\}\right),
\end{equation*}
where $u_{x_0,\nu}^{-e_1,e_1}$ is defined in \eqref{eq:purejump}. Note that the minimization problem defining $s$ is automatically restricted to functions with values in $\{\pm e_1\}$.
\end{remark}

\subsection{Stochastic homogenization}
The second main result uses the statistical properties of the lattice and the edges (more precisely only stationarity and ergodicity) in order to avoid passing to subsequences for the $\Gamma$-convergence result. It is a straightforward consequence of Theorem \ref{t.sepofscales} and the homogenization results proven in \cite{ACG2,ACR}.

\begin{theorem}\label{mainthm1}
Assume that $\mathcal{L}$ is a stationary and ergodic stochastic lattice with admissible stationary edges in the sense of Definitions \ref{defadmissible} and \ref{defgoodedges}. Then $\mathbb{P}$-a.s. the functionals $F_{\e}(\w)$ $\Gamma$-converge in the $L^1(D,\R^m)$-topology to a deterministic functional $F:L^1(D,\R^m)\to[0,+\infty]$ with domain $L^1(D,\R^m)\cap GSBV^{p}(D,\R^m)$, where it is given by
\begin{equation*}
F(u)=
\int_Dh(\nabla u)\dx+\int_{S_u}\varphi(\nu_u)\,\mathrm{d}\mathcal{H}^{d-1}
\end{equation*}
for some convex, $p$-homogeneous function $h$ and some convex, one-homogeneous function $\varphi$.
\end{theorem}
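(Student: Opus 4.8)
The plan is to obtain Theorem \ref{mainthm1} as a corollary of the structural result Theorem \ref{t.sepofscales} once the subsequences in Theorems \ref{ACGmain} and \ref{t.ACRmain} are removed by stochastic homogenization. First I would fix a set $\Omega_0\subset\Omega$ of full probability by intersecting three almost sure events: the event on which $\Lw$ is admissible with admissible edges in the sense of Definitions \ref{defadmissible} and \ref{defgoodedges}; the event, provided by the discrete-to-continuum stochastic homogenization theorem for convex energies with $p$-growth in \cite{ACG2}, on which the whole family $E_{\e}(\w)(\cdot,D)$ $\Gamma(L^p(D,\R^m))$-converges as $\e\to 0$ to a \emph{deterministic} functional; and the analogous event from \cite{ACR} on which $I_{\e}(\w)(\cdot,D)$ $\Gamma(L^p(D,\R^m))$-converges to a deterministic functional. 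That the random edge set $\mathcal{E}(\w)$ and the non-pairwise interactions are admissible for these homogenization results is exactly what the footnotes to Theorems \ref{ACGmain} and \ref{t.ACRmain} record, using \eqref{finiterange}, \eqref{nncontained} and \eqref{neighbours}; here we only need the case $A=D\in\Ard$, so no measurable selection or separability argument in the set variable $A$ is needed. A finite intersection of events of probability one has probability one.

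On $\Omega_0$ I would then read off the limits together with their qualitative properties. The homogenization theorem of \cite{ACG2} gives $\Gamma(L^p(D,\R^m))\text{-}\lim_{\e\to 0}E_{\e}(\w)(\cdot,D)=\int_D h(\nabla u)\dx$, finite only on $W^{1,p}(D,\R^m)$, with $h$ independent of $\w$ and given by the homogenized asymptotic formula obtained from the one in Remark \ref{r.blowupformulas} via the subadditive ergodic theorem; since every $E_{\e}(\w)(\cdot,D)$ is convex (a finite sum of convex functions composed with the linear maps $u\mapsto u(\e x)-u(\e y)$), even, and positively $p$-homogeneous, and since $\Gamma$-limits are lower semicontinuous and inherit midpoint convexity and positive $p$-homogeneity from recovery sequences, the density $h$ is convex, $p$-homogeneous and satisfies the growth bounds of Theorem \ref{ACGmain}. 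In the same way \cite{ACR} gives $\Gamma(L^p(D,\R^m))\text{-}\lim_{\e\to 0}I_{\e}(\w)(\cdot,D)=\int_{S_u}\varphi(\nu_u)\,\mathrm{d}\mathcal{H}^{d-1}$, finite only on $BV(D,\{\pm e_1\})$, with $\varphi$ independent of $\w$; as is standard for interfacial $\Gamma$-limits and shown there, $\varphi$ is (the restriction to $S^{d-1}$ of) a norm, hence the extension $\nu\mapsto|\nu|\,\varphi(\nu/|\nu|)$ is convex and positively one-homogeneous, and $\tfrac1C\le\varphi\le C$.

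Finally I would apply Theorem \ref{t.sepofscales} on $\Omega_0$: for an arbitrary sequence $\e_n\to 0$ the sequences $E_{\e_n}(\w)(\cdot,D)$ and $I_{\e_n}(\w)(\cdot,D)$ $\Gamma(L^p(D))$-converge, being subsequences of the $\Gamma$-convergent families above, so the ``if'' part of Theorem \ref{t.sepofscales} yields that $F_{\e_n}(\w)$ $\Gamma(L^1(D))$-converges to $F(\w)(u)=\int_D q(x,\nabla u)\dx+\int_{S_u}s(x,\nu_u)\,\mathrm{d}\mathcal{H}^{d-1}$, finite only on $GSBV^p(D,\R^m)$, with the integrands of Theorems \ref{ACGmain} and \ref{t.ACRmain}; by the previous step these are precisely $q(x,\xi)=h(\xi)$ and $s(x,\nu)=\varphi(\nu)$, which are $x$-independent and deterministic. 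Since every sequence $\e_n\to 0$ produces the same limit, the full family $F_{\e}(\w)$ $\Gamma(L^1(D,\R^m))$-converges to $F$ as $\e\to 0$ (the sequential characterization of $\Gamma$-convergence for a continuous parameter), with domain $L^1(D,\R^m)\cap GSBV^p(D,\R^m)$, which is the assertion. No step here is deep: all the substance sits in Theorem \ref{t.sepofscales} and in the cited homogenization theorems, and the only points requiring attention are the applicability of \cite{ACG2,ACR} to the present random, possibly non-pairwise, edge structure (covered by the footnotes above) and the short recovery-sequence arguments giving convexity and homogeneity of $h$ and $\varphi$.
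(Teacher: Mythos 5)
Your proposal is correct and follows essentially the same route as the paper: invoke the stochastic homogenization results of \cite{ACG2,ACR} (applied with $A=D$, admissibility of the edge structure covered by the footnotes) to pass from the subsequence statements of Theorems \ref{ACGmain} and \ref{t.ACRmain} to deterministic, spatially homogeneous $\Gamma$-limits for $E_{\e}(\w)$ and $I_{\e}(\w)$ on a common full-measure event, feed this into Theorem \ref{t.sepofscales}, and then transfer convexity and $p$- (resp. one-) homogeneity from the discrete energies to the integrands. The paper handles the last step by citing \cite[Theorem 11.1, Proposition 11.6]{DM} for $h$ and \cite[Theorem 3.1]{AmBrI} for the one-homogeneous extension of $\varphi$, whereas you argue via recovery sequences and $L^1$-lower semicontinuity directly, but the content is the same.
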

\begin{remark}\phantomsection\label{r.ondensities}
\begin{itemize}
	\item[(i)] By Theorem \ref{t.sepofscales} the functions $h$ and $\varphi$ are given by the formulas in Remark \ref{r.blowupformulas} for $q(0,\cdot)$ and $s(0,\cdot)$, respectively. However, we can take $\varrho=1$ in both formulas and in the formula for $s(0,\cdot)$ one can avoid the additional limit in $\delta$ by setting $\delta=2M\e$. Moreover, every remaining $\limsup$-expression can be replaced by a limit.
	\item[(ii)] The above $\Gamma$-convergence result also holds locally on each $A\in\Ard$ for the same set of $\w$.
	\item[(iii)] Theorem \ref{mainthm1} still holds if we drop the ergodicity assumption. Then the integrands are $\tau$-invariant, but possibly random. 
\end{itemize}
\end{remark}

\subsection{Convergence of minimizers in the stationary, ergodic setting}
Now we add a discrete fidelity term to our approximating functional $F_{\e}(\w)$, which will approximate the continuum fidelity term that penalizes the distance to the measured image. 

In order to define the discrete approximation, we consider a discrete measurement of a given continuum function. More precisely, throughout this section we fix an exponent $q>1$ and consider a sequence $g_{\e}(\w):\e\Lw\to\R^m$, for which we assume that there exists $g\in L^q(D,\R^m)$ such that $\mathbb{P}$-a.s.
\begin{equation}\label{approxassumption}
g_{\e}(\w)\to g \quad\text{in }L^q(D,\R^n).
\end{equation}
\begin{remark}\label{examplesequence}
For every given $g\in L^q(D,\R^m)$, we find a sequence with this approximation property by first extending $g=0$ on $\R^d\backslash D$ and then setting
\begin{equation*}
g_{\e}(\w)(\e x)=\frac{1}{|B_{\e}(\e x)|}\int_{ B_{\e}(\e x)}g(z)\,\mathrm{d}z.
\end{equation*}	
To see this, we may assume that $m=1$. It is a consequence of the (generalized) Lebesgue differentiation theorem (see \cite[Remark 1.160]{FoLe}) that $g_{\e}(\w)\to g$ a.e. on $D$. Since $g_{\e}$ is bounded in modulus by the maximal function of $g$ (which belongs itself to $L^q(\R^d)$), we deduce (\ref{approxassumption}) from dominated convergence.
\end{remark}

Given a sequence $g_{\e}(\w)$ satisfying \eqref{approxassumption}, we introduce $F_{\e,g}(\w):L^1(D,\R^m)\to [0,+\infty]$ defined by
\begin{equation}\label{eq:defF_e,g}
F_{\e,g}(\w)(u)=
\begin{cases}
\displaystyle F_{\e}(\w)(u)+\sum_{\e x\in\e\Lw\cap D}\e^d|u(\e x)-g_{\e}(\w)(\e x)|^q &\mbox{if $u\in\mathcal{PC}_{\e}^{\w}$,}\\
+\infty &\mbox{otherwise,}
\end{cases}
\end{equation}
where $F_{\e}(\w)$ is defined in (\ref{deffunctional}). Note that we chose a discrete fidelity term not depending on the measure of the Voronoi cells. The identification of the $\Gamma$-limit of $F_{\e,g}(\w)$ is contained in the following theorem.
\begin{theorem}\label{convfull}
Let $q\in(1,+\infty)$ and $g_{\e}(\w)$ satisfy (\ref{approxassumption}). Under the assumptions of Theorem \ref{mainthm1}, there exists a constant $\gamma>0$ such that $\mathbb{P}$-a.s. the functionals $F_{\e,g}(\w)$ defined in \eqref{eq:defF_e,g} $\Gamma$-converge with respect to the $L^1(D,\R^m)$-topology to the functional $F_g:L^1(D,\R^m)\to [0,+\infty]$ with domain $L^q(D,\R^m)\cap GSBV^p(D,\R^m)$, where it is defined by
\begin{equation*}
F_{g}(u)=\int_D h(\nabla u)\,\mathrm{d}x+\int_{S_u}\varphi(\nu_u)\,\mathrm{d}\mathcal{H}^{d-1}+\gamma\int_D|u-g|^q\,\mathrm{d}x
\end{equation*}
with the functions $h$ and $\varphi$ given by Theorem \ref{mainthm1}.
\\
Moreover, for each $\e>0$ there exists a global minimizer $\hat{u}_{\e}\in\mathcal{PC}_{\e}^{\w}$ of the functional $F_{\e,g}(\w)$ and if $u_{\e}\in\mathcal{PC}_{\e}^{\w}$ is any sequence such that
\begin{equation*}
\lim_{\e\to 0}\Big(F_{\e,g}(\w)(u_{\e})- \min_{u\in\mathcal{PC}_{\e}^{\w}}F_{\e,g}(\w)(u)\Big)=0,
\end{equation*}
then a.s. it is compact in $L^q(D,\R^m)$ and each cluster point as $\e\to 0$ is a global minimizer of $F_g$.
\end{theorem}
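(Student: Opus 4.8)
The starting point is Theorem~\ref{mainthm1}: $\mathbb{P}$-a.s.\ the functionals $F_{\e}(\w)$ already $\Gamma(L^1(D,\R^m))$-converge to $u\mapsto\int_D h(\nabla u)\dx+\int_{S_u}\varphi(\nu_u)\,\mathrm{d}\mathcal{H}^{d-1}$, finite on $L^1(D,\R^m)\cap GSBV^p(D,\R^m)$. Writing $G_{\e}(\w)(u)=\sum_{\e x\in\e\Lw\cap D}\e^d|u(\e x)-g_{\e}(\w)(\e x)|^q$ for the discrete fidelity term (set to $+\infty$ off $\mathcal{PC}_{\e}^{\w}$), so that $F_{\e,g}(\w)=F_{\e}(\w)+G_{\e}(\w)$, the plan is: (a) to show that $G_{\e}(\w)$ converges, along sequences of equibounded $F_{\e}(\w)$-energy, to the continuum fidelity $u\mapsto\gamma\int_D|u-g|^q\dx$ with $\gamma$ the a.s.\ intensity of the point process, and to deduce $F_{\e,g}(\w)\to F_g$ in the sense of $\Gamma$-convergence; (b) to obtain existence of discrete minimizers by the direct method; and (c) to prove $L^q(D,\R^m)$-equicoercivity of $F_{\e,g}(\w)$-bounded sequences, after which the statements on almost-minimizers follow from the fundamental theorem of $\Gamma$-convergence, noting that almost-minimizing sequences have bounded energy by comparison with a recovery sequence for the constant $0$ (for which $F_g(0)=\gamma\int_D|g|^q\dx<+\infty$ since $g\in L^q$).

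For (a), I would first record that, by Birkhoff's ergodic theorem applied to the stationary, ergodic point process, $\mathbb{P}$-a.s.\ the rescaled empirical measures $\nu_{\e}:=\sum_{\e x\in\e\Lw\cap D}\e^d\delta_{\e x}$ converge weakly-$*$ on $\overline{D}$ to $\gamma\,\mathds{1}_D\,\mathcal{L}^d$, where $\gamma=\mathbb{E}[\#(\Lw\cap[0,1)^d)]$; here $\gamma>0$ because property~(i) of Definition~\ref{defadmissible} forces at least $c(R)>0$ points per unit volume. For the $\Gamma$-$\liminf$ inequality one may assume $\liminf F_{\e,g}(\w)(u_{\e})<+\infty$ for $u_{\e}\to u$ in $L^1$, pass to a subsequence along which both $F_{\e}(\w)(u_{\e})$ and $G_{\e}(\w)(u_{\e})$ are bounded, invoke Theorem~\ref{mainthm1} for the first summand, and for the second combine $\nu_{\e}\overset{*}{\rightharpoonup}\gamma\,\mathds{1}_D\,\mathcal{L}^d$, the convergence $g_{\e}(\w)\to g$ in $L^q$, and $u_{\e}\to u$ in $L^1$ via a standard lower-semicontinuity argument for integral functionals with respect to weakly converging measures (convexity of $|\cdot|^q$ and Jensen's inequality over the Voronoi cells, whose diameters tend to $0$). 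For the recovery sequence, by Lemma~\ref{truncation} and a density argument it suffices to treat bounded $u$; one then uses the recovery sequence for $F$ from Theorem~\ref{mainthm1} --- which may be chosen with nodal values a bounded precise representative of $u$, hence equibounded --- and checks $G_{\e}(\w)(u_{\e})\to\gamma\int_D|u-g|^q\dx$ using the weak-$*$ convergence of $\nu_{\e}$ and uniform integrability, closing with a diagonal argument in the truncation parameter. Since $G_{\e}(\w)$ thus acts as a continuously converging perturbation along sequences of equibounded $F_{\e}(\w)$-energy, the $\Gamma$-limit of $F_{\e}(\w)+G_{\e}(\w)$ is $F+\gamma\int_D|\cdot-g|^q\dx=F_g$.

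Step (b) is elementary: for fixed $\e>0$ the value $F_{\e,g}(\w)(u)$ depends only on the finitely many nodal values $\{u(\e x):\e x\in\e\Lw\cap D\}$ (finiteness because $\Lw$ is $r$-separated and $D$ is bounded); on this finite-dimensional space $F_{\e,g}(\w)$ is lower semicontinuous by \eqref{flsc} and the continuity of $t\mapsto|t|^q$, and coercive because the fidelity term alone dominates $c_{\e}|u|^q-C_{\e}$. The direct method then yields a minimizer, which we realize as an element of $\mathcal{PC}_{\e}^{\w}\subset L^1(D,\R^m)$ by assigning arbitrary values at the remaining nodes.

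For (c), the fidelity term is exactly what restores compactness in $L^q$ (this is the content of Lemma~\ref{compact}, and reflects the observation, recorded after the statement of the main approximation result, that $L^q$ is not the compactness one would expect from finite-energy sequences). A bound $F_{\e,g}(\w)(u_{\e})\le C$ controls the discrete Mumford-Shah energy, which --- after the truncation of Lemma~\ref{truncation} and comparison with a $GSBV^p$-interpolant --- yields uniform $BV$-bounds on the truncations $T_k u_{\e}$ and hence precompactness in measure and in $L^1_{\mathrm{loc}}(D,\R^m)$; simultaneously the bound $\int|u_{\e}-g_{\e}(\w)|^q\,\mathrm{d}\nu_{\e}\le C$, together with $g_{\e}(\w)\to g$ in $L^q$ and $\nu_{\e}\overset{*}{\rightharpoonup}\gamma\,\mathds{1}_D\,\mathcal{L}^d$, rules out escape of mass and upgrades the convergence to strong $L^q(D,\R^m)$-precompactness. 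Granting this, the fundamental theorem of $\Gamma$-convergence gives that every almost-minimizing sequence is precompact in $L^q$ and that each of its cluster points minimizes $F_g$. I expect the $L^q$-equicoercivity of step (c) to be the main obstacle: one has to reconcile the merely weak $L^1$/measure compactness intrinsic to Mumford-Shah-type energies with $L^q$-control available only against the fluctuating discrete measures $\nu_{\e}$ rather than against Lebesgue measure, and to carry out the interpolation and truncation so that the discrete energy genuinely controls a $GSBV^p$-quantity on a lattice whose Voronoi cells may be very irregular.
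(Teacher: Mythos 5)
Your decomposition $F_{\e,g}(\w)=F_{\e}(\w)+G_{\e}(\w)$, the ergodic-theorem construction of $\gamma$, and the reduction to separate lower/upper bounds all track the paper's proof at the level of strategy. One cosmetic but meaningful difference: rather than the empirical measures $\nu_{\e}=\sum_{\e x}\e^d\delta_{\e x}$, the paper introduces the Voronoi-weighted densities $\gamma_{\e}(\w)=\sum_{x\in\Lw}|\mathcal{C}(x)|^{-1}\mathds{1}_{\e\mathcal{C}(x)}$, proves $\gamma_{\e}(\w)\overset{*}{\rightharpoonup}\gamma$ in $L^{\infty}(D)$ via the multiparameter ergodic theorem, and then writes the fidelity term as a \emph{Lebesgue} integral $\int_D\gamma_{\e}(\w)|u_{\e}-g_{\e}(\w)|^q\,\dx$ (up to boundary corrections). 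This matters: the lower bound then follows from a standard weak--strong lower semicontinuity theorem for integrands against weights converging weakly-$*$ in $L^{\infty}$, whereas your version asks for lower semicontinuity of $\int|u_{\e}-g_{\e}|^q\,\mathrm{d}\nu_{\e}$ against weakly-$*$ converging \emph{measures} with a piecewise constant integrand, which is not one of the off-the-shelf results and would, once made precise, essentially reconstruct the $\gamma_{\e}$ device. Your existence-of-minimizers argument matches the paper.

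The genuine gap is in step (c). You claim that the bound $\int|u_{\e}-g_{\e}(\w)|^q\,\mathrm{d}\nu_{\e}\le C$, together with $g_{\e}(\w)\to g$ in $L^q$ and $\nu_{\e}\overset{*}{\rightharpoonup}\gamma\,\mathds{1}_D\,\mathcal{L}^d$, ``upgrades the convergence to strong $L^q(D,\R^m)$-precompactness.'' This is false: an $L^q$-bound combined with $L^1$-convergence only yields weak $L^q$-convergence and strong $L^r$-convergence for $r<q$ (e.g.\ $u_n=u+n^{1/q}\mathds{1}_{(0,1/n)}$ is $L^q$-bounded and $L^1$-convergent but not strongly $L^q$-precompact). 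The paper's argument crucially uses that the sequence is \emph{almost-minimizing} — as remark (v) of the introduction emphasizes, this is not the natural compactness for merely energy-bounded sequences. Concretely: an almost-minimizing sequence, after passing to an $L^1$-convergent subsequence with limit $u$, is automatically a recovery sequence for $u$; by comparing with the separate $\liminf$ bounds one infers that the fidelity energy values actually converge, i.e.\ $\int_D\gamma_{\e}(\w)|u_{\e}-g_{\e}(\w)|^q\to\gamma\int_D|u-g|^q$. The paper then invokes biting convergence, a lemma on nonnegative sequences whose integrals converge to the integral of the biting limit (\cite[Proposition 2.67]{FoLe}) to get equiintegrability of $|u_{\e}-g_{\e}(\w)|^q$, hence $\|u_{\e}-g_{\e}(\w)\|_{L^q}\to\|u-g\|_{L^q}$, and finally uniform convexity of $L^q$ to upgrade weak to strong $L^q$-convergence. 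You flag $L^q$-equicoercivity as the main obstacle, but the mechanism you offer to overcome it does not work; the correct one is entirely about exploiting the near-minimality, not about ruling out escape of mass.
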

\begin{remark}
Up to a further subsequence, the statement of Theorem \ref{convfull} remains valid in the non-stationary setting of Theorem \ref{t.sepofscales}. However, the $\Gamma$-limit contains a heterogeneous fidelity term of the form $\int_D\gamma(x)|u-g|^q\dx$ for some positive function $\gamma\in L^{\infty}(D)$ such that also $1/\gamma\in L^{\infty}(D)$. In the case of a stationary, but non-ergodic group action the function $\gamma$ does not depend on $x$, but might be random.
\end{remark}
\section{Applications and the main result}\label{s.applications}
We postpone the technical proofs of the results of Section \ref{s.presentation} to the last two sections. First we use them to derive the $\Gamma$-limit of the functionals in \eqref{fd:intro} and prove our main result, that is, the approximation of the Mumford-Shah functional announced in the introduction.

\subsection{The \texorpdfstring{$\Gamma$}{}-limit for forward differences on \texorpdfstring{$\mathbb{Z}^2$}{}}
Our aim is to analyze the asymptotic behavior of the discrete functional proposed in \cite{StCr}. It is based on the square lattice $\mathcal{L}=\mathbb{Z}^2$ with edges that yield standard forward differences, that is, $\mathcal{E}_{FD}=\{(x,x+e_i):\,x\in\mathbb{Z}^2,\,i=1,2\}$, where $\{e_i\}_{i=1,2}$ denotes the standard basis of $\R^2$. The discrete approximation is then defined for functions $u:\e\mathbb{Z}^2\to\R^m$ and after rescaling and dropping the fidelity term it reads as
\begin{equation}\label{defforwarddiff}
\tilde{F}_{\e}(u,A)=\sum_{\e x\in\e \mathbb{Z}^2\cap A}\e\min\{\alpha\e^{-1} (|u(\e x+\e e_1)-u(\e x)|^2+|u(\e x+\e e_2)-u(\e x)|^2),1\}.
\end{equation}
The set of edges satisfies (\ref{nncontained}) as the Voronoi neighbors are given by all $x,y\in\mathbb{Z}^2$ such that $|x-y|=1$. Moreover $\tilde{F}_{\e}$ has the required structure to apply our results which can be seen by setting $f(\mathfrak{p})=\min\{\alpha\|\mathfrak{p}\|_1,1\}$. In \cite{StCr} Cremers and Strekalovskiy conjectured that $\tilde{F}_{\e}$ approximates the Mumford-Shah functional. With the results of Section \ref{s.presentation} we can identify the $\Gamma$-limit of $\tilde{F}_{\e}$, which differs from the Mumford-Shah functional due to an anisotropic surface integral.
\begin{corollary}\label{notMS}
The functionals $\tilde{F}_{\e}$ defined in (\ref{defforwarddiff}) $\Gamma$-converge with respect to the $L^1(D,\R^m)$-topology to the functional $F_0:L^1(D,\R^m)$ with domain $L^1(D,\R^m)\cap GSBV^2(D,\R^m)$, where it is given by
\begin{equation*}
F_0(u)=\alpha\int_D|\nabla u|^2+\int_{S_u}\varphi_0(\nu)\,\mathrm{d}\mathcal{H}^1,
\end{equation*} 
with the function $\varphi_0:\R^2\to [0,+\infty)$ defined by
\begin{equation*}
\varphi_0(\nu)=\begin{cases}
|\nu_1|+|\nu_2| &\mbox{if $\nu_1\cdot\nu_2< 0$,}\\
\max\{|\nu_1|,|\nu_2|\} &\mbox{if $\nu_1\cdot\nu_2\geq 0$.}
\end{cases}
\end{equation*}
\end{corollary}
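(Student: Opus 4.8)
The plan is to recognise the functionals $\tilde{F}_\e$ as a deterministic instance of the family $F_\e(\w)$ and to invoke Theorem \ref{mainthm1} together with the cell formulas recalled in Remarks \ref{r.blowupformulas} and \ref{r.ondensities}, so that the whole problem reduces to two explicit computations on $\mathbb{Z}^2$ with forward differences. First I would check the structural hypotheses: $\mathbb{Z}^2$ is an admissible lattice (take $r$ slightly below $1$ and $R$ slightly above $\sqrt{2}/2$) and, being periodic, is a trivial stationary and ergodic stochastic lattice; its Voronoi cells are unit squares sharing a facet exactly when the nuclei differ by a $\pm e_i$, so $\mathcal{N}=\{(x,x\pm e_i)\}$ and \eqref{nncontained} holds for $\mathcal{E}_{FD}$, which is moreover finite range and stationary. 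Writing $\tilde F_\e$ in the form of $F_\e(\w)$ corresponds to $p=2$ and $f(\mathfrak p)=\min\{\alpha\|\mathfrak p\|_1,1\}$, and this $f$ satisfies \eqref{linearatzero} with constant $\alpha$, is monotone and lower semicontinuous. Hence by Theorem \ref{mainthm1} the $\Gamma$-limit $F_0$ exists, is finite precisely on $L^1\cap GSBV^2$, and there equals $\int_D h(\nabla u)\dx+\int_{S_u}\varphi(\nu_u)\,\mathrm{d}\mathcal{H}^1$; by Theorem \ref{t.sepofscales} and Remark \ref{r.ondensities}, $h=q(0,\cdot)$ is the density of the $\Gamma$-limit of the convex functionals $E_\e(u,A)=\alpha\sum_{(x,y)\in\mathcal{E}_{FD},\,\e x,\e y\in A}\e^2|(u(\e x)-u(\e y))/\e|^2$, and $\varphi=s(0,\cdot)$ is the surface density of the $\Gamma$-limit of the Ising functionals $I_\e$, both computable by the formulas of Remark \ref{r.blowupformulas} with $\varrho=1$ (and $\delta=2M\e$ for $s$).

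For the bulk density I would test the cell formula for $q(\xi)$ on the cube $Q_{e_1}$ with the affine competitor $\xi\,\cdot$, which already meets the boundary condition and for which $|(u(\e x+\e e_i)-u(\e x))/\e|^2=|\xi e_i|^2$; summing over $\e\mathbb{Z}^2\cap Q_{e_1}$ gives $q(\xi)\le\alpha(|\xi e_1|^2+|\xi e_2|^2)=\alpha|\xi|^2$. The matching lower bound is the classical fact that the linearised nearest-neighbour energy on the square lattice with the two orthogonal differences homogenises to the isotropic Dirichlet energy (passing to the piecewise affine interpolant on the associated triangulation and using weak lower semicontinuity of $w\mapsto\alpha\int|\nabla w|^2$, or simply convexity — no corrector is needed). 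Hence $h(\xi)=\alpha|\xi|^2$, the bulk part of $F_0$.

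For the surface density, observe first that with $f(\mathfrak p)=\min\{\alpha\|\mathfrak p\|_1,1\}$ one has $\beta(\ell,k)=1$ for every $\ell\ge1$ (and $\beta(0,k)=0$), so that on functions valued in $\{\pm e_1\}$ the Ising functional reduces to a count of non-monochromatic forward stencils,
\begin{equation*}
I_\e(v,A)=\e\cdot\#\big\{\e x\in\e\mathbb{Z}^2\cap A:\ v(\e x)\neq v(\e x+\e e_1)\ \text{or}\ v(\e x)\neq v(\e x+\e e_2),\ \e x+\e e_i\in A\big\}.
\end{equation*}
For the upper bound in the formula for $s(0,\nu)$ I would take (a small translate of) the competitor $u^{-e_1,e_1}_{0,\nu}$ and count the sites of $\e\mathbb{Z}^2\cap Q_\nu$ whose forward L-stencil $\{\e x,\e x+\e e_1,\e x+\e e_2\}$ straddles the hyperplane $\{\langle\cdot,\nu\rangle=0\}$. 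When $\nu_1\nu_2\ge0$ both forward steps cross the hyperplane in the same sense, so these sites fill a single slab of width $\e\max\{|\nu_1|,|\nu_2|\}$; when $\nu_1\nu_2<0$ the two steps cross in opposite senses and one gets two disjoint slabs of widths $\e|\nu_1|$ and $\e|\nu_2|$. Counting lattice points in the slab(s) meeting $Q_\nu$ then yields $s(0,\nu)\le\varphi_0(\nu)$; by the coordinate-exchange symmetry of $\mathcal{E}_{FD}$ and the invariance of $I_\e$ under $v\mapsto-v$ it suffices to consider $\nu_1\nu_2\ge0$ and $\nu_1>0>\nu_2$.

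The lower bound, and the reason the dichotomy in $\varphi_0$ appears, is the main obstacle. Slicing in the directions $e_1$ and $e_2$ shows $I_\e$ dominates from below the two directional perimeters, hence $s(0,\nu)\ge\max\{|\nu_1|,|\nu_2|\}$ in all cases, which already closes the gap for $\nu_1\nu_2\ge0$. For $\nu_1>0>\nu_2$ I would argue with $B=\{v=e_1\}$: every paying site in $B$ has a rightward or an upward exit from $B$, and every paying site in $B^c$ has a rightward or an upward entry into $B$, so the energy is at least the number of sites of $B$ with a rightward exit plus the number of sites of $B^c$ with an upward entry — two \emph{disjoint} families. Since $\nu_1>0$, the boundary datum forces on each of the $\approx|\nu_1|/\e$ rows met by the interface inside $Q_\nu$ at least one rightward exit; since $\nu_2<0$, it forces on each of the $\approx|\nu_2|/\e$ columns met by the interface at least one upward entry. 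Adding the two disjoint contributions gives $s(0,\nu)\ge|\nu_1|+|\nu_2|$ and completes the identification $\varphi=\varphi_0$. The delicate point is exactly this sign bookkeeping: when instead $\nu_1\nu_2>0$ the boundary datum produces upward \emph{exits} rather than entries, both forced families then lie in $B$ and cannot be added, and only $\max\{|\nu_1|,|\nu_2|\}$ survives. It remains to make the lattice-point counts in thin slabs and the "at least one exit per straddling row/column" assertions quantitative with $o(1/\e)$ errors, which is routine given the admissibility bounds on $\mathbb{Z}^2$.
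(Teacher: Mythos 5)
Your proposal is correct, and the global structure — recognising $\tilde F_\e$ as a deterministic instance of $F_\e(\w)$, invoking Theorem \ref{mainthm1} with the blow-up formulas of Remarks \ref{r.blowupformulas} and \ref{r.ondensities}, identifying $h(\xi)=\alpha|\xi|^2$ via the classical nearest-neighbour Dirichlet energy, and obtaining the upper bound $s(0,\nu)\le\varphi_0(\nu)$ by testing the (discretised) pure-jump competitor — matches the paper. Where you genuinely diverge is in (a) the upper bound for $\nu_1\nu_2<0$, where the paper instead compares $I_\e$ with the standard $\ell^1$-Ising energy $\tfrac14\sum\e|u(\e x)-u(\e y)|$ and cites its known $\Gamma$-limit $\int_{S_u}|\nu|_1\,\mathrm d\mathcal H^1$, while you count the paying sites of the naive competitor directly inside two disjoint slabs (both are fine and essentially equivalent); and more substantially in (b) the lower bound $s(0,\nu)\ge\varphi_0(\nu)$. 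The paper passes to the continuum: it builds $\tilde v\in BV(Q_\nu,\{\pm e_1\})$ from an arbitrary competitor by cutting each "both-differences-nonzero'' unit cell along its anti-diagonal, checks $I_\e(v,Q_\nu)\ge\int_{S_{\tilde v}\cap Q_\e}\varphi_0(\nu_{\tilde v})\,\mathrm d\mathcal H^1-C\e$, and then invokes $BV$-ellipticity (convexity) of $\varphi_0$. You instead stay fully discrete and run a slicing/parity count: coordinate slicing gives $\ge\max\{|\nu_1|,|\nu_2|\}$ in all cases, and for $\nu_1>0>\nu_2$ you add the disjoint families of rightward-exits-from-$B$ (one forced per row straddling the interface, $\approx|\nu_1|/\e$ rows) and upward-entries-into-$B$ (one forced per straddling column, $\approx|\nu_2|/\e$ columns), which lie respectively in $B$ and $B^c$ and hence never double-count a paying site. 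Your argument is more elementary and combinatorial, and it nicely explains the sign dichotomy in $\varphi_0$; the paper's is more systematic in that a single geometric construction plus convexity covers all $\nu$ at once without case analysis. Both are valid; your remaining quantitative steps (lattice-point counts in $O(\e)$-slabs, boundary-layer corrections of size $O(1)$ sites) are indeed routine on $\e\mathbb Z^2$.
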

\begin{proof}
As outlined above, we can apply Theorem \ref{mainthm1} to the sequence $\tilde{F}_{\e}$. Moreover, from Theorem \ref{t.sepofscales} we deduce that the function $h$ in Theorem \ref{mainthm1} is given by the density of the $\Gamma$-limit of the sequence $E_{\e}$ defined in (\ref{auxelastic}). With our choice of $f$ and $\mathcal{E}_{FD}$ the functional $E_{\e}$ takes the form
\begin{equation*}
E_{\e}(u)=\alpha\sum_{\substack{\e x,\e y\in \e\mathbb{Z}^2\cap D\\ |x-y|=1}}\frac{\e^2}{2}\Big|\frac{u(\e x)-u(\e y)}{\e}\Big|^2.
\end{equation*}
In this case \cite[Remark 5.3]{AC} yields that
\begin{equation*}
\Gamma(L^p(D))\hbox{-}\lim_{\e\to 0}E_{\e}(u)=\alpha\int_D|\nabla u|^2\,\mathrm{d}x
\end{equation*}
for all $u\in W^{1,2}(D,\R^m)$, so that $h(\xi) = \alpha|\xi|^2$ and it remains to identify the surface integrand $\varphi$ in Theorem \ref{mainthm1} as $\varphi_0$. By convexity, the function $\varphi$ is continuous, so it suffices to treat the case $\nu_1\cdot\nu_2\neq 0$. Note that for forward differences the functional $I_{\e}$ defined in (\ref{Ising}) is given by
\begin{equation*}
I_{\e}(u,A) = \frac{1}{2}\sum_{\e x\in\e\mathbb{Z}^2\cap A}\e \max\{|u(\e x+\e e_i)-u(\e x)|:\,i\in\{1,2\}\text{ with }\e x+\e e_i\in A\},
\end{equation*}
where $u:\e\mathbb{Z}^2\to\{\pm e_1\}$. From Remarks \ref{r.blowupformulas} and \ref{r.ondensities} we know that $\varphi$ is given by the formula
\begin{equation}\label{formulag0}
\varphi(\nu)=\lim_{\e\to 0}\left(\inf\left\{I_{\e}(v,Q_{\nu}(0,1)):\;v\in\mathcal{PC}_{\e,4\e}(u^{-e_1,e_1}_{0,\nu},Q_{\nu}(0,1))\right\}\right).
\end{equation} 
Consider for fixed $\e<<1$ any function $v\in \mathcal{PC}_{\e,4\e}(u^{-e_1,e_1}_{0,\nu},Q_{\nu}(0,1))$ with values in $\{\pm e_1\}$. We locally construct a function $\tilde{v}\in BV(Q_{\nu}(0,1),\{\pm e_1\})$ as follows: on $Q_{e_1}(\e x,\e)$ with $x\in\mathbb{Z}^2$ such that $Q_{e_1}(\e x,\e)\subset Q_{\nu}(0,t)$ we set
\begin{equation*}
\tilde{v}(y)=
\begin{cases}
v(\e x) &\mbox{if $\prod_{i=1}^2|v(\e x+\e e_i)-v(\e x)|=0$,}
\\
v(\e x) &\mbox{if $\prod_{i=1}^2|v(\e x+\e e_i)-v(\e x)|\neq 0$ and $\langle y-\e x,e_1+e_2\rangle\leq 0$,}
\\
v(\e x+\e e_1) &\mbox{if $\prod_{i=1}^2|v(\e x+\e e_i)-v(\e x)|\neq 0$ and $\langle y-\e x,e_1+e_2\rangle> 0$,}
\end{cases}
\end{equation*}
while we define $\tilde{v}=u_{0,\nu}^{-e_1,e_1}$ on all cubes $Q_{e_1}(\e x,\e)$ with $Q_{e_1}(\e x,\e)\cap \partial Q_{\nu}(0,1)\neq\emptyset$. The latter implies $\tilde{v}=u_{0,\nu}^{-e_1,e_1}$ on $\partial Q_{\nu}(0,1)$ in the sense of traces. Moreover we modified the jump set away from the boundary in such a way that it contains diagonal lines of length $\sqrt{2}$ instead of corners formed by the upper and the right hand side of a cube. Setting $Q_{\e}:=\{y\in Q_{\nu}(0,1):\,{\rm dist}(y,\partial Q_{\nu}(0,1))>2\e\}$, the above construction and the boundary conditions for $v$ imply
\begin{equation}\label{lbg0}
I_{\e}(v,Q_{\nu}(0,1))\geq \int_{S_{\tilde{v}}\cap Q_{\e}}\varphi_0(\nu_{\tilde{v}})\,\mathrm{d}\mathcal{H}^1\geq \int_{S_{\tilde{v}}\cap Q_{\nu}(0,1)}\varphi_0(\nu_{\tilde{v}})\,\mathrm{d}\mathcal{H}^1-C\e.
\end{equation}
Observe that $\varphi_0$ is convex. Hence the functional on the right hand side is $BV$-elliptic in the sense that
\begin{equation*}
\int_{S_{\tilde{v}}\cap Q_{\nu}(0,1)}\varphi_0(\nu_{\tilde{v}})\,\mathrm{d}\mathcal{H}^1\geq \varphi_0(\nu)
\end{equation*}
for all $\tilde{v}\in BV(Q_{\nu}(0,1),\{\pm e_1\})$ such that $\tilde{v}=u_{0,\nu}^{-e_1,e_1}$ on $\partial Q_{\nu}(0,1)$ in the sense of traces (see \cite{AmBrII} for more details). Since $v\in\mathcal{PC}_{\e,4\e}(u^{-e_1,e_1}_{0,\nu},Q_{\nu}(0,1))$ was arbitrary, we conclude from (\ref{formulag0}) and (\ref{lbg0}) that $\varphi(\nu)\geq \varphi_0(\nu)$.
	
In order to prove the reverse inequality, first note that
\begin{equation}\label{minsumbound}
I_{\e}(u,A)\leq\frac{1}{4}\sum_{\substack{\e x,\e y\in\e\mathbb{Z}^2\cap A\\ |x-y|=1}} \e|u(\e x)-u(\e y)|.
\end{equation}
The $\Gamma$-limit of the right hand side of (\ref{minsumbound}) is well-known. It is finite only on $BV(D,\{\pm e_1\})$ and given by $\int_{S_u}|\nu|_1\,\mathrm{d}\mathcal{H}^1$ (see \cite{ABC}). By comparison we obtain $\varphi(\nu)\leq |\nu_1|+|\nu_2|$. This finishes the proof in the case $\nu_1\cdot\nu_2<0$. If $\nu_1\cdot\nu_2>0$, denote by $i_0$ the index such that $|\nu_{i_0}|=\max\{|\nu_1|,\nu_2|\}$ and set $i_1=\{1,2\}\backslash\{i_0\}$. We define a candidate for the minimum problem in (\ref{formulag0}) setting $u_{\nu}(\e x)=u^{-e_1,e_1}_{0,\nu}(\e x)$ for all $x\in\mathbb{Z}^2$. By definition it satisfies the correct boundary conditions. A straightforward analysis shows that for any $x\in\mathbb{Z}^2$ with $u_{\nu}(\e x)\neq u_{\nu}(\e x+\e e_{i_1})$ we have $u_{\nu}(\e x)\neq u_{\nu}(\e x+\e e_{i_0})$. Thus it suffices to count just the interactions along the direction $e_{i_0}$. Those can be bounded by $\e^{-1}|\nu_{i_0}|+C$, so that 
\begin{equation*}
I_{\e}(u_{\nu},Q_{\nu}(0,1))\leq |\nu_{i_0}|+C\e=\max\{|\nu_1|,|\nu_2|\}+C\e.
\end{equation*}
From (\ref{formulag0}) we conclude that $\varphi(\nu)\leq \varphi_0(\nu)$ which finishes the proof.
\end{proof}
\begin{remark}\label{higherdim}
For the $d$-dimensional version of $\tilde{F}_{\e}$ (defined in the introduction), one still has the existence of the $\Gamma$-limit with an anisotropic surface integrand. To see the latter, one first shows that $\varphi(e_1)=1$. Then for the vector $\nu_0=(\frac{1}{\sqrt{2}},\frac{1}{\sqrt{2}},0)$ the discretization of $u_{0,\nu_0}^{-e_1,e_1}$ yields an upper bound $\varphi(\nu_0)\leq \frac{1}{\sqrt{2}}$ (actually equality holds). The precise $\Gamma$-limit in higher dimensions is beyond the scope of this paper.
\end{remark}
\subsection{Approximations of the Mumford-Shah functional}\label{subsec:MS}
Now we use our general results to provide a discretization of the vector-valued Mumford-Shah functional. To this end, we need to take our parameters $p=q=2$. However, it might be of interest to obtain also other exponents for the fidelity term and therefore we consider the general case $q>1$ and just fix $p=2$ to focus on the isotropy issue. We suggest to take as stochastic lattice the so-called {\it random parking process}. For the precise geometric construction of this process by suitably choosing projected points of a homogeneous Poisson point process in dimension $d+1$, we refer the reader to the two papers \cite{Pe,glpe}. Here we recall that the random parking process defines a stochastic lattice $\mathcal{L}_{RP}$ that is admissible, stationary, ergodic and, most important for our applications, isotropic in the sense of Definition \ref{defstatiolattice}. Moreover, we can choose for instance $\mathcal{E}(\w)=\mathcal{N}(\w)$ to obtain stationary and isotropic edges (see also Remark \ref{r.measurability} for other possible choices). We prove our result for general stochastic lattices satisfying these assumptions. Note that the following theorem covers in particular the two functionals presented in the introduction.
\begin{theorem}\label{MSapprox}
	Fix $p=2$ and let $q\in(1,+\infty)$ and $g_{\e}(\w)$ satisfy (\ref{approxassumption}). Assume that $\mathcal{L}$ is an admissible stochastic lattice that is stationary, ergodic and isotropic with admissible stationary and isotropic edges. Then there exist constants $c_1,c_2,c_3>0$ such that $\mathbb{P}$-a.s. the functionals $F_{\e,g}(\w)$ defined in \eqref{eq:defF_e,g} $\Gamma$-converge with respect to the $L^1(D,\R^m)$-topology to the functional $F_g:L^1(D,\R^m)\to [0,+\infty]$ with domain $L^q(D,\R^m)\cap GSBV^2(D,\R^m)$, where it is defined by
	\begin{equation*}
	F_{g}(u)=c_1\int_D |\nabla u|^2\,\mathrm{d}x+c_2\mathcal{H}^{d-1}(S_u)+c_3\int_D|u-g|^q\,\mathrm{d}x.
	\end{equation*}
\end{theorem}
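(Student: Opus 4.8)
The plan is to deduce Theorem \ref{MSapprox} from Theorem \ref{convfull} by upgrading the limit densities $h$ and $\varphi$ to isotropic ones, using the additional isotropy assumption on $\mathcal{L}$ and $\mathcal{E}$. First I would apply Theorem \ref{convfull} with $p=2$: this already gives $\mathbb{P}$-a.s.\ $\Gamma$-convergence of $F_{\e,g}(\w)$ in the $L^1(D,\R^m)$-topology to a functional of the form $\int_D h(\nabla u)\dx+\int_{S_u}\varphi(\nu_u)\,\mathrm{d}\mathcal{H}^{d-1}+\gamma\int_D|u-g|^q\dx$, with $h$ convex and $2$-homogeneous and $\varphi$ convex and one-homogeneous. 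Moreover the convergence of (almost) minimizers is already contained in that theorem, so nothing new has to be proven on that front; only the shape of $h$ and $\varphi$ needs to be pinned down. The goal is then to show $h(\xi)=c_1|\xi|^2$ and $\varphi(\nu)=c_2$, i.e.\ both densities are isotropic, so that the limit takes the announced Mumford-Shah form with $c_3=\gamma$.

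Next I would exploit the rotational invariance. Fix $R\in SO(d)$. By Definition \ref{defstatiolattice} there is a measure preserving $\tau'_R$ with $\mathcal{L}\circ\tau'_R=R\mathcal{L}$, and by Definition \ref{defgoodedges} the edges satisfy $\mathcal{E}\circ\tau'_R=R\mathcal{E}$. Substituting $\w\mapsto\tau'_R(\w)$ in the energies $E_\e(\w)$ and $I_\e(\w)$ and changing variables $x\mapsto R^{-1}x$ (using that $\varepsilon R\mathcal{L}(\w)=R(\varepsilon\mathcal{L}(\w))$ and $|R\xi|=|\xi|$), one sees that $E_\e(\tau'_R\w)(u\circ R^{-1},RA)=E_\e(\w)(u,A)$ for the elastic energies and similarly $I_\e(\tau'_R\w)(v\circ R^{-1},RA)=I_\e(\w)(v,A)$ for the Ising energies (here one must be slightly careful that a rotation maps Voronoi cells to Voronoi cells, which holds since the Voronoi construction is rotation-equivariant). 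Since by Theorem \ref{mainthm1} (and Remark \ref{r.ondensities}) the $\Gamma$-limit densities $h$ and $\varphi$ are deterministic, they do not change under the measure-preserving map $\tau'_R$; passing to the $\Gamma$-limit in the above identities and using the blow-up formulas of Remark \ref{r.blowupformulas} with $\varrho=1$, one obtains $h(R\xi)=h(\xi)$ for all $\xi\in\R^{m\times d}$ and all $R\in SO(d)$ acting on the $\R^d$-factor, and $\varphi(R\nu)=\varphi(\nu)$ for all $\nu\in S^{d-1}$. A one-homogeneous function on $\R^d$ that is $SO(d)$-invariant is of the form $c_2|\nu|$, so $\varphi(\nu_u)=c_2$ on $S_u$, with $c_2=\varphi(e_1)>0$ by the lower bound in Theorem \ref{t.ACRmain}. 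For $h$, a $2$-homogeneous function that is invariant under $\xi\mapsto \xi R$ for all $R\in SO(d)$ must depend only on $\xi\xi^T$; combined with convexity and the $p$-growth bounds from Theorem \ref{ACGmain} one concludes $h(\xi)=c_1|\xi|^2$ with $c_1>0$ (alternatively, for the scalar case $m=1$ a rotation-invariant quadratic form on $\R^d$ is automatically a multiple of the identity, and for $m>1$ one applies the same reasoning row by row together with the symmetry between rows).

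I expect the main obstacle to be the careful justification that the isotropy of $\mathcal{L}$ and $\mathcal{E}$ indeed transfers to exact equalities for the finite-$\varepsilon$ minimization problems appearing in the blow-up formulas, rather than merely up to boundary errors: one has to check that a rotation sends the class $\mathcal{PC}_{\e,\delta}^{\w}(\bar u,A)$ to $\mathcal{PC}_{\e,\delta}^{\tau'_R\w}(\bar u\circ R^{-1},RA)$, i.e.\ that the discrete boundary-value constraint on $\partial_\delta A$ is rotation-compatible, and that the piecewise-constant-on-Voronoi-cells structure is preserved. This is essentially bookkeeping once one notes that the whole construction (Voronoi cells, the set $\mathcal{N}(\w)$, the boundary layers $\partial_\delta A$) commutes with orthogonal maps, but it must be done with some care because $\tau'_R$ need not interact nicely with the $\mathbb{Z}^d$-action $\tau_z$ used for stationarity. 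A clean way around this is to argue directly on the deterministic densities: since $h$ and $\varphi$ are deterministic, it suffices to show the \emph{distributional} identity $\mathcal{L}\stackrel{d}{=}R\mathcal{L}$, $\mathcal{E}\stackrel{d}{=}R\mathcal{E}$, combined with the fact that the $\Gamma$-limit, being deterministic, must coincide for $\mathcal{L}$ and for $R\mathcal{L}$; and for $R\mathcal{L}$ the densities are manifestly the $R$-rotated versions $h(R^{-1}\cdot)$ and $\varphi(R^{-1}\cdot)$ of those for $\mathcal{L}$. Either route yields the full isotropy of $h$ and $\varphi$, and the theorem follows with $c_1=h(e_1\otimes e_1)$ (or the appropriate normalization), $c_2=\varphi(e_1)$, and $c_3=\gamma$.
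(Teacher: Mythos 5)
Your overall strategy matches the paper's: reduce to Theorem \ref{convfull} and then show $h(\xi)=c_1|\xi|^2$ and $\varphi(\nu)=c_2$ by extracting rotation invariance from the lattice's stochastic isotropy. For the surface density $\varphi$ your argument is essentially the one used in the paper: the blow-up formula of Remark \ref{r.blowupformulas}, simplified via Remark \ref{r.ondensities}, combined with the measure-preserving change of variable $\w\mapsto\tau'_R\w$ (equivalently your distributional identity $\mathcal{L}\stackrel{d}{=}R\mathcal{L}$, $\mathcal{E}\stackrel{d}{=}R\mathcal{E}$ together with the determinism of the limit density) gives $\varphi(R\nu)=\varphi(\nu)$, and a one-homogeneous $SO(d)$-invariant function on $\R^d$ is $c_2|\nu|$ with $c_2=\varphi(e_1)>0$ by Theorem \ref{t.ACRmain}. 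That part is fine.

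The gap is in the characterization of $h$. Your primary argument — that a convex, $2$-homogeneous function depending only on $\xi\xi^T$ and satisfying the $p=2$ growth bounds must equal $c_1|\xi|^2$ — is false whenever $\min(m,d)\geq 2$. For example $h(\xi)=\sigma_1(\xi)^2$, the squared operator norm, is convex, $2$-homogeneous, $O(m)\times O(d)$-invariant, satisfies $\tfrac{1}{\min(m,d)}|\xi|^2\leq h(\xi)\leq|\xi|^2$, and is not a multiple of the Frobenius norm squared. The ingredient that the paper makes central — and that you relegate to a vague parenthetical — is that for $p=2$ the discrete energies $E_{\e}(\w)$ are non-negative \emph{quadratic forms}, so by \cite[Theorem 11.1]{DM} the limit density $h$ is itself a quadratic form $h(\xi)=\sum h_{ijkl}\xi_{ij}\xi_{kl}$. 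Once this is in hand one still needs two separate inputs that your ``row by row together with the symmetry between rows'' phrase does not make explicit: (i) the invariance $h(Q\xi)=h(\xi)$ for $Q\in O(m)$, which comes from the invariance of the \emph{discrete} energy under $u\mapsto Qu$ (not from lattice isotropy), and is what makes the diagonal coefficients $h_{iiii}$ equal across $i$; and (ii) the vanishing of the cross-coupling coefficients $h_{iikk}$ for $i\neq k$, which the paper obtains by reducing to diagonal matrices via a singular value decomposition and then testing $Q\in O(m)$ with ${\rm diag}(e_i+e_k)=Q\,{\rm diag}(e_i-e_k)$. Without (ii), a rotation-invariant quadratic form can still couple distinct rows of $\xi$, and the conclusion $h(\xi)=c_1|\xi|^2$ does not follow from convexity, homogeneity, growth bounds and $SO(d)$-invariance alone.
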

\begin{remark}\label{generalp}
In the scalar case $m=1$, the statement of Theorem 	\ref{MSapprox} is valid for every $p>1$. Indeed, we already know that the function $h$ has to be $p$- homogeneous. Following the proof below, stochastic isotropy implies that it is constant on $S^{d-1}$. Hence $h(\xi)=c_1|\xi|^p$ for some $c_1>0$. The formula for the surface term does not depend on $p$.
\end{remark}
\begin{remark}\label{r.complete}
Theorem \ref{MSapprox} and the convergence of minimizers in Theorem \ref{convfull} yield the full discretization of the Mumford-Shah functional. In practice it is of course impossible to create the stochastic lattice on the whole space but one has to take a finite particle approximation. Moreover the minimization of the discrete functionals $F_{\e}(\w)$ is still nontrivial due to non-convexity. However, first numerical tests have shown promising results and we plan to further investigate our approach in the future.	
\end{remark}
\begin{proof}[Proof of Theorem \ref{MSapprox}]
Due Theorem \ref{convfull} it only remains to show that $h(\xi)=c_1|\xi|^2$ and $\varphi(\nu)=c_2$ for some constants $c_1,c_2>0$. By Theorem \ref{t.sepofscales} the function $h$ is also the density of the $\Gamma$-limit of the functionals $E_{\e}(\w)$ defined in (\ref{auxelastic}). For $p=2$ these are non-negative quadratic forms, so we deduce from \cite[Theorem 11.1]{DM} that $h$ is a non-negative quadratic form, too. We write it explicitly as 
	\begin{equation*}
	h(\xi)=\sum_{i,k=1}^{m}\sum_{j,l=1}^{d}h_{ijkl}\xi_{ij}\xi_{kl},
	\end{equation*}
	where the coefficients satisfy the symmetry condition $h_{ijkl}=h_{klij}$. Since the discrete functional is invariant under orthogonal transformations $u\mapsto Qu$ it holds that $h(Q\xi)=h(\xi)$ for all $\xi\in\mathbb{R}^{m\times d}$ and $Q\in O(m)$. Moreover, reasoning exactly as for the case $m=d$ treated in \cite[Theorem 9]{ACG2} one can further show that ergodicity and isotropy imply $h(\xi R)=h(\xi)$ for all $\xi\in\mathbb{R}^{m\times d}$ and all $R\in SO(d)$. We argue that $h$ depends only on the singular values. To this end, we fix a matrix $\xi\in\mathbb{R}^{m\times d}$ and consider any singular value decomposition $\xi=Q\Sigma V^T$ with orthogonal matrices $Q\in O(m)$ and $V\in O(d)$ and a diagonal matrix $\Sigma\in\mathbb{R}^{m\times d}$. If $V\in SO(d)$ then $h(\xi)=h(\Sigma)$. Otherwise we replace $V$ by a rotation observing that
	\begin{equation*}
	A=QP_m^{1,2}P_m^{1,2}\Sigma P_d^{1,2} P_d^{1,2}V^T,
	\end{equation*}
	where $P_n^{1,2}$ denotes the $n\times n$-matrix which differs from the identity by exchanging the first and the second column. In this case $h(\xi)=h(P_m^{1,2}\Sigma P_d^{1,2})$ since the matrix $P_d^{1,2}V^T$ belongs to $SO(d)$. Set $l=\min\{d,m\}$ and write the singular values as $\lambda(\xi)\in\mathbb{R}^l$ with non-negative coefficients. We conclude that there exists a permutation $P(\xi)\in\{I,P_l^{1,2}\}$ such that  
	\begin{equation*}
	h(\xi)=\sum_{i,k=1}^lh_{iikk}\big(P(\xi)\lambda(\xi)\big)_i\big(P(\xi)\lambda(\xi)\big)_k.
	\end{equation*}
	Thus it is enough to characterize the coefficients $h_{iikk}$. We will test several diagonal matrices $\xi$. To simplify notation, given $v\in\mathbb{R}^l$ we denote by ${\rm diag}(v)\in\mathbb{R}^{m\times d}$ the diagonal matrix with elements $v$. As a first step, note that we can find $Q\in O(m)$ and $R\in SO(d)$ such that ${\rm diag}(e_i)=Q\,{\rm diag}(-e_j)R$. Thus by invariance $h_{iiii}=h_{jjjj}$ for all $i,j=1,\dots,l$. Now consider $i\neq k$. We argue that $h_{iikk}=0$. To this end, we use that there exists a matrix $Q\in O(m)$ such that ${\rm diag}(e_i+e_k)=Q\,{\rm diag}(e_i-e_k)$, which yields again by invariance that
	\begin{equation*}
	h_{iiii}+h_{kkkk}+h_{iikk}+h_{kkii}=h_{iiii}+h_{kkkk}-h_{iikk}-h_{kkii}.
	\end{equation*}
	Due the symmetry condition on the coefficients of $h$ we obtain $h_{iikk}=0$. Setting $c_1=h_{1111}$ we have shown that
	\begin{equation*}
	h(\xi)=c_1\sum_{i=1}^l\lambda_i(\xi)^2=c_1|\xi|^2.
	\end{equation*}
	It follows from Theorem \ref{ACGmain} that $c_1>0$.
	
	We now turn to the surface integrand $\varphi$ and prove that $\varphi(R\nu)=\varphi(\nu)$ for all $R\in SO(d)$. Since $\varphi$ is deterministic by ergodicity, we can take expectations in the asymptotic formula given by Remark \ref{r.blowupformulas} and simplified via Remark \ref{r.ondensities}. Since $\tau^{\prime}_{R}$ is measure preserving, dominated convergence and a change of variables yield 
	\begin{align*}
	\varphi(R\nu)&=\lim_{\e\to 0}\int_{\Omega}\inf\{I_{\e}(\w)(v,Q_{R\nu}(0,1)):\,v\in\mathcal{PC}^{\w}_{\e,2M\e}(u_{0,R\nu}^{-e_1,e_1},Q_{R\nu}(0,1))\}\,\mathrm{d}\mathbb{P}(\w)
	\\
	&=\lim_{\e\to 0}\int_{\Omega}\inf\{I_{\e}(\tau^{\prime}_{R^T}\w)(v,Q_{\nu}(0,1)):\,v\in\mathcal{PC}^{\tau^{\prime}_{R^T}\w}_{\e,2M\e}(u_{0,\nu}^{-e_1,e_1},Q_{\nu}(0,1))\}\,\mathrm{d}\mathbb{P}(\w)
	\\
	&=\lim_{\e\to 0}\int_{\Omega}\inf\{I_{\e}(\w^{\prime})(v,Q_{\nu}(0,1)):\,v\in\mathcal{PC}^{\w^{\prime}}_{\e,2M\e}(u_{0,\nu}^{-e_1,e_1},Q_{\nu}(0,1))\}\,\mathrm{d}\mathbb{P}(\w^{\prime})=\varphi(\nu),
	\end{align*}
	where we used from the first to the second line that by isotropy of $\mathcal{L}$ and $\mathcal{E}$ the discrete functional in \eqref{Ising} satisfies $I_{\e}(\tau^{\prime}_R\w)(u,Q_{R\nu}(0,1))=I_{\e}(\w)(u\circ R,Q_{\nu}(0,1))$ for every $R\in SO(d)$. We finish the proof setting $c_2=\varphi(e_1)$ since Theorem \ref{t.ACRmain} implies $c_2>0$.
\end{proof}

\section{Separation of scales: proof of Theorem \ref{t.sepofscales}}\label{s.proofs}
This section is devoted to the proof of Theorem \ref{t.sepofscales}, which will constitute the most involved part of the paper. In a first part we use \cite[Theorem 1]{BFLM} to represent (up to subsequences) the $\Gamma$-limit on $SBV^p$ as an integral functional. In a second and third part we study asymptotic formulas for the integrands of that representation which allow to conclude the proof. Several times we will need the following property of Voronoi neighbors in a stochastic lattice.

\begin{lemma}\label{l.paths}
Let $\Lw$ be an admissible set of points with constants $R>r>0$ as in Definition \ref{defadmissible}. Then for all $x,y\in\Lw$ there exists a path $P(x,y)=\{x_0=x,x_1,\dots,x_n=y\}$ such that $(x_{i-1},x_{i})\in\mathcal{N}(\w)$ for all $1\leq i \leq n$ and
\begin{equation*}
P(x,y)\subset {\rm co}(x,y)+B_{2R}(0).
\end{equation*}
In particular, there exists a constant $C_{r,R}<+\infty$ such that $\#P(x,y)\leq C_{r,R}|x-y|$.
\end{lemma}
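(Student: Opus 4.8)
The plan is to connect $x$ and $y$ by a slightly perturbed straight segment and to read the path off from the sequence of Voronoi cells that the segment meets. We may assume $x\neq y$, so that $|x-y|\geq r$ since distinct points of $\Lw$ are at distance $\geq r$. First I would perturb the endpoints: choose $x'\in B_{r/2}(x)$ and $y'\in B_{r/2}(y)$, which by Remark~\ref{voronoi} lie in the interiors of $\mathcal C(x)$ and $\mathcal C(y)$ respectively. Only finitely many Voronoi cells meet the segment $[x',y']$, since the points of $\Lw$ are $r$-separated and each cell is contained in a ball of radius $R$; call their nuclei the \emph{relevant} nuclei. The key point is to choose $(x',y')$ \emph{generically}: by a Fubini-type argument the set of pairs in $B_{r/2}(x)\times B_{r/2}(y)$ for which $[x',y']$ hits one of the (finitely many, at most $(d-2)$-dimensional) triple intersections $\mathcal C(v)\cap\mathcal C(v')\cap\mathcal C(v'')$ of relevant nuclei, or is contained in a bisecting hyperplane of two relevant nuclei, has measure zero; I fix $(x',y')$ outside this null set. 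Note that, whatever the choice, $[x',y']\subset {\rm co}(x,y)+B_{r/2}(0)$.

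Write $\sigma(t)=(1-t)x'+ty'$ for $t\in[0,1]$. Since Voronoi cells are closed and convex, each preimage $\sigma^{-1}(\mathcal C(v))$ is a closed subinterval of $[0,1]$, so ordering the visited cells by these intervals produces a finite list $\mathcal C(w_0),\dots,\mathcal C(w_n)$ with $w_0=x$ (because $\sigma(0)=x'$ is interior to $\mathcal C(x)$), $w_n=y$, and with $\sigma$ passing from $\mathcal C(w_{i-1})$ into $\mathcal C(w_i)$ at a single time $t_i$. At the crossing point $z=\sigma(t_i)$ the genericity forces $z$ to lie in exactly the two cells $\mathcal C(w_{i-1})$ and $\mathcal C(w_i)$, hence $z$ is strictly closer to $w_{i-1}$ and $w_i$ than to any other point of $\Lw$; by continuity a full $(d-1)$-dimensional neighbourhood of $z$ inside the bisecting hyperplane of $w_{i-1}$ and $w_i$ is contained in $\mathcal C(w_{i-1})\cap\mathcal C(w_i)$, so $\mathcal H^{d-1}(\mathcal C(w_{i-1})\cap\mathcal C(w_i))>0$, while finiteness of this measure is clear because the intersection is bounded. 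Hence $(w_{i-1},w_i)\in\mathcal N(\w)$ for every $i$, and $P(x,y):=\{w_0,\dots,w_n\}$ is an admissible path.

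It remains to verify the two quantitative claims. For the localisation, each $w_i$ satisfies $\sigma(t_i)\in\mathcal C(w_i)\subset B_R(w_i)$ for some $t_i\in[0,1]$, so $|w_i-\sigma(t_i)|<R$; together with $\sigma(t_i)\in{\rm co}(x,y)+B_{r/2}(0)$ and $r<R$ this gives $P(x,y)\subset {\rm co}(x,y)+B_{2R}(0)$. For the cardinality bound, the $w_i$ are pairwise distinct (each cell is visited over a single interval of $t$), so the balls $B_{r/2}(w_i)$ are pairwise disjoint and, by the previous inclusion, all contained in ${\rm co}(x,y)+B_{3R}(0)$; comparing $d$-dimensional volumes,
\[
(n+1)\,|B_{r/2}(0)|\ \leq\ \big|{\rm co}(x,y)+B_{3R}(0)\big|\ \leq\ C\,\big(|x-y|\,R^{d-1}+R^{d}\big),
\]
and using $|x-y|\geq r$ we conclude $\#P(x,y)=n+1\leq C_{r,R}\,|x-y|$.

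The main obstacle is precisely the transversality issue handled by the generic choice of $(x',y')$: one has to exclude that the segment meets the Voronoi complex through a face of dimension $\leq d-2$, where several cells touch without being pairwise Voronoi neighbours, since this would break the assertion that consecutive nuclei in the list satisfy $(w_{i-1},w_i)\in\mathcal N(\w)$. Everything else is elementary bookkeeping with the two-sided inclusions $B_{r/2}(v)\subset\mathcal C(v)\subset B_R(v)$ of Remark~\ref{voronoi}.
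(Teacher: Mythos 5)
Your proof is correct and follows essentially the same strategy as the paper's: find a straight segment between small neighbourhoods of $x$ and $y$ that misses all Voronoi faces of dimension $\le d-2$, read off the path from the cells it crosses, then get the localisation and cardinality bounds from $B_{r/2}(v)\subset\mathcal C(v)\subset B_R(v)$ and a packing/volume comparison. The only (cosmetic) difference is how genericity of the segment is obtained: the paper keeps the direction $y-x$ fixed, varies the base point in a $\delta$-ball, and argues via a Lipschitz-projection contradiction, whereas you perturb both endpoints independently in $r/2$-balls and use a Fubini-type dimension count — both are standard transversality arguments and equally valid.
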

\begin{proof}
For $0<\delta\ll 1$ consider the collection of segments
\begin{equation*}
G_{\delta}(x,y)=\{z+\lambda (y-x):\;z\in B_{\delta}(x),\,0\leq\lambda\leq 1\}.
\end{equation*}
We argue that there exists a segment $s^*=\{z_0+\lambda(y-x):0\leq\lambda\leq 1\}\subset G_{\delta}(x,y)$ that does not intersect any Voronoi facet of the tessellation $\mathcal{V}(\w)$ of dimension less than $d-1$. Indeed, assume by contradiction that the claim is false for all $z_0\in B_{\delta}(x)$. By Remark \ref{voronoi} we find finitely many Voronoi facets of dimension less than $d-1$ whose projections onto the hyperplane containing $x$ and orthogonal to $y-x$ cover a $d-1$-dimensional set. Since projections onto hyperplanes are Lipschitz continuous, we obtain a contradiction. Hence, for $\delta$ small enough, we can construct a path of Voronoi neighbors connecting $x,y$ by suitably numbering the set
\begin{equation*}
P(x,y)=\{z\in \Lw:\;\mathcal{C}(z)\cap s_*\neq\emptyset\},
\end{equation*} 
which satisfies $P(x,y)\subset {\rm co}(x,y)+{B_{R+\delta}}(0)$ as claimed. Combining Remark \ref{voronoi} with a covering argument we obtain in addition 
\begin{equation*}
\#P(x,y)\leq |B_{\frac{r}{2}}(0)|^{-1}(|x-y|+2R)(2R)^{d-1}\leq |B_{\frac{r}{2}}(0)|^{-1}\frac{2(2R)^d}{r}|x-y|=:C_{r,R}|x-y|.
\end{equation*}
\end{proof}
\subsection{Integral representation on SBV\texorpdfstring{$^{\bf p}$}{}}
We are going to prove the following intermediate result: 
\begin{proposition}\label{limitsbvp}
Given any sequence $\e\to 0$ there exists a subsequence $\e_n$ such that for all $A\in\Ard$ the functionals $F_{\e_n}(\w)(\cdot,A)$ $\Gamma$-converge in the $L^1(D,\R^m)$-topology to a functional $F(\w)(\cdot,A):L^1(D,\R^m)\to[0,+\infty]$. If $u\in SBV^p(A,\R^m)$ then $F(\w)(u,A)$ can be written as
\begin{equation*}
F(\w)(u,A)=
\int_Ah(x,\nabla u)\dx+\int_{S_u\cap A}\varphi(x,{u^+}-{u^-},\nu)\,\mathrm{d}\mathcal{H}^{d-1}.
\end{equation*}
where, for $x_0\in D$, $\nu\in S^1$, $a\in\R^m$ and $\xi\in\R^{d\times m}$, the integrands are given by
\begin{equation*}
\begin{split}
h(x_0,\xi)&=\limsup_{\varrho\to 0}\frac{1}{\varrho_d}m(\w)(\xi(\cdot-x_0),Q_{\nu}(x_0,\varrho)),
\\
\varphi(x_0,a,\nu)&=\limsup_{\varrho\to 0}\frac{1}{\varrho^{d-1}}m(\w)(u_{x_0,\nu}^{a,0},Q_{\nu}(x_0,\varrho))
\end{split}
\end{equation*} 
with the function $u_{x_0,\nu}^{a,0}$ defined in \eqref{eq:purejump} and the function $m(\w)(\bar{u},A)$ defined for any $\bar{u}\in SBV^p(D,\R^m)$ and $A\in\Ard$ by
\begin{equation*}
m(\w)(\bar{u},A)=\inf\{F(\w)(v,A):\;v\in SBV^p(A,\R^m),\,v=\bar{u}\text{ in a neighborhood of }\partial A\}.
\end{equation*}
\end{proposition}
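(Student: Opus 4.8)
The plan is to derive Proposition~\ref{limitsbvp} by applying the abstract integral representation theorem \cite[Theorem 1]{BFLM} to the $\Gamma$-limit $F(\w)(\cdot,A)$ viewed as a functional on $SBV^p(A,\R^m)$. To do so I first need to produce the $\Gamma$-limit as a set function of $A\in\Ard$ with good measure-theoretic properties, and then check that it satisfies the structural hypotheses of \cite{BFLM} (locality, lower semicontinuity, measure property in $A$, suitable growth bounds, and translation behaviour in $u$ on affine/pure-jump competitors). The asymptotic formulas for $h$ and $\varphi$ will then come out of the blow-up characterization of the integrands built into that theorem.

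\medskip
\noindent\textbf{Step 1: Compactness of the $\Gamma$-limit.} Using the Fundamental Theorem of $\Gamma$-convergence together with a diagonal argument over a countable dense subfamily of $\Ard$ (the De~Giorgi--Letta criterion), I extract a subsequence $\e_n$ such that $F_{\e_n}(\w)(\cdot,A)$ $\Gamma(L^1)$-converges for all $A$ in a countable family, and then I pass to the inner-regular envelope to get a functional $F(\w)(\cdot,A)$ defined for all $A\in\Ard$. Here I use the growth bound \eqref{realcut}: from the lower bound $c_f\min\{\|p\|_1,1\}\le f(p)$ one gets that finite-energy sequences are bounded in $SBV^p$ after truncation (Lemma~\ref{truncation}), giving the coercivity needed to identify the domain and to run the localization machinery; from the upper bound $f(p)\le C_f\min\{\|p\|_1,1\}$ one gets the matching upper growth on $F(\w)$.

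\medskip
\noindent\textbf{Step 2: Measure property and locality.} I verify that $A\mapsto F(\w)(u,A)$ is the restriction of a Borel measure for each fixed $u\in SBV^p$. The nontrivial half is the subadditivity/fundamental estimate: given $A'\Subset A\Subset A''$ I must glue a recovery sequence on $A'$ with one on $A''\setminus\overline{A'}$ at cost controlled by the energy in the overlap. Because the discrete gradients couple points at distance $\le M\e$ across the gluing interface, the cut-off construction is done on the level of the discrete functions with a slicing/averaging argument choosing a good layer of width $\sim M\e$; the extra interactions created on that layer are estimated by \eqref{realcut} and \eqref{neighbours}, and Lemma~\ref{l.paths} is what guarantees such constructions stay local. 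Locality of $F(\w)(\cdot,A)$ (it depends on $u$ only through $u|_A$) is inherited from the discrete functionals since $F_\e(\w)(u,A)$ only sees $\e x\in A$.

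\medskip
\noindent\textbf{Step 3: Apply the integral representation theorem and read off the formulas.} With Steps 1--2 in hand, $F(\w)(\cdot,A)$ is a local, $L^1$-lower semicontinuous functional on $SBV^p(A,\R^m)$ that is a measure in $A$ and satisfies $p$-growth from above and below on the bulk part and linear growth on the jump part. Theorem~1 of \cite{BFLM} then gives the representation $\int_A h(x,\nabla u)\,\mathrm{d}x+\int_{S_u\cap A}\varphi(x,u^+-u^-,\nu)\,\mathrm{d}\mathcal{H}^{d-1}$, together with the blow-up identification of the densities as limits of the minimal-energy problems $m(\w)$ on small cubes with affine, resp.\ pure-jump, boundary data. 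Matching the notation in \cite{BFLM} with ours yields exactly the stated formulas for $h(x_0,\xi)$ and $\varphi(x_0,a,\nu)$ (note that on an affine or pure-jump datum the cube orientation $\nu$ in $Q_\nu(x_0,\varrho)$ is irrelevant for $h$, which is why one may write $Q_{e_1}$ there as well, matching Remark~\ref{r.blowupformulas}).

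\medskip
\noindent\textbf{Main obstacle.} The delicate point is Step~2, specifically the fundamental estimate / gluing with the nonlocal discrete coupling: one must cut off a discrete function across an interface without paying a surface-order penalty proportional to the length of that interface, while only modifying the function on a layer of discrete width $O(\e)$ and respecting the lattice structure $\e\Lw$ and the edge set $\mathcal{E}(\w)$. The standard continuum averaging trick (choose among many parallel layers the one with least energy) has to be adapted to the discrete setting, using that there are $\sim 1/\e$ admissible layers and that each point has at most $M$ neighbours, so that the discrete energy deposited on a single good layer is $O(\e)\cdot(\text{energy in the overlap})$, which vanishes as $\e\to0$. Once this estimate is established the rest is bookkeeping plus invoking \cite{BFLM}.
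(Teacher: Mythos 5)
The main outline (Γ-compactness on a countable family, inner regular envelope, De Giorgi--Letta, fundamental estimate, then invoke \cite[Theorem 1]{BFLM} with blow-up formulas) is the same as the paper's. However, Step 3 has a genuine gap: you cannot apply \cite[Theorem 1]{BFLM} directly to $F(\w)$, because that theorem's growth hypothesis (condition (iv) in the paper's proof) requires a \emph{lower} bound of the form
\begin{equation*}
\frac{1}{c}\left(\int_A|\nabla u|^p\,\mathrm{d}x+\int_{S_u\cap A}\big(1+|u^+-u^-|\big)\,\mathrm{d}\mathcal{H}^{d-1}\right)\leq \mathcal{F}(u,A),
\end{equation*}
i.e.\ the functional must dominate $\int_{S_u}|u^+-u^-|\,\mathrm{d}\mathcal{H}^{d-1}$. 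This is \emph{false} for $F(\w)$: since $f$ is bounded (see \eqref{realcut}), Lemma~\ref{compact} only yields the weaker lower bound by $\mathcal{H}^{d-1}(S_u)$, and the surface part of $F(\w)$ is in fact uniformly bounded per unit jump length, no matter how large $|u^+-u^-|$ is. Your phrase ``linear growth on the jump part'' is precisely where the error creeps in --- the jump part has \emph{sublinear} (indeed bounded) growth in the jump amplitude.

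The paper resolves this by first perturbing the functional, defining
\begin{equation*}
\mathcal{F}_{\eta}(u,A)=F(\w)(u,A)+\eta\int_{S_u\cap A}|u^+-u^-|\,\mathrm{d}\mathcal{H}^{d-1},
\end{equation*}
verifying that $\mathcal{F}_\eta$ does satisfy the growth hypothesis, applying \cite{BFLM} to get integrands $h_\eta,\varphi_\eta$, and then letting $\eta\to 0$ using monotonicity in $\eta$ and monotone convergence. One must also show the limits of $h_\eta,\varphi_\eta$ as $\eta\to 0$ agree with the stated formulas for $h,\varphi$; this requires a further argument (a weak maximum principle via Lemma~\ref{trunc} ensuring near-optimal competitors have jumps of bounded amplitude). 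Your proposal skips this perturbation entirely, and without it the invocation of \cite{BFLM} is not justified.
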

In order to prove this result we will analyze the localized $\Gamma\hbox{-}\liminf$ and $\Gamma\hbox{-}\limsup$ $F^{\prime}(\w),F^{\prime\prime}(\w):L^{1}(D,\R^m)\times\mathcal{A}(D)\to [0,+\infty]$ of the functionals $F_{\e}(\w)$, which are defined by
\begin{equation*}%\label{eq:Gammalimsupinf}
\begin{split}
F^{\prime}(\w)(u,A)&=\inf\{\liminf_{\e\to 0}F_{\e}(\w)(u_{\e},A):\;u_{\e}\to u \text{ in }L^{1}(D,\R^m)\},\\
F^{\prime\prime}(\w)(u,A)&=\inf\{\limsup_{\e\to 0}F_{\e}(\w)(u_{\e},A):\;u_{\e}\to u\text{ in }L^{1}(D,\R^m)\}.
\end{split}
\end{equation*}

\begin{remark}\label{proxi}
It is well-known that both functionals are $L^1(D,\R^m)$-lower semicontinuous. Moreover, note that for any $u\in L^{1}(D,\R^m)$ there exists indeed a sequence $u_{\e}\in\mathcal{PC}_{\e}^{\w}$ such that $u_{\e}\to u$ in $L^{1}(D,\R^m)$. For $u\in C_c(D,\R^m)$ this follows from Remark \ref{voronoi}. In the general case one can use a density argument and construct suitable diagonal sequences.
\end{remark} 
Our aim is to apply the integral representation of \cite[Theorem 1]{BFLM} to a slightly modified functional. To this end, below we establish several properties of $F^{\prime}(\w)$ and $F^{\prime\prime}(\w)$ which are necessary in the context of integral representation. However, at first we prove a truncation lemma that allows to reduce many arguments to the case of bounded functions.
\begin{lemma}\label{trunc}
Let $u_{\e}\in\mathcal{PC}_{\e}^{\w}$. For any $k>0$ set $T_ku_{\e}$ as in Lemma \ref{truncation}. Then, for any $A\in\mathcal{A}(D)$, it holds that $F_{\e}(\w)(T_ku_{\e},A)\leq F_{\e}(\w)(u_{\e},A)$. In particular, whenever $u\in L^{\infty}(D,\R^m)$ we can compute $F^{\prime}(\w)(u,A)$ and $F^{\prime\prime}(\w)(u,A)$ considering sequences $u_{\e}\in\mathcal{PC}_{\e}^{\w}$ such that $|u_{\e}(\e x)|\leq 3\|u\|_{\infty}$ for all $x\in\Lw$. Moreover, for all $u\in L^1(D,\R^m)$ we have
\begin{equation*}
\begin{split}
\lim_{k\to +\infty}F^{\prime}(\w)(T_ku,A)&=F^{\prime}(\w)(u,A),\\
\lim_{k\to +\infty}F^{\prime\prime}(\w)(T_ku,A)&=F^{\prime\prime}(\w)(u,A).
\end{split}
\end{equation*}
\end{lemma}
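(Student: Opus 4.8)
The three assertions build on one another, and the only point carrying real content---and the only place a pitfall could hide---is the monotonicity bound for a fixed $\e$. I would proceed as follows.

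First I establish the pointwise inequality $F_{\e}(\w)(T_ku_{\e},A)\leq F_{\e}(\w)(u_{\e},A)$. Since $\Phi_k$ is $1$-Lipschitz, for every edge $(x,y)\in\mathcal{E}(\w)$ with $\e x,\e y\in A$ one has $|T_ku_{\e}(\e x)-T_ku_{\e}(\e y)|\leq|u_{\e}(\e x)-u_{\e}(\e y)|$; hence the multiset $\e|\nabla_{\w,\e}(T_ku_{\e},A)|^p(\e x)$ is dominated term by term by $\e|\nabla_{\w,\e}(u_{\e},A)|^p(\e x)$, and---crucially---both are indexed by the same collection of at most $M$ edges, which depends on $\e$, $A$ and $\mathcal{E}(\w)$ but not on the function. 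The monotonicity assumption \eqref{monotone} then yields $f\big(\e|\nabla_{\w,\e}(T_ku_{\e},A)|^p(\e x)\big)\leq f\big(\e|\nabla_{\w,\e}(u_{\e},A)|^p(\e x)\big)$ for every nucleus $\e x\in\e\Lw\cap A$; since moreover $T_ku_{\e}=\Phi_k(u_{\e})\in\mathcal{PC}_{\e}^{\w}$, summing over nuclei gives the claim. (When $u_{\e}\notin\mathcal{PC}_{\e}^{\w}$ the right-hand side is $+\infty$, so the inequality in fact holds for arbitrary $u_{\e}\in L^1(D,\R^m)$, a version I use below.)

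For the second assertion, let $u\in L^{\infty}(D,\R^m)$ and set $k=\|u\|_{\infty}$. Given any $u_{\e}\in\mathcal{PC}_{\e}^{\w}$ with $u_{\e}\to u$ in $L^1$, the truncated family $v_{\e}:=T_ku_{\e}$ satisfies $|v_{\e}(\e x)|\leq 3\|u\|_{\infty}$ for all $x\in\Lw$ (directly from the definition of $\Phi_k$ and the properties $\phi(t)=t$ for $|t|\leq1$, $\phi\equiv0$ on $[3,+\infty)$, $\|\phi'\|_{\infty}\leq1$), converges to $T_ku=u$ in $L^1$ (since $\|v_{\e}-u\|_{L^1}\leq\|u_{\e}-u\|_{L^1}$ by $1$-Lipschitz continuity), and has $F_{\e}(\w)(v_{\e},A)\leq F_{\e}(\w)(u_{\e},A)$ by the previous step. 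Restricting the competing families in the definitions of $F^{\prime}(\w)(u,A)$ and $F^{\prime\prime}(\w)(u,A)$ to such uniformly bounded ones can neither raise (obviously) nor lower (by the above) the infima, so the restricted and unrestricted values coincide.

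For the third assertion I would first observe that truncation never increases $F^{\prime}(\w)(\cdot,A)$ or $F^{\prime\prime}(\w)(\cdot,A)$: for every $k>0$ and $u\in L^1(D,\R^m)$,
\[
F^{\prime}(\w)(T_ku,A)\leq F^{\prime}(\w)(u,A),\qquad F^{\prime\prime}(\w)(T_ku,A)\leq F^{\prime\prime}(\w)(u,A).
\]
Indeed, if $u_{\e}\to u$ in $L^1$ nearly realizes $F^{\prime}(\w)(u,A)$ (respectively $F^{\prime\prime}(\w)(u,A)$), then $T_ku_{\e}\to T_ku$ in $L^1$ is admissible in the definition of $F^{\prime}(\w)(T_ku,A)$ (respectively $F^{\prime\prime}(\w)(T_ku,A)$), and its energy $\liminf$ (respectively $\limsup$) does not exceed that of $(u_{\e})$ by the pointwise bound. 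Combining these inequalities with $T_ku\to u$ in $L^1$ as $k\to+\infty$ (Lemma \ref{truncation}(i)) and the $L^1$-lower semicontinuity of $F^{\prime}(\w)(\cdot,A)$ and $F^{\prime\prime}(\w)(\cdot,A)$ noted in Remark \ref{proxi} squeezes $\liminf_{k}$ and $\limsup_{k}$ of $F^{\prime}(\w)(T_ku,A)$ (respectively $F^{\prime\prime}(\w)(T_ku,A)$) between $F^{\prime}(\w)(u,A)$ (respectively $F^{\prime\prime}(\w)(u,A)$) and itself, which is the assertion. The main (essentially the only) obstacle is the combinatorial check in the first step that the two multisets have equal cardinality before \eqref{monotone} is applied; this is immediate since the edge index set does not see the function, and everything else is routine $\Gamma$-convergence manipulation.
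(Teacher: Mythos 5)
Your argument is correct and follows exactly the same route as the paper's proof: the $1$-Lipschitz property of $\Phi_k$ gives the edge-wise bound, the observation that the indexing multiset is function-independent lets you invoke \eqref{monotone}, and the passage to the limits uses $L^1$-lower semicontinuity combined with the monotone bound $F^{\prime}(\w)(T_ku,A)\leq F^{\prime}(\w)(u,A)$ (and likewise for $F^{\prime\prime}$). The paper compresses this into three sentences; you merely make the combinatorial point about equal multiset cardinality and the squeeze argument at the end explicit, which is a useful but not substantively different presentation.
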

\begin{proof}
For the estimate at the discrete level, it suffices to combine the fact that $|T_ku_{\e}(\e x)-T_ku_{\e}(\e y)|\leq |u_{\e}(\e x)-u_{\e}(\e y)|$ for all $x,y\in\Lw$ with the monotonicity assumption (\ref{monotone}). In order to restrict the class of approximating sequences, we use the first estimate and the fact that any truncated sequence $T_ku_{\e}$ with $k=\|u\|_{\infty}$ still converges to $u$ in $L^1(D,\R^m)$. The continuity property at the limit follows from $L^1(D,\R^m)$-lower semicontinuity of both functionals and the fact that the discrete upper bound is conserved in the limit.
\end{proof}
We next show that $F^{\prime\prime}(\w)$ is local.
\begin{lemma}[Locality of $F^{\prime\prime}$]\label{local}
Let $A\in\Ard$. If $u,v\in L^1(D,\R^m)$ and $u=v$ a.e. on $A$, then $F^{\prime\prime}(\w)(u,A)=F^{\prime\prime}(\w)(v,A)$.
\end{lemma}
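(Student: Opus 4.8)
\textbf{Proof plan for Lemma~\ref{local} (locality of $F''(\w)$).}
The plan is to prove the statement by showing that the value $F''(\w)(u,A)$ for $u\in L^1(D,\R^m)$ depends only on the restriction $u|_A$. Since the definitions of $F_\e(\w)(v,A)$ and of the class $\mathcal{PC}_\e^\w$ only involve the behaviour of $v$ on (a neighbourhood of) $A$ — indeed the sum defining $F_\e(\w)(v,A)$ ranges over $\e x\in\e\Lw\cap A$ and pairs $(x,y)\in\mathcal{E}(\w)$ with $\e x,\e y\in A$ — the only subtlety is the interplay between the constraint of being piecewise constant on \emph{Voronoi cells} (which extend slightly outside $A$) and the constraint of $L^1(D,\R^m)$-convergence on all of $D$. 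So the heart of the argument is a gluing/cut-off construction: given a recovery sequence for $u$ on $A$, produce a competitor for $v$ that agrees with the given sequence on $A$ and matches (a recovery sequence for) $v$ away from $A$, without increasing the $\limsup$ of the energy.

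First I would fix $u,v\in L^1(D,\R^m)$ with $u=v$ a.e.\ on $A$, and take a sequence $u_\e\in\mathcal{PC}_\e^\w$ with $u_\e\to u$ in $L^1(D,\R^m)$ and $\limsup_\e F_\e(\w)(u_\e,A)=F''(\w)(u,A)$ (if this is $+\infty$ there is nothing to prove). By Remark~\ref{proxi} pick also $v_\e\in\mathcal{PC}_\e^\w$ with $v_\e\to v$ in $L^1(D,\R^m)$. Now define a new sequence $w_\e\in\mathcal{PC}_\e^\w$ by setting, for each $x\in\Lw$,
\begin{equation*}
w_\e(\e x)=
\begin{cases}
u_\e(\e x) & \text{if }\e x\in A,\\
v_\e(\e x) & \text{if }\e x\notin A.
\end{cases}
\end{equation*}
Because $u_\e$ and $v_\e$ are both piecewise constant on the cells $\e\mathcal{C}(x)$ and the assignment is made per nucleus, $w_\e\in\mathcal{PC}_\e^\w$ is well defined. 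Since $u=v$ a.e.\ on $A$, $w_\e\to v$ in $L^1(D,\R^m)$: indeed $w_\e$ differs from $u_\e$ only on cells $\e\mathcal{C}(x)$ with $\e x\notin A$, and there $w_\e=v_\e$; using $|\e\mathcal{C}(x)|\le CR^d\e^d$ and the fact that convergence in $L^1$ of $u_\e$ to $u=v$ on $A$ together with $v_\e\to v$ on $D\setminus A$ (with the cells that straddle $\partial A$ contributing a vanishing amount by absolute continuity of $\int_{\partial_{C\e}A}$), one gets $\|w_\e-v\|_{L^1(D)}\to 0$. This is the routine part and I would only sketch it.

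The key energy estimate is then to bound $F_\e(\w)(w_\e,A)$ by $F_\e(\w)(u_\e,A)$ plus an error that vanishes. A pair $(x,y)\in\mathcal{E}(\w)$ with $\e x,\e y\in A$ contributes to $F_\e(\w)(\cdot,A)$ through the multiset $\e|\nabla_{\w,\e}(\cdot,A)|^p(\e x)$ at the nucleus $\e x\in\e\Lw\cap A$; hence the terms that could possibly differ between $w_\e$ and $u_\e$ are those nuclei $\e x\in A$ that have a Voronoi-edge partner $\e y\notin A$ with $(x,y)\in\mathcal{E}(\w)$. By Definition~\ref{defgoodedges}(i) any such $\e y$ lies within distance $M\e$ of $\partial A$, hence so does $\e x$; that is, the ``bad'' nuclei all lie in $\e\Lw\cap\partial_{M\e}A$. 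Using the uniform bound $f\le C_f$ from \eqref{realcut} and the bound $\#(\e\Lw\cap\partial_{M\e}A)\le C\mathcal{H}^{d-1}(\partial A)\e^{-(d-1)}$ (valid since $A\in\Ard$ has Lipschitz boundary and points of $\Lw$ are $r$-separated, via the volume estimate $|\partial_{M\e}A|\le C\e$), we get
\begin{equation*}
F_\e(\w)(w_\e,A)\le F_\e(\w)(u_\e,A)+\sum_{\e x\in\e\Lw\cap\partial_{M\e}A}\e^{d-1}C_f\le F_\e(\w)(u_\e,A)+C\,\mathcal{H}^{d-1}(\partial A)\,\e.
\end{equation*}
Wait — this last step overcounts, because at a bad nucleus $\e x$ the whole term $\e^{d-1}f(\e|\nabla_{\w,\e}(w_\e,A)|^p(\e x))$ is replaced, not merely an increment; but that is precisely what the bound above accounts for, since I bound the replaced term by $C_f$ rather than estimating a difference. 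Taking $\limsup_\e$ yields $F''(\w)(v,A)\le\limsup_\e F_\e(\w)(w_\e,A)\le F''(\w)(u,A)$. By symmetry (exchanging the roles of $u$ and $v$) the reverse inequality holds, giving equality.

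The main obstacle I anticipate is not any of these estimates individually but making the boundary-layer bookkeeping fully rigorous: one must be careful that (a) the modification of $u_\e$ near $\partial A$ genuinely stays inside the class $\mathcal{PC}_\e^\w$ and still $L^1$-converges — this forces the per-nucleus definition of $w_\e$ rather than a spatial cut-off of the functions themselves; and (b) the counting estimate $\#(\e\Lw\cap\partial_{M\e}A)=O(\e^{-(d-1)})$ must invoke both the Lipschitz regularity of $\partial A$ (so that $|\partial_\delta A|\le C\delta$) and the minimal-distance property (ii) of Definition~\ref{defadmissible} (so that a volume bound converts into a cardinality bound). Everything else is a direct consequence of the finite-range property \eqref{finiterange}, the upper bound in \eqref{realcut}, and the monotonicity/structure already built into $F_\e(\w)$.
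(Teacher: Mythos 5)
Your construction coincides with the paper's: you glue $u_\e$ and $v_\e$ per nucleus, observe that the result is in $\mathcal{PC}_\e^\w$ and converges to $v$ in $L^1$, and then pass to the $\limsup$. Where you diverge is in the energy estimate. You treat the nuclei $\e x\in A$ whose edge partner $\e y$ falls outside $A$ as potentially problematic and introduce a boundary-layer error of order $\mathcal{H}^{d-1}(\partial A)\,\e$ to control them. But by the definition in the paper, the multiset $\e|\nabla_{\w,\e}(\cdot,A)|^p(\e x)$ ranges only over pairs $(x,y)\in\mathcal{E}(\w)$ with \emph{both} $\e x,\e y\in A$, so $F_\e(\w)(\cdot,A)$ depends solely on values at nuclei in $A$. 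Consequently $F_\e(\w)(w_\e,A)=F_\e(\w)(u_\e,A)$ holds as an \emph{exact identity}, and the boundary-layer bookkeeping you anticipate as "the main obstacle" is simply not needed; this is exactly the shortcut the paper takes. Your argument is still valid, because the spurious error vanishes in the limit, but it misreads the localization built into the functional and hence is longer than necessary. No gap, just an avoidable detour.
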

\begin{proof}
Due to Remark \ref{proxi} there exist sequences $u_{\e},v_{\e}\in\mathcal{PC}_{\e}^{\w}$ converging to $u$ and $v$ in $L^1(D,\R^m)$, respectively, and such that
\begin{equation*}
\limsup_{\e\to 0}F_{\e}(\w)(u_{\e},A)=F^{\prime\prime}(\w)(u,A),\quad\quad\limsup_{\e\to 0}F_{\e}(\w)(v_{\e},A)=F^{\prime\prime}(\w)(v,A).
\end{equation*}
Define $\tilde{u}_{\e}\in\mathcal{PC}_{\e}^{\w}$ by
\begin{equation*}
\tilde{u}_{\e}(\e x)=\mathds{1}_{A}(\e x)u_{\e}(\e x)+(1-\mathds{1}_A(\e x))v_{\e}(\e x).
\end{equation*}
Since $|\partial A|=0$ and $u_{\e}$ and $v_{\e}$ are equiintegrable, it follows that $\tilde{u}_{\e}\to v$ in $L^1(D,\R^m)$. Then by definition
\begin{equation*}
F^{\prime\prime}(\w)(v,A)\leq\limsup_{\e\to 0}F_{\e}(\w)(\tilde{u}_{\e},A)=\limsup_{\e\to 0}F_{\e}(\w)(u_{\e},A)=F^{\prime\prime}(\w)(u,A).
\end{equation*}
Exchanging the roles of $u$ and $v$ concludes the proof.
\end{proof}
The following two lemmata provide a lower bound for $F^{\prime}$ and an upper bound for $F^{\prime\prime}$. Together with the lower bound we obtain an equicoercivity property under an additional equiintegrability assumption.
\begin{lemma}[Compactness and lower bound]\label{compact}
Assume that $A\in\mathcal{A}^R(D)$ and $u_{\e}\in\mathcal{PC}_{\e}^{\w}$ are such that 
\begin{equation*}
\sup_{\e>0}F_{\e}(\w)(u_{\e},A)<+\infty.
\end{equation*}
If $u_{\e}$ is equiintegrable on $A$, then there exists a subsequence (not relabeled) such that $u_{\e}\to u$ in $L^1(A,\R^m)$ for some $u\in L^1(A,\R^m)\cap GSBV^p(A,\R^m)$. Moreover we have the estimate
\begin{equation*}
\frac{1}{c}\left(\int_A|\nabla u|^p\dx+\mathcal{H}^{d-1}(S_u\cap A)\right)\leq F^{\prime}(\w)(u,A)
\end{equation*}
for some constant $c>0$ independent of $\w,A$ and $u$.
\end{lemma}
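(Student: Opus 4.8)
The statement combines three assertions: (a) compactness of an equiintegrable sequence with bounded energy in $L^1$ with limit in $GSBV^p$, (b) a lower bound $\tfrac1c(\int_A|\nabla u|^p + \mathcal{H}^{d-1}(S_u\cap A)) \le F'(\w)(u,A)$, and (c) uniformity of $c$. The natural strategy is to reduce everything to a \emph{pairwise nearest-neighbor} comparison energy of Mumford--Shah type, for which the analogous continuum-type compactness and lower-bound results are classical (Chambolle-type estimates for finite-difference functionals, or the discrete $SBV$-compactness theorems à la Alicandro--Braides--Gelli). Concretely, by \eqref{realcut} we have $f(\mathfrak{p}) \ge c_f\min\{\|\mathfrak{p}\|_1,1\}$, and since $\min\{\sum_i s_i,1\} \ge \tfrac1M\sum_i \min\{s_i,1\}$ for at most $M$ summands, we get for every admissible edge a pointwise bound
\begin{equation*}
F_{\e}(\w)(u_{\e},A) \ge \frac{c_f}{M}\sum_{\e x\in\e\Lw\cap A}\ \sum_{\substack{(x,y)\in\mathcal{E}(\w)\\ \e y\in A}}\e^{d-1}\min\Big\{\e\Big|\frac{u_{\e}(\e x)-u_{\e}(\e y)}{\e}\Big|^p,\,1\Big\}.
\end{equation*}
Because $\mathcal{N}(\w)\subset\mathcal{E}(\w)$ (up to symmetrization) by \eqref{nncontained}, it suffices to keep only the Voronoi-neighbor terms on the right. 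So the whole lemma follows if I prove it for the Voronoi-neighbor weak-membrane energy; the constant $c$ will then only depend on $r,R,M,c_f,p$, giving (c) for free.

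\textbf{Compactness.} First I would handle bounded $u_\e$ via Lemma \ref{trunc}: truncating at level $k$ does not increase the energy, and on the truncated sequence I run the standard argument. Identify $u_\e$ with its piecewise-constant interpolant on the scaled Voronoi cells $\e\mathcal{C}(x)$; by Remark \ref{voronoi} these cells are comparable to balls of radius $\e$, so cardinalities, volumes and overlaps are controlled by $r,R$. Split the nearest-neighbor pairs into a ``good'' set $G_\e$ where $\e|u_\e(\e x)-u_\e(\e y)|^p < \e$ (i.e.\ the gradient is small) and a ``jump'' set $J_\e$ where $\e|u_\e(\e x)-u_\e(\e y)|^p \ge \e$. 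The energy bound gives $\sum_{G_\e}\e^d|\tfrac{u_\e(\e x)-u_\e(\e y)}{\e}|^p \le C$ and $\#J_\e \cdot \e^{d-1} \le C$. Using the path lemma (Lemma \ref{l.paths}) to connect points and a Poincaré-type argument on the sublattice obtained by removing the jump edges, one obtains a uniform $L^1$ bound and equiintegrability-preserved compactness; passing to a subsequence $u_\e \to u$ in $L^1_{\rm loc}(A)$, and by equiintegrability in $L^1(A)$. That the limit lies in $GSBV^p(A,\R^m)$ follows from the discrete-to-continuum $SBV$-type closure results (e.g.\ \cite{ACG2,ACR}, or a direct slicing argument): the good part produces an $L^p$-bounded approximate gradient, the jump part produces a $(d-1)$-rectifiable set of finite measure. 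For general (non-bounded) $u_\e$ one applies the bounded case to each truncation $T_k u_\e$, extracts a diagonal subsequence, and uses Lemma \ref{truncation}(i) together with the monotone behavior of $F'(\w)(T_k\cdot,A)$ to pass to the limit $k\to\infty$.

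\textbf{Lower bound.} For the inequality $\tfrac1c(\int_A|\nabla u|^p + \mathcal{H}^{d-1}(S_u\cap A)) \le F'(\w)(u,A)$, I would first prove it for $u\in L^\infty$: given any recovery sequence $u_\e \to u$ with $\sup_\e F_\e(\w)(u_\e,A) < \infty$ (which exists iff the right side is finite), run the above good/jump split. The bulk part: the piecewise-affine (or piecewise-constant) interpolants of $u_\e$ restricted to the set away from jump edges have $L^p$-bounded gradients and converge to $u$, so by weak lower semicontinuity of $\xi\mapsto\int|\xi|^p$, $\int_A|\nabla u|^p \le \liminf C\sum_{G_\e}\e^d|\ldots|^p \le C\,F'(\w)(u,A)$. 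The surface part: the union of scaled jump edges (or the boundaries of the corresponding Voronoi cells) forms a $(d-1)$-dimensional set whose $\mathcal{H}^{d-1}$-measure is, up to the geometric constant from Remark \ref{voronoi}, bounded by $\e^{d-1}\#J_\e \le C\,F_\e(\w)(u_\e,A)$; a lower-semicontinuity/localization argument (or comparison with the known $\Gamma$-limit $\int_{S_u}|\nu|_1\,d\mathcal{H}^{d-1}$ of the pairwise cut functional, cf.\ \cite{ABC,Ch95}, combined with $|\nu|_1 \ge |\nu|$) yields $\mathcal{H}^{d-1}(S_u\cap A) \le C\,F'(\w)(u,A)$. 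Summing gives the claim for bounded $u$. Finally extend to $u\in L^1(A,\R^m)$ via Lemma \ref{trunc}: $F'(\w)(T_k u,A)\le F'(\w)(u,A)$ up to the truncation monotonicity (Lemma \ref{trunc} last display) and $\int_A|\nabla T_k u|^p + \mathcal{H}^{d-1}(S_{T_ku}\cap A) \to \int_A|\nabla u|^p + \mathcal{H}^{d-1}(S_u\cap A)$ by Lemma \ref{truncation}(ii)--(iii) and monotone/dominated convergence; if $u\notin GSBV^p$ the left side is $+\infty$, forcing $F'(\w)(u,A)=+\infty$ as well.

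\textbf{Main obstacle.} The delicate point is the surface lower bound in the genuinely discrete stochastic-lattice setting: unlike on $\mathbb{Z}^d$, Voronoi cells can have arbitrarily small facets, so a naive ``count the jump edges times $\e^{d-1}$'' overestimates and one must instead bound $\mathcal{H}^{d-1}$ of the \emph{limit} jump set from below by the limiting edge count using the lower density bound $B_{r/2}(x)\subset\mathcal{C}(x)$ from Remark \ref{voronoi}, combined with a careful rectifiability/compactness argument (or the calibration/$BV$-ellipticity argument used later in Corollary \ref{notMS}). Getting this with a constant independent of $\w$, $A$ and $u$ is exactly what is needed for the uniformity claim, and is the technical heart of the lemma; the bulk estimate and the compactness are comparatively routine adaptations of known discrete-to-continuum techniques.
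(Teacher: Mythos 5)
Your reduction to the pairwise Voronoi-neighbor weak-membrane energy via \eqref{realcut} and \eqref{nncontained} is correct and is indeed how the paper's proof implicitly begins, and the good/jump decomposition of edges is also the same idea as the paper's split of the lattice into $L_{0,\e}$ and $L_{1,\e}$. However, there are two genuine gaps.

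First, the compactness step is not a proof: you invoke a ``Poincar\'e-type argument on the sublattice obtained by removing the jump edges'' and then a black box of ``discrete-to-continuum $SBV$-type closure results (e.g.\ \cite{ACG2,ACR})''. Neither reference supplies the needed result --- \cite{ACG2} is a Sobolev (quasiconvex integral) compactness statement and \cite{ACR} concerns $BV$-functions into a finite set --- and equiintegrability plus an $L^1$ bound only gives \emph{weak} $L^1$ compactness, not the strong $L^1$ convergence required. What the paper actually does, and what your outline is missing, is the explicit construction of a comparison sequence $v_\e\in SBV^p(A,\R^m)$: one takes a \emph{fixed periodic} simplicial triangulation $\mathcal{T}$ of $\R^d$ (not the Delaunay triangulation, which can be degenerate for $d\ge 3$; this subtlety is flagged in the introduction), builds a continuous piecewise-affine interpolant of $u_\e$ on $\e\mathcal{T}$, and then zeroes it out on simplices near $\partial A$ or near a high-gradient lattice point. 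One proves $\|v_\e-u_\e\|_{L^1(A)}\to 0$, $\mathcal{H}^{d-1}(S_{v_\e}\cap A)\le C$, and $\int_A|\nabla v_\e|^p\le C F_\e(\w)(u_\e,A)$ (the last two via the path lemma and Remark \ref{voronoi}), and only then applies Ambrosio's $GSBV$ compactness and lower-semicontinuity theorem to $v_\e$. Without constructing $v_\e$, the lemma does not reduce to any off-the-shelf result.

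Second, you have misidentified the ``main obstacle''. The issue you raise --- that small Voronoi facets make the edge-count overestimate interface length --- is a concern for the \emph{upper} bound (the $\limsup$ inequality, as in Lemma \ref{bounds} or the recovery-sequence side of the argument), where one needs the discrete energy not to exceed the continuum one. For the present lemma one needs exactly the opposite direction, $\mathcal{H}^{d-1}(S_u\cap A)\le C\,F'(\w)(u,A)$, and an overestimating edge count is harmless there; indeed, the bound $\mathcal{H}^{d-1}(\bigcup_{T\in\mathcal{T}_2}\partial(\e T)\cap A)\le C\e^{d-1}\#L_{1,\e}\le C F_\e(\w)(u_\e,A)$ is one-line once the construction above is in place. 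The actual technical heart of this lemma is the $L^1$-closeness $\|v_\e-u_\e\|_{L^1(A)}\to 0$, which requires the path lemma and the definition of $L_{0,\e}$ to propagate the pointwise bound $|v_\e-u_\e|\le C\e^{1-1/p}$ on good simplices, together with the equiintegrability hypothesis to handle the simplices near $\partial A$ and near $L_{1,\e}$.
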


\begin{proof}
We first construct a suitable function $v_{\e}\in SBV^p(A,\R^m)$ that is asymptotically close to $u_{\e}$. Given a triangulation $\mathcal{T}^{d}$ of the cube $[0,1]^d$ we construct a periodic triangulation of $\R^d$ via
\begin{equation*}
\mathcal{T}=\{T=z+T_d:\;z\in\mathbb{Z}^d,\,T_d\in\mathcal{T}^d\}.
\end{equation*}
We may assume that ${\rm diam}(T)< R$ for all $T\in\mathcal{T}$. Define $u^{\rm aff}_{\e}$ as a continuous piecewise affine interpolation of $u_{\e}$ on $\e\mathcal{T}$ as follows: for each $z\in\mathbb{Z}^d$ we choose a point $x(z)\in\Lw$ such that $z\in\mathcal{C}(x)$ and set 
\begin{equation*}
u^{\rm aff}_{\e}(\e z)=u_{\e}(\e x(z)).
\end{equation*}
Next we decompose the scaled lattice as $\e\mathcal{L}(\w)=L_{0,\e}\cup L_{1,\e}$, where
\begin{equation*}
L_{0,\e}=\{\e x\in\e\Lw:\;\|\e|\nabla_{\w,\e}(u,A)|^p(\e x)\|_1\leq 1\}.
\end{equation*}
Let us also group the simplices overlapping with $A$ according to
\begin{align*}
\mathcal{T}_1&=\{T\in\mathcal{T}:\;T\cap A\neq\emptyset,\,\inf_{z\in\e T}\dist(z,\partial A)\leq 8R\e\},
\\
\mathcal{T}_2&=\{T\in\mathcal{T}:\;T\cap A\neq\emptyset,\,\inf_{z\in\e T}\dist(z,\partial A)>8R\e\; \text{ and }\inf_{z\in\e T}\dist(z,L_{1,\e})\leq 6R\e\},
\\
\mathcal{T}_3&=\{T\in\mathcal{T}:\;T\cap A\neq\emptyset,\,\inf_{z\in\e T}\dist(z,\partial A)>8R\e\; \text{ and }\inf_{z\in\e T}\dist(z,L_{1,\e})>6R\e\}.
\end{align*}
Given $u^{\rm aff}_{\e}$ we define $v_{\e}$ on the interior of each simplex $\e T\in\e\mathcal{T}$ setting
\begin{equation*}
v_{\e_{|\e T}}=\begin{cases}
u^{\rm aff}_{\e_{|\e T}} &\mbox{if $T\in\mathcal{T}_3$,}\\
0 &\mbox{otherwise.} 
\end{cases}
\end{equation*}
Note that $v_{\e}\in SBV^p(A,\R^m)$. We start estimating the difference of $u_{\e}$ and $v_{\e}$ on $A$. Consider any simplex $\e T$ with $T\in\mathcal{T}_1$. Then $\e T\subset \partial A+B_{9R\e}(0)$. Since $\partial A$ is a Lipschitz boundary it admits a $(d-1)$-dimensional Minkowski content. Hence there exists a constant $C=C_R>0$ such that for $\e$ small enough we have
\begin{equation}\label{measest1}
\Big|\bigcup_{T\in\mathcal{T}_1}\e T\cap A\Big|\leq\left|\{z\in A:\;\dist(z,\partial A)\leq 9R\e\}\right|\leq C\mathcal{H}^{d-1}(\partial A)\,\e.
\end{equation}
Next, if $T\in \mathcal{T}_2$, then there exists $\e x\in L_{1,\e}\cap A$ such that $\e T\subset B_{7R\e}(\e x)$. From (\ref{realcut}) and the definition of $L_{1,\e}$ we deduce
\begin{equation}\label{measest2}
\Big|\bigcup_{T\in\mathcal{T}_2}\e T\cap A\Big|\leq C\e^d\,\#\{\e x\in\e\Lw\cap A:\;\|\e|\nabla_{\w,\e}(u,A)|^p(\e x)\|_1 > 1\}
\leq C\e F_{\e}(\w)(u_{\e},A)\leq C\e. 
\end{equation}  
Finally, if $z\in \e T\cap A$ for some $T\in\mathcal{T}_3$, then by definition $\dist(z,L_{1,\e})>6R\e$ and $\dist(z,\partial A)>8R\e$. Let us write $T={\rm co}(z^0,\dots,z^d)\in\mathcal{T}$ and $z=\sum_i\lambda_i^zz^i$. Choosing $x\in\Lw$ such that $z\in \e (\mathcal{C}(x)\cap T)$, Remark \ref{voronoi} yields
\begin{equation}\label{diffest1}
|v_{\e}(z)-u_{\e}(z)|\leq\sum_{i=0}^d\lambda^{z}_i|u^{\rm aff}_{\e}(\e z^i)-u_{\e}(\e x)|\leq \sum_{y\in\Lw\cap B_{3R}(x)}|u_{\e}(\e y)-u_{\e}(\e x)|
\end{equation}  
except for a null set where $u_{\e}$ is not well-defined. Given $y\in \Lw\cap B_{3R}(x)$, we let $P(y,x)=\{y=x_0,x_1,\dots,x_n=x\}$ be the path of Voronoi neighbors given by Lemma \ref{l.paths}. Since $|x-y|\leq 3R$, we have
\begin{equation*}
|x_i-\e^{-1}z|\leq \dist(x_i,{\rm co}(x,y))+|y-x|+|x-\e^{-1}z|< 2R+3R+R=6R.
\end{equation*}
In particular we conclude that $\e x_i\in L_{0,\e}\cap A$ for all $0\leq i\leq n$. Using (\ref{nncontained}), the definition of the set $L_{0,\e}$ implies that $|u_{\e}(\e x_i)-u_{\e}(\e x_{i+1})|\leq\e^{1-\frac{1}{p}}$ for all $0\leq i\leq n-1$. By Remark \ref{voronoi} the number of Voronoi neighbors in $B_{6R}(\e^{-1}z)$ is uniformly bounded, so that from (\ref{diffest1}) we infer the bound
\begin{equation}\label{distest2}
|v_{\e}(z)-u_{\e}(z)|\leq C\e^{1-\frac{1}{p}}.
\end{equation}
Since we have set $v_{\e}=0$ on all $\e T\in \e(\mathcal{T}_1\cup\mathcal{T}_2)$ and $u_{\e}$ is equiintegrable by assumption, we conclude from (\ref{measest1}), (\ref{measest2}) and (\ref{distest2}) that
\begin{equation}\label{l1close}
\lim_{\e\to 0}\|v_{\e}-u_{\e}\|_{L^1(A)}=0.
\end{equation}
Moreover, the sequence $v_{\e}$ is still equiintegrable on $A$. We will show the convergence for the sequence $v_{\e}$. We start estimating the size of its jump set. 
We have at most two contributions. The first one comes from discontinuities along edges of simplices in $\mathcal{T}_1$. For those we have, again for $\e$ small enough, the estimate
\begin{equation*}
\mathcal{H}^{d-1}\left(\bigcup_{T\in\mathcal{T}_1}\partial \e T\cap A\right)\leq C \e^{-1}|\{z\in \R^d:\,\dist(z,\partial A)\leq 9R\e\}|\leq C\mathcal{H}^{d-1}(\partial A),
\end{equation*}
where we used the Lipschitz regularity of $\partial A$ and a reverse isoperimetric inequality for the finitely many simplices in $\mathcal{T}^d$. The other contribution is given by
\begin{equation*}
\mathcal{H}^{d-1}\left(\bigcup_{T\in\mathcal{T}_2}\partial \e T\cap A\right)\leq\sum_{T\in\mathcal{T}_2}C\e^{d-1}\leq CF_{\e}(\w)(u_{\e},A)\leq C,
\end{equation*}
where the second inequality follows by the same reasoning used in the lines preceding (\ref{measest2}). The last two estimates imply that
\begin{equation}\label{jumpest}
\mathcal{H}^{d-1}(S_{v_{\e}}\cap A)\leq\mathcal{H}^{d-1}\left(\bigcup_{T\in\mathcal{T}_1\cup\mathcal{T}_2}\partial \e T\cap A\right)\leq C.
\end{equation}
To estimate the gradient it suffices to consider simplices $T={\rm{co}}(z^0,\dots, z^d)\in\mathcal{T}_3$. Write any basis vector $e_k$ as $e_k=\sum_{i=1}^d\lambda^k_i(z^i-z^0)$. Due to the periodicity of the triangulation $\mathcal{T}$ the coefficients $\lambda^k_i$ are equibounded with respect to the simplices. Take $x\in\Lw$ such that $u^{\rm aff}_{\e}(\e z^0)=u_{\e}(\e x)$. Since $v_{\e}$ is affine on $\e T$ we have
\begin{align*}
|\partial_k v_{\e_{|\e T}}|=&\Big|\sum_{i=1}^d\lambda^k_i\frac{v_{\e}(\e z^i)-v_{\e}(\e z^0)}{\e}\Big|\leq C\sum_{i=0}^d\Big|\frac{u^{\rm aff}_{\e}(\e z^i)-u^{\rm aff}_{\e}(\e z^0)}{\e}\Big|
\\
\leq &C\sum_{y\in\Lw\cap B_{3R}(x)}\e^{-1}\big|u_{\e}(\e y)-u_{\e}(\e x)\big|\leq C\sum_{\substack{x_i,x_j\in B_{5R}(x)\\(x_i,x_j)\in\mathcal{N}(\w)}}\e^{-1}|u_{\e}(\e x_i)-u_{\e}(\e x_j)|,
\end{align*}
where the last inequality follows by the same reasoning we used for proving (\ref{distest2}). Now observe that if $T\in\mathcal{T}_3$ and $|x_i-x|< 5R$, then $\e x_i\in L_{0,\e}\cap A$. Thus taking the $p$-th power of the above estimate and using (\ref{nncontained}) and (\ref{realcut}) we obtain 
\begin{align*}
|\nabla v_{\e_{|\e T}}|^p&\leq C\sum_{\substack{x_i,x_j\in B_{5R}(x)\\(x_i,x_j)\in\mathcal{N}(\w)}}\e^{-p}|u_{\e}(\e x_i)-u_{\e}(\e x_j)|^p\leq C\e^{-1}\sum_{x_i\in \Lw\cap B_{5R}(x)}\|\e|\nabla_{\w,\e}(u,A)|^p(\e x_i)\|_1
\\
&\leq C\e^{-1}\sum_{x_i\in \Lw\cap B_{5R}(x)}f(\e|\nabla_{\w,\e}(u,A)|^p(\e x_i)).
\end{align*}
We sum the last estimate over all $T\in\mathcal{T}_3$. Since $T\subset B_{7R}(x_i)$ we count each lattice point $x_i$ at most $C$ times and thus conclude
\begin{equation}\label{gradest}
\int_{A}|\nabla v_{\e}(z)|^p\,\mathrm{d}z \leq C\sum_{\e x\in \e\Lw\cap A}\e^{d-1}f(\e|\nabla_{\w,\e}(u,A)|^p(\e x))
= CF_{\e}(\w)(u_{\e},A)\leq C.
\end{equation}
By (\ref{jumpest}) and (\ref{gradest}), the compactness theorem for $GSBV(A,\R^m)$-functions \cite[Theorem 2.2]{Amb} implies that, up to subsequences, $v_{\e}\to u\in GSBV(A,\R^m)$ in measure and by equiintegrability also in $L^1(A,\R^m)$. Moreover $\nabla v_{\e}\rightharpoonup \nabla u$ in $L^p(A,\R^{m\times d})$ and from lower semicontinuity \cite[Theorem 3.7]{Amb} we deduce
\begin{equation*}
\int_{A}|\nabla u|^p\,\mathrm{d}z+\mathcal{H}^{d-1}(S_u\cap A)\leq C\liminf_{\e\to 0}F_{\e}(\w)(u_{\e},A)+C\mathcal{H}^{d-1}(\partial A)\leq C.
\end{equation*} 
Thus by definition $u\in GSBV^p(A)$ which finishes the proof of compactness. In order to prove the lower bound, note that the argument above shows that for any open set $A^{\prime}\subset\subset A$ it holds that
\begin{equation*}
\int_{A^{\prime}}|\nabla u|^p\,\mathrm{d}z+\mathcal{H}^{d-1}(S_u\cap A^{\prime})\leq C\liminf_{\e\to 0}F_{\e}(\w)(u_{\e},A),
\end{equation*}
provided the right hand side is finite and $u_{\e}\to u$ in $L^1(D,\R^m)$. By the definition of $F^{\prime}(\w)$ and the arbitrariness of $A^{\prime}$ we obtain the desired estimate.
\end{proof}

As a next step we estimate $F^{\prime\prime}(\w)$ from above.
\begin{lemma}[Upper bound]\label{bounds}
Let $u\in L^1(D,\R^m)$. There exists a constant $c>0$ independent of $\w$ and $u$ such that for all $A\in\mathcal{A}^R(D)$ with $u\in GSBV^p(A,\R^m)$ it holds that
\begin{equation*}
F^{\prime\prime}(\w)(u,A)\leq c\left(\int_A|\nabla u|^p\dx+\mathcal{H}^{d-1}(S_u\cap A)\right).
\end{equation*}
\end{lemma}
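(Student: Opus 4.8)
The plan is to reduce, by truncation and a density argument, to functions that are Lipschitz off a polyhedral jump set, and then produce an explicit recovery sequence for those. First, since $F^{\prime\prime}(\w)(\cdot,A)$ is $L^1(D,\R^m)$-lower semicontinuous (Remark \ref{proxi}), it suffices to prove the bound on a subclass that is dense in energy. By Lemma \ref{trunc} combined with Lemma \ref{truncation}(ii)--(iii) (which give $|\nabla T_k u|\le|\nabla u|$ a.e.\ and $S_{T_k u}\subset S_u$), if the estimate holds for every $u\in SBV^p(A,\R^m)\cap L^{\infty}(A,\R^m)$ it follows for $u\in GSBV^p(A,\R^m)$ upon letting $k\to+\infty$. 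For such $u$ I would then invoke the classical density of piecewise Lipschitz functions in $SBV^p$: there exist $u_k$ Lipschitz on $A$ minus a finite union $P_k$ of $(d-1)$-dimensional polytopes, with $S_{u_k}\subset P_k$, $\|u_k\|_{\infty}\le C\|u\|_{\infty}$, $\nabla u_k\to\nabla u$ in $L^p(A,\R^{m\times d})$ and $\mathcal{H}^{d-1}(S_{u_k})\to\mathcal{H}^{d-1}(S_u\cap A)$; by lower semicontinuity of $F^{\prime\prime}(\w)(\cdot,A)$ it then remains to bound each such $u_k$.

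For $u$ Lipschitz on $A\setminus P$ with $P\supset S_u$ a finite union of $(d-1)$-polytopes, set $\rho=r/4$ and define $u_{\e}\in\mathcal{PC}_{\e}^{\w}$ by $u_{\e}(\e x)=\dashint_{B_{\e\rho}(\e x)\cap A}u\,\mathrm{d}z$ if $\dist(\e x,P)>3M\e$ and $u_{\e}(\e x)=0$ otherwise; equiintegrability of $u$ together with ${\rm diam}\,\mathcal{C}(x)\le 2R$ gives $u_{\e}\to u$ in $L^1(D,\R^m)$. Split $\e\Lw\cap A$ into the jump layer $L_{\e}^{J}=\{\e x:\dist(\e x,P)\le 3M\e\}$, the boundary layer $L_{\e}^{\partial}=\{\e x:\dist(\e x,P)>3M\e,\ \dist(\e x,\partial A)\le 3M\e\}$, and the remaining interior points $L_{\e}^{I}$. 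On $L_{\e}^{J}$ I use only $f\le C_f$ from \eqref{realcut}: since $P$ is polyhedral it has finite $(d-1)$-Minkowski content, so the shell $\{z:\dist(z,P)\le(3M+R)\e\}$ has volume $\le C\e\,\mathcal{H}^{d-1}(P)$, and as the disjoint balls $B_{\e r/2}(\e x)\subset\e\mathcal{C}(x)$ sit in it we get $\#L_{\e}^{J}\le C\e^{1-d}\mathcal{H}^{d-1}(P)$, whence this contribution is $\le C\,\mathcal{H}^{d-1}(S_u\cap A)$. On $L_{\e}^{\partial}$, $u$ is still Lipschitz near $\e x$ and near all its neighbours, so the finite differences are $O(\e)$ and $f(\e|\nabla_{\w,\e}(u_{\e},A)|^p(\e x))=O(\e)$; since $\#L_{\e}^{\partial}\le C\e^{1-d}\mathcal{H}^{d-1}(\partial A)$, this layer contributes $O(\e)\to 0$ (isolating it is what prevents a spurious $\mathcal{H}^{d-1}(\partial A)$ term).

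For the bulk term: for $\e x\in L_{\e}^{I}$ and any edge $(x,y)\in\mathcal{E}(\w)$ with $\e x,\e y\in A$ we have $|\e x-\e y|\le M\e$, both $u_{\e}(\e x)$ and $u_{\e}(\e y)$ are full averages over $\e\rho$-balls, and the ``sausage'' $S_{xy}=\bigcup_{0\le t\le 1}B_{\e\rho}(\e y+t(\e x-\e y))$ lies in $A\setminus P$, where $u\in W^{1,\infty}$. Writing $u_{\e}(\e x)-u_{\e}(\e y)=\dashint_{B_{\e\rho}(0)}\int_0^1\nabla u(\e y+w+t(\e x-\e y))\cdot(\e x-\e y)\,\mathrm{d}t\,\mathrm{d}w$ and applying Jensen's inequality twice and Fubini gives $\e\,|\tfrac{u_{\e}(\e x)-u_{\e}(\e y)}{\e}|^p\le C\e^{1-d}\int_{S_{xy}}|\nabla u|^p\,\mathrm{d}z$. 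Summing over interior points and using $f\le C_f\min\{\|\cdot\|_1,1\}$ and \eqref{neighbours},
\begin{equation*}
\sum_{\e x\in L_{\e}^{I}}\e^{d-1}f\!\left(\e|\nabla_{\w,\e}(u_{\e},A)|^p(\e x)\right)\le C\sum_{\text{edges }(x,y)}\int_{S_{xy}}|\nabla u|^p\,\mathrm{d}z\le C\int_A|\nabla u|^p\,\mathrm{d}z,
\end{equation*}
the last step because every $z$ lies in only boundedly many sets $S_{xy}$ — their endpoints belong to $B_{2M\e}(z)$, which by Remark \ref{voronoi} contains at most $C$ lattice points — while $|\nabla u|^p$ vanishes outside $A$. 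Collecting the three estimates yields $\limsup_{\e\to 0}F_{\e}(\w)(u_{\e},A)\le c\big(\int_A|\nabla u|^p\,\mathrm{d}z+\mathcal{H}^{d-1}(S_u\cap A)\big)$, hence the claim for piecewise Lipschitz $u$, and the reductions conclude.

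The main obstacle is this bulk estimate — converting the discrete $\ell^p$-sum of finite differences over the unstructured edge set $\mathcal{E}(\w)$ into a constant times $\int_A|\nabla u|^p$ — for which the averaging/Fubini device is essential, as is the reduction to a polyhedral jump set: for a generic rectifiable $S_u$ the $(d-1)$-Minkowski content may strictly exceed $\mathcal{H}^{d-1}(S_u)$, which would wreck the estimate on the jump layer $L_{\e}^{J}$. The boundary layer needs only the minor extra care indicated above.
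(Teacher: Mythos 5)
Your proof follows the same overall plan as the paper's — truncation and lower semicontinuity to reduce to $SBV^p\cap L^{\infty}$, then a Cortesani--Toader-type density to piecewise Lipschitz functions with polyhedral jump set $P$, then a direct discretization bounded by splitting the lattice into a jump layer (estimated via the Minkowski content of $P$ and $f\le C_f$) and a bulk region (estimated via Jensen, Fubini, and the bounded overlap of the shifted cells/sausages). The two genuine variations are worth noting. First, you discretize via ball averages $u_\e(\e x)=\dashint_{B_{\e\rho}(\e x)}u$, which, combined with the sausage argument, only needs $u\in W^{1,\infty}$ off $P$; the paper instead takes $u_\e(\e x)=u(\e x)$ pointwise and has to appeal to the $W^{2,\infty}$-regularity granted by assumption (iii) (the quantity $c_u=\|D^2u\|_{L^\infty}$) to move from $\nabla u(\e x+\cdots)$ to $\nabla u(z+\cdots)$ for $z\in\e\mathcal{C}(x)$. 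Your version is cleaner and requires one less derivative. Second, to avoid a spurious $\mathcal{H}^{d-1}(\partial A)$ term you isolate an explicit boundary layer $L_\e^\partial$, whereas the paper reflects $u$ across $\partial A$ to get $\tilde u\in SBV^p(B_L)$ with $\mathcal{H}^{d-1}(S_{\tilde u}\cap\partial A)=0$, applies the density theorem on $B_L$, and then shrinks $A^{\prime}\downarrow\overline A$. Both devices achieve the same thing.

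There is one gap in your version that needs fixing, though it is purely a matter of thresholds. You set $u_\e(\e x)=0$ when $\dist(\e x,P)\le 3M\e$ and define $L_\e^J$ by the same condition, so $L_\e^I$ and $L_\e^\partial$ are exactly the points with $\dist(\e x,P)>3M\e$. But a lattice neighbour $\e y$ of such a point (with $|\e x-\e y|<M\e$) is only guaranteed to satisfy $\dist(\e y,P)>2M\e$, so $\e y$ may still fall into the ``$u_\e=0$'' region. For those edges the sausage formula $u_\e(\e x)-u_\e(\e y)=\dashint\int_0^1\nabla u\cdot(\e x-\e y)$ is false (one side is an average, the other is zero) and the bulk/boundary estimates collapse for them. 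The fix is the standard buffer: keep the ``$u_\e=0$'' rule at $3M\e$ but enlarge $L_\e^J$ to $\{\dist(\e x,P)\le 4M\e\}$ and likewise take $L_\e^\partial$ with $\dist(\e x,\partial A)\le 4M\e$; then every neighbour of a point in $L_\e^I\cup L_\e^\partial$ has $\dist>3M\e$ from $P$ (and $>3M\e$ from $\partial A$ in the interior case), so $u_\e(\e y)$ is a full average and the sausage lies in $A\setminus P$. The widened jump layer still has cardinality $O(\e^{1-d}\mathcal{H}^{d-1}(P))$, so the final constant only worsens by a factor. With this correction the argument is complete.
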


\begin{proof}
Take any ball $B_{L}$ such that $D\subset\subset B_L$. For the moment let us assume that $u\in SBV^p(B_L,\R^m)$ is such that 
\begin{enumerate}
	\item[(i)] $\mathcal{H}^{d-1}(\overline{S_u}\backslash S_u\cap B_L)=0$,
	\item[(ii)] $\overline{S_u}$ is the intersection of $B_L$ with a finite number of pairwise disjoint $(d\mbox{--}1)$-simplices,
	\item[(iii)] $u\in W^{k,\infty}(B_L\backslash\overline{S_u},\R^m)$ for all $k\in\mathbb{N}$.
\end{enumerate}
We define an admissible sequence to bound $F^{\prime\prime}(\w)(u_{|D},A)$ setting
\begin{equation*}
u_{\e}(\e x)=
\begin{cases}
u(\e x) &\mbox{if $\e x\in B_L\backslash\overline{S_u}$,}\\
0 &\mbox{otherwise.}
\end{cases}
\end{equation*} 
Using the properties (ii) and (iii) from above it follows by Remark \ref{voronoi} that $u_{\e}\to u_{|D}$ in $L^1(D,\R^m)$. To bound the energy, consider first the case that $\e x\in\e\Lw\cap A$ is such that $\dist(\e x,\overline{S_u})\geq 3M\e$. Then for all $y\in\Lw$ with $(x,y)\in\mathcal{E}(\w)$ we have by Jensen's inequality and the regularity of $u$ that
\begin{equation*}
\left|\frac{u_{\e}(\e x)-u_{\e}(\e y)}{\e}\right|^p= \left|\int_0^1 \nabla u(\e x+s\e(y-x))(y-x)\,\mathrm{d}s\right|^p\leq |y-x|^p\int_0^1|\nabla u(\e x+s\e(y-x))|^p\,\mathrm{d}s.
\end{equation*}
Integrating both sides over $\e\mathcal{C}(x)$ we infer from Fubini's theorem and Remark \ref{voronoi} the bound
\begin{align}\label{discgrad}
\e^d\Big|\frac{u_{\e}(\e x)-u_{\e}(\e y)}{\e}\Big|^p&\leq C\int_{\e \mathcal{C}(x)}\int_0^1|\nabla u(\e x+s\e(y-x))|^p\,\mathrm{d}s\,\mathrm{d}z\nonumber
\\
&\leq C\left(\int_{\e \mathcal{C}(x)}\int_0^1|\nabla u(z+s\e(y-x))|^p\,\mathrm{d}s\,\mathrm{d}z+c^p_{u}\int_{\e \mathcal{C}(x)}|z-\e x|^p\,\mathrm{d}z\right)\nonumber
\\
&\leq C\left(\int_0^1\int_{\e (\mathcal{C}(x)+s(y-x))}|\nabla u(z)|^p\,\mathrm{d}z\,\mathrm{d}s+c^p_{u}\e^{d+p}\right),
\end{align}
where $c_{u}$ denotes the $L^{\infty}$-norm of $D^2 u$ on $B_L\backslash \overline{S_u}$. Here we used that by Remark \ref{voronoi} we have
\begin{equation}\label{shiftedcell}
t\big(\e x+s\e(y-x)\big)+(1-t)\big(z+s\e(y-x)\big)\in B_{2M\e}(\e x)\subset B_L\backslash\overline{S_u},
\end{equation}
for all $z\in\e\mathcal{C}(x)$ and $s,t\in [0,1]$, so that $c_u$ indeed provides Lipschitz estimates for $\nabla u$. Due to (\ref{realcut}) we have $f(\mathfrak{p})\leq C_f \|\mathfrak{p}\|_1$, so that (\ref{neighbours}), (\ref{discgrad}) and (\ref{shiftedcell}) imply
\begin{equation}\label{regularestimate}
\e^{d-1}f\big(\e|\nabla_{\w,\e}(u,A)|^p(\e x)\big)\leq C\left(\int_{B_{2M\e}(\e x)}|\nabla u(z)|^p\,\mathrm{d}z+c_u^p\e^{d+p}\right).
\end{equation}	

In order to control the contribution from the remaining lattice points, fix a set $A^{\prime}\in\mathcal{A}^R(\R^d)$ such that $A\subset\subset A^{\prime}$. Then, for $\e$ small enough, Remark \ref{voronoi} yields the estimate
\begin{equation*}
\e^{d-1}\#\left\{\e x\in\e\Lw\cap A:\;\dist(\e x,\overline{S_u})<3M\e \right\}\leq \frac{|(\overline{S_{u}}\cap A^{\prime})+B_{4M\e}(0)|}{\e|B_{\frac{r}{2}}(0)|}.
\end{equation*}
Recall that $\overline{S_u}$ is the intersection of $B_L$ with a finite union of pairwise disjoint $(d\hbox{--}1)$-simplices, so that $\overline{S_u}\cap\overline{A^{\prime}}$ admits a $(d-1)$-dimensional Minkowski content. Hence, letting $\e\to 0$, it holds that 
\begin{equation*}
\limsup_{\e\to 0}\frac{|(\overline{S_{u}}\cap A^{\prime})+B_{4M\e}(0)|}{\e|B_{\frac{r}{2}}(0)|}\leq C\mathcal{H}^{d-1}(\overline{S_u}\cap \overline{A^{\prime}})=\mathcal{H}^{d-1}(S_u\cap\overline{A^{\prime}}),
\end{equation*}
where we used assumption (i) in the second identity. Since $f$ is bounded, from (\ref{regularestimate}) and the bound above we conclude that
\begin{align*}
\limsup_{\e\to 0}F_{\e}(\w)(u_{\e},A)&\leq \limsup_{\e\to 0}\sum_{\e x\in\e\Lw\cap A}C\left(\int_{B_{2M\e}(\e x)}|\nabla u(z)|^p\,\mathrm{d}z+c^p_{u}\e^{d+p}\right)+C\,\mathcal{H}^{d-1}(S_u\cap \overline{A^{\prime}})\\
&\leq C\int_{A^{\prime}}|\nabla u(z)|^p\,\mathrm{d}z+C\,\mathcal{H}^{d-1}(S_u\cap \overline{A^{\prime}}).
\end{align*}
Letting $A^{\prime}\downarrow\overline{A}$ in this estimate yields by definition of $F^{\prime\prime}(\w)$ that
\begin{equation}\label{densityest}
F^{\prime\prime}(\w)(u_{|D},A)\leq C\int_{A}|\nabla u(z)|^p\,\mathrm{d}z+C\,\mathcal{H}^{d-1}(S_u\cap \overline{A}).
\end{equation}
From this estimate we can now prove the claim by density. First we assume that $u\in SBV^p(A,\R^m)\cap L^{\infty}(A,\R^m)$. Due to the Lipschitz regularity of $\partial A$ we can use a local reflection argument to extend $u$ to a function $\tilde{u}\in SBV^p(B_L,\R^m)\cap L^{\infty}(B_L,\R^m)$ such that $\mathcal{H}^{d-1}(S_{\tilde{u}}\cap\partial A)=0$. By \cite[Theorem 3.1]{CoTo} applied to the large set $B_L$ we find a sequence $u_n\in SBV^p(B_L,\R^m)$ fulfilling assumptions (i)-(iii) of the first part such that $u_n\to \tilde{u}$ in $L^1(B_L,\R^m)$, $\nabla u_n\to\nabla \tilde{u}$ in $L^p(B_L,\R^{m\times d})$ and $\limsup_n\mathcal{H}^{d-1}(S_{u_n}\cap \overline{A})\leq\mathcal{H}^{d-1}(S_{\tilde{u}})\cap\overline{A})=\mathcal{H}^{d-1}(S_u\cap A)$. From locality and lower semicontinuity of $F^{\prime\prime}(\w)(\cdot,A)$  and (\ref{densityest}) we deduce
\begin{equation*}
F^{\prime\prime}(u,A)=F^{\prime\prime}(\tilde{u}_{|D},A)\leq \liminf_n F^{\prime\prime}(\w)({u_n}_{|D},A)\leq C\int_A|\nabla u(z)|^p\,\mathrm{d}z+C\mathcal{H}^{d-1}(S_u\cap A).
\end{equation*}
It remains to remove the $L^{\infty}$-bound. To this end, given any $u\in GSBV^p(A,\R^m)\cap L^1(D,\R^m)$, we consider the truncated sequence $T_ku\in SBV^p(A,\R^m)\cap L^{\infty}(A,\R^m)$. Then $u_k\to u$ in $L^1(D,\R^m)$ and, as in the previous reasoning, the claim follows by lower semicontinuity of $F^{\prime\prime}(\w)(\cdot,A)$, Lemma \ref{truncation} and the estimate established for bounded functions.
\end{proof}

The following technical lemma establishes an almost subadditivity of the set function $A\mapsto F^{\prime\prime}(\w)(u,A)$.
\begin{proposition}[Almost subadditivity]\label{subadd}
Let $A,B\in\Ard$. Moreover let $A^{\prime}\in\Ard$ be such that $A^{\prime}\subset\subset A$. Then, for all $u\in L^1(D,\R^m)$,
\begin{equation*}
F^{\prime\prime}(\w)(u,A^{\prime}\cup B)\leq F^{\prime\prime}(\w)(u,A)+F^{\prime\prime}(\w)(u,B).
\end{equation*}
\end{proposition}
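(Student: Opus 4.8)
The plan is to use the standard cut-off/gluing technique for almost subadditivity of $\Gamma$-limsups on set functions, adapted to the discrete setting. Fix $u \in L^1(D,\R^m)$. By Lemma \ref{trunc} and the continuity properties at the limit, I may reduce to the case $u \in L^\infty(D,\R^m)$, since truncations converge in $L^1$ and the inequality passes to the limit $k\to\infty$ on both sides. So assume $\|u\|_\infty < \infty$ and, using Remark \ref{proxi} together with Lemma \ref{trunc}, pick sequences $u_\e, v_\e \in \mathcal{PC}_\e^\w$ with $\|u_\e\|_\infty, \|v_\e\|_\infty \le 3\|u\|_\infty$ such that $u_\e \to u$, $v_\e \to u$ in $L^1(D,\R^m)$ and
\begin{equation*}
\limsup_{\e\to 0} F_\e(\w)(u_\e,A) = F''(\w)(u,A), \qquad \limsup_{\e\to 0} F_\e(\w)(v_\e,B) = F''(\w)(u,B).
\end{equation*}
If either right-hand side is $+\infty$ there is nothing to prove, so assume both are finite.

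The core step is the construction of a recovery sequence for $A'\cup B$ by interpolating between $u_\e$ on $A'$ and $v_\e$ on $B\setminus A$. Since $A'\subset\subset A$, choose $N$ nested open sets $A' =: A_0 \subset\subset A_1 \subset\subset \dots \subset\subset A_N := A$ with, say, $\dist(A_{k-1},\partial A_k) \ge \delta > 0$ for all $k$. Let $L_k$ be the ``layer'' $A_k \setminus \overline{A_{k-1}}$; these are pairwise disjoint subsets of $A$. For each $k$ define $w_\e^k \in \mathcal{PC}_\e^\w$ by setting $w_\e^k(\e x) = u_\e(\e x)$ if $\e x \in A_{k-1}$, $w_\e^k(\e x) = v_\e(\e x)$ if $\e x \notin A_k$, and on the lattice points in $L_k$ one must decide the value on a point-by-point basis; the cleanest choice is a sharp switch along a suitable ``discrete hyperplane'' inside $L_k$, i.e. fix an affine hyperplane $H_k$ separating $A_{k-1}$ from the complement of $A_k$ and set $w_\e^k(\e x) = u_\e(\e x)$ on one side of $H_k$ and $v_\e(\e x)$ on the other. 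Then $w_\e^k \to u$ in $L^1(D,\R^m)$ because $u_\e$ and $v_\e$ both converge to $u$ and $|L_k|$ is fixed. The key energy estimate splits $F_\e(\w)(w_\e^k, A'\cup B)$ into three parts: the part supported on lattice points well inside $A_{k-1}$ (bounded by $F_\e(\w)(u_\e,A)$ plus an error), the part supported on lattice points in $(A'\cup B)\setminus A_k$ (bounded by $F_\e(\w)(v_\e,B)$), and the ``interface'' part coming from lattice points within distance $M\e$ of $H_k \cap L_k$; by the finite-range bound \eqref{finiterange}, the neighbor bound \eqref{neighbours} and the uniform $L^\infty$ bound on $u_\e, v_\e$, each such point contributes at most $C\e^{d-1}$ where $C$ depends only on $\|u\|_\infty$, $f$, $M$, so the total interface contribution in layer $L_k$ is bounded by $C \e^{d-1} \#\{\text{lattice points within } M\e \text{ of } H_k\cap L_k\} \le C \mathcal{H}^{d-1}(H_k \cap A)$, a quantity independent of $\e$ and of $k$.

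Now average over $k$: since the layers $L_1,\dots,L_N$ are disjoint,
\begin{equation*}
\sum_{k=1}^N F_\e(\w)(w_\e^k, A'\cup B) \le N\, F_\e(\w)(u_\e,A) + N\, F_\e(\w)(v_\e,B) + C \sum_{k=1}^N \mathcal{H}^{d-1}(H_k\cap A) + o(1),
\end{equation*}
but this naive bound still has the undesirable $C\sum_k \mathcal{H}^{d-1}(H_k\cap A)$ term. The point of having $N$ layers is that for each fixed $\e$ there is an index $k(\e)$ for which the interface contribution is no larger than the average, i.e. at most $\frac{C}{N}\sum_k \mathcal{H}^{d-1}(H_k\cap A) \le \frac{C'}{N}$ (choosing the $H_k$ with comparable $(d-1)$-measure). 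Setting $\bar w_\e := w_\e^{k(\e)}$, one gets
\begin{equation*}
F_\e(\w)(\bar w_\e, A'\cup B) \le F_\e(\w)(u_\e,A) + F_\e(\w)(v_\e,B) + \frac{C'}{N} + o(1).
\end{equation*}
Passing to $\limsup_{\e\to 0}$ and then letting $N\to\infty$, and using that $\bar w_\e \to u$ in $L^1$ (for every $\e$, $\bar w_\e$ is one of finitely many sequences each converging to $u$, so a diagonal argument gives convergence), the definition of $F''(\w)$ yields
\begin{equation*}
F''(\w)(u, A'\cup B) \le \limsup_{\e\to 0} F_\e(\w)(\bar w_\e, A'\cup B) \le F''(\w)(u,A) + F''(\w)(u,B).
\end{equation*}

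The main obstacle is the interface estimate: one must verify that gluing two discrete functions across a layer produces only a surface-order error $O(\e^{d-1})$ times a fixed $(d-1)$-measure, rather than something divergent. This relies crucially on (a) the uniform $L^\infty$ bound on the competitors $u_\e, v_\e$ (so that each ``bad'' finite difference across the interface is bounded, and $f$ being bounded gives a uniform contribution $\le C_f \e^{d-1}$ per point via \eqref{realcut}), (b) the finite interaction range $M$ and uniform neighbor bound \eqref{neighbours}, which keep the number of affected points near $H_k$ of order $\e^{-(d-1)}$ per unit $(d-1)$-area, and (c) the fact that $A'\subset\subset A$ gives room for genuinely $N$ disjoint layers, so the averaging trick can drive the error to zero. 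A minor technical point is the $L^1$-convergence of the diagonal sequence $\bar w_\e$: since for each $N$ the indices $k(\e)$ range over $\{1,\dots,N\}$ and each $w_\e^k\to u$, one extracts convergence by a standard subsequence/diagonal argument, exactly as alluded to in Remark \ref{proxi} and in the appendix.
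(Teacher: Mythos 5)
Your overall scaffolding (reduction to $u\in L^\infty$, recovery sequences, nested intermediate sets, averaging over layers) matches the paper's, but the gluing step is wrong in a way that is not cosmetic: with a \emph{sharp} cut along a hyperplane $H_k$, the interface error does not go to zero.

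Concretely, near $H_k$ the glued function $w_\e^k$ couples $u_\e$ on one side to $v_\e$ on the other. Although $u_\e-v_\e\to 0$ in $L^1$, pointwise they differ by $O(\|u\|_\infty)$ on an $\e$-scale, so across the cut the discrete gradients blow up like $1/\e$ and $f$ saturates at its plateau value. You correctly compute that this gives a contribution $\le C\,\mathcal{H}^{d-1}(H_k\cap A)$ per layer — a quantity that is $\Theta(1)$, uniformly in $\e$ \emph{and} in $k$. But then your claimed bound $\tfrac{C}{N}\sum_{k=1}^N\mathcal{H}^{d-1}(H_k\cap A)\le \tfrac{C'}{N}$ is false: the hyperplanes all have $(d\!-\!1)$-measure comparable to $\mathcal{H}^{d-1}(\partial A')$, so $\sum_k\mathcal{H}^{d-1}(H_k\cap A)\sim N\cdot c_0$, and the average is $\sim c_0$, a constant. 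Averaging cannot kill an error that is identically of size $O(1)$ in every layer; it only helps when the \emph{total} of the layer contributions is controlled (as for disjoint layers sharing a fixed total energy). So the argument cannot be completed as written, and in fact no choice of $k(\e)$ avoids paying a fixed surface-energy price $\sim f_\infty\,\mathcal{H}^{d-1}(\partial A')$.

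This is precisely why the paper interpolates with a \emph{smooth} cutoff $w_\e^i=\Theta_i u_\e+(1-\Theta_i)v_\e$ rather than a sharp switch. The decomposition of the finite difference then produces the extra term $(\Theta_i(\e y)-\Theta_i(\e x))(u_\e(\e x)-v_\e(\e x))$, of size $\sim \e\|\nabla\Theta_i\|_\infty\,|u_\e-v_\e|\sim (N/h)\,\e\,|u_\e-v_\e|$. After taking $p$-th powers and summing, this contributes $CN^{p-1}\|u_\e-v_\e\|_{L^p}^p$, which vanishes as $\e\to0$ for each fixed $N$ because $u_\e-v_\e\to 0$ in $L^p$ (here the uniform $L^\infty$ bound is what upgrades $L^1$ to $L^p$ convergence). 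The remaining error, namely the layer-contributions $F_\e(\w)(u_\e,S_\e^i)+F_\e(\w)(v_\e,S_\e^i)$, is the part killed by averaging, because the layers are essentially disjoint so the sum over $i$ is bounded by $C\big(F_\e(\w)(u_\e,A)+F_\e(\w)(v_\e,B)\big)$. In short: averaging handles the reshuffling of the original energy across layers, while $L^p$-closeness of $u_\e$ and $v_\e$ handles the transition error — and the latter mechanism is only available with the smooth cutoff, not with a hard cut.
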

\begin{proof}
We can assume that $F^{\prime\prime}(u,A)$ and $F^{\prime\prime}(u,B)$ are both finite. Since $A^{\prime}\cup B\in\mathcal{A}(D)$, Lemma \ref{trunc} allows us to reduce the proof to the case $u\in L^{\infty}(D,\R^m)$. Let $u_{\e},v_{\e}\in \mathcal{PC}_{\e}^{\w}$ both converge to $u$ in $L^1(D,\R^m)$ such that
\begin{equation}\label{recovery}
\limsup_{\e\to 0}F_{\e}(\w)(u_{\e},A)=F^{\prime\prime}(\w)(u,A),\quad\limsup_{\e\to 0}F_{\e}(\w)(v_{\e},B)=F^{\prime\prime}(\w)(u,B).
\end{equation}
By Lemma \ref{trunc} we may assume that  $\|u_{\e}\|_{\infty},\|v_{\e}\|_{\infty}\leq 3\|u\|_{\infty}$, so that both sequences actually converge to $u$ in $L^p(D,\R^m)$. Fix $h\leq\dist(A^{\prime},A^c)$ and $N\in\mathbb{N}$. For $i=1,\dots,N$ we define the sets
\begin{equation*}
A_{i}:=\left\{x\in A:\;\dist(x,A^{\prime})<i\frac{h}{2N}\right\}.
\end{equation*}
Let $0\leq\Theta_i\leq 1$ be a cut-off function between the sets $A_i$ and $A_{i+1}$, that means $\Theta_i=1$ on $A_i$ and $\Theta_i=0$ on $\R^d\backslash A_{i+1}$. We may assume that $\|\nabla\Theta_i\|_{\infty}\leq \frac{4N}{h}$. Then define $w^i_{\e}\in \mathcal{PC}_{\e}^{\w}$ by
\begin{equation*}
w^i_{\e}(\e x)=\Theta_i(\e x)u_{\e}(\e x)+(1-\Theta_i(\e x))v_{\e}(\e x).
\end{equation*}
Note that for fixed $i\in\{1,\dots,N\}$ it holds that $w^i_{\e}\to u$ in $L^p(D,\R^m)$. We define the layer-like set
\begin{equation*}
S_{\e}^{i}:=\{x\in A^{\prime}\cup B:\;\dist(x,A_{i+1}\backslash A_{i-1})< 3M\e\}.
\end{equation*}
Then by the definition of the localized functionals we can decompose $F_{\e}(\w)(w^i_{\e},A^{\prime}\cup B)$ via
\begin{align}\label{almostsubeq}
F_{\e}(\w)(w^i_{\e},A^{\prime}\cup B)&\leq\, F_{\e}(\w)(u_{\e},A_{i})+F_{\e}(\w)(v_{\e},B\backslash \overline{A_{i+1}})
+F_{\e}(\w)(w_{\e}^i,S_{\e}^i)\nonumber
\\
&\leq F_{\e}(\w)(u_{\e},A)+F_{\e}(\w)(v_{\e},B)
+F_{\e}(\w)(w_{\e}^i,S_{\e}^i).
\end{align}
We show that the last term is negligible. This will be done by averaging. Observe that
\begin{align*}
w^i_{\e}(\e y)-w^i_{\e}(\e x)=&\Theta_i(\e y)(u_{\e}(\e y)-u_{\e}(\e x))+(1-\Theta_i(\e y))(v_{\e}(\e y)-v_{\e}(\e x))
\\
&+(\Theta_i(\e y)-\Theta_i(\e x))(u_{\e}(\e x)-v_{\e}(\e x))
\end{align*}
for all $x,y\in\Lw$. Applying the convexity inequality $(a+b+c)^p\leq 3^{p-1}(a^p+b^p+c^p)$ and the mean value theorem for $\Theta_i$, we obtain for all $(x,y)\in\mathcal{E}(\w)$ the bound
\begin{equation*}
\e\Big|\frac{w^i_{\e}(\e y)-w_{\e}^i(\e x)}{\e}\Big|^p\leq 3^{p-1}\e\Big|\frac{u_{\e}(\e y)-u_{\e}(\e x)}{\e}\Big|^p+3^{p-1}\e\Big|\frac{v_{\e}(\e y)-v_{\e}(\e x)}{\e}\Big|^p+\frac{(12MN)^p}{3h^p}\e|u_{\e}(\e x)-v_{\e}(\e x)|^p.
\end{equation*}
Summing this estimate over all $\e y\in\e\Lw\cap S_{\e}^i$ with $(x,y)\in\mathcal{E}(\w)$ we infer
\begin{align}
\|\e|\nabla_{\w,\e}(w^i_{\e}, S_{\e}^i)|^p(\e x)\|_1\leq& 3^{p-1}\|\e|\nabla_{\w,\e}(u_{\e}, S_{\e}^i)|^p(\e x)\|_1+3^{p-1}\|\e|\nabla_{\w,\e}(v_{\e}, S_{\e}^i)|^p(\e x)\|_1\nonumber
\\
&+C N^p\e |u_{\e}(\e x)-v_{\e}(\e x)|^p.\label{gradinequ}
\end{align}
Note that for all $\lambda\geq 1$ and $x,y\in\R$ it holds that
\begin{equation*}
\min\{\lambda x+y,1\}\leq\lambda\min\{x,1\}+\min\{y,1\}.
\end{equation*}
We combine this estimate with the bound \eqref{realcut} and the monotonicity of $x\mapsto\min\{x,1\}$ to deduce from \eqref{gradinequ} that
\begin{align*}
\e^{d-1}f(\e|\nabla_{\w,\e}(w^i_{\e}, S_{\e}^i)|^p(\e x))\leq & C\e^{d-1}f(\e|\nabla_{\w,\e}(u_{\e}, S_{\e}^i)|^p(\e x))+C \e^{d-1}f(\e|\nabla_{\w,\e}(v_{\e}, S_{\e}^i)|^p(\e x))
\\
&+C N^p\e^d |u_{\e}(\e x)-v_{\e}(\e x)|^p.
\end{align*} 
%
%
%
%Having in mind this inequality we subdivide the lattice points according to
%\begin{align*}
%I^0_{\e,i}&=\{\e x\in \e\Lw\cap S_{\e}^{i}:\,\|\e|\nabla_{\w,\e}(u_{\e}, S_{\e}^i)|^p(\e x)\|_1+\|\e|\nabla_{\w,\e}(v_{\e}, S_{\e}^i)|^p(\e x)\|_1\leq 3^{-p}\},\\
%I^1_{\e,i}&=(\e\Lw\cap S_{\e}^i)\backslash I^0_{\e,i}.
%\end{align*}
%Since $f(p)\leq C_f\|p\|_1$, the estimate (\ref{gradinequ}) and the lower bound in (\ref{realcut}) imply for $\e x\in I_{\e,i}^0$ that
%\begin{align}
%\e^{d-1}f(\e|\nabla_{\w,\e}(w^i_{\e}, S_{\e}^i)|^p(\e x))\leq& C\e^{d-1}f(\e|\nabla_{\w,\e}(u_{\e}, S_{\e}^i)|^p(\e x))+C\e^{d-1}f(\e|\nabla_{\w,\e}(v_{\e}, S_{\e}^i)|^p(\e x))\nonumber
%\\
%&+CN^p\e^d |u_{\e}(\e x)-v_{\e}(\e x)|^p.\label{convexbound}
%\end{align}
%If $\e x\in I^1_{\e,i}$, we can use (\ref{realcut}) and the inequality $\min\{x+y,1\}\leq \min\{x,1\}+\min\{y,1\}$ to deduce
%\begin{equation}\label{nonconvexbound}
%\e^{d-1}f(\e|\nabla_{\w,\e}(w^i_{\e}, S_{\e}^i)|^p(\e x))
%\leq C\e^{d-1}f(\e|\nabla_{\w,\e}(u_{\e}, S_{\e}^i)|^p(\e x))+C\e^{d-1}f(\e|\nabla_{\w,\e}(v_{\e}, S_{\e}^i)|^p(\e x)).
%\end{equation}
%Summing (\ref{convexbound}) and (\ref{nonconvexbound}) yields
Summing this inequality yields
\begin{align*}
F_{\e}(\w)(w_{\e}^i,S_{\e}^i)\leq &C\big(F_{\e}(\w)(u_{\e}, S_{\e}^i)+F_{\e}(\w)(v_{\e},S_{\e}^i)\big)+CN^p\sum_{\e x\in \e\Lw\cap S^i_{\e}}\e^d|u_{\e}(\e x)-v_{\e}(\e x)|^p.
\end{align*}
For $\e$ small enough we have $S_{\e}^i\cap S_{\e}^j=\emptyset$ for $|i-j|\geq 3$. Moreover, $S_{\e}^i\subset A\cap B$ for $i\geq 2$ as well as $S_{\e}^i\subset\subset A$ with a uniform distance to $\partial A$. Thus averaging the last inequality and applying (\ref{recovery}) gives
\begin{align*}
\frac{1}{N-1}\sum_{i=2}^{N}F_{\e}(\w)(w_{\e}^i,S_{\e}^i)&\leq\frac{C}{N}\left(F_{\e}(\w)(u_{\e},A)+F_{\e}(\w)(v_{\e},B)\right)+CN^{p-1}\sum_{\e \mathcal{C}(x)\subset A}\e^d|u_{\e}(\e x)-v_{\e}(\e x)|^p\nonumber
\\
&\leq\frac{C}{N}+CN^{p-1}\|u_{\e}-v_{\e}\|^p_{L^p(D)}.
\end{align*}
Since we have $u_{\e}-v_{\e}\to 0$ also in $L^p(D,\R^m)$, the last term vanishes when $\e\to 0$. For every $\e>0$ let $i_{\e}\in\{2,\dots,N\}$ be such that
\begin{equation}\label{minimal}
F_{\e}(\w)(w_{\e}^{i_{\e}},S_{\e}^i)\leq \frac{1}{N-1}\sum_{i=2}^{N}F_{\e}(\w)(w_{\e}^i,S_{\e}^i)\leq\frac{C}{N}+CN^{p-1}\|u_{\e}-v_{\e}\|^2_{L^p(D)} 
\end{equation}
and set $w_{\e}:=w_{\e}^{i_{\e}}$. Note that $w_{\e}$ still converges to $u$ strongly in $L^p(D,\R^m)$. Hence, using (\ref{recovery}), (\ref{almostsubeq}) and \eqref{minimal}, we conclude that
\begin{equation*}
F^{\prime\prime}(\w)(u,A^{\prime}\cup B)\leq\limsup_{\e\to 0}F_{\e}(\w)(w_{\e},A^{\prime}\cup B)\leq F^{\prime\prime}(\w)(u,A)+F^{\prime\prime}(\w)(u,B)+\frac{C}{N}.
\end{equation*}
The claim follows by letting $N\to +\infty$.
\end{proof}
In the lemma below we state the last property that we need to prove Proposition \ref{limitsbvp}.
\begin{lemma}[Inner regularity]\label{almostmeasure}
Let $u\in L^1(D)$. Then for any $A\in\Ard$ it holds that
\begin{equation*}
F^{\prime\prime}(\w)(u,A)=\sup_{A^{\prime}\subset\subset A}F^{\prime\prime}(\w)(u,A^{\prime}).
\end{equation*}
\end{lemma}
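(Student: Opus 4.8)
The plan is to prove the nontrivial inequality
\begin{equation*}
F^{\prime\prime}(\w)(u,A)\le\sup_{A^{\prime}\subset\subset A}F^{\prime\prime}(\w)(u,A^{\prime})=:s,
\end{equation*}
the reverse one being immediate since $A\mapsto F_{\e}(\w)(v,A)$, and hence $A\mapsto F^{\prime\prime}(\w)(u,A)$, is non-decreasing (restrict a recovery sequence for $A$ to any $A^{\prime}\subseteq A$). If $s=+\infty$ there is nothing to prove, so from now on I assume $s<+\infty$.

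First I would show that $u\in GSBV^p(A,\R^m)$ with $\int_A|\nabla u|^p\dx+\mathcal{H}^{d-1}(S_u\cap A)<+\infty$. Fix $A^{\prime}\in\Ard$ with $A^{\prime}\subset\subset A$, so that $F^{\prime\prime}(\w)(u,A^{\prime})\le s$, and pick $u_{\e}\in\mathcal{PC}_{\e}^{\w}$ with $u_{\e}\to u$ in $L^1$ and $\limsup_{\e}F_{\e}(\w)(u_{\e},A^{\prime})=F^{\prime\prime}(\w)(u,A^{\prime})$. Truncating via Lemma \ref{trunc} does not increase the energy and produces an equiintegrable sequence $T_ku_{\e}$, so Lemma \ref{compact} applies on $A^{\prime}$ and, after letting $k\to+\infty$ with the help of Lemma \ref{truncation} and lower semicontinuity, yields $u\in GSBV^p(A^{\prime},\R^m)$ with $\int_{A^{\prime}}|\nabla u|^p\dx+\mathcal{H}^{d-1}(S_u\cap A^{\prime})\le c\,s$. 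Letting $A^{\prime}\uparrow A$ and using monotone convergence gives the claim.

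Next, fix an increasing sequence $(A_n)_n$ of sets in $\Ard$ with $A_n\subset\subset A_{n+1}$ and $\bigcup_nA_n=A$ (e.g.\ a smooth exhaustion of $A$). Because $A_n\subset\subset A$ and $\partial A$ is Lipschitz, the collar $C_n:=A\setminus\overline{A_n}$ again lies in $\Ard$, and $A_{n+1}\cup C_n=A$ since $\overline{A_n}\subset\subset A_{n+1}$. Applying Proposition \ref{subadd} with the roles of $A^{\prime},A,B$ there played by $A_{n+1},A_{n+2},C_n$ (legitimate as $A_{n+1}\subset\subset A_{n+2}$) gives
\begin{equation*}
F^{\prime\prime}(\w)(u,A)=F^{\prime\prime}(\w)(u,A_{n+1}\cup C_n)\le F^{\prime\prime}(\w)(u,A_{n+2})+F^{\prime\prime}(\w)(u,C_n)\le s+c\Big(\int_{C_n}|\nabla u|^p\dx+\mathcal{H}^{d-1}(S_u\cap C_n)\Big),
\end{equation*}
where I used $A_{n+2}\subset\subset A$ for the first term and the upper bound Lemma \ref{bounds} for the second (the restriction of $u$ lies in $GSBV^p(C_n,\R^m)$ by the first step). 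The sets $C_n$ are decreasing and $\bigcap_nC_n=A\setminus\bigcup_n\overline{A_n}=\emptyset$ (as $A=\bigcup_nA_n\subseteq\bigcup_n\overline{A_n}$); hence $|C_n|\to0$, so $\int_{C_n}|\nabla u|^p\dx\to0$ by dominated convergence (with dominating function $|\nabla u|^p\mathds{1}_A\in L^1$), and $\mathcal{H}^{d-1}(S_u\cap C_n)\to0$ by continuity from above of the finite Borel measure $B\mapsto\mathcal{H}^{d-1}(S_u\cap B)$. Letting $n\to+\infty$ in the displayed estimate yields $F^{\prime\prime}(\w)(u,A)\le s$.

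The subtle point is the treatment of the boundary collar: one must check that the energy $F^{\prime\prime}$ carried by a thin shell around $\partial A$ is asymptotically negligible, which is not obvious a priori because $S_u$ could in principle cluster on $\partial A$. The resolution is twofold: the collars $C_n$ are \emph{open} sets shrinking to the empty set (and not to $\partial A$, precisely because $A$ is open), so the surface part disappears by continuity from above; and in order to bound $F^{\prime\prime}$ on $C_n$ at all one needs Lemma \ref{bounds}, which requires $u\in GSBV^p(C_n,\R^m)$ — this is exactly why the global $GSBV^p$ estimate of the first step (obtained through Lemmas \ref{trunc}, \ref{compact} and \ref{truncation}) has to be established beforehand, since the collars are not compactly contained in $A$.
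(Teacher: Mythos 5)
Your proposal is correct and follows essentially the same route as the paper: decompose $A$ into a compactly contained core $A_{n+1}$ and a thin open collar $C_n$, apply the almost-subadditivity of Proposition \ref{subadd} together with the upper bound of Lemma \ref{bounds} on $C_n$, and let the collar shrink using finiteness of $\|\nabla u\|^p_{L^p(A)}+\mathcal{H}^{d-1}(S_u\cap A)$. The only differences are organizational — you argue in the contrapositive (assuming $s<+\infty$ and deducing $u\in GSBV^p(A,\R^m)$, making the truncation needed for Lemma \ref{compact} explicit) and use a generic smooth exhaustion rather than the distance sublevels $A_k=\{x\in A:\dist(x,\partial A)>2^{-k}\}$ of the paper — but these are cosmetic and do not change the substance of the argument.
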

\begin{proof}
It suffices to prove one inequality since $A\mapsto F^{\prime\prime}(\w)(u,A)$ is monotone with respect to set inclusion. For $k\in\mathbb{N}$ define the set $A_k=\{x\in A:\;\dist(x,\partial A)>2^{-k}\}$. Then for $k$ large enough we have that $A_k,A\backslash A_k\in\Ard$ (see \cite[Lemma 2.2]{glpe}). We first treat the case $u\notin GSBV^p(A,\R^m)$ and prove that $\limsup_k F^{\prime\prime}(\w)(u,A_k)=+\infty$. Assume by contradiction that this sequence is bounded. Then, for each $k$ we have $u\in GSBV^p(A_k,\R^m)$ by Lemma \ref{compact} and thus $u\in GSBV(A,\R^m)$. Since the measure of the jump set and the $L^p$-norm of the gradient in $A_k$ are equibounded with respect to $k$, we reach the contradiction $u\in GSBV^p(A,\R^m)$. Now assume that $u\in GSBV^p(A,\R^m)$. Note that $A=A_{k+1}\cup A\backslash \overline{A_{k}}$. Hence Lemma \ref{bounds} and Proposition \ref{subadd} imply
\begin{align*}
F^{\prime\prime}(\w)(u,A)&\leq F^{\prime\prime}(\w)(u,A_{k+2})+C\Big(\|\nabla u\|^p_{L^p(A\backslash\overline{A_{k}})}+\mathcal{H}^{d-1}(S_u\cap (A\backslash\overline{A_{k}}))\Big)
\\
&\leq \sup_{A^{\prime}\subset\subset A}F^{\prime\prime}(\w)(u,A^{\prime})+C\Big(\|\nabla u\|^p_{L^p(A\backslash\overline{A_{k}})}+\mathcal{H}^{d-1}(S_u\cap (A\backslash\overline{A_{k}}))\Big).
\end{align*}
Letting $k\to +\infty$ proves the claim since $u\in GSBV^P(A,\R^m)$.
\end{proof}

\begin{proof}[Proof of Proposition \ref{limitsbvp}]
Given a sequence $\e\to 0^+$, by the compactness property of $\Gamma$-convergence on separable metric spaces (see \cite[Proposition 1.42]{GCB}) we find a subsequence $\e_n$ such that 
\begin{equation*}
\Gamma\hbox{-}\lim_n F_{\e_n}(\w)(u,R)=:\tilde{F}(\w)(u,R)
\end{equation*}
exists for every $u\in L^1(D,\R^m)$ and all sets $R\in\mathcal{R}$, where $\mathcal{R}$ denotes the class of all subsets of $D$ that are finite unions of rectangles with rational vertices. Due to Lemma \ref{almostmeasure} and monotonicity, for every $u\in L^1(D,\R^m)$ and $A\in\Ard$ we conclude that
\begin{equation*}
\Gamma\hbox{-}\limsup_nF_{\e_n}(\w)(u,A)\leq \sup_{R\in\mathcal{R}}\tilde{F}(\w)(u,R)\leq {\Gamma\hbox{-}\liminf_n}F_{\e_n}(\w)(u,A),
\end{equation*}
so that we can define $\tilde{F}(\w)(u,A):=\Gamma\hbox{-}\lim_nF_{\e_n}(u,A)$ also for all $u\in L^1(D,\R^m)$ and $A\in\Ard$. We extend $\tilde{F}(\w)(u,\cdot)$ to $\mathcal{A}(D)$ via its inner regular envelope
\begin{equation*}
F(\w)(u,A):=\sup\,\{\tilde{F}(\w)(u,A^{\prime}):\;A^{\prime}\subset\subset A,\, A^{\prime}\in\Ard\}.
\end{equation*}
By Lemma \ref{almostmeasure} this functional indeed extends $\tilde{F}(\w)(u,\cdot)$. Next we need to slightly perturb the functional. Given $\eta>0$, for any $u\in SBV^p(D,\R^m)$ and $A\in\mathcal{A}(D)$ we define the auxiliary functional $\mathcal{F}_{\eta}:SBV^p(D,\R^m)\times\mathcal{A}(D)\to[0,+\infty)$ as
\begin{equation*}%\label{perturbation}
\mathcal{F}_{\eta}(u,A)=F(\w)(u,A)+\eta\int_{S_u\cap A}|u^+-u^-|\,\mathrm{d}\mathcal{H}^{d-1}.
\end{equation*}
We argue that $\mathcal{F}_{\eta}$ satisfies the assumptions of \cite[Theorem 1]{BFLM}, that means,
\begin{itemize}
	\item[(i)] ${\mathcal F}_{\eta}(u, \cdot)$ is the restriction to ${\mathcal A}(D)$ of a Radon measure;
	\item[(ii)] ${\mathcal F}_{\eta}(u, A)={\mathcal F}_{\eta}(v, A)$ whenever $u = v$ a.e. on $A\in{\mathcal A}(D)$;
	\item[(iii)] ${\mathcal F}_{\eta}(\cdot, A)$ is $L^1(D,\R^m)$-lower semicontinuous;
	\item[(iv)] there exists $c>0$ such that 
	\begin{equation*}
	\begin{split}\frac 1 c &\left(\int_A|\nabla u|^p\dx+\int_{S_u\cap A}(1+|u^+-u^-|)\,\mathrm{d}\mathcal{H}^{d-1}\right)\leq  {\mathcal F}_{\eta}(u, A)\\
	&\leq c \left(\int_A(1+|\nabla u|^p)+\int_{S_u\cap A}(1+|u^+-u^-|)\,\mathrm{d}\mathcal{H}^{d-1}\right).
	\end{split}
	\end{equation*}
\end{itemize}

(i): We verify the De Giorgi-Letta criterion (see \cite[Theorem 1.62]{FoLe}). Clearly $A\mapsto\mathcal{F}_{\eta}(u,A)$ is a non-negative, increasing and inner regular set function with $\mathcal{F}_{\eta}(u,\emptyset)=0$. Moreover, discrete superadditivity on disjoint sets is transfered from $F_{\e}(\w)(u,\cdot)$ to $F^{\prime}(\w)(u,\cdot)$, which implies that $\mathcal{F}_{\eta}(u,A\cup B)\geq \mathcal{F}_{\eta}(u,A)+\mathcal{F}_{\eta}(u,B)$ whenever $A\cap B=\emptyset$. In order to prove subadditivity, let $A,B\in\mathcal{A}(D)$ and consider $S\subset\subset A\cup B$ such that $S\in\Ard$. Let us define the set $A_k=\{x\in A:\,\dist(x,\partial A)>2^{-k}\}$ and similarly we define $B_k$. Then the family $\{A_k\cup B_k\}_k$ forms an open cover of $\overline{S}$. By compactness we find an index $k_0$ such that $\overline{S}\subset A_{k_0}\cup B_{k_0}$. Next we regularize the sets $A_{2k_0},B_{2k_0}$ and $A_{4k_0},B_{4k_0}$ by standard methods to find further sets $A_0,A_1,B_0,B_1\in\Ard$ such that $\overline{S}\subset A_0\cup B_0$ and $A_0\subset\subset A_1\subset\subset A$ and $B_0\subset\subset B_1\subset\subset B$. Then by Proposition \ref{subadd}
\begin{equation*}
\tilde{F}(\w)(u,S)\leq \Gamma\hbox{-}\limsup_n F_{\e_n}(\w)(u,A_0\cup B_0)\leq \tilde{F}(\w)(u,A_1)+\tilde{F}(\w)(u,B_1)\leq F(\w)(u,A)+F(\w)(u,B).
\end{equation*} 
Taking the supremum over such $S$ yields the subadditivity of $A\mapsto F(\w)(u,A)$. The corresponding property for the perturbation term is straightforward. Thus the De Giorgi-Letta criterion applies and we infer that $\mathcal{F}_{\eta}(u,\cdot)$ is the trace of a Borel measure. By Lemma \ref{bounds} this measure is finite, so it is a Radon measure. 

(ii)+(iii): The locality property follows from Lemma \ref{local} and the definition of $F(\w)$ by inner approximation as well as locality of the perturbation term. By the properties of $\Gamma$-limits we know that $\tilde{F}(\w)(\cdot,A)$ is $L^1(D,\R^m)$-lower semicontinuous and so is $F(\w)(\cdot,A)$ as the supremum of lower semicontinuous functionals. $L^1(D,\R^m)$-lower semicontinuity of the perturbation term along sequences such that $F(\w)(u_n,A)$ remains bounded follows from the bounds established in Lemma \ref{compact} that still hold for $F(\w)(u,A)$. Indeed those bounds yield stronger compactness so that we can combine \cite[Theorems 2.2 and 3.7]{Amb} to conclude lower semicontinuity. Hence $\mathcal{F}(\cdot,A)$ is lower semicontinuous as the sum of (finite) lower semicontinuous functionals. 

(iv): The bounds follow from Lemmata \ref{compact} and \ref{bounds}, which still hold for $F(\w)$ in place of $\tilde{F}(\w)$, and the definition of the perturbation term.

From \cite[Theorem 1]{BFLM} and the fact that $F_{\e}(\w)(u+z,A)=F_{\e}(\w)(u,A)$ for all $z\in\R^m$ we deduce that $\mathcal{F}_{\eta}$ has the representation
\begin{equation}\label{F_eta}
\mathcal{F}_{\eta}(u,A)=\int_Ah_{\eta}(x,\nabla u)\,\mathrm{d}x+\int_{S_u\cap A}\varphi_{\eta}(x,{u^+}-{u^-},\nu_u)\,\mathrm{d}\mathcal{H}^{d-1}
\end{equation}
for all $u\in SBV^p(D,\R^m)$ and $A\in\mathcal{A}(D)$ with the integrands given by the asymptotic formulas
\begin{equation*}
\begin{split}
h_{\eta}(x_0,\xi)&=\limsup_{\varrho\to 0}\frac{1}{\varrho^d}\inf\{\mathcal{F}_{\eta}(v,Q_{\nu}(x_0,\varrho)):\;v=\xi(\cdot-x_0)\text{ in a neighborhood of }\partial Q_{\nu}(x_0,\varrho)\},
\\
\varphi_{\eta}(x_0,a,\nu)&=\limsup_{\varrho\to 0}\frac{1}{\varrho^{d-1}}\inf\{\mathcal{F}_{\eta}(v,Q_{\nu}(x_0,\varrho)):\;v=u_{x_0,\nu}^{a,0}\text{ in a neighborhood of }\partial Q_{\nu}(x_0,\varrho)\}.
\end{split}
\end{equation*}
Hence, due to locality, for every $u\in L^1(D,\R^m)$ and $A\in\Ard$ such that $u\in SBV^p(A,\R^m)$ we have
\begin{equation*}
\Gamma\hbox{-}\lim_nF_{\e_n}(\w)(u,A)=F(\w)(u,A)=\int_A h_{\eta}(x,\nabla u)\,\mathrm{d}x+\int_{S_u\cap A}(\varphi_{\eta}(x,{u^+}-{u^-},\nu_u)-\eta|u^+-u^-|)\,\mathrm{d}\mathcal{H}^{d-1}.
\end{equation*}
It remains to prove the formulas for the integrands stated in Proposition \ref{limitsbvp}. Note that the mapping $\eta\mapsto \varphi_{\eta}(x_0,a,\nu)$ is increasing and non-negative on $(0,+\infty)$. Hence there exists the limit $\varphi(x_0,a,\nu)=\lim_{\eta\to 0}\varphi_{\eta}(x_0,a,\nu)$. By the same reasoning there exists $h(x_0,\xi)=\lim_{\eta\to 0}h_{\eta}(x_0,\xi)$, so that monotone convergence implies that 
\begin{equation*}
F(\w)(u,A)=\int_A h(x,\nabla u)\,\mathrm{d}x+\int_{S_u\cap A}{\varphi}(x,{u^+}-{u^-},\nu_u)\,\mathrm{d}\mathcal{H}^{d-1}
\end{equation*}
for every $A\in\Ard$ and every $u\in L^1(D,\R^m)$ such that $u\in SBV^p(A,\R^m)$. Since $F(\w)(u,A)\leq \mathcal{F}_{\eta}(u,A)$, we further know by the definition of $m(\w)(v,A)$ (cf. the statement of Proposition \ref{limitsbvp}) that
\begin{equation*}
\limsup_{\varrho\to 0}\varrho^{1-d}m(\w)(u_{x_0,\nu}^{a,0},Q_{\nu}(x_0,\varrho))\leq \lim_{\eta\to 0}\varphi_{\eta}(x_0,a,\nu)=\varphi(x_0,a,\nu).
\end{equation*}
In order to show the reverse inequality, we note that Lemma \ref{trunc} implies a very weak maximum principle for $F(\w)$: there exists $u_{\varrho}\in SBV^p(Q_{\nu}(x_0,\varrho),\R^m)$ admissible for the definition of $m(\w)(u_{x_0,\nu}^{a,0},Q_{\nu}(x_0,\varrho))$ and satisfying $\|u\|_{\infty}\leq 3|a|$ such that
\begin{equation*}
\varrho^{1-d}F(\w)(u_{\varrho},Q_{\nu}(x_0,\varrho))\leq m(\w)(u_{x_0,\nu}^{a,0},Q_{\nu}(x_0,\varrho))+\varrho.
\end{equation*}
Then clearly $|u_{\varrho}^+-u_{\varrho}^-|\leq 6|a|$ for $\mathcal{H}^{d-1}$-a.e. $x\in S_{u_{\varrho}}$. With the lower bound of Lemma \ref{compact} we obtain
\begin{align*}
\varphi_{\eta}(x_0,a,\nu)&\leq \limsup_{\varrho\to 0}\varrho^{1-d}\Big(F(\w)(u_{\varrho},Q_{\nu}(x_0,\varrho))+6|a|\eta\mathcal{H}^{d-1}(S_{u_{\varrho}}\cap Q_{\nu}(x_0,\varrho))\Big)
\\
&\leq \limsup_{\varrho\to 0}\varrho^{1-d}\Big(m(\w)(u_{x_0,\nu}^{a,0},Q_{\nu}(x_0,\varrho))+C|a|\eta F(\w)(u_{\varrho},Q_{\nu}(x_0,\varrho))\Big)
\\
&\leq (1+C|a|\eta)\limsup_{\varrho\to 0}\varrho^{1-d}m(\w)(u_{x_0,\nu}^{a,0},Q_{\nu}(x_0,\varrho)).
\end{align*}
Since the term on the right hand side is finite, we conclude by taking the limit as $\eta\to 0$. The proof of the formula for $h$ is the same except that we can choose $u_{\varrho}$ even such that $\|u_{\varrho}\|_{\infty}\leq C |\xi|\varrho$, so that there is no need to let $\eta\to 0$ at the end. This finishes the proof of Proposition \ref{limitsbvp}.
\end{proof}  

\subsection{Characterization of the bulk density}
In this section we show that the function $h$ given by Proposition \ref{limitsbvp} agrees with the density of the $\Gamma$-limit of the discrete functionals defined in \eqref{auxelastic}. 

Let us briefly explain the strategy: for both functionals $F_{\e}(\w)$ and $E_{\e}(\w)$ we consider recovery sequences for affine functions instead of minimization problems with affine boundary conditions since this approach shortens the proof. Our aim is to modify those sequences on a small set such that $\|\e|\nabla_{\w,\e}(u_{\e},A)|^p\|_1$ is equiintegrable because at that level $F_{\e}(\w)$ and $E_{\e}(\w)$ become comparable. It is well-known that if a sequence is bounded in $W^{1,p}$, then up to a subsequence one can modify it on a set whose measure goes to zero in such a way that the gradients become $p$-equiintegrable (see \cite[Lemma 1.2]{FoMuPe}). This is usually achieved via an abstract Lipschitz-extension on the set where the maximal function of the norm of the gradient is very large. Indeed, this classical strategy goes back at least to \cite{AcFu}. To control the size of the set where the function is modified, one estimates the maximal function by the gradient itself, which is possible only for $p>1$. In our case no a priori $L^p$-bounds on the gradient are available. However, in \cite{Larsen} the modification procedure was extended to $BV$-sequences with vanishing singular part. In our setting the basic idea is the following observation: on a ball of radius $\varrho$ the energy of a recovery sequence can be trivially bounded by $\sim\varrho^d$. Morally speaking, this shows that the set where no $L^p$-bounds are available is relatively small with a quantitative rate in $\varrho$. In suitable diagonal regimes of $\varrho$ and $\e$ this can be exploited to still perform a modification on small sets which makes the gradients equiintegrable. However, we emphasize that we have to transfer these ideas to the discrete environment.
\\

The analysis splits into three different parts. As a first step, we argue that affine functions indeed fully characterize the function $h$ and establish the framework for the diagonal argument on small balls $B_{\varrho}(x_0)$. Then we introduce the notion of discrete maximal functions on stochastic lattices and prove the doubling property of the counting measure as well as a Poincar\'e inequality. Those properties allow to perform modifications on small sets by abstract Lipschitz extensions. Finally, with these tools at hand, we can carefully modify recovery sequences on smaller and smaller balls to conclude by a blow-up argument.  
\\

Although it is just a technical detail, we first prove that $h$ is a Carath\'eodory function, which is necessary to conclude its quasiconvexity.

\begin{lemma}\label{caratheodory}
Let $h:D\times\R^{m\times d}\to [0,+\infty)$ be given by Proposition \ref{limitsbvp}. Then, for every $\xi\in\mathbb{R}^{m\times d}$ the map $x\mapsto h(x,\xi)$ is measurable and, for every $x\in D$, the map $\xi\mapsto h(x,\xi)$ is continuous.
\end{lemma}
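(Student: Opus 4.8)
Here is the plan I would follow.

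\medskip
\noindent\textbf{Strategy.} Rather than analysing the $\limsup$‑formula for $h$ directly, I would transfer the Carath\'eodory property from the perturbed integrands $h_{\eta}$ produced by \cite[Theorem 1]{BFLM}. Recall from the proof of Proposition \ref{limitsbvp} that, for every $\eta>0$, the functional $\mathcal{F}_{\eta}$ admits an integral representation whose bulk density $h_{\eta}$ is a Carath\'eodory function (measurable in $x_{0}$, continuous in $\xi$), and that $h(x_{0},\cdot)=\lim_{\eta\to 0^{+}}h_{\eta}(x_{0},\cdot)$ for every $x_{0}\in D$. The key observation, which is already recorded at the very end of the proof of Proposition \ref{limitsbvp}, is that the jump‑perturbation $\eta\int_{S_{u}\cap A}|u^{+}-u^{-}|\,\mathrm{d}\mathcal{H}^{d-1}$ does not affect the bulk density, so that in fact
\[
h(x_{0},\xi)=h_{\eta}(x_{0},\xi)\qquad\text{for every }x_{0}\in D,\ \xi\in\mathbb{R}^{m\times d},\ \eta>0 .
\]
Once this identity is available, $h$ inherits all the properties of the Carath\'eodory function $h_{\eta}$ and the lemma follows.

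\medskip
\noindent\textbf{Proof of the identity $h=h_{\eta}$.} One inequality is immediate: since $\mathcal{F}_{\eta}\geq F(\w)$ on $SBV^{p}$ and both asymptotic formulas range over the same class of competitors, the infimum defining $h_{\eta}$ dominates $m(\w)(\xi(\cdot-x_{0}),Q_{\nu}(x_{0},\varrho))$, whence $h_{\eta}\geq h$. For the reverse inequality I would fix $x_{0},\xi,\varrho$, take a competitor $v\in SBV^{p}(Q_{\nu}(x_{0},\varrho),\R^{m})$ with $v=\xi(\cdot-x_{0})$ near $\partial Q_{\nu}(x_{0},\varrho)$ and $F(\w)(v,Q_{\nu}(x_{0},\varrho))\leq m(\w)(\xi(\cdot-x_{0}),Q_{\nu}(x_{0},\varrho))+\varrho^{d+1}$, and — using the truncation of Lemma \ref{trunc} (which still applies to $F(\w)$ and, being $1$‑Lipschitz and jump‑set shrinking, does not increase $\int_{S_{v}}|v^{+}-v^{-}|$) — assume in addition $\|v\|_{\infty}\leq C|\xi|\varrho$, so that $|v^{+}-v^{-}|\leq C|\xi|\varrho$ on $S_{v}$. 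The lower bound of Lemma \ref{compact} gives $\mathcal{H}^{d-1}(S_{v}\cap Q_{\nu}(x_{0},\varrho))\leq C\,F(\w)(v,Q_{\nu}(x_{0},\varrho))$, hence
\[
\mathcal{F}_{\eta}(v,Q_{\nu}(x_{0},\varrho))\leq\bigl(1+C|\xi|\varrho\,\eta\bigr)F(\w)(v,Q_{\nu}(x_{0},\varrho)).
\]
Dividing by $\varrho^{d}$, using the trivial bound $\varrho^{-d}m(\w)(\xi(\cdot-x_{0}),Q_{\nu}(x_{0},\varrho))\leq C|\xi|^{p}$ (test with $v=\xi(\cdot-x_{0})$ and Lemma \ref{bounds}), and letting $\varrho\to 0$, the factor $C|\xi|\varrho\,\eta$ disappears and one gets $h_{\eta}(x_{0},\xi)\leq h(x_{0},\xi)$. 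This yields the desired identity, and measurability in $x_{0}$ and continuity in $\xi$ of $h$ are then exactly those of $h_{\eta}$.

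\medskip
\noindent\textbf{Expected difficulty.} I do not expect a genuine obstacle; the only point requiring care is the truncation bookkeeping. One must check that the discrete monotonicity $F_{\e}(\w)(T_{k}u_{\e},A)\leq F_{\e}(\w)(u_{\e},A)$ of Lemma \ref{trunc} survives the passage to the $\Gamma$‑limit (via a recovery sequence and $T_{k}u_{\e}\to T_{k}u$ in $L^{1}$), that a truncation leaves the affine datum $\xi(\cdot-x_{0})$ on $Q_{\nu}(x_{0},\varrho)$ unchanged once its level exceeds $\|\xi(\cdot-x_{0})\|_{L^{\infty}(Q_{\nu}(x_{0},\varrho))}\simeq|\xi|\varrho$, and that it does not enlarge $\int_{S_{v}}|v^{+}-v^{-}|$, so that restricting to uniformly bounded competitors is legitimate simultaneously for $m(\w)$ and for the minimisation problem defining $h_{\eta}$. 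All of this is elementary, and most of it is already carried out at the end of the proof of Proposition \ref{limitsbvp}.
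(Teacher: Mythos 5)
Your identity $h=h_{\eta}$ for every $\eta>0$ is correct, and it is in fact already recorded, essentially verbatim, in the final paragraph of the paper's proof of Proposition \ref{limitsbvp} (the remark that for $h$ ``there is no need to let $\eta\to 0$ at the end''). The gap lies in the transfer step: you assert that the proof of Proposition \ref{limitsbvp}, via \cite[Theorem 1]{BFLM}, already produces an integrand $h_{\eta}$ that is continuous in $\xi$ for \emph{every} $x_{0}\in D$. Nothing in the paper up to that point establishes this; \cite[Theorem 1]{BFLM} only supplies the integral form and the asymptotic blow-up formula for $h_\eta$, and Lemma \ref{caratheodory} is precisely the statement meant to deliver the $\xi$-regularity, so invoking it for $h_{\eta}$ is circular. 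Moreover, even if one could extract a Carath\'eodory property directly from the global method, that notion only gives continuity in $\xi$ for a.e.\ $x_{0}$, whereas the lemma demands it for every $x_{0}\in D$.

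The paper closes the gap by a direct cut-off argument that is uniform in $x_0$: take a near-minimiser for $m(\w)(u_{x_0,\xi_1},Q_{\nu}(x_0,\varrho))$, glue it to the affine datum $u_{x_0,\xi_2}$ on a shell $Q_{\nu}(x_0,(1+2\eta)\varrho)\setminus Q_{\nu}(x_0,\varrho)$ via a cut-off with gradient of order $(\varrho\eta)^{-1}$, control the added bulk energy through Lemma \ref{bounds}, and pass $\varrho\to 0$ to obtain the quantitative modulus $|h(x_0,\xi_1)-h(x_0,\xi_2)|\leq C\eta\big(|\xi_1|^p+|\xi_2|^p\big)+C\eta^{1-p}|\xi_1-\xi_2|^p$ for every $x_{0}\in D$. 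You would still need an argument of this type after establishing $h=h_\eta$. Your treatment of measurability (Radon--Nikod\'ym derivative for $h_\eta(\cdot,\xi)$, then pointwise limit in $\eta$) coincides with the paper's and is fine.
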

\begin{proof}
Denoting by $u_{x_0,\xi}$ the affine function $u_{x_0,\xi}(x)=\xi (x-x_0)$, the function $x\mapsto h_{\eta}(x,\xi)$ defined in the proof of Proposition \ref{limitsbvp} is the Radon-Nikodym derivative of the measure $\mathcal{F}_{\eta}(u_{x_0,\xi},\cdot)$ with respect to the Lebesgue-measure. Hence it is measurable and so is $x\mapsto h(x,\xi)$ as the pointwise limit. In order to prove continuity in $\xi$, let us write $Q_{\varrho}=Q_{\nu}(x_0,\varrho)$ and fix $\xi_1,\xi_2\in\mathbb{R}^{m\times d}$ and $\eta>0$. Consider a smooth function $0\leq\Theta\leq 1$ such that $\Theta=1$ on $Q_{\varrho}$ and ${\rm supp}(\Theta)\subset Q_{(1+\eta)\varrho}$ satisfying in addition $\|\nabla \Theta\|_{\infty}\leq\frac{2}{\varrho\eta}$. Given $u_1\in SBV^p(Q_{\varrho},\R^m)$ such that $u_1=u_{x_0,\xi_1}$ in a neighborhood of $\partial Q_{\varrho}$ (extended to the whole space) we define $u_2\in SBV^p(Q_{(1+2\eta)\varrho},\R^m)$ by
\begin{equation*}
u_2=\Theta u_1+(1-\Theta)u_{x_0,\xi_2}.
\end{equation*}
Then $\mathcal{H}^{d-1}(S_{u_2}\cap (Q_{(1+2\eta)\varrho}\backslash Q_{\varrho}))=0$ and $u_2=u_{x_0,\xi_2}$ in a neighborhood of $\partial Q_{(1+2\eta)\varrho}$. Hence by Lemma \ref{bounds}
\begin{align*}
\frac{m(\w)(u_{x_0,\xi_2},Q_{(1+2\eta)\varrho})}{((1+2\eta)\varrho)^d}&\leq \frac{F(\w)(u_2,Q_{(1+2\eta)\varrho})}{\varrho^d}\leq \frac{F(\w)(u_1,Q_{\varrho})}{\varrho^d}+C\varrho^{-d}\int_{Q_{(1+2\eta)\varrho}\backslash \overline{Q_{\varrho}}}|\nabla u_2|^p\,\mathrm{d}z
\\
&\leq \frac{F(\w)(u_1,Q_{\varrho})}{\varrho^d}+C((1+2\eta)^d-1) \Big((|\xi_1|^p+|\xi_2|^p) +\frac{\varrho^p}{(\varrho\eta)^p}|\xi_1-\xi_2|^p\Big).
\end{align*}
Since $u_1$ was arbitrary, by the definition of $h$ we obtain for fixed $0<\eta<\frac{1}{2}$ the inequality
\begin{equation*}
h(x_0,\xi_2)=\limsup_{\varrho\to 0}\frac{m(\w)(u_{x_0,\xi_2},Q_{(1+2\eta)\varrho})}{((1+2\eta)\varrho)^d}\leq h(x_0,\xi_1)+C\eta \Big((|\xi_1|^p+|\xi_2|^p) +\frac{1}{\eta^p}|\xi_1-\xi_2|^p\Big).
\end{equation*}
Exchanging the roles of $\xi_1$ and $\xi_2$, for any $0<\eta<\frac{1}{2}$ we infer
\begin{equation*}
|h(x_0,\xi_1)-h(x_0,\xi_2)|\leq C\eta \Big((|\xi_1|^p+|\xi_2|^p) +\frac{1}{\eta^p}|\xi_1-\xi_2|^p\Big).
\end{equation*}
This estimate implies continuity since the right hand side can be made arbitrarily small on bounded sets by first adjusting $\eta$ and then the difference $|\xi_1-\xi_2|$.
\end{proof}

In the next lemma we prove that $h$ is determined by the behavior of $F(\w)(u_{x_0,\xi},B_{\varrho}(x_0))$ for $\varrho\to 0$.

\begin{lemma}\label{cauchy-born}
Let $\e_n$ and $F(\w)$ be as in Proposition \ref{limitsbvp}. Then there exists a null set $N\subset D$ such that for every $x_0\in D\backslash N$ and all $\xi\in\mathbb{R}^{m\times d}$ it holds that
\begin{equation*}
|B_1|h(x_0,\xi)=\lim_{\varrho\to 0}\varrho^{-d}F(\w)(u_{x_0,\xi},B_{\varrho}(x_0)),
\end{equation*}
where $u_{x_0,\xi}$ denotes the affine function $x\mapsto \xi(x-x_0)$.
\end{lemma}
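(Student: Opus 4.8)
The plan is to reduce the statement to the Lebesgue differentiation theorem, exploiting that the right-hand side involves the \emph{fixed} affine function $u_{x_0,\xi}$ rather than a minimization problem. Since $u_{x_0,\xi}(x)=\xi(x-x_0)$ is affine, it belongs to $SBV^p(A,\R^m)$ for every $A\in\Ard$, with $\nabla u_{x_0,\xi}\equiv\xi$ and $S_{u_{x_0,\xi}}=\emptyset$. Hence the integral representation of Proposition \ref{limitsbvp} yields
\begin{equation*}
F(\w)(u_{x_0,\xi},A)=\int_A h(x,\xi)\,\mathrm{d}x\qquad\text{for every }A\in\Ard.
\end{equation*}
By Lemma \ref{bounds} (which, as observed in the proof of Proposition \ref{limitsbvp}, also holds for $F(\w)$) one has $F(\w)(u_{x_0,\xi},A)\le c\,|\xi|^p|A|$, so $h(\cdot,\xi)\in L^{\infty}(D)\subset L^1_{\mathrm{loc}}(D)$. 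Taking $A=B_{\varrho}(x_0)\in\Ard$ (admissible as soon as $\varrho$ is small enough that $B_{\varrho}(x_0)\subset D$) and using $|B_{\varrho}(x_0)|=|B_1|\varrho^d$, the assertion to be proved is exactly that
\begin{equation*}
\lim_{\varrho\to 0}\frac{1}{|B_{\varrho}(x_0)|}\int_{B_{\varrho}(x_0)}h(x,\xi)\,\mathrm{d}x=h(x_0,\xi),
\end{equation*}
that is, that $x_0$ is a Lebesgue point of $h(\cdot,\xi)$.

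For each fixed $\xi$ this holds for $x_0$ outside a Lebesgue-null set $N_{\xi}$. The one delicate point is that the exceptional set must be chosen independently of $\xi$: I would fix a countable dense set $\{\xi_k\}_{k\in\mathbb{N}}\subset\R^{m\times d}$, set $N:=\bigcup_k N_{\xi_k}$, and pass to arbitrary $\xi$ by an $\varepsilon/3$ argument based on the $x$-uniform continuity of $h$. Indeed, the estimate established in the proof of Lemma \ref{caratheodory},
\begin{equation*}
|h(x,\xi_1)-h(x,\xi_2)|\le C\eta\big((|\xi_1|^p+|\xi_2|^p)+\eta^{-p}|\xi_1-\xi_2|^p\big)\qquad\bigl(0<\eta<\tfrac12\bigr),
\end{equation*}
holds with $C$ independent of $x$, so on each ball $\{|\xi|\le R\}$ the family $\{h(x,\cdot)\}_{x\in D}$ is equicontinuous. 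Consequently both $|h(x_0,\xi)-h(x_0,\xi_k)|$ and $\sup_{x\in D}|h(x,\xi)-h(x,\xi_k)|$, and hence also $\frac{1}{|B_{\varrho}(x_0)|}\int_{B_{\varrho}(x_0)}|h(x,\xi)-h(x,\xi_k)|\,\mathrm{d}x$ uniformly in $\varrho$, become arbitrarily small as $\xi_k\to\xi$. Adding and subtracting $h(\cdot,\xi_k)$ under the integral, letting $\varrho\to 0$ (the $\xi_k$-term vanishes since $x_0\notin N$) and then $\xi_k\to\xi$ gives the claim for every $x_0\in D\setminus N$ and every $\xi\in\R^{m\times d}$.

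I do not expect a genuine obstacle: the content is essentially contained in Proposition \ref{limitsbvp}, which identifies $F(\w)(u_{x_0,\xi},\cdot)$ as the absolutely continuous measure $h(\cdot,\xi)\,\mathrm{d}x$, and in Lemma \ref{caratheodory}, which supplies the $x$-uniform modulus of continuity needed to make $N$ independent of $\xi$. The only thing to be careful about is precisely this uniformity together with the separability of $\R^{m\times d}$; the rest is the routine bookkeeping of the Lebesgue-point and density arguments sketched above.
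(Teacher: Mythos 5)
Your proposal is correct and follows the same overall strategy as the paper: identify $\varrho^{-d}F(\w)(u_{x_0,\xi},B_\varrho(x_0))$ with $|B_1|\dashint_{B_\varrho(x_0)}h(z,\xi)\,\mathrm{d}z$, apply Lebesgue differentiation for each $\xi$ in a countable dense set, and then extend to all $\xi$ by an approximation argument. The one place where you diverge is the ingredient used to make the exceptional null set independent of $\xi$. The paper excises an additional null set $N'$ on which $\xi\mapsto h(x,\xi)$ may fail to be quasiconvex (invoking a standard relaxation/representation result), and then uses the local Lipschitz continuity of quasiconvex functions with $p$-growth to pass to the limit under the average. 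You instead reuse directly the $x$-uniform estimate
\begin{equation*}
|h(x,\xi_1)-h(x,\xi_2)|\le C\eta\bigl((|\xi_1|^p+|\xi_2|^p)+\eta^{-p}|\xi_1-\xi_2|^p\bigr),
\end{equation*}
already established in the proof of Lemma \ref{caratheodory}, which gives an equicontinuous family $\{h(x,\cdot)\}_{x\in D}$ on bounded sets and hence suffices for the $\varepsilon/3$ argument. This is slightly more self-contained and elementary, avoiding the auxiliary null set $N'$ and the appeal to quasiconvexity; the paper's route is natural in context because quasiconvexity of $h$ resurfaces in the subsequent analysis anyway. Both approaches are valid and yield the same conclusion.
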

\begin{proof}
It follows from the definition of $F(\w)$ that
\begin{equation*}
\varrho^{-d}F(\w)(u_{x_0,\xi},B_{\varrho}(x_0))=|B_1|\dashint_{B_{\varrho}(x_0)}h(z,\xi)\,\mathrm{d}z.
\end{equation*}
Due to Lebesgue's differentiation theorem there exists a null set $N_{\xi}\subset D$ such that for any $x_0\in D\backslash N_{\xi}$ we have
\begin{equation}\label{claimxi}
|B_1|h(x_0,\xi)=\lim_{\delta\to 0}\varrho^{-d}F(\w)(u_{x_0,\xi},B_{\varrho}(x_0)).
\end{equation}
It remains to show that the null sets can be chosen independent of $\xi$. To this end, observe that the restriction of $F(\w)$ to $W^{1,p}(D,\R^m)$ is $L^1(D,\R^m)$-lower semicontinuous. Using the defining formula provided by Proposition \ref{limitsbvp} and testing an affine function, by Lemma \ref{bounds} we find that $0\leq h(x_0,\xi)\leq C|\xi|^p$ and Lemma \ref{caratheodory} yields that $h$ is a Carath\'eodory-function. Hence by standard results (see \cite[Theorem 1.13]{Dac})  there exists a null set $N^{\prime}\subset D$ such that for every $x\in D\backslash N^{\prime}$ the function $\xi\mapsto h(x,\xi)$ is quasiconvex on $\mathbb{R}^{m\times d}$ and therefore locally Lipschitz continuous. Moreover the Lipschitz constant is uniformly bounded on compact sets. Let us define the null set $N=N^{\prime}\cup\bigcup_{\xi\in\mathbb{Q}^{m\times d}}N_{\xi}$. Then for any $x_0\in D\backslash N$ it holds that $h(x_0,\xi)=\lim_nh(x_0,\xi_n)$, where $\mathbb{Q}^{m\times d}\ni \xi_n\to \xi$. On the other hand, whenever $\xi_n\to \xi$, then without loss of generality $\sup_n|\xi_n|\leq (|\xi|+1)$ and by the uniform local Lipschitz continuity we obtain 
\begin{equation*}
\dashint_{B_{\varrho}(x_0)}|h(x,\xi_n)-h(x,\xi)|\,\mathrm{d}x\leq C_{\xi}|\xi-\xi_n|.
\end{equation*}
Thus, for $x_0\in D\backslash N$ we can pass to the limit in $n$ on both sides in (\ref{claimxi}) finishing the proof.
\end{proof}
Motivated by the previous lemma we analyze the limit functional $F(\w)(u_{x_0,\xi},B_{\varrho}(x_0))$ by studying recovery sequences for the affine function $u_{x_0,\xi}$. As explained at the beginning of this subsection, we have to localize the analysis to small balls $B_{\varrho}(x_0)$ and consider diagonal sequences $(\e_{\varrho},\varrho)$. Therefore we need the auxiliary result below, which is an immediate consequence of a change of variables. We omit its proof.
\begin{lemma}\label{changeofvar}
Let $\e_n$ and $E(\w)$ be as in Theorem \ref{ACGmain}. For $\varrho>0$ and $x_0\in D$ such that $B_{\varrho}(x_0)\subset D$, define the functional $G_{\e_n,\varrho}(x_0,\w):L^p(B_1,\R^m)\to [0,+\infty]$ to be finite only for $u:\frac{\e_n}{\varrho}(\Lw-\frac{x_0}{\e_n})\to\R^m$ with value
\begin{equation*}
G_{\e_n,\varrho}(x_0,\w)(u)=
\alpha\sum_{\substack{(x,y)\in\mathcal{E}(\w)-\frac{x_0}{\e_n}\\ \frac{\e_n}{\varrho}x,\frac{\e_n}{\varrho}y\in B_1}}\Big(\frac{\e_n}{\varrho}\Big)^d\Big|\frac{u(\frac{\e_n}{\varrho}x)-u(\frac{\e_n}{\varrho}y)}{\e_n\varrho^{-1}}\Big|^p.
\end{equation*}
Then $G_{\e_n,\varrho}(x_0,\w)$ $\Gamma(L^p(B_1,\R^m))$-converges to the functional $E_{\varrho}(x_0,\w):L^p(B_1,\R^m)\to [0,+\infty]$ with domain $W^{1,p}(B_1,\R^m)$, where it is given by
\begin{equation*}
E_{\varrho}(x_0,\w)(u)=\int_{B_1}q(x_0+\varrho y,\nabla u(y))\,\mathrm{d}y.
\end{equation*}
\end{lemma}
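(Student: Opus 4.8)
The plan is to read the statement off Theorem \ref{ACGmain} after the substitution $z=x_0+\varrho y$, since $G_{\e_n,\varrho}(x_0,\w)$ is nothing but a fixed constant multiple of $E_{\e_n}(\w)(\cdot,B_{\varrho}(x_0))$ transported through a linear change of variables on $L^p$. First I would record the elementary algebraic identity. Fix $\varrho>0$ and $x_0\in D$ with $B_{\varrho}(x_0)\subset D$; note $B_{\varrho}(x_0)\in\Ard$. Set $\sigma=\e_n/\varrho$ and let $\Psi_{\varrho}\colon L^p(B_1,\R^m)\to L^p(B_{\varrho}(x_0),\R^m)$ be the linear homeomorphism $(\Psi_{\varrho}u)(z)=u(\varrho^{-1}(z-x_0))$. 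Since translating the lattice translates its Voronoi cells, $\Psi_{\varrho}$ maps $\{u\colon\sigma(\Lw-x_0/\e_n)\to\R^m\}$ onto the set of restrictions to $B_{\varrho}(x_0)$ of functions in $\mathcal{PC}_{\e_n}^{\w}$. For such $u$, setting $v=\Psi_{\varrho}u$ and $x'=x-x_0/\e_n$, the relations $\e_n^d=\sigma^d\varrho^d$ and $(v(\e_n x)-v(\e_n y))/\e_n=\varrho^{-1}(u(\sigma x')-u(\sigma y'))/\sigma$ give, after summation,
\begin{equation*}
G_{\e_n,\varrho}(x_0,\w)(u)=\varrho^{\,p-d}\,E_{\e_n}(\w)(\Psi_{\varrho}u,B_{\varrho}(x_0)).
\end{equation*}

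Next I would invoke Theorem \ref{ACGmain} with $A=B_{\varrho}(x_0)$: the functionals $E_{\e_n}(\w)(\cdot,B_{\varrho}(x_0))$ $\Gamma(L^p(D,\R^m))$-converge to $u\mapsto\int_{B_{\varrho}(x_0)}q(z,\nabla u(z))\,\mathrm{d}z$. Since both the $\e_n$-functional and its limit only see $u|_{B_{\varrho}(x_0)}$ — competitors may be extended off $B_{\varrho}(x_0)$ by any fixed discretized function without changing the energy, both for recovery sequences and for the $\liminf$-inequality — the same $\Gamma$-convergence holds in the $L^p(B_{\varrho}(x_0),\R^m)$-topology. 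As $\Psi_{\varrho}$ is a linear homeomorphism (so $u_n\to u$ in $L^p(B_1,\R^m)$ iff $\Psi_{\varrho}u_n\to\Psi_{\varrho}u$ in $L^p(B_{\varrho}(x_0),\R^m)$), and $\Gamma$-convergence is stable under composition with a homeomorphism and under multiplication by the positive constant $\varrho^{\,p-d}$, the identity above shows that $G_{\e_n,\varrho}(x_0,\w)$ $\Gamma(L^p(B_1,\R^m))$-converges to $u\mapsto\varrho^{\,p-d}E(\w)(\Psi_{\varrho}u,B_{\varrho}(x_0))$, which is finite precisely when $\Psi_{\varrho}u|_{B_{\varrho}(x_0)}\in W^{1,p}$, i.e.\ when $u\in W^{1,p}(B_1,\R^m)$.

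Finally I would rewrite this limit. Because $E_{\e_n}(\w)(t\,\cdot\,,A)=|t|^pE_{\e_n}(\w)(\cdot,A)$, the $\Gamma$-limit is positively $p$-homogeneous in $u$, hence (by the Radon--Nikodym representation of $q$ and continuity of $q(x,\cdot)$) $q(x,\lambda\xi)=\lambda^pq(x,\xi)$ for every $\lambda>0$ and a.e.\ $x$. Using $\nabla(\Psi_{\varrho}u)(z)=\varrho^{-1}(\nabla u)(\varrho^{-1}(z-x_0))$ and the change of variables $z=x_0+\varrho y$ (Jacobian $\varrho^d$),
\begin{align*}
\varrho^{\,p-d}\int_{B_{\varrho}(x_0)}q\big(z,\nabla(\Psi_{\varrho}u)(z)\big)\,\mathrm{d}z&=\varrho^{\,p}\int_{B_1}q\big(x_0+\varrho y,\varrho^{-1}\nabla u(y)\big)\,\mathrm{d}y\\
&=\int_{B_1}q\big(x_0+\varrho y,\nabla u(y)\big)\,\mathrm{d}y=E_{\varrho}(x_0,\w)(u),
\end{align*}
which is the claim. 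The argument is entirely routine; the only things to watch are that the powers of $\varrho$ coming from the lattice spacing, the difference quotient and the Jacobian cancel thanks to the $p$-homogeneity of $q$, and that one is allowed to pass the $\Gamma$-convergence of Theorem \ref{ACGmain} from the $L^p(D)$- to the $L^p(B_{\varrho}(x_0))$-topology so that composition with $\Psi_{\varrho}$ makes sense. This is why the paper states the result with the proof omitted.
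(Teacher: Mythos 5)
Your proof is correct and takes precisely the route the paper has in mind: the paper explicitly declares the lemma ``an immediate consequence of a change of variables'' and omits the proof, and you carry that change of variables out carefully, including the two points worth not skipping (that $\Gamma(L^p(D))$-convergence of $E_{\e_n}(\w)(\cdot,B_\varrho(x_0))$ passes to the $L^p(B_\varrho(x_0))$-topology because the energies only see restrictions to $B_\varrho(x_0)$, and that the $p$-homogeneity of $q(x,\cdot)$ — inherited from the $p$-homogeneity of $E_{\e_n}(\w)$ — is exactly what makes the powers of $\varrho$ from the lattice rescaling, the difference quotient, and the Jacobian cancel).
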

In order to use diagonal sequences of the functionals $G_{\e,\varrho}(x_0,\w)$, we first identify the $\Gamma$-limit of the functionals $E_{\varrho}(x_0,\w)(u,A)$ when $\varrho\to 0$. This is already contained in \cite[Lemma 2.1]{GiPo} for scalar problems. Here we provide a short proof in the vectorial case. We essentially follow the lines of \cite[Theorem 5.14]{DM} up to some necessary modifications.
\begin{lemma}[Blow-up by $\Gamma$-convergence]\label{gammablowup}
Let $E_{\varrho}(x_0,\w)$ be a functional as in Lemma \ref{changeofvar}. Then there exists a null set $N\subset D$ such that for all $x_0\in D\backslash N$ it holds that
\begin{equation*}
\Gamma(L^p(B_1,\R^m))\hbox{-}\lim_{\varrho\to 0}E_{\varrho}(x_0,\w)(u)=\begin{cases}
\displaystyle\int_{B_1}q(x_0,\nabla u(y))\,\mathrm{d}y &\mbox{if $u\in W^{1,p}(B_1,\R^m)$,}\\
+\infty &\mbox{otherwise.}
\end{cases}
\end{equation*}
\end{lemma}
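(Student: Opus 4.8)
The plan is to follow the scheme of \cite[Theorem 5.14]{DM}, which amounts to ``freezing'' the spatial dependence of $q$ at the point $x_0$, with two modifications dictated by our setting: the absence of an a priori bound on competitors (handled only through the coercivity $E_\varrho(x_0,\w)(u)\geq\frac1C\|\nabla u\|_{L^p(B_1)}^p-C$, which follows from the lower bound in Theorem \ref{ACGmain}) and the possible lack of $p$-equiintegrability of finite-energy sequences (handled via the decomposition lemma \cite[Lemma 1.2]{FoMuPe}). By Theorem \ref{ACGmain} the integrand $q$ is a non-negative Carath\'eodory function with $\frac1C|\xi|^p-C\leq q(x,\xi)\leq C(|\xi|^p+1)$ that is quasiconvex in the second variable for a.e.\ $x$; hence, by standard estimates for quasiconvex functions with $p$-growth (see e.g.\ \cite{Dac}), for a.e.\ $x$ and every $L>0$ the map $\xi\mapsto q(x,\xi)$ is Lipschitz on $\overline{B_L}\subset\mathbb{R}^{m\times d}$ with a constant depending only on $C$, $p$ and $L$. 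Fixing a countable dense set $\{\xi_j\}_j\subset\mathbb{R}^{m\times d}$, I let $N\subset D$ be the union of the corresponding null sets: those outside of which $q(x,\cdot)$ is quasiconvex and locally uniformly Lipschitz, and $x$ is a Lebesgue point of each $q(\cdot,\xi_j)$. Combining the Lebesgue point property with the uniform Lipschitz bound and a finite $\delta$-net of $\overline{B_L}$ then yields the key averaging estimate: for $x_0\in D\setminus N$ and every $L>0$,
\begin{equation*}
\lim_{\varrho\to0}\frac{1}{|B_\varrho|}\int_{B_\varrho(x_0)}\sup_{|\xi|\leq L}|q(z,\xi)-q(x_0,\xi)|\,\mathrm{d}z=0.
\end{equation*}

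For the $\Gamma\hbox{-}\limsup$ inequality, given $u\in W^{1,p}(B_1,\R^m)$ I would simply take the constant sequence $u_\varrho\equiv u$. After the change of variables $z=x_0+\varrho y$, and splitting $B_1$ into $\{|\nabla u|\leq L\}$ and its complement, the first part of $\int_{B_1}|q(x_0+\varrho y,\nabla u)-q(x_0,\nabla u)|\,\mathrm{d}y$ is controlled by $|B_1|\,|B_\varrho|^{-1}\int_{B_\varrho(x_0)}\sup_{|\xi|\leq L}|q(z,\xi)-q(x_0,\xi)|\,\mathrm{d}z$, which tends to $0$ for fixed $L$ by the key estimate, while the second part is bounded by $C\int_{\{|\nabla u|>L\}}(1+|\nabla u|^p)\,\mathrm{d}y$, arbitrarily small for $L$ large since $|\nabla u|^p\in L^1(B_1)$. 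Hence $E_\varrho(x_0,\w)(u)\to\int_{B_1}q(x_0,\nabla u)\,\mathrm{d}y$, and for $u\in L^p(B_1,\R^m)\setminus W^{1,p}(B_1,\R^m)$ the $\Gamma\hbox{-}\limsup$ is $+\infty$ by the lower bound below.

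For the $\Gamma\hbox{-}\liminf$ inequality, let $u_\varrho\to u$ in $L^p(B_1,\R^m)$ with $\liminf_\varrho E_\varrho(x_0,\w)(u_\varrho)<+\infty$; along a subsequence realizing this liminf, coercivity gives $\sup_\varrho\|\nabla u_\varrho\|_{L^p(B_1)}<+\infty$, so $u\in W^{1,p}(B_1,\R^m)$ and $\nabla u_\varrho\rightharpoonup\nabla u$ in $L^p$. By \cite[Lemma 1.2]{FoMuPe} there are a further subsequence and functions $v_\varrho$ with $\{|\nabla v_\varrho|^p\}$ equiintegrable, $v_\varrho\rightharpoonup u$ in $W^{1,p}(B_1,\R^m)$ and $|\{v_\varrho\neq u_\varrho\}|+\|v_\varrho-u_\varrho\|_{L^p(B_1)}\to0$. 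Then
\begin{equation*}
E_\varrho(x_0,\w)(u_\varrho)\geq\int_{B_1}q(x_0+\varrho y,\nabla v_\varrho)\,\mathrm{d}y-\int_{\{v_\varrho\neq u_\varrho\}}C(1+|\nabla v_\varrho|^p)\,\mathrm{d}y,
\end{equation*}
where the last integral vanishes as $\varrho\to0$ by equiintegrability; replacing $q(x_0+\varrho y,\cdot)$ by $q(x_0,\cdot)$ costs $\int_{B_1}|q(x_0+\varrho y,\nabla v_\varrho)-q(x_0,\nabla v_\varrho)|\,\mathrm{d}y\to0$, proved by the same split as in the $\Gamma\hbox{-}\limsup$ step, now using equiintegrability of $\{|\nabla v_\varrho|^p\}$; and finally $\liminf_\varrho\int_{B_1}q(x_0,\nabla v_\varrho)\,\mathrm{d}y\geq\int_{B_1}q(x_0,\nabla u)\,\mathrm{d}y$ by sequential weak lower semicontinuity on $W^{1,p}$ of the quasiconvex, $p$-growth integrand $q(x_0,\cdot)$ \cite{AcFu}. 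Chaining these three estimates gives the desired inequality.

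The main obstacle is the $\Gamma\hbox{-}\liminf$ step: competitors with equibounded energy need not have $p$-equiintegrable gradients, so one cannot directly split the energy and freeze $x_0$, which is precisely what forces the use of the decomposition lemma of \cite{FoMuPe}. Once the uniform-in-$x$ local Lipschitz bound for $q(x,\cdot)$ and the averaging estimate above are established, the remaining arguments are routine and essentially those of \cite[Theorem 5.14]{DM} and \cite[Lemma 2.1]{GiPo}.
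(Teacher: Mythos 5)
Your proof is correct and follows the same overall strategy as the paper: coercivity to restrict the $\Gamma$-liminf to $W^{1,p}$, the uniform local Lipschitz estimate for $q(x,\cdot)$ (via quasiconvexity with $p$-growth), Lebesgue points of $q(\cdot,\xi)$ over a countable dense set of $\xi$, the Fonseca--M\"uller--Pedregal decomposition lemma to gain $p$-equiintegrability of gradients, and sequential weak lower semicontinuity of the frozen quasiconvex integrand. The one genuinely different ingredient is how you handle uniformity in $\xi$ over compact sets of matrices: you establish the averaging estimate $\lim_{\varrho\to0}|B_\varrho|^{-1}\int_{B_\varrho(x_0)}\sup_{|\xi|\leq L}|q(z,\xi)-q(x_0,\xi)|\,\mathrm{d}z=0$ directly from the Lipschitz bound, a finite $\delta$-net and the Lebesgue points, whereas the paper first extracts a subsequence $\varrho_j$ along which $q(x_0+\varrho_j\,\cdot,\xi)\to q(x_0,\xi)$ a.e.\ for all $\xi$ and then invokes Egorov's theorem on the finite net; the paper must therefore close with a Urysohn-type remark that the limit is subsequence-independent, a step your route avoids since the averaging estimate holds along the full family $\varrho\to0$ and the constant recovery sequence in the $\Gamma$-limsup needs no subsequence at all. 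Both arguments rely on the same three ingredients (Lipschitz bound, finite net, Lebesgue points); yours simply packages them into a single lemma, which is arguably cleaner, at the cost of having to justify that the supremum over $\xi$ is measurable (automatic since $q$ is Carath\'eodory) and that the weak $W^{1,p}$-convergence of the modified sequence $v_\varrho$ follows from equiintegrability together with the smallness of the modification set, a point the paper spells out and you state without proof. A small notational imprecision: the exceptional set from the decomposition lemma is $\{v_\varrho\neq u_\varrho\ \text{or}\ \nabla v_\varrho\neq\nabla u_\varrho\}$, and it is the gradients that matter in your error integral, so you should write $\{\nabla v_\varrho\neq\nabla u_\varrho\}$ there; this does not affect the argument.
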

\begin{proof}
By coercivity, the $\Gamma$-liminf can be finite only on $W^{1,p}(B_1,\R^m)$. Let $N^{\prime}\subset D$ be the null set where the function $\xi\mapsto q(x,\xi)$ is not quasiconvex or does not fulfill the bound $\frac{1}{C}|\xi|^p-C\leq q(x,\xi)\leq C(|\xi|^p+1)$. We redefine $q(x,\xi)=|\xi|^p$ for all $x\in N^{\prime}$ not changing the functional $E_{\varrho}(x_0,\w)$. Then, according to \cite[Proposition 2.32]{Dac}, there exists a constant $C_q$ such that 
\begin{equation}\label{locallip}
|q(x,\xi)-q(x,\zeta)|\leq C_q(1+|\xi|^{p-1}+|\zeta|^{p-1})|\xi-\zeta|
\end{equation}
for all $x\in D$ and all $\xi,\zeta\in\mathbb{R}^{m\times d}$. Moreover, by Lebesgue's differentiation theorem, for every $\xi\in\R^{m\times d}$ there exists a null set $N_{\xi}$ such that for every $x_0\in D\backslash N_{\xi}$ we have
\begin{equation}\label{lebesgue}
\lim_{\varrho\to 0}\dashint_{B_1}|q(x_0+\varrho y,\xi)-q(x_0,\xi)|\,\mathrm{d}y=\lim_{\varrho\to 0}\dashint_{B_{\varrho}(x_0)}|q(z,\xi)-q(x_0,\xi)|\,\mathrm{d}z=0.
\end{equation}
Set $N=N^{\prime}\cup\bigcup_{\xi\in\mathbb{Q}^{m\times d}}N_{\xi}$, fix $x_0\in D\backslash N$ and consider a sequence $\varrho_j\to 0$. We first prove that, along a suitable subsequence, for each $\xi\in\R^{m\times d}$ the functions $y\mapsto q_j(y,\xi)=q(x_0+\varrho_jy,\xi)$ converge a.e. on $B_1$ to $q(x_0,\xi)$. Indeed, due to (\ref{lebesgue}) the convergence holds true in $L^1(B_1)$ for every $\xi\in\mathbb{Q}^{m\times d}$. Thus we find a ($\xi$-dependent) subsequence such that $q_{j_k}(\cdot,\xi)\to q(x_0,\xi)$ a.e. on $B_1$. Enumerating the $\xi\in\mathbb{Q}^{m\times d}$ we can ensure that the subsequences are nested. Then, by a diagonal argument, we find a common subsequence ${j_k}$ such that $q_{j_k}(\cdot,\xi)\to q(x_0,\xi)$ a.e. on $B_1$ for every $\xi\in\mathbb{Q}^{m\times d}$. Using (\ref{locallip}) we can extend the convergence to all $\xi\in\R^{m\times d}$.

We now prove the $\Gamma$-convergence along this subsequence. By dominated convergence the existence of a recovery sequence is provided through the pointwise limit. In order to prove the lower bound, we can assume that $u_j\to u$ in $L^p(B_1,\R^m)$ and that $u_j$ is bounded in $W^{1,p}(B_1,\R^m)$. By \cite[Lemma 1.2]{FoMuPe} there exists a subsequence (not relabeled) and another sequence $z_j\in W^{1,p}(B_1,\R^m)$ such that $|\nabla z_j|^p$ is equiintegrable and
\begin{equation}\label{inmeasure}
\lim_j\left|\{z_j\neq u_j\text{ or }\nabla z_j\neq\nabla u_j\}\right|=0.
\end{equation}
Note that the above property implies that $z_j$ is also bounded in $W^{1,p}(B_1,\R^m)$. Otherwise, along a subsequence, $w_j=\frac{z_j}{\|z_j\|_{L^p}}$ converges in $W^{1,p}(B_1,\R^m)$ to a non-zero constant, but the sequence $v_j=\frac{u_j}{\|z_j\|_{L^p}}$ converges to $0$ strongly in $W^{1,p}(B_1,\R^m)$. This contradicts (\ref{inmeasure}) which remains valid for $w_j$ and $v_j$. Thus we deduce from (\ref{inmeasure}) that $z_j\rightharpoonup u$ in $W^{1,p}(B_1,\R^m)$. Moreover, equiintegrability of $|\nabla z_j|^p$, the bound $0\leq q(x,\xi)\leq C(|\xi|^p+1)$ and (\ref{inmeasure}) imply that
\begin{equation}\label{betterequi}
\liminf_j E_{\varrho_j}(x_0,\w)(u_j)\geq \liminf_j E_{\varrho_j}(x_0,\w)(z_j).
\end{equation}
Next, for any $\delta>0$ we find again by equiintegrability and the upper bound on $q$ a number $\eta_{\delta}>0$ such that for any measurable set $G\subset B_1$ with $|G|\leq\eta_{\delta}$ it holds that
\begin{equation}\label{equidef}
\sup_j\int_Gq(x_0,\nabla z_j(y))\,\mathrm{d}y<\delta.
\end{equation}
Hence let us choose $t_{\delta}>0$ such that $\sup_j|\{|\nabla z_j|>t_{\delta}\}|\leq\eta_{\delta}$
and set $K_{\delta}=C_q(1+2t_{\delta}^{p-1})$ with $C_q$ given by (\ref{locallip}). By a compactness argument we find $\xi_1,\dots,\xi_N\in\mathbb{R}^{m\times d}$ such that $|\xi_i|< t_{\delta}$ and
\begin{equation*}
\{\xi\in\mathbb{R}^{m\times d}:\;|\xi|\leq t_{\delta}\}\subset \bigcup_{i=1}^N \{\xi\in\mathbb{R}^{m\times d}:\;|\xi-\xi_i|<\frac{\delta}{K_{\delta}}\}.
\end{equation*}
Due to Egorov's theorem there exists a set $G\subset B_1$ with $|G|\leq \eta_{\delta}$ such that the sequences $q_j(y,\xi_i)$ converge uniformly to $q(x_0,\xi_i)$ on $B_1\backslash G$. Hence there exists $j_{\delta}\in\mathbb{N}$ such that for all $j\geq j_{\delta}$, all $i$ and all $y\in B_1\backslash G$ we have $|q_j(y,\xi_i)-q(x_0,\xi_i)|<\delta$. Using (\ref{locallip}) and the triangle inequality we obtain
\begin{equation*}
|q_j(y,\xi)-q(x_0,\xi)|\leq 2C_q(1+2t_{\delta}^{p-1})\min_i|\xi_i-\xi|+\delta\leq 3\delta
\end{equation*}
for all $y\in B_1\backslash G$, all $\xi\in\mathbb{R}^{m\times d}$ with $|\xi|\leq t_{\delta}$ and all $j\geq j_{\delta}$. Since $q(x,\xi)\geq 0$, we infer that for those $j$
\begin{align*}
E_{\varrho_j}(x_0,\w)(z_j)&\geq \int_{\{|\nabla z_j|\leq t_{\delta}\}\backslash G}q_j(y,\nabla z_j(y))\,\mathrm{d}y\geq \int_{\{|\nabla z_j|\leq t_{\delta}\}\backslash G}q(x_0,\nabla z_j(y))\,\mathrm{d}y-3|B_1|\delta
\\
&\geq\int_{B_1}q(x_0,\nabla z_j(y))\,\mathrm{d}y-(3|B_1|+2)\delta,
\end{align*}
where we used (\ref{equidef}) and the definition of $t_{\delta}$. By quasiconvexity and the growth conditions on $q$ the last term is lower semicontinuous with respect to weak convergence in $W^{1,p}(B_1,\R^m)$. Hence (\ref{betterequi}) implies
\begin{equation*}
\liminf_jE_{\varrho_j}(x_0,\w)(u_j)\geq \int_{B_1}q(x_0,\nabla u(y))\,\mathrm{d}y-C\delta.
\end{equation*}
By the arbitrariness of $\delta$ we obtain the lower bound. Since the limit functional is independent of any subsequence, we established the full $\Gamma$-convergence result.
\end{proof}
\begin{remark}\label{alsopointwise}
Taking the same null set $N\subset D$ as in Lemma \ref{gammablowup}, the convergence (\ref{lebesgue}) holds for all $x_0\in D\backslash N$ and all $\xi\in\R^d$ again by (\ref{locallip}).	
\end{remark}

Now we are almost in a position to use a diagonal sequence to recover the function $q(x_0,\xi)$. However, in general there exists no metric characterizing the $\Gamma$-convergence when equicoercivity fails, so that diagonal arguments are not always available. Therefore we provide an explicit construction similar to \cite{DaMo1} in the appendix. With this metric at hand we can derive the following result.
\begin{lemma}\label{diagonal}
Under the assumptions of Lemma \ref{changeofvar}, let $x_0\in D\backslash N$ where $N$ is given by Lemma \ref{gammablowup}. For every $\varrho$ there exists $\e(\varrho)>0$ such that whenever we chose $\e_{n(\varrho)}\leq \e(\varrho)$ it holds that
\begin{equation*}
\Gamma(L^p(B_1,\R^m))\hbox{-}\lim_{\varrho\to 0}G_{\e_{n(\varrho)},\varrho}(x_0,\w)(u)=\begin{cases}
\displaystyle\int_{B_1}q(x_0,\nabla u(y))\,\mathrm{d}y &\mbox{if $u\in W^{1,p}(B_1,\R^m)$,}\\
+\infty &\mbox{otherwise.}
\end{cases}.
\end{equation*}
\end{lemma}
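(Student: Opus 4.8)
The plan is to combine Lemmas \ref{changeofvar} and \ref{gammablowup} by a diagonalization, made rigorous by working in a metric that induces $\Gamma$-convergence. First I would invoke the explicit metric $d_\Gamma$ on the set of functionals $L^p(B_1,\R^m)\to[0,+\infty]$ constructed in the appendix (in the spirit of \cite{DaMo1}), with the property that $d_\Gamma(F_j,F)\to0$ if and only if $F_j$ $\Gamma(L^p(B_1,\R^m))$-converges to $F$. This is precisely the device needed here, since the families $G_{\e,\varrho}(x_0,\w)$ are not equicoercive uniformly in $\e$ and $\varrho$, so the classical metrizations of $\Gamma$-convergence cannot be quoted directly. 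Writing $\mathcal{Q}_{x_0}$ for the functional $u\mapsto\int_{B_1}q(x_0,\nabla u)\dx$ on $W^{1,p}(B_1,\R^m)$, extended by $+\infty$ elsewhere, Lemma \ref{changeofvar} then reads $d_\Gamma(G_{\e_n,\varrho}(x_0,\w),E_\varrho(x_0,\w))\to0$ as $n\to\infty$ for each fixed $\varrho$, while Lemma \ref{gammablowup} reads $d_\Gamma(E_\varrho(x_0,\w),\mathcal{Q}_{x_0})\to0$ as $\varrho\to0$ for $x_0\in D\setminus N$.

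With this in hand the diagonal choice is immediate. Fix $x_0\in D\setminus N$ with $N$ from Lemma \ref{gammablowup}, and pick $\varrho_0>0$ with $B_{\varrho_0}(x_0)\subset D$ so that Lemma \ref{changeofvar} is applicable for $0<\varrho<\varrho_0$. For such $\varrho$, choose $\e(\varrho)>0$ so small that $d_\Gamma(G_{\e_n,\varrho}(x_0,\w),E_\varrho(x_0,\w))\le\varrho$ whenever $\e_n\le\e(\varrho)$ (for $\varrho\ge\varrho_0$ simply set $\e(\varrho)=\e(\varrho_0)$; this range plays no role). For any choice $n(\varrho)$ with $\e_{n(\varrho)}\le\e(\varrho)$, which is possible since $\e_n\to0$, the triangle inequality gives
\[
d_\Gamma\big(G_{\e_{n(\varrho)},\varrho}(x_0,\w),\mathcal{Q}_{x_0}\big)\le d_\Gamma\big(G_{\e_{n(\varrho)},\varrho}(x_0,\w),E_\varrho(x_0,\w)\big)+d_\Gamma\big(E_\varrho(x_0,\w),\mathcal{Q}_{x_0}\big)\le\varrho+d_\Gamma\big(E_\varrho(x_0,\w),\mathcal{Q}_{x_0}\big),
\]
and the right-hand side tends to $0$ as $\varrho\to0$ by Lemma \ref{gammablowup}. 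Since $d_\Gamma$ metrizes $\Gamma$-convergence and the limit as $\varrho\to0$ can be tested along arbitrary sequences $\varrho_j\to0$, this yields exactly $\Gamma(L^p(B_1,\R^m))\hbox{-}\lim_{\varrho\to0}G_{\e_{n(\varrho)},\varrho}(x_0,\w)=\mathcal{Q}_{x_0}$, which is the claim.

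The only genuinely nontrivial ingredient is the existence of the metric $d_\Gamma$ with the stated property in the absence of equicoercivity; this is the point I expect to be the main obstacle, and I would carry it out separately in the appendix following \cite{DaMo1}. Everything else here is the two-line triangle-inequality estimate above, together with the harmless bookkeeping of discarding the regime $\varrho\ge\varrho_0$ before passing to the limit.
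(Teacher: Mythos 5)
Your proposal takes essentially the same route as the paper: defer the hard work to an appendix lemma establishing a metric equivalent (for the relevant families) to $\Gamma$-convergence, and then run the obvious triangle-inequality diagonalization. That is exactly what the paper does, invoking Lemma \ref{metric} and Remark \ref{continuumcase} and then diagonalizing. One caution worth making explicit: the metric $\mathfrak{d}$ in the appendix does \emph{not} metrize $\Gamma$-convergence on the whole space of lower semicontinuous functionals $L^p(B_1,\R^m)\to[0,+\infty]$ (no metric can do this without coercivity). What Lemma \ref{metric} proves is the equivalence of $\Gamma$-convergence and $\mathfrak{d}$-convergence \emph{specifically} for the families $G_{\e_j,\varrho_j}(x_0,\w)$ with $\e_j/\varrho_j\to0$ and limit $\mathfrak{E}_h$, and the implication $\Gamma$-convergence $\Rightarrow$ $\mathfrak{d}$-convergence there crucially uses the discrete compactness from Lemma \ref{compact}; Remark \ref{continuumcase} similarly handles the continuum families $E_\varrho(x_0,\w)$. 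So the argument is tailored to these particular sequences rather than appealing to a universal $d_\Gamma$. Your construction should also pick $\e(\varrho)$ so that $\e(\varrho)/\varrho\to0$ (e.g.\ $\e(\varrho)\le\varrho^2$), so that Lemma \ref{metric} can be applied to the diagonal sequence; this is harmless but needed for the final step (ii)$\Rightarrow$(i).
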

\begin{proof}
This result is an immediate consequence of Lemma \ref{metric} and Remark \ref{continuumcase}, which allow to combine Lemma \ref{gammablowup} and a diagonal argument with respect to the metric $\mathfrak{d}$ constructed in the appendix.
\end{proof}
We now come to the second part and derive Lipschitz-estimates from bounds on the gradient's discrete maximal function. For the remainder of this subsection it will be convenient to view a stochastic lattice also as an undirected graph $G=(\Lw,\mathbb{B}(\w))$ with edges $\mathbb{B}(\w)=\mathcal{E}(\w)\cup \{(y,x)\in\Lw^2:\;(x,y)\in\mathcal{E}(\w)\}$. We say that $P$ is a path of length $n$ if $P=\{x_0,\dots,x_n\}$ with $(x_{i-1},x_{i})\in \mathbb{B}(\w)$ for all $i=1,\dots,n$. Given $x,y\in \Lw$, we define the graph distance as
\begin{equation*}
\mathrm{d}_{G}(x,y)=\inf\{\text{length of a path }P\text{ such that }x,y\in P\}.
\end{equation*}
We denote by $B_{G}(x,\eta)=\{y\in \Lw:\;\mathrm{d}_{G}(x,y)\leq\eta\}$ the closed ball with radius $\eta$ with respect to the graph metric. In the next lemma we establish a doubling property of the counting measure and a weak Poincar\'e inequality that allow us to relate Lipschitz continuity to discrete maximal functions of gradients. Given $\e>0$ and $u:\e \Lw\to\R^m$ we define the length of its edge gradient $|\nabla_{e,\e}u|:\e \Lw\to\R$ by
\begin{equation}\label{eq:lengthgrad}
|\nabla_{e,\e} u|(\e x)=\sum_{(x,y)\in \mathbb{B}(\w)}\Big|\frac{u(\e x)-u(\e y)}{\e}\Big|.
\end{equation}
Note that in the above definition we take into account the undirected edges $e\in \mathbb{B}(\w)$ (hence the subscript).

\begin{lemma}\label{graphprop}
	Let $G=(\Lw,\mathbb{B}(\w))$ be a graph associated to an admissible stochastic lattice. Then there exists a constant $C=C(r,R,M)>0$ such that for  all $x\in \Lw$, $\eta>0$ and $u:\Lw\to\R^m$ it holds that
	\begin{itemize}
		\item [(i)] $\#B_{G}(x,2\eta)\leq C \#B_{G}(x,\eta)$,
		\item [(ii)]
		\begin{equation*}
		\sum_{y\in B_{G}(x,\eta)}|u(y)-u_{B_{G}(x,\eta)}|\leq C\eta\sum_{y\in B_{G}(x,C\eta)}|\nabla_{e,1} u|(y),
		\end{equation*}
		where the average $u_{B_G(x,\eta)}$ is defined as
		\begin{equation*}
		u_{B_{G}(x,\eta)}=\frac{1}{\#B_{G}(x,\eta)}\sum_{y\in B_{G}(x,\eta)}u(y).
		\end{equation*}
	\end{itemize}
\end{lemma}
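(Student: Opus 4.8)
The plan is to compare the combinatorial structure of the graph $G=(\Lw,\mathbb{B}(\w))$ with the Euclidean metric and then to deduce (i) from elementary packing estimates and (ii) from the classical $BV$ Poincar\'e inequality applied to the piecewise constant interpolation on Voronoi cells. The basic comparison is this: since every edge of $\mathbb{B}(\w)$ has Euclidean length $<M$ by \eqref{finiterange}, a path of combinatorial length $n$ joins points at Euclidean distance $<Mn$, so $B_{G}(x,\eta)\subseteq\Lw\cap B_{M\eta}(x)$; conversely, Lemma \ref{l.paths} produces for $x,y\in\Lw$ a path of Voronoi neighbors of combinatorial length $\le C_{r,R}|x-y|$, and since $\mathcal{N}(\w)\subseteq\mathbb{B}(\w)$ by \eqref{nncontained} this gives $\mathrm{d}_{G}(x,y)\le C_{r,R}|x-y|$ and hence $\Lw\cap B_{\eta/C_{r,R}}(x)\subseteq B_{G}(x,\eta)$.

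For part (i) I would combine this with the packing bounds $\#(\Lw\cap B_s(x))\le(2s/r+1)^d$ (the balls $\{B_{r/2}(y)\}_{y\in\Lw}$ are pairwise disjoint) and $\#(\Lw\cap B_s(x))\ge((s-R)/R)^d$ for $s\ge 2R$ (the balls $B_R(y)$, $y\in\Lw\cap B_s(x)$, cover $B_{s-R}(x)$ by admissibility). For $\eta\ge 2RC_{r,R}$ these yield $\#B_{G}(x,2\eta)\le\#(\Lw\cap B_{2M\eta}(x))\le C\,\#(\Lw\cap B_{\eta/C_{r,R}}(x))\le C\,\#B_{G}(x,\eta)$ with $C=C(r,R,M)$, while for smaller $\eta$ the left-hand side is bounded by a fixed constant and the right-hand side is at least $1$.

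For part (ii) a direct chaining argument on the graph is awkward, the obstacle being that one has no control on how many of the connecting paths pass through a given vertex. I would instead pass to the continuum: put $\rho=M\eta+R$ and let $\tilde u\colon\R^d\to\R^m$ be the function equal to $u(y)$ on the Voronoi cell $\mathcal{C}(y)$. Only finitely many cells meet $B_\rho(x)$ and each is convex, so $\tilde u\in BV(B_\rho(x),\R^m)$ with $|D\tilde u|(B_\rho(x))=\sum_{(y,z)}\mathcal{H}^{d-1}(\mathcal{C}(y)\cap\mathcal{C}(z)\cap B_\rho(x))\,|u(y)-u(z)|$. The three geometric inputs I would use are: $|\mathcal{C}(y)|\ge|B_{r/2}|$, whence $\int_{B_\rho(x)}|\tilde u-c|\,\mathrm{d}z\ge|B_{r/2}|\sum_{\mathcal{C}(y)\subseteq B_\rho(x)}|u(y)-c|$ for every constant $c$; the cells are convex and contained in $B_R(y)$, so monotonicity of perimeter under inclusion of convex bodies gives $\mathcal{H}^{d-1}(\mathcal{C}(y)\cap\mathcal{C}(z))\le\mathcal{H}^{d-1}(\partial\mathcal{C}(y))\le\mathcal{H}^{d-1}(\partial B_R(y))\le C$, hence $|D\tilde u|(B_\rho(x))\le C\sum_{y\in\Lw\cap B_{\rho+R}(x)}|\nabla_{e,1}u|(y)$ by \eqref{nncontained} and \eqref{eq:lengthgrad}; and, by the distance comparison, $B_{G}(x,\eta)\subseteq\{y:\mathcal{C}(y)\subseteq B_\rho(x)\}$ and $\Lw\cap B_{\rho+R}(x)\subseteq B_{G}(x,C\eta)$ whenever $\eta\ge1$.

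It then remains to combine the scaled $BV$ Poincar\'e inequality $\|\tilde u-\langle\tilde u\rangle_{B_\rho(x)}\|_{L^1(B_\rho(x))}\le C(d)\,\rho\,|D\tilde u|(B_\rho(x))$ with the elementary bound $\sum_{y\in B}|u(y)-u_B|\le2\sum_{y\in B}|u(y)-c|$, valid for any constant $c$, taking $B=B_{G}(x,\eta)$ and $c=\langle\tilde u\rangle_{B_\rho(x)}$. Since $\rho\le(M+R)\eta$ once $\eta\ge1$, this chain of inequalities produces (ii) with a constant depending only on $r,R,M$ (and $d$); the range $0<\eta<1$ is trivial because then $B_{G}(x,\eta)=\{x\}$ and both sides vanish. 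The one point that requires a little care beyond bookkeeping is the passage to $\tilde u$ in part (ii) --- realizing the combinatorial Poincar\'e inequality through the $BV$ Poincar\'e inequality on Voronoi interpolations and bounding $|D\tilde u|$ by the discrete edge gradient --- for which the decisive geometric fact is the sandwiching $B_{r/2}(y)\subseteq\mathcal{C}(y)\subseteq B_R(y)$ together with the convexity of the cells.
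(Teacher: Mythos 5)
Part (i) of your argument follows essentially the same lines as the paper: compare graph distance with Euclidean distance via the edge-length bound \eqref{finiterange} and Lemma \ref{l.paths}, then use the $r$-separation and $R$-covering to get upper and lower packing bounds for $\#(\Lw\cap B_s(x))$. Part (ii), however, takes a genuinely different route, and it is correct. The paper proves the Poincar\'e inequality entirely combinatorially: it chains $|u(y)-u(z)|$ along the Voronoi-neighbor paths of Lemma \ref{l.paths}, which forces it to control, for each edge $(x',y')$, the overlap count $N(x',y')$ of how many pairs route through that edge; the key technical input is a cylinder/slicing argument giving $N(x',y')\le C\eta^d$, which exactly cancels the $1/\#B_G(x,\eta)\sim\eta^{-d}$ from the averaging. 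You instead pass to the continuum: the piecewise-constant Voronoi interpolation $\tilde u$ is in $BV(B_\rho(x),\R^m)$, the classical scaled $BV$ Poincar\'e inequality applies on the Euclidean ball $B_\rho(x)$, and the discrete statement is recovered by sandwiching $B_{r/2}(y)\subset\mathcal C(y)\subset B_R(y)$, by monotonicity of perimeter under inclusion of convex bodies (which bounds $\mathcal H^{d-1}(\mathcal C(y)\cap\mathcal C(z))$ uniformly), and by the inclusion $\mathcal N(\w)\subset\mathbb B(\w)$ from \eqref{nncontained}. This avoids the overlap-counting lemma entirely and is arguably more transparent, at the cost of invoking the continuum $BV$ Poincar\'e inequality rather than staying purely in the discrete framework; both approaches yield the stated conclusion with a constant depending only on $r,R,M$ (and $d$). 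One small imprecision: for $0<\eta<1$ you say both sides vanish --- in fact only the left-hand side vanishes (since $B_G(x,\eta)=\{x\}$), which is all that is needed.
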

\begin{remark}
	Given $\eta>0$ and a scalar function $v:\e\Lw\to\R$ we define the maximal function $\mathcal{M}^{\e}_{\eta}v:\e\Lw\to\R$ as
	\begin{equation}\label{maximalfunc}
	\mathcal{M}^{\e}_{\eta}v(\e x)=\sup_{0<s<\eta}\Big(\frac{1}{\#B_{G}(x,\frac{s}{\e})}\sum_{y\in B_{G}(x,\frac{s}{\e})}|v(\e y)|\Big).
	\end{equation}
	Then, assuming the doubling property, the Poincar\'e inequality is equivalent to the estimate
	\begin{equation}\label{maximal-lip}
	\Big|\frac{u(\e x)-u(\e y)}{\e}\Big|\leq \overline{C}\, \mathrm{d}_{G}(x,y)\Big(\mathcal{M}^{\e}_{\overline{C}\e \mathrm{d}_{G}(x,y)}|\nabla_{e,\e} u|(\e x)+\mathcal{M}^{\e}_{\overline{C}\e \mathrm{d}_{G}(x,y)}|\nabla_{e,\e} u|(\e y)\Big),
	\end{equation}
	where $\overline{C}$ is a constant independent of $u:\e\Lw\to\R^m$ and $x,y\in\Lw$. For $\e,m=1$ this fact can be found in a much more general context in \cite[Lemma 5.15]{HeiKo}. For $\e>0$ and $m=1$ it follows by applying the inequality to $v:\Lw\to\R$ defined as $v(x)=\e^{-1}u(\e x)$ upon noticing that $\mathcal{M}^1_{\eta}|\nabla_{e,1}v|(x)=\mathcal{M}^{\e}_{\e\eta}|\nabla_{e,\e}u|(\e x)$. In particular the constant $\overline{C}$ is independent of $\e$. For $m\geq 2$ the inequality remains true arguing for each component and increasing the constant $\overline{C}$ by at most a factor of $m$.
\end{remark}
\begin{proof}[Proof of Lemma \ref{graphprop}]
	(i): We may assume that $\eta\geq\frac{1}{2}$. Our aim is to compare the graph-metric with the Euclidean one. Given $x,y\in \Lw$ and an optimal path $P=\{x_0=x,x_1,\dots,x_n=y\}$, by (\ref{finiterange}) we have
	\begin{equation}\label{ubdist}
	|x-y|\leq\sum_{i=1}^{\mathrm{d}_G(x,y)}|x_{i-1}-x_i| < M\,\mathrm{d}_G(x,y).
	\end{equation}
	On the other hand, it follows from \eqref{nncontained} and Lemma \ref{l.paths} that
	\begin{equation}\label{lbdist}
	\mathrm{d}_G(x,y)\leq  C_{r,R} |x-y|.
	\end{equation}
	Using again Remark \ref{voronoi} and (\ref{ubdist}), for $\eta\geq\frac{1}{2}$ we deduce that 
	\begin{equation*}
	\#B_G(x,2\eta)\leq \#(\Lw\cap B_{2M\eta}(x))\leq |B_{\frac{r}{2}}|^{-1}|B_{4M\eta}(x)|, 
	\end{equation*}
	while due to \eqref{lbdist} for any $\varrho>0$ it holds that
	\begin{equation}\label{conttograph}
	|B_{\varrho}(x)|\leq |B_R(0)|\,\#(\Lw\cap B_{2\varrho}(x))\leq |B_R(0)|\,\#B_G(x,2C_{r,R}\varrho).
	\end{equation}
	The claim now follows from the scaling properties of the Lebesgue measure by choosing $\varrho=\frac{\eta}{2C_{r,R}}$.
	\\
	(ii): We can assume that $\eta\geq 1$. Due to the triangle inequality we have
	\begin{equation}\label{triangle1}
	\sum_{y\in B_G(x,\eta)}|u(y)-u_{B_G(x,\eta)}|\leq \frac{1}{\#B_G(x,\eta)}\sum_{y,z\in B_G(x,\eta)}|u(y)-u(z)|.
	\end{equation}
	Fix $y,z\in B_G(x,\eta)$ and consider the path $P(y,z)=\{x_0=y,x_1,\dots,x_n=z\}$ given by Lemma \ref{l.paths}. Then
	\begin{equation}\label{triangle2}
	|u(y)-u(z)|\leq \sum_{i=1}^{n}|u(x_{i-1})-u(x_i)|.
	\end{equation} 
	The triangle inequality, Lemma \ref{l.paths} and (\ref{ubdist}) imply that for all $x_i\in P(y,z)$
	\begin{align*}
	\mathrm{d}_G(x_i,x)&\leq \mathrm{d}_G(x_i,y)+\mathrm{d}_G(y,x)\leq\#P(y,z)+\eta\leq C_{r,R}|y-z|+\eta\\
	&\leq C_{r,R}(|y-x|+|x-z|)+\eta
	\leq (2MC_{r,R}+1)\eta.
	\end{align*}
	Setting $C=2MC_{r,R}+1$, we deduce that $P(y,z)\subset B_G(x,C\eta)$. Conversely, given an edge $(x^{\prime},y^{\prime})\in \mathbb{B}(\w)$ with $x^{\prime},y^{\prime}\in B_G(x,C\eta)$, we denote by
	\begin{equation*}
	N(x^{\prime},y^{\prime})=\#\{(y,z)\in B_G(x,\eta)\times B_G(x,\eta) :\;x^{\prime},y^{\prime}\subset P(y,z)\}
	\end{equation*}
	the number of pairs $(y,z)$ such that this edge is contained in the path given by Lemma \ref{l.paths}. As a consequence of (\ref{conttograph}), (\ref{triangle1}), (\ref{triangle2}) and (\ref{nncontained}) we have
	\begin{align}\label{poincare1}
	\sum_{y\in B_G(x,\eta)}|u(y)-u_{B_G(x,\eta)}|&\leq\frac{1}{\#B_G(x,\eta)}\sum_{\substack{(x^{\prime},y^{\prime})\in \mathbb{B}(\w) \\ x^{\prime},y^{\prime}\in B_G(x,C\eta)}}N(x^{\prime},y^{\prime})|u(x^{\prime})-u(y^{\prime})|\nonumber
	\\
	&\leq C\sup_{\substack{(x^{\prime},y^{\prime})\in \mathbb{B}(\w)\\ x^{\prime},y^{\prime}\in B_G(x,C\eta)}}\frac{N(x^{\prime},y^{\prime})}{\eta^d}\sum_{y\in B_G(x,C\eta)}|\nabla_{e,1} u|(y).
	\end{align}
	It remains to prove a suitable upper bound for $N(x^{\prime},y^{\prime})$. Since $x^{\prime},y^{\prime}\in B_G(x,C\eta)$, it follows by (\ref{ubdist}) that $x^{\prime},y^{\prime}\in B_{MC\eta}(x)$ and therefore
	\begin{equation*}
	N(x^{\prime},y^{\prime})\leq \#\{(y,z)\in B_{2MC\eta}(x^{\prime})\times B_{2MC\eta}(x^{\prime}):\;x^{\prime},y^{\prime}\subset P(y,z)\},
	\end{equation*}
	where we used that $C\geq 1$. Consider then the boundary-like set $\Gamma=\{b\in\Lw:\;\mathcal{C}(b)\cap \partial B_{2MC\eta}(x^{\prime})\neq\emptyset\}$ and for each $b\in\Gamma$ let us define the cylinder-type set
	\begin{equation*}
	Z(b,x^{\prime})=\{a+\lambda (x^{\prime}-b):\;\lambda\in [0,2],\,a\in B_{6R}(b)\}.
	\end{equation*}
	For the moment fix any $y,z\in B_{2MC\eta}(x^{\prime})$ such that $x^{\prime},y^{\prime}\in P(y,z)$. By Lemma \ref{l.paths} there exists a point $x_*\in {\rm co}(y,z)$ such that $|x_*-x^{\prime}|\leq 2R$. Without loss of generality we assume that $|y-x_*|\leq |z-x_*|$ and denote by $p$ the unique point $p\in\{x^{\prime}+\lambda (z-x^{\prime}):\;\lambda\geq 0\}\cap \partial B_{2MC\eta}(x^{\prime})$. Then we find $b\in\Gamma$ with $|p-b|\leq R$. We argue that $y,z\in Z(b,x^{\prime})$. For $z$ this follows upon writing $z=p+\lambda_z(x^{\prime}-p)$ with $\lambda_z\in [0,1]$ and choosing $a=\lambda_zb+(1-\lambda_z)p$ and $\lambda=\lambda_z$ in the definition of $Z(b,x^{\prime})$. Regarding $y$, recall that we assume $|y-x_*|\leq |z-x_*|$, so that for some $\lambda_y\in [1,2]$ we can write
	\begin{equation*}
	y=z+\lambda_y(x_*-z)=p+\lambda_z(x^{\prime}-p)+\lambda_y(x_*-p-\lambda_z(x^{\prime}-p)).
	\end{equation*}
	Setting $\lambda=\lambda_z+\lambda_y-\lambda_z\lambda_y\in [0,2]$ and using the ansatz $a=b+\xi$ in the definition of $Z(b,x^{\prime})$, we find
	\begin{equation*}
	\xi=(p-b)(1-\lambda_y)+\lambda_y(x_*-x^{\prime}),
	\end{equation*}
	so that $|\xi|\leq 5R$ and therefore $y\in Z(b,x^{\prime})$ as well. Thus we have proven that
	\begin{equation}\label{sliced}
	N(x^{\prime},y^{\prime})\leq 2(\#\Gamma)\Big(\sup_{b\in\Gamma}\#(\Lw\cap Z(b,x^{\prime}))\Big)^2.
	\end{equation}
	Now we use again Remark \ref{voronoi} combined with a covering argument and the fact that $\eta\geq 1$ to find
	\begin{equation*}
	\#\Gamma\leq |B_{\frac{r}{2}}(0)|^{-1}\Big(|B_{2MC\eta+R}(x^{\prime})|-|B_{2MC\eta-R}(x^{\prime})|\Big)\leq C \Big((2MC\eta+R)^d-(2MC\eta-R)^d\Big)\leq C\eta^{d-1}.
	\end{equation*}
	With the same technique we obtain the bound
	\begin{equation*}
	\#(\Lw\cap Z(b,x^{\prime}))\leq 2|B_{\frac{r}{2}}(0)|^{-1}(14R)^{d-1}(|x^{\prime}-b|+7R)\leq C\eta.
	\end{equation*}
	Combining the last two estimates with (\ref{poincare1}) and (\ref{sliced}) we conclude the proof.
\end{proof}
The next lemma is a discrete analogue of \cite[Lemma 1.2]{FoMuPe}.
\begin{lemma}\label{l.fomupediscrete}
Let $G=(\Lw,\mathbb{B}(\w))$ be admissible, let $x_0\in\R^d$, $\lambda>0$ and set $\bar{k}=3+6\overline{C}C_{r,R}M$ with $\overline{C}$ and $C_{r,R}$ given by (\ref{maximal-lip}) and Lemma \ref{l.paths}, respectively. If $\e\downarrow 0$ and $u_{\e}:\e\Lw\to\R^m$ is a sequence such that
\begin{equation*}
\sup_{\e>0}\sum_{\e x\in\e\Lw\cap B_{\bar{k}\lambda}(x_0)}\e^d|\nabla_{e,\e}u_{\e}|^p(\e x)\leq C,
\end{equation*}	
then there exists a subsequence $\e_j$ and a sequence $w_j:\e_{j}\Lw\to\R^m$ such that $|\nabla_{e,\e_{j}}w_{j}|^p$ is equiintegrable on $B_{2\lambda}(x_0)$ and moreover
\begin{equation*}
\lim_j\e_{j}^d\#\left\{\e_{j}x\in\e_{j}\Lw\cap B_{2\lambda}(x_0):\,u_{\e_j}\not\equiv w_j\text{ on }\e_jB_G(x,1)\right\}=0.
\end{equation*}
\end{lemma}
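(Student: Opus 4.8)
The plan is to adapt the Lipschitz--truncation argument of \cite[Lemma~1.2]{FoMuPe} to the discrete environment, replacing the classical pointwise Sobolev estimate by the maximal--function inequality \eqref{maximal-lip} and the Lebesgue measure by the counting measure on the graph $(\Lw,\mathbb{B}(\w))$, whose doubling property is supplied by Lemma \ref{graphprop}(i).

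First I would translate so that $x_0=0$. I fix a radius $\rho$ proportional to $\lambda$; the precise proportionality constant (and hence the value $\bar{k}=3+6\overline{C}C_{r,R}M$) is forced by two competing requirements. On the one hand, for $\e x\in B_{2\lambda}(0)$ the graph ball $\e B_G(x,\rho/\e)$ underlying the restricted maximal function $\mathcal M^{\e}_{\rho}$ must stay inside $B_{\bar{k}\lambda}(0)$; this is guaranteed by the bound $|x-y|<M\,\mathrm{d}_G(x,y)$ coming from \eqref{finiterange}. On the other hand one must have $\overline{C}\e\,\mathrm{d}_G(x,y)\le\rho$ for \emph{every} pair with $\e x,\e y\in B_{2\lambda}(0)$, which follows from $\mathrm{d}_G(x,y)\le C_{r,R}|x-y|$, itself a consequence of Lemma \ref{l.paths} and \eqref{nncontained}. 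With such a $\rho$, a Vitali covering argument in the doubling graph yields the discrete weak--$(1,1)$ inequality
\begin{equation*}
\e^d\,\#\bigl\{\e x\in\e\Lw\cap B_{2\lambda}(0):\ \mathcal M^{\e}_{\rho}\bigl(|\nabla_{e,\e}u_{\e}|^p\bigr)(\e x)>t\bigr\}\le\frac{C}{t}\sum_{\e x\in\e\Lw\cap B_{\bar{k}\lambda}(0)}\e^d|\nabla_{e,\e}u_{\e}|^p(\e x)\le\frac{C'}{t},
\end{equation*}
together with its truncated refinement in which the last sum is taken only over those $\e x$ with $|\nabla_{e,\e}u_{\e}|^p(\e x)>t/2$.

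Next, for each level $k\in\mathbb N$ I set $\mathcal G_j^k=\{x:\ \e_j x\in B_{2\lambda}(0),\ \mathcal M^{\e_j}_{\rho}(|\nabla_{e,\e_j}u_{\e_j}|^p)(\e_j x)\le k\}$. By Jensen's inequality the same points satisfy $\mathcal M^{\e_j}_{\rho}(|\nabla_{e,\e_j}u_{\e_j}|)(\e_j x)\le k^{1/p}$, so \eqref{maximal-lip} shows that $u_{\e_j}$ restricted to $\e_j\mathcal G_j^k$ is $Ck^{1/p}$--Lipschitz for the Euclidean distance; extending it to a map $w_j^k:\e_j\Lw\to\R^m$ by a componentwise McShane inf--convolution produces a globally $Ck^{1/p}$--Lipschitz function that coincides with $u_{\e_j}$ on $\e_j\mathcal G_j^k$, whence $|\nabla_{e,\e_j}w_j^k|\le Ck^{1/p}$ on all of $\e_j\Lw$ (each point has at most $M$ neighbours at Euclidean distance $\le M\e_j$ by \eqref{neighbours}). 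Since $w_j^k\ne u_{\e_j}$ at $\e_j y$ only if $y$ is not in $\mathcal G_j^k$, the set $\mathcal N_j^k$ of cells with $\e_j x\in B_{2\lambda}(0)$ and $w_j^k\not\equiv u_{\e_j}$ on $\e_j B_G(x,1)$ obeys, by the weak--$(1,1)$ bound and a neighbour count, $\e_j^d\#\mathcal N_j^k\le \tfrac Ck+C\e_j$, the last term accounting for the $\lesssim\e_j^{1-d}$ cells meeting $\partial B_{2\lambda}(0)$. Identifying $|\nabla_{e,\e_j}w_j^k|^p$ with the piecewise constant function on the Voronoi cells (whose volumes are comparable to $\e_j^d$ by Remark \ref{voronoi}), the $L^{\infty}$--bound $\lesssim k$ shows that for each \emph{fixed} $k$ the family $\{|\nabla_{e,\e_j}w_j^k|^p\}_j$ is equiintegrable on $B_{2\lambda}(0)$ and bounded in $L^1$ uniformly in $j$ (for the $L^1$--bound one splits the integral into the ``good'' cells, where $w_j^k=u_{\e_j}$ and the hypothesis applies, and the remaining cells, estimated by $k$ times the refined weak--$(1,1)$ inequality).

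\textbf{The main obstacle} is the passage to a diagonal level $k=k(j)\to\infty$: equiintegrability for every fixed $k$ does not automatically survive along such a diagonal, and the bound $\||\nabla_{e,\e_j}w_j^k|^p\|_{\infty}\lesssim k$ degenerates. As in \cite{FoMuPe}, this is resolved by first passing, for each $k$ and then diagonally in $k$, to subsequences along which $|\nabla_{e,\e_j}w_j^k|^p$ converges weakly in $L^1(B_{2\lambda}(0))$---the key being that the refined weak--$(1,1)$ inequality bounds the mass carried by the modified cells by a tail $\sum_{|\nabla_{e,\e_j}u_{\e_j}|^p>k/C}\e_j^d|\nabla_{e,\e_j}u_{\e_j}|^p$ of $|\nabla_{e,\e_j}u_{\e_j}|^p$---and then using that weak $L^1$ limits of equiintegrable sequences are themselves equiintegrable, so that a careful diagonal choice of $k(j)\to\infty$ (which may in addition be arranged so that $k(j)\e_j\to0$) keeps $\{|\nabla_{e,\e_j}w_{j}|^p\}_j$, with $w_j=w_j^{k(j)}$, equiintegrable on $B_{2\lambda}(0)$. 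Since $k(j)\to\infty$, the estimate on $\mathcal N_j^{k(j)}$ gives $\e_j^d\#\mathcal N_j^{k(j)}\to0$, which is precisely the second assertion. The only genuinely new inputs beyond \cite{FoMuPe} are the discrete weak--$(1,1)$ inequality (Lemma \ref{graphprop}(i)), the discrete Lipschitz estimate \eqref{maximal-lip}, and the Voronoi/neighbour bookkeeping through \eqref{neighbours} and Remark \ref{voronoi}; the particular constant $\bar{k}$ is exactly the one making the two radius constraints above compatible.
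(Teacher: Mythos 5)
Your proposal is correct in substance and follows the same overall strategy as the paper: control the set where the discrete maximal function of $|\nabla_{e,\e}u_{\e}|$ is large, extend the restriction of $u_{\e}$ to the complementary ``good'' set to a Lipschitz map on all of $\e\Lw$ via (\ref{maximal-lip}), and diagonalise the truncation level. The differences are in the harmonic-analysis toolbox. You use a Vitali covering argument in the doubling graph to get a discrete weak-$(1,1)$ bound and a restricted maximal function $\mathcal M^{\e}_{\rho}$, whereas the paper cuts off $|\nabla_{e,\e}u_{\e}|$ by zero outside $B_{\bar k\lambda}(x_0)$, compares the \emph{unrestricted} discrete maximal function pointwise with the classical Hardy--Littlewood maximal function of the piecewise-constant interpolant, and so imports the $L^p$-boundedness of the latter instead of a weak-type estimate; this lets the paper invoke \cite[Lemma 2.31]{FoLe} once, obtaining a single subsequence $\e_j$ and levels $l_j\to\infty$ for which $(\delta_{l_j}\circ|\mathcal M^{\e_j}_{\infty}V_{\e_j}|)^p$ is \emph{already} equiintegrable, and equiintegrability of $|\nabla_{e,\e_j}w_j|^p$ then follows by plain domination. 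Your version, which goes level-by-level at fixed $k$ and then diagonalises ``as in \cite{FoMuPe}'', is in the end relying on the same truncation lemma (your phrase ``weak $L^1$ limits of equiintegrable sequences are themselves equiintegrable'' does not by itself give equiintegrability along the diagonal $k=k(j)$; the precise step is exactly the content of \cite[Lemma 2.31]{FoLe}, which it would be cleaner to cite than to re-derive). Two minor remarks: the paper performs the Kirszbraun extension on $\e_j R_j\cap B_{3\lambda}(x_0)$, keeping a buffer between the extension domain and the ball $B_{2\lambda}(x_0)$ where equiintegrability is asserted, precisely because the discrete gradient at $\e_j x\in B_{2\lambda}$ involves neighbours up to distance $M\e_j$ away; your $\mathcal G_j^k$ should likewise be defined on a slightly larger ball. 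And Kirszbraun (vector-valued, constant-preserving) versus componentwise McShane is immaterial here, as you only need the Lipschitz constant up to a dimensional factor.
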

\begin{proof}
Define a function $V_{\e}:\e\Lw\to [0,+\infty)$ by
\begin{equation*}
V_{\e}(\e x)=
\begin{cases}
|\nabla_{e,\e}u_{\e}|(\e x) &\mbox{if $\e x\in B_{\bar{k}\lambda}(x_0)$,}
\\
0 &\mbox{otherwise.}
\end{cases}
\end{equation*}
By piecewise constant interpolation on the Voronoi cells $\{\e\mathcal{C}(x)\}$ of $\e\Lw$ we can view $V_{\e}$ as an element of $L^p(\R^d)$. From our assumption we deduce that
\begin{equation*}
\|V_{\e}\|^p_{L^p(\R^d)}\leq C \sum_{\e x\in\e\Lw\cap B_{\bar{k}\lambda}(x_0)}\e^d|\nabla_{e,\e}u_{\e}|^p(\e x)\leq C.
\end{equation*}
Hence the sequence $V_{\e}$ is bounded in $L^p(\R^d)$.  We claim that $\mathcal{M}^{\e}_{\infty}V_{\e}$ is also bounded in $L^p(\R^d)$, where $\mathcal{M}^{\e}_{\infty}V_{\e}$ is the discrete maximal function of $V_{\e}$ defined in \eqref{maximalfunc}. To this end, we show that it can be pointwise controlled by the standard Hardy-Littlewood maximal function. Indeed, by (\ref{ubdist}), (\ref{conttograph}) and Remark \ref{voronoi}, for any function $v:\e\Lw\to\mathbb{R}$  we can estimate
\begin{align*}
|\mathcal{M}^{\e}_{\infty}v(\e x)|&\leq\sup_{\eta>\frac{\e}{2}}\Big(\frac{1}{\e^d\#B_{G}(x,\frac{\eta}{\e})}\sum_{y\in B_{G}(x,\frac{\eta}{\e})}\e^d|v(\e y)|\Big)\leq \sup_{\eta>\frac{\e}{2}}\Big(\frac{C}{|B_{\eta}(x)|}\sum_{y\in B_{M\frac{\eta}{\e}}(x)}\e^d|v(\e y)|\Big)
\\
&\leq \sup_{\eta>\frac{\e}{2}}\Big(\frac{C}{|B_{\eta}(\e x)|}\int_{B_{3M\eta}(\e x)}|v(z)|\,\mathrm{d}z\Big)\leq C\sup_{\eta>0}\Big(\frac{1}{|B_{\eta}(\e x)|}\int_{B_{\eta}(\e x)}|v(z)|\,\mathrm{d}z\Big).
\end{align*}
Thus boundedness of the Hardy-Littlewood maximal function operator on $L^p(\R^d)$ (see \cite[Theorem 2.91]{FoLe}) implies that 
\begin{equation}\label{eq:lpmaxbound}
\|\mathcal{M}_{\infty}^{\e}V_{\e}\|^p_{L^p(\R^d)}\leq C \|V_{\e}\|^p_{L^p(\R^d)}\leq C.
\end{equation} 
For $l>0$ we introduce the scalar truncation operator $\delta_l:\R\to\R$ given by $\delta_l(x)=\min\{l,|x|\}\tfrac{x}{|x|}$ (with $\delta_l(0)=0$). Applying \cite[Lemma 2.31]{FoLe}, we know that there exists a subsequence $\e_j$ and an increasing sequence of positive integers $l_j^p\to +\infty$ such that the sequence $\delta_{l_j^p}\circ(|\mathcal{M}_{\infty}^{\e_{j}}V_{\e_j}|^p)=\big(\delta_{l_j}\circ|\mathcal{M}_{\infty}^{\e_j}V_{\e_j}|\big)^p$ is equiintegrable on $\R^d$. Define the sets
\begin{equation*}
\begin{split}
&R^{\prime}_j:=\{x\in\Lw:\; \mathcal{M}_{\infty}^{\e_{j}}V_{\e_j}(\e_{j}x)\leq l_j\},
\\
&R_j:=\bigcup_{x\in R_j^{\prime}}B_G(x,1).
\end{split}
\end{equation*}
Note that $R_j^{\prime}\subset R_j$. By Remark \ref{voronoi} and \eqref{eq:lpmaxbound} we have
\begin{equation}\label{eq:mod2small}
\e_{j}^d\#(\Lw\backslash R_j)\leq \e_j^d\#(\Lw\backslash R_j^{\prime})\leq \frac{C}{l_j^p}\int_{\R^d}|\mathcal{M}^{\e_j}_{\infty}V_{\e_j}(z)|^p\,\mathrm{d}z\leq \frac{C}{l_j^p}.
\end{equation}
Next we bound $\mathcal{M}_{\infty}^{\e_j}V_{\e_j}$ on the set $R_j$. Given $y\in B_G(x,1)$ with $x\in R^{\prime}_j$, for $\eta\geq 2\e_j$ it holds that
\begin{equation*}
B_{G}\left(x,\frac{\eta}{2\e_j}\right)\subset B_{G}\left(x,\frac{\eta}{\e_j}-1\right)\subset B_{G}\left(y,\frac{\eta}{\e_j}\right)\subset B_{G}\left(x,\frac{\eta}{\e_j}+1\right)\subset B_{G}\left(x,\frac{2\eta}{\e_j}\right).
\end{equation*}
Hence applying twice the doubling property proven in Lemma \ref{graphprop} (i) we conclude that
\begin{equation*}
\sup_{\eta\geq 2\e_j}\Big(\frac{1}{\#B_{G}(y,\frac{\eta}{\e_j})}\sum_{z\in B_{G}(y,\frac{\eta}{\e_j})}|V_{\e_j}(\e_j z)|\Big)\leq C \Big(\mathcal{M}_{\infty}^{\e_j}V_{\e_j}(\e_j x)\Big)\leq Cl_j.
\end{equation*}
If $\eta< 2\e_j$, then by Remark \ref{voronoi} and (\ref{finiterange}) we still have the inequality and inclusion
\begin{equation*}
\frac{1}{C}\#B_{G}(x,2)\leq\#B_{G}\left(y,\frac{\eta}{\e_j}\right)
\quad\text{ and }\quad
B_{G}\left(y,\frac{\eta}{\e_j}\right)\subset B_{G}(x,2),
\end{equation*}
so that for $y\in R_j$ we obtain the estimate
\begin{equation}\label{eq:maximalneighbours}
\mathcal{M}_{\infty}^{\e_j}V_{\e_j}(\e_j y)\leq Cl_j.
\end{equation}
Next we want to use (\ref{maximal-lip}) and Kirszbraun's extension theorem on $\e_j\Lw\cap B_{3\lambda}(x_0)$. To this end, note that by \eqref{ubdist} and \eqref{lbdist}, for all $x,y\in R_j\cap \e_j^{-1}B_{3\lambda}(x_0)$ and $z\in B_{G}(y,\overline{C}\mathrm{d}_{G}(x,y))$, it holds that 
\begin{equation*}
|\e_j z-x_0|< 3\lambda+\e_j|z-y|\leq 3\lambda+\e_j M\overline{C}\mathrm{d}_{G}(x,y)\leq 3\lambda+6\lambda M\overline{C}C_{r,R}= \bar{k}\lambda.
\end{equation*}
Since $V_{\e_j}$ agrees with $|\nabla_{e,\e_j}u_{\e_j}|$ on $\e_j\Lw\cap B_{\bar{k}\lambda}(x_0)$, it follows from \eqref{eq:maximalneighbours} that for $\e_jy\in \e_jR_j\cap B_{3\lambda}(x_0)$ we have the estimate
\begin{equation*}
\mathcal{M}_{\overline{C}\e_j \mathrm{d}_{G}(x,y)}^{\e_j}|\nabla_{e,\e_j}u_{\e_j}|(\e_j y)=\mathcal{M}_{\overline{C}\e_j \mathrm{d}_{G}(x,y)}^{\e_j}V_{\e_j}(\e_j y)\leq \mathcal{M}_{\infty}^{\e_j}V_{\e_j}(\e_j y)\leq Cl_j.
\end{equation*} 
Combining (\ref{maximal-lip}) and Kirszbraun's extension theorem we find Lipschitz-functions $w_j:\e_j\Lw\to\R^m$ that agree with $u_{\e_j}$ on the set $\e_jR_j\cap B_{3\lambda}(x_0)$ and with Lipschitz constant bounded by $Cl_j$ (up to enlarging $C$ due to \eqref{lbdist}). We claim that $|\nabla_{e,\e_j}w_j|^p$ is equiintegrable on $B_{2\lambda}(x_0)$. To verify this assertion, we observe that for $j=j(\lambda)$ large enough, by the definition of $R_j$ we have that $|\nabla_{e,\e_j} w_j|=|\nabla_{e,\e_j}u_{\e_j}|$ on $\e_jR_j^{\prime}\cap B_{5\lambda/2}(x_0)$. Hence for $\e_jx\in \e_jR_j^{\prime}\cap B_{5\lambda/2}(x_0)$ we deduce
\begin{equation*}
|\nabla_{e,\e_j}w_j|^p(\e_j x)=|\nabla_{e,\e_j}u_{\e_j}|^p(\e_j x)=|V_{\e_j}(\e_j x)|^p\leq |\mathcal{M}^{\e_j}_{\infty}V_{\e_j}(\e_jx)|^p=\big(\delta_{l_j}\circ|\mathcal{M}_{\infty}^{\e_j}V_{\e_j}|\big)^p(\e_j x),
\end{equation*}
while on $\e_j\left(\Lw\backslash R^{\prime}_j\right)$ the bound on the Lipschitz constant implies
\begin{equation*}
|\nabla_{e,\e_j}w_j|^p(\e_j x)
\leq Cl_j^p=C\big(\delta_{l_j}\circ|\mathcal{M}_{\infty}^{\e_j}V_{\e_j}|\big)^p(\e_j x).
\end{equation*}
Hence equiintegrability on $B_{2\lambda}(x_0)$ transfers from $\big(\delta_{l_j}\circ|\mathcal{M}_{\infty}^{\e_j}V_{\e_j}|\big)^p$ to $|\nabla_{e,\e_j}w_j|^p$. Finally, note that
\begin{equation*}
\left\{\e_{j}x\in\e_{j}\Lw\cap B_{2\lambda}(x_0):\,u_{\e_j}\not\equiv w_j\text{ on }\e_jB_G(x,1)\right\}\subset \e_j\left(\Lw\backslash R^{\prime}_j\right),
\end{equation*}
so that the second claim of the lemma follows from \eqref{eq:mod2small}.
\end{proof}

Now we are finally in a position to compare the two discrete functionals $F_{\e}(\w)$ and $E_{\e}(\w)$.
\begin{proposition}[Separation of bulk effects]\label{p.gradientpartsequal}
Let $\e_n$ and $F(\w)$ be as in Proposition \ref{limitsbvp}. Then for a.e. $x_0\in D$ and every $\xi\in\R^{m\times d}$ it holds that
\begin{equation*}
|B_1|h(x_0,\xi)=\lim_{\varrho\to 0}\varrho^{-d}F(\w)(u_{x_0,\xi},B_{\varrho}(x_0))=|B_1|q(x_0,\xi),
\end{equation*}
where $q$ is an (equivalent) integrand given by the $\Gamma$-limit of $E_{\e_n}(\w)(\cdot,D)$, which in particular exists.	
\end{proposition}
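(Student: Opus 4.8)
The first equality in the statement is exactly Lemma~\ref{cauchy-born}, so the whole content is the identity $h(x_0,\xi)=q(x_0,\xi)$ for a.e.\ $x_0$ and every $\xi$. I would start by extracting from the subsequence $\e_n$ of Proposition~\ref{limitsbvp} a further subsequence (not relabelled) along which $E_{\e_n}(\w)(\cdot,D)$ $\Gamma$-converges, with quasiconvex $p$-growth integrand $q$ (Theorem~\ref{ACGmain}); $F_{\e_n}(\w)$ still $\Gamma$-converges along it. Then I fix $x_0$ in the full-measure set obtained by intersecting the null sets of Lemmas~\ref{cauchy-born}, \ref{gammablowup} and \ref{diagonal} with the Lebesgue points of $z\mapsto q(z,\xi)$ for all $\xi$ in a countable dense set, and with the set where $q(x_0,\cdot)$ is quasiconvex; I also fix $\xi\in\R^{m\times d}$. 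Note that $B_\varrho(x_0)\in\Ard$, so Proposition~\ref{limitsbvp} applies on these balls. It remains to prove the two inequalities $h(x_0,\xi)\le q(x_0,\xi)$ and $h(x_0,\xi)\ge q(x_0,\xi)$; the first is the ``$F_\e\lesssim E_\e$ on the equiintegrable level'' direction, the second the ``$E_\e\lesssim F_\e$ on the equiintegrable level'' direction, which is the genuinely hard one.

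\textbf{Upper bound $h\le q$.} For fixed small $\varrho$, take a recovery sequence $u_k$ for $E_{\e_{n_k}}(\w)(\cdot,B_\varrho(x_0))$ at the affine function $u_{x_0,\xi}$. Since $E_\e$ is coercive, $\sum_{\e x}\e^d|\nabla_{e,\e}u_k|^p(\e x)$ stays bounded on a slightly larger ball, so Lemma~\ref{l.fomupediscrete} yields $w_k$ with $|\nabla_{e,\e_{n_k}}w_k|^p$ equiintegrable, $w_k\to u_{x_0,\xi}$ in $L^1$, differing from $u_k$ only on a set of vanishing volume; by equiintegrability the modified edges carry vanishing $E$-energy, hence $E_{\e_{n_k}}(\w)(w_k,B_\varrho)\le E_{\e_{n_k}}(\w)(u_k,B_\varrho)+o(1)$. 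Now split the lattice points of $w_k$ into those with $\|\e_{n_k}|\nabla_{\w,\e_{n_k}}(w_k,\cdot)|^p(\e_{n_k}x)\|_1\le\sigma$, where \eqref{linearatzero} gives $f\le(\alpha+\omega(\sigma))\|\cdot\|_1$ with $\omega(\sigma)\to0$, so their total contribution is $\le\frac{\alpha+\omega(\sigma)}{\alpha}E_{\e_{n_k}}(\w)(w_k,B_\varrho)$, and those where $\|\cdot\|_1>\sigma$, where $f\le C_f$ and, using $\|\e|\nabla_{\w,\e}(w_k,\cdot)|^p(\e x)\|_1\le\e|\nabla_{e,\e}w_k|^p(\e x)$ together with the equiintegrability of $|\nabla_{e,\e_{n_k}}w_k|^p$, one checks $\e_{n_k}^{d-1}\#\{\text{such }x\}\to0$, so their contribution is $o(1)$. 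Hence $\limsup_k F_{\e_{n_k}}(\w)(w_k,B_\varrho)\le\frac{\alpha+\omega(\sigma)}{\alpha}\int_{B_\varrho(x_0)}q(z,\xi)\dx$; the $\Gamma$-$\liminf$ inequality for $F_{\e_n}(\w)$ gives the same bound for $F(\w)(u_{x_0,\xi},B_\varrho)$, and dividing by $\varrho^d$, letting $\varrho\to0$ through Lebesgue points and then $\sigma\to0$ gives $h(x_0,\xi)\le q(x_0,\xi)$.

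\textbf{Lower bound $h\ge q$.} Here I would argue by blow-up on shrinking balls. For each small $\varrho$ let $v_k$ be a recovery sequence for $F_{\e_{n_k}}(\w)(\cdot,B_\varrho(x_0))$ at $u_{x_0,\xi}$, truncated by Lemma~\ref{trunc} so that $\|v_k\|_\infty\le 3|\xi|\varrho$; by Lemma~\ref{bounds} one has the a priori bound $F_{\e_{n_k}}(\w)(v_k,B_\varrho)\to F(\w)(u_{x_0,\xi},B_\varrho)\le C|\xi|^p\varrho^d$. Rescaling $B_\varrho(x_0)$ to $B_1$ via $z_k(y)=\varrho^{-1}v_k(x_0+\varrho y)$ turns $\varrho^{-d}F_{\e_{n_k}}(\w)(v_k,B_\varrho(x_0))$ into the discrete $F$-functional of $z_k$ on the rescaled lattice (spacing $\e_{n_k}/\varrho$), but with potential $f_\varrho(\mathfrak p):=\varrho^{-1}f(\varrho\mathfrak p)$, whose linearization constant is again $\alpha$ and which converges pointwise to $\alpha\|\cdot\|_1$ as $\varrho\to0$ by \eqref{linearatzero}; the associated bulk functional is precisely $G_{\e_{n_k},\varrho}(x_0,\w)$ of Lemma~\ref{changeofvar}. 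The cut bound \eqref{realcut} combined with $F_{\e_{n_k}}(\w)(v_k,B_\varrho)\le C\varrho^d$ shows the ``bad set'' where $\|\e_{n_k}|\nabla_{\w,\e_{n_k}}(v_k,\cdot)|^p\|_1$ exceeds $\sigma\varrho$ has, after rescaling, volume $\le C\e_{n_k}/(\sigma\varrho)$, which vanishes along a diagonal $\e_{n_k}=\e(\varrho)\ll\varrho$; Lemmas~\ref{gammablowup} and \ref{diagonal}, via the metrization of $\Gamma$-convergence in the appendix, supply such a diagonal with $G_{\e(\varrho),\varrho}(x_0,\w)\xrightarrow{\Gamma}\int_{B_1}q(x_0,\nabla\cdot)$. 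Away from a neighbourhood of the bad set the rescaled edge gradients of $z_k$ are controlled and $|z_k|\le3|\xi|$, so the doubling/Poincaré/Kirszbraun machinery behind Lemma~\ref{l.fomupediscrete} allows to replace $z_k$ by $\hat z_k$ with controlled $\ell^p$-edge-gradient sum and $\hat z_k\to\xi(\cdot)$ in $L^1(B_1)$, and then by $w_k$ with equiintegrable gradient; comparing the rescaled $E$-functional with the rescaled $F$-functional once more via the small/large $\|\cdot\|_1$ split, and using the $\Gamma$-convergence above together with a Lebesgue-point argument for $\int_{B_1}q(x_0+\varrho y,\xi)\,dy\to|B_1|q(x_0,\xi)$ (Remark~\ref{alsopointwise}), gives $|B_1|q(x_0,\xi)\le\liminf_{\varrho\to0}\varrho^{-d}F(\w)(u_{x_0,\xi},B_\varrho)=|B_1|h(x_0,\xi)$. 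Finally, since the left-hand side of the resulting identity does not depend on which subsequence of $\e_n$ was used for $E$, the integrand $q$ is uniquely determined up to the usual equivalence, whence $E_{\e_n}(\w)(\cdot,D)$ itself $\Gamma$-converges.

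\textbf{Main obstacle.} The delicate part is the lower bound. Unlike for $E_\e$, a recovery sequence for $F_\e$ carries no a priori $L^p$-bound on its discrete gradient, so Lemma~\ref{l.fomupediscrete} cannot be applied to it directly; one has only the trivial energy bound $\sim\varrho^d$ on the ball, which makes the bad set small \emph{only after rescaling and only in a carefully tuned diagonal regime} $\e\ll\varrho$. Performing the Lipschitz-extension modification across the bad set in that regime without inflating the energy is the crux: in the ``large gradient'' regime $F$ costs only $\sim C_f$ per point, far less than the $E$-energy a naive modification would create there, so the extension must be designed so that it produces only $o(1)$ of extra bulk energy — this is exactly why the whole apparatus of this subsection (discrete maximal functions, doubling, the discrete Poincaré inequality, Kirszbraun's theorem, and the explicit metric for $\Gamma$-convergence without equicoercivity) is needed, and it is the step where the non-homogeneity of $f$ bites hardest.
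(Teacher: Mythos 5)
Your proposal follows essentially the same route as the paper: reduce to $h(x_0,\xi)=q(x_0,\xi)$ via Lemma \ref{cauchy-born}; prove the easy inequality by applying Lemma \ref{l.fomupediscrete} to an $E_\e$-recovery sequence and then invoking \eqref{linearatzero} on the equiintegrable set; and prove the hard inequality by truncating and rescaling an $F_\e$-recovery sequence to $B_1$ in a diagonal regime $\e\ll\varrho$, building a Lipschitz modification (Vitali covering plus discrete maximal function plus Kirszbraun) to gain an a priori $\ell^p$-gradient bound, applying Lemma \ref{l.fomupediscrete} for equiintegrability, and comparing with $G_{\e,\varrho}(x_0,\w)$ through Lemma \ref{diagonal} and the metrization in the appendix. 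You also correctly identify the crux (no a priori $L^p$ bound on the discrete gradient of an $F_\e$-recovery sequence, so the Lipschitz extension must not inflate the bulk energy in the diagonal regime) and the Urysohn step at the end; the only deviation is cosmetic ordering and an unfleshed-out account of the paper's Step 1, which is precisely the technical heart of the argument.
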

\begin{proof}
The first equality follows from Lemma \ref{cauchy-born}, so we turn to the proof of the second one. We apply Theorem \ref{ACGmain}, so that, passing to a further subsequence (not relabeled), we may assume that $E_{\e_n}(\w)$ $\Gamma$-converges to some integral functional $E(\w)$ with density $q(x,\xi)$. Let us fix $x_0\in D$ satisfying the first equality and such that Lemmata \ref{gammablowup} and \ref{diagonal} hold. Choose $0<\varrho_0<1$ such that $B_{\varrho_0}(x_0)\subset D$. Lemma \ref{trunc} yields a sequence $u_{\e_n}\in\mathcal{PC}_{\e_n}^{\w}$ that is equibounded in $L^{\infty}(\R^d,\R^m)$, $u_{\e_n}\to u_{x_0,\xi}$ in $L^p(D,\R^m)$ and
\begin{equation*}
\lim_{\e_n\to 0}F_{\e_n}(\w)(u_{\e_n},B_{\varrho_0}(x_0))=F(\w)(u_{x_0,\xi},B_{\varrho_0}(x_0)).
\end{equation*}
Now consider $0<\varrho<\varrho_0$. By discrete superadditivity and $\Gamma$-convergence on $\Ard$ we find that
\begin{align}\label{universalrec}
\limsup_{\e_n\to 0}F_{\e_n}(\w)(u_{\e_n},B_{\varrho}(x_0))&\leq \lim_{\e_n\to 0}F_{\e_n}(\w)(u_{\e_n},B_{\varrho_0}(x_0))-\liminf_{\e_n\to 0}F_{\e_n}(\w)(u_{\e_n},B_{\varrho_0}(x_0)\backslash\overline{B_{\varrho}(x_0)})\nonumber
\\
&\leq F(\w)(u_{x_0,\xi},B_{\varrho_0}(x_0))-F(\w)(u_{x_0,\xi},B_{\varrho_0}(x_0)\backslash \overline{B_{\varrho}(x_0)})\nonumber
\\
&=F(\w)(u_{x_0,\xi},B_{\varrho}(x_0)),
\end{align}
where in the last equality we used that the limit energy of $u_{x_0,\xi}$ does not concentrate on $\partial B_{\varrho}(x_0)$. (\ref{universalrec}) shows that $u_{\e_n}$ is also a recovery sequence on each $B_{\varrho}(x_0)$ for $0<\varrho<\varrho_0$. Next we introduce a constant whose value will become clear later in the proof (cf. the constant $\bar{k}$ in Lemma \ref{l.fomupediscrete}). Choose $k$ satisfying
\begin{equation*}
3+6\,\overline{C}\,C_{r,R}M+|\xi|\leq k,
\end{equation*}
where $\overline{C}$ and $C_{r,R}$ are given by (\ref{maximal-lip}) and Lemma \ref{l.paths}, respectively.
Since $|u_{x_0,\xi}|\leq |\xi|\varrho$ on $B_{\varrho}(x_0)$, Lemma \ref{trunc} implies that the truncated functions $T_{k\varrho}u_{\e_n}$ also yield a recovery sequence on $B_{\varrho}(x_0)$. Now consider a sequence $\varrho_j\to 0$. For any $\varrho=\varrho_j\in (0,(3Mk^2)^{-1}\varrho_0)$ we choose $\e_{\varrho}=\e_{n(\varrho)}\leq\min\{\varrho^{\frac{p}{p-1}},\varrho^2\}$ non-decreasing in $\varrho$, satisfying Lemma \ref{diagonal} and such that
\begin{equation}\label{choicerho}
\begin{split}
F_{\e_{\varrho}}(\w)(T_{k\varrho}u_{\e_{\varrho}},B_{3Mk^2\varrho}(x_0))&\leq C|\xi|^p\varrho^{d},
\\
\dashint_{B_{\varrho}(x_0)}|T_{k\varrho}u_{\e_{\varrho}}-u_{x_0,\xi}|^p\,\mathrm{d}x&\leq \varrho^{p+1}.
\end{split}
\end{equation}
Note that the first estimate is realizable due to Lemma \ref{trunc} and the fact that $u_{\e_n}$ is a recovery sequence also on $B_{3Mk^2\varrho}(x_0)$. 
Finally, we can also require that
\begin{equation}\label{onesequence}
\lim_{\varrho\to 0}\varrho^{-d}F(\w)(u_{x_0,\xi},B_{\varrho}(x_0))=\lim_{\varrho\to 0}\varrho^{-d}F_{\e_{\varrho}}(\w)(T_{k\varrho}u_{\e_{\varrho}},B_{\varrho}(x_0)).
\end{equation}
Our analysis relies on several modifications of the sequence $T_{k\varrho}u_{\e_{\varrho}}$ and a rescaling to $B_1$.

\medskip
\textbf{Step 1} Construction of Lipschitz competitors
\\
The following argument is well-known for continuum functionals and we adapt it carefully to the discrete setting. Let us set $v_{\varrho}:\Lw\to\R^m$ as $v_{\varrho}(x)=\e_{\varrho}^{-1}T_{k\varrho}u_{\e_{\varrho}}(\e_{\varrho} x)$. Then, for given $\Lambda>0$, we define the sets
\begin{equation*}
\begin{split}
&R^{\Lambda}_{\varrho}:=\{x\in\Lw\cap \e_{\varrho}^{-1}B_{k\varrho}(x_0):\;\mathcal{M}^1_{k^2\varrho\e_{\varrho}^{-1}}|\nabla_{e,1} v_{\varrho}|(x)\leq\Lambda\},
\\
&S^{\Lambda}_{\varrho}:=\{x\in\Lw:\;|\nabla_{e,1}v_{\varrho}|(x)\geq\frac{\Lambda}{2}\},
\end{split}
\end{equation*}
where $\mathcal{M}^1_{\eta}$ denotes the discrete maximal function operator defined in (\ref{maximalfunc}) and the norm of the discrete gradient is given by the formula \eqref{eq:lengthgrad}. First we estimate the cardinality of $(\Lw\cap \e_{\varrho}^{-1}B_{k\varrho}(x_0))\backslash R^{\Lambda}_{\varrho}$. To this end, note that for every $x\in (\Lw\cap\e_{\varrho}^{-1}B_{k\varrho}(x_0))\backslash R^{\Lambda}_{\varrho}$ there exists a number $0<\eta_x\leq k^2\varrho\e_{\varrho}^{-1}$ such that
\begin{equation*}
\Lambda\, \#B_G(x,\eta_x)<\sum_{y\in B_G(x,\eta_x)}|\nabla_{e,1} v_{\varrho}|(y)=:|\nabla_1 v_{\varrho}|(B_G(x,\eta_x)).
\end{equation*}
Applying Vitali's covering lemma on separable metric spaces we find a (finite) collection of disjoint balls $B_G(x_i,\eta_i)$ with $x_i\in (\Lw\cap\e_{\varrho}^{-1}B_{k\varrho}(x_0))\backslash R^{\Lambda}_{\varrho}$ satisfying the above inequality and 
\begin{equation*}
(\Lw\cap\e_{\varrho}^{-1}B_{k\varrho}(x_0))\backslash R^{\Lambda}_{\varrho}\subset\bigcup_i B_G(x_i,5\eta_i).
\end{equation*}
Since the balls are disjoint we conclude that
\begin{equation*}
\Lambda\,\#\Big(\bigcup_i B_G(x_i,\eta_i)\Big)< |\nabla_1 v_{\varrho}|\Big( \bigcup_i B_G(x_i,\eta_i)\Big)\leq|\nabla_1 v_{\varrho}|\Big( \bigcup_i B_G(x_i,\eta_i)\cap S^{\Lambda}_{\varrho}\Big)+\frac{\Lambda}{2}\,\#\Big(\bigcup_i B_G(x_i,\eta_i)\Big),
\end{equation*}
where we used the definition of $S^{\Lambda}_{\varrho}$ in the last estimate. Rearranging terms we obtain
\begin{equation}\label{cardbound1}
\#\Big(\bigcup_i B_G(x_i,\eta_i)\Big)< \frac{2}{\Lambda}|\nabla_1 v_{\varrho}|\Big( \bigcup_i B_G(x_i,\eta_i)\cap S^{\Lambda}_{\varrho}\Big).
\end{equation}
To reduce notation, for $x\in\Lw$ we set $N_x=\{y\in\Lw:\,(x,y)\in \mathbb{B}(\w)\}$. Moreover define
\begin{equation*}
\mathcal{J}_{\varrho}=\{x\in\Lw:\;\e_{\varrho}|\nabla_{e,1}v_{\varrho}|^p(x)\geq 1\}.
\end{equation*}
Note that for any $x\in\mathcal{J}_{\varrho}$ there exists a point $y\in N_x$ with $\e_{\varrho}|v_{\varrho}(x)-v_{\varrho}(y)|^p\geq \frac{1}{M^p}$. Hence the growth condition (\ref{realcut}) implies the inequality
\begin{equation}\label{localsymmetry}
1\leq C\Big(f(\e_{\varrho}|\nabla_{\w,1}(v_{\varrho},\R^d)|^p(x))+f(\e_{\varrho}|\nabla_{\w,1}(v_{\varrho},\R^d)|^p(y))\Big).
\end{equation} 
In order to control the location of such $y$, observe that by (\ref{ubdist}) we have $B_G(x_i,\eta_i)\subset B_{M\eta_i}(x_i)$, which in turn implies (for $M\geq 1$) that
\begin{equation}\label{localisation}
\bigcup_i\bigcup_{x\in B_G(x_i,\eta_i)}N_x\subset \e_{\varrho}^{-1}B_{3Mk^2\varrho}(x_0).
\end{equation}
Here we also used (\ref{finiterange}) and that $Mk^2\varrho\geq M\e_{\varrho}$. Next we sum the estimate (\ref{localsymmetry}) over $x$. Note that due to (\ref{neighbours}) every term can appear at most $M+1$ times. By the definition of $v_{\varrho}$ we obtain
\begin{equation*}
\e_{\varrho}^{d-1}\#\Big(\bigcup_i B_G(x_i,\eta_i)\cap\mathcal{J}_{\varrho}\Big)\leq C F_{\e_{\varrho}}(\w)(T_{k\varrho}u_{\e_{\varrho}},B_{3Mk^2\varrho}(x_0))\leq C |\xi|^p \varrho^d,
\end{equation*}
where we applied the first bound in (\ref{choicerho}). By truncation we further know that $|\nabla_{e,1} v_{\varrho}|(x)\leq C\e_{\varrho}^{-1}k\varrho$ for all $x\in\Lw$, so that
\begin{equation}\label{cardbound2}
|\nabla_1 v_{\varrho}|\Big( \bigcup_i B_G(x_i,\eta_i)\cap \mathcal{J}_{\varrho}\Big)\leq C |\xi|^p \Big(\frac{\varrho}{\e_{\varrho}}\Big)^{d}\varrho.
\end{equation}
In order to estimate the remaining contributions in the right hand side of \eqref{cardbound1} we use H\"older's inequality in the form
\begin{equation}\label{holder}
|\nabla_1 v_{\varrho}|\Big( \bigcup_i B_G(x_i,\eta_i)\cap S_{\varrho}^{\Lambda}\backslash\mathcal{J}_{\varrho}\Big)\leq \#\Big(\bigcup_i B_G(x_i,\eta_i)\cap S_{\varrho}^{\Lambda}\backslash \mathcal{J}_{\varrho}\Big)^{\frac{p-1}{p}}\Big(\sum_{x\in \bigcup_iB_G(x_i,\eta_i)\backslash \mathcal{J}_{\varrho}}|\nabla_{e,1}v_{\varrho}|^p(x)\Big)^{\frac{1}{p}}.
\end{equation}
In the last term we have to pass from the undirected gradient to the directed version: for $x\notin\mathcal{J}_{\varrho}$ and $y\in N_x$ it holds that $\e_{\varrho}|v_{\varrho}(x)-v_{\varrho}(y)|^p\leq 1$. Hence we infer from the bound (\ref{realcut}) that for $x\in \bigcup_iB_G(x_i,\eta_i)\backslash\mathcal{J}_{\varrho}$
\begin{align*}
|\nabla_{e,1}v_{\varrho}(x)|^p(x)\leq& C\sum_{y\in N_x}|v_{\varrho}(x)-v_{\varrho}(y)|^p=\frac{C}{\e_{\varrho}}\sum_{y\in N_x}\min\{\e_{\varrho}|v_{\varrho}(x)-v_{\varrho}(y)|^p,1\}
\\
\leq& \frac{C}{\e_{\varrho}}\sum_{y\in N_x}f(\e_{\varrho}|\nabla_{\w,1}(v_{\varrho},\e_{\varrho}^{-1}B_{3Mk^2\varrho}(x_0))|^p(x) )+f(\e_{\varrho}|\nabla_{\w,1}(v_{\varrho},\e_{\varrho}^{-1}B_{3Mk^2\varrho}(x_0))|^p(y)),
\end{align*}
where we used again (\ref{localisation}). We sum this estimate and by (\ref{neighbours}) each term is counted at most $2M$ times. Thus in combination with the first estimate in (\ref{choicerho}) we have
\begin{equation}\label{globalsymmetry}
\Big(\sum_{x\in \bigcup_iB_G(x_i,\eta_i)\backslash\mathcal{J}_{\varrho}}|\nabla_{e,1}v_{\varrho}|^p(x)\Big)^{\frac{1}{p}}\leq C\e_{\varrho}^{-\frac{d}{p}}\Big(F_{\e_{\varrho}}(\w)(T_ku_{\e_{\varrho}},B_{3Mk^2\varrho}(x_0))\Big)^{\frac{1}{p}}\leq\Big(\frac{\varrho}{\e_{\varrho}}\Big)^{\frac{d}{p}}C|\xi|. 
\end{equation}
Combining this estimate with (\ref{holder}) leads to
\begin{equation*}
|\nabla_1 v_{\varrho}|\Big( \bigcup_i B_G(x_i,\eta_i)\cap S_{\varrho}^{\Lambda}\backslash\mathcal{J}_{\varrho}\Big)\leq 
C\#\Big(\bigcup_i B_G(x_i,\eta_i)\cap S_{\varrho}^{\Lambda}\backslash \mathcal{J}_{\varrho}\Big)^{\frac{p-1}{p}}\Big(\frac{\varrho}{\e_{\varrho}}\Big)^{\frac{d}{p}}|\xi|.
\end{equation*}
In order to bound the cardinality term, note that by the definition of $S^{\Lambda}_{\varrho}$ and (\ref{globalsymmetry}) it holds that
\begin{equation*}
\#\Big(\bigcup_i B_G(x_i,\eta_i)\cap S_{\varrho}^{\Lambda}\backslash \mathcal{J}_{\varrho}\Big)\Big(\frac{\Lambda}{2}\Big)^p\leq \sum_{x\in \bigcup_i B_G(x_i,\eta_i)\backslash \mathcal{J}_{\varrho}}|\nabla_{e,1} v_{\varrho}|^p(x)\leq C|\xi|^p\Big(\frac{\varrho}{\e_{\varrho}}\Big)^d.
\end{equation*}
Plugging this estimate into the previous one yields
\begin{equation}\label{cardbound3}
|\nabla_1 v_{\varrho}|\Big( \bigcup_i B_G(x_i,\eta_i)\cap S_{\varrho}^{\Lambda}\backslash\mathcal{J}_{\varrho}\Big)\leq C\Lambda^{1-p}|\xi|^p\Big(\frac{\varrho}{\e_{\varrho}}\Big)^d.
\end{equation}
Applying trice the doubling property of Lemma \ref{graphprop} and combining (\ref{cardbound1}), (\ref{cardbound2}) and (\ref{cardbound3}) we infer that
\begin{equation*}
\#\big((\Lw\cap\e_{\varrho}^{-1}B_{k\varrho}(x_0))\backslash R^{\Lambda}_{\varrho}\big)\leq \#\Big(\bigcup_i B_G(x_i,5\eta_i)\Big)\leq C\#\Big(\bigcup_i B_G(x_i,\eta_i)\Big)
\leq C|\xi|^p\Big(\frac{\varrho}{\e_{\varrho}}\Big)^d\Big(\varrho\Lambda^{-1}+\Lambda^{-p}\Big).
\end{equation*}
We choose $\Lambda=\Lambda_{\varrho}$ as $\Lambda^{p-1}=\varrho^{-1}$, so that the last inequality can be written as
\begin{equation}\label{cardboundfinal}
\#\big((\Lw\cap\e_{\varrho}^{-1}B_{k\varrho}(x_0))\backslash R^{\Lambda_{\varrho}}_{\varrho}\big)\leq C|\xi|^p\Big(\frac{\varrho}{\e_{\varrho}}\Big)^d\Lambda_{\varrho}^{-p}.
\end{equation}
With this choice of $\Lambda_{\varrho}$, we now construct the Lipschitz competitor. First observe that for any $x,y\in \Lw\cap\e_{\varrho}^{-1}B_{k\varrho}(x_0)$ the definition of $k$ yields
\begin{equation*}
\overline{C}\mathrm{d}_G(x,y)\leq \overline{C}\,C_{r,R}|x-y|\leq 2\,\overline{C}\,C_{r,R}k\varrho\e_{\varrho}^{-1}\leq k^2\varrho\e_{\varrho}^{-1},
\end{equation*}
so that (\ref{maximal-lip}) and (\ref{lbdist}) imply for any $x,y\in R^{\Lambda_{\varrho}}_{\varrho}$ the Lipschitz estimate
\begin{equation*}
|v_{\varrho}(x)-v_{\varrho}(y)|\leq 2\,\overline{C}\,C_{r,R}\Lambda_{\varrho}|x-y|\leq k\Lambda_{\varrho}|x-y|.
\end{equation*}
Using Kirszbraun's extension theorem we find a Lipschitz function $\tilde{v}_{\varrho}:\Lw\to\R^m$ with Lipschitz constant $k\Lambda_{\varrho}$ that agrees with $v_{\varrho}$ on $ R^{\Lambda_{\varrho}}_{\varrho}$. Moreover, by truncation via the operator $T_{3\e_{\rho}^{-1}k\rho}$ we can additionally assume that $\|\tilde{v}_{\varrho}\|_{\infty}\leq 9\e_{\varrho}^{-1}k\varrho$.

\medskip
\textbf{Step 2} From Lipschitz continuity to equiintegrability of discrete gradients
\\
It will be convenient to rescale the function $\tilde{v}_{\varrho}$ constructed in the first step onto $B_1$. First we introduce some notation. We set $\sigma_{\varrho}=\frac{\e_{\varrho}}{\varrho}$ and $\Lw_{\varrho}:=\Lw-\frac{x_0}{\e_{\varrho}}$. For any $x\in\Lw_{\varrho}$ we further denote by $N_{x,\varrho}=N_{x+\frac{x_0}{\e_{\varrho}}}-\frac{x_0}{\e_{\varrho}}$ the set of adjacent points in the undirected shifted graph. In the notation for the discrete gradients we will replace $e$ by $e_0$ and $\w$ by $\w_0$, respectively. Define $u^{\varrho}:\sigma_{\varrho}\Lw_{\varrho}\to \R^m$ via 
\begin{equation*}
u^{\varrho}(\sigma_{\varrho}x)=\sigma_{\varrho}\tilde{v}_{\varrho}\left(x+\frac{x_0}{\e_{\varrho}}\right).
\end{equation*}
By the properties of $\tilde{v}_{\varrho}$ established in the first step, the function $u^{\varrho}$ satisfies
\begin{itemize}
	\item[(i)] $\|u^{\varrho}\|_{\infty}\leq 9k$;
	\item [(ii)] $|u^{\varrho}(\sigma_{\varrho} x)-u^{\varrho}(\sigma_{\varrho} y)|\leq k\Lambda_{\varrho}\sigma_{\varrho}|x-y|$ for all $x,y\in\Lw_{\varrho}$;
	\item[(iii)] $u^{\varrho}(\sigma_{\varrho} x)=\sigma_{\varrho} v_{\varrho}(x+\frac{x_0}{\e_{\varrho}})=\varrho^{-1}T_{k\varrho}u_{\e_{\varrho}}(\e_{\varrho}x+x_0)$ for all $x\in R_{\varrho}^{\Lambda_{\varrho}}-\frac{x_0}{\e_{\varrho}}$.
\end{itemize}
We aim at applying Lemma \ref{l.fomupediscrete} with $x_0=0$, $\lambda=1$ and the vanishing sequence $\sigma_{\rho}$ (note that the shift of the graph preserves admissibility). Due to (ii) we have the bound $|\nabla_{e_0,\sigma_{\varrho}}u^{\varrho}|^p(\sigma_{\varrho} x)\leq C\Lambda^p_{\varrho}$. In combination with (iii) and a change of variables we derive the bound
\begin{align*}
&\sum_{\substack{\sigma_{\varrho} x\in\sigma_{\varrho}\Lw_{\varrho}\cap B_{k}\\ N_{x,\varrho}\backslash (R_{\varrho}^{\Lambda_{\varrho}}-\frac{x_0}{\e_{\varrho}})\neq\emptyset}}\sigma_{\varrho}^d|\nabla_{e_0,\sigma_{\varrho}}u^{\varrho}|^p(\sigma_{\varrho} x)+ \sum_{\substack{\sigma_{\varrho} x\in\sigma_{\varrho}\Lw_{\varrho}\cap B_{k}\\ N_{x,\varrho}\subset (R_{\varrho}^{\Lambda_{\varrho}}-\frac{x_0}{\e_{\varrho}})}}\sigma_{\varrho}^d|\nabla_{e_0,\sigma_{\varrho}}u^{\varrho}|^p(\sigma_{\varrho} x)
\\
\leq& C\Lambda_{\varrho}^p\sigma_{\varrho}^d\#\Big((\Lw\cap \e_{\varrho}^{-1}B_{k\varrho}(x_0))\backslash R_{\varrho}^{\Lambda_{\varrho}}\big)\Big)+\sum_{x\in R_{\e,\varrho}^{\Lambda_{\varrho}}}\sigma_{\varrho}^{d}|\nabla_{e,1}v_{\varrho}|^p(x).
\end{align*}
By definition of the maximal function operator it holds that $\e_{\varrho}|\nabla_{e,1}v_{\varrho}|^p(x)\leq \e_{\varrho}\Lambda_{\varrho}^p\leq 1$ for $x\in R^{\Lambda_{\varrho}}_{\varrho}$, so that we can use (\ref{cardboundfinal}) and the same reasoning as for (\ref{globalsymmetry}) to obtain the estimate
\begin{equation*}
\sum_{\sigma_{\varrho} x\in\sigma_{\varrho}\Lw_{\varrho}\cap B_{k}}\sigma_{\varrho}^d|\nabla_{e_0,\sigma_{\varrho}}u^{\varrho}|^p(\sigma_{\varrho} x)\leq C|\xi|^p +C\varrho^{-d}F_{\e}(\w)(T_{k\varrho}u_{\e_{\varrho}},B_{3Mk^2\varrho}(x_0))\leq C|\xi|^p.
\end{equation*}
Hence our choice of $k$ allows to apply Lemma \ref{l.fomupediscrete} and we obtain a subsequence $\varrho_j$ and a sequence $w_j:\sigma_{\varrho_j}\Lw_{\varrho_j}\to\R^m$ such that, setting $\sigma_j=\sigma_{\varrho_j}$, the sequence $|\nabla_{e_0,\sigma_j}w_j|^p$ is equiintegrable on $B_2$ and 
\begin{equation}\label{eq:smallmod}
\lim_j\sigma_j^d\#\{\sigma_j x\in\sigma_j\Lw_{\varrho_j}\cap B_2:\,\,u^{\varrho_j}\not\equiv w_j\text{ on }\sigma_jB_{G_{\varrho_j}}(x,1) \}=0,
\end{equation}
where $G_{\varrho_j}$ denotes the undirected shifted graph. Finally, by a truncation argument based on the operator $T_{9k}$ we can assume that $\|w_{j}\|_{\infty}\leq 27k$.

\medskip
\textbf{Step 3} Proof of $\displaystyle\lim_{\varrho\to 0}\varrho^{-d}F(\w)(u_{x_0,\xi},B_{\varrho}(x_0))\geq|B_1|q(x_0,\xi)$
\\
Let $w_j$ be the sequence constructed in Step 2. First we estimate the $L^p(B_1)$-norm of the sequence $w_j-u_{0,\xi}$. Define the set
\begin{equation*}
U_j=\left\{\sigma_j x\in \sigma_j\Lw_{\varrho_j}:\, B_{G_{\varrho_j}}(x,1)\subset R^{\Lambda_{\varrho_j}}_{\varrho_j}-x_0\e^{-1}_{\varrho_j}\right\}\backslash R_j^{\prime},
\end{equation*}
where $R_j^{\prime}$ denotes the set in \eqref{eq:smallmod}. Then by construction $w_j(\sigma_j x)=\varrho_j^{-1}T_{k\varrho_j}u_{\e_{\varrho_j}}(x_0+\e_j x)$ for all $\sigma_j x\in B_2\cap U_j$. Moreover, by \eqref{neighbours} and \eqref{cardboundfinal} we can bound the cardinality of the complement via
\begin{equation*}
\sigma_j^d\#\left(\sigma_j\Lw_{\varrho_j}\cap B_2\backslash U_j\right)\leq \sigma_j^d\#R^{\prime}_j+C\sigma_j^d\#\left(\Lw\cap\e_{\varrho_j}^{-1}B_{k\varrho_j}(x_0)\backslash R_{\varrho_j}^{\Lambda_{\varrho_j}}\right)\leq \sigma_j^d\#R_j^{\prime}+C|\xi|^p\Lambda_{\varrho_j}^p,
\end{equation*}
so that by \eqref{eq:smallmod} and the choice of $\Lambda_{\varrho_j}$ we have
\begin{equation}\label{eq:smallmod2}
\lim_{j\to +\infty}\sigma_j^d\#\left(\sigma_j\Lw_{\varrho_j}\cap B_2\backslash U_j\right)=0.
\end{equation}
Due to the $L^{\infty}$-bound on $w_j$, for $j$ large enough a change of variables yields
\begin{equation*}
\|w_j-u_{0,\xi}\|^p_{L^p(B_1)}\leq \frac{C}{\varrho_j^{p}}\dashint_{B_{\varrho_j}(x_0)}|T_{k\varrho_j}u_{\e_{\varrho_j}}-u_{x_0,\xi}|^p\,\mathrm{d}z
+C \sigma_j^d\#\left(\sigma_j\Lw_{\varrho_j}\cap B_2\backslash U_j\right).
\end{equation*}
Hence \eqref{choicerho} and \eqref{eq:smallmod2} imply that $w_j\to u_{0,\xi}$ in $L^p(B_1,\R^m)$. Now we turn to the energy estimates. Fix $\eta>0$. Since $|\nabla_{e_0,\sigma_j}w_j|^p$ is piecewise constant with respect to the Voronoi tessellation of $\sigma_j\Lw_{\varrho_j}$, Remark \ref{voronoi}, \eqref{eq:smallmod2} and the equiintegrability of $|\nabla_{e_0,\sigma_j}w_j|^p$ on $B_2$ imply that there exists $j_{\eta}$ such that for all  $j\geq j_{\eta}$
\begin{equation}\label{equint1}
\sum_{\sigma_j x\in \sigma_j\Lw_{\varrho_j}\cap B_1\backslash U_j}\sigma_j^d|\nabla_{e_0,\sigma_j}w_j|^p(\sigma_j x)\leq \eta.
\end{equation}
For $t>0$ let us further introduce the sets
\begin{equation*}
S_j(t)=\{\sigma_jx\in\sigma_j\Lw_{\varrho_j}\cap B_1:\;|\nabla_{e_0,\sigma_j}w_j|^p(\sigma_j x)>t\}.
\end{equation*}
Again due to the equiintegrability of $|\nabla_{e_0,\sigma_j}w_j|^p$ on $B_2$ we find $t_{\eta}>0$ such that for $j\geq j_{\eta}$ we have
\begin{equation}\label{equint2}
\sum_{\sigma_jx\in S_j(t_{\eta})}\sigma_j^d|\nabla_{e_0,\sigma_j}w_j|^p(\sigma_j x)\leq C\int_{B_2\cap\{|\nabla_{e_0,\sigma_j}w_j|^p>t_{\eta}\}}|\nabla_{e_0,\sigma_j}w_j|^p(z)\,\mathrm{d}z\leq\eta.
\end{equation}
Moreover, if $\e_{\varrho_j} x\in (x_0+\varrho_j (B_1\cap U_j\backslash S_j(t_{\eta}))$, then 
\begin{equation}\label{smallgrad}
\|\e_{\varrho_j}|\nabla_{\w,\e_{\varrho_j}}(T_{k\varrho_j}u_{\e_{\varrho_j}},B_{\varrho_j}(x_0))|^p(\e_j x)\|_1\leq\e_{\varrho_j}|\nabla_{e_0,\sigma_j}w_j|^p(\sigma_jx-\varrho_j^{-1}x_0)\leq \e_{\varrho_j}t_{\eta}.
\end{equation}
The right hand side converges to zero. Thus, after enlarging $j_{\eta}$, assumption \eqref{linearatzero} yields
\begin{equation*}
f\big(\e_{\varrho_j}|\nabla_{\w,\e_{\varrho_j}}(T_{k\varrho_j}u_{\e_{\varrho_j}},B_{\varrho_j}(x_0))|^p(\e_j x)\big)\geq (1-\eta)\,\alpha\,\|\e_{\varrho_j}|\nabla_{\w,\e_{\varrho_j}}(T_{k\varrho_j}u_{\e_{\varrho_j}},B_{\varrho_j}(x_0))|^p(\e_{\varrho_j} x)\|_1
\end{equation*}
for all $j\geq j_{\eta}$ and all $\e_{\varrho_j} x\in (x_0+\varrho_j (B_1\cap U_j\backslash S_j(t_{\eta}))$. For the remaining lattice points we can use (\ref{equint1}) and (\ref{equint2}), so that from a change of variables we deduce the lower bound
\begin{align*}
\varrho_j^{-d}F_{\e_{\varrho_j}}(\w)(T_{k\varrho_j}u_{\e_{\varrho_j}},B_{\varrho_j}(x_0))&\geq(1-\eta)\alpha\sum_{\sigma_jx\in\sigma_j\Lw_{\varrho_j}\cap B_1}\sigma_j^{d-1}\|\sigma_j|\nabla_{\w_0,\sigma_j}(w_j,B_{1})|^p(\sigma_j x)\|_1-2\eta
\\
&=(1-\eta)G_{\e_{\varrho_j},\varrho_j}(x_0,\w)(w_j)-2\eta
\end{align*}
with the functional $G_{\e,\varrho}(x_0,\w)$ defined in Lemma \ref{changeofvar}. Since we have chosen $x_0$ and $\e_{\varrho}$ such that Lemma \ref{diagonal} holds, we deduce from (\ref{onesequence}) and the convergence $w_j\to u_{0,\xi}$ in $L^p(B_1,\R^m)$ that
\begin{align*}
\lim_{\varrho\to 0}\varrho^{-d}F(\w)(u_{x_0,\xi},B_{\varrho}(x_0))&=\lim_j \varrho_{j}^{-d}F_{\e_{\varrho_j}}(\w)(T_{k\varrho_j}u_{\e_{\varrho_j}},B_{\varrho_j}(x_0))
\geq \liminf_j(1-\eta)G_{\e_{\varrho_j},\varrho_j}(x_0,\w)(w_j)-2\eta\nonumber
\\
&\geq (1-\eta)\int_{B_1}q(x_0,\xi)\,\mathrm{d}z-2\eta=(1-\eta)|B_1|q(x_0,\xi)-2\eta.
\end{align*}
Since $\eta>0$ was arbitrary, we conclude that for a.e. $x_0\in D$ and all $\xi\in\R^{m\times d}$
\begin{equation}\label{bulklb}
\lim_{\varrho\to 0}\varrho^{-d}F(\w)(u_{x_0,\xi},B_{\varrho}(x_0))\geq |B_1|q(x_0,\xi).
\end{equation}
Note that the exceptional set may depend on the subsequence chosen at the beginning.

\medskip
\textbf{Step 4} Proof of $\displaystyle\lim_{\varrho\to 0}\varrho^{-d}F(\w)(u_{x_0,\xi},B_{\varrho}(x_0))\leq|B_1|q(x_0,\xi)$
\\
To prove the reverse inequality in (\ref{bulklb}) we take a sequence $u_{\e_n}\in\mathcal{PC}_{\e_n}^{\w}$ converging to $u_{x_0,\xi}$ in $L^p(D,\R^m)$ and such that 
\begin{equation*}
\lim_{\e_n\to 0}E_{\e_n}(\w)(u_{\e_n},B_{\varrho_0}(x_0))=E(\w)(u_{x_0,\xi},B_{\varrho_0}(x_0)).
\end{equation*}
The arguments are very similar to Steps 2 and 3, so we just sketch them. As in (\ref{universalrec}) one can show that the truncated functions $T_{k\varrho}u_{\e_n}$ form a recovery sequence on all balls $B_{\varrho}(x_0)$ with $0<\varrho<\varrho_0$. This time we apply Lemma \ref{l.fomupediscrete} to $T_{k\rho}u_{\e_n}$ with the chosen $x_0$ and $\lambda=\varrho$. Note that the assumptions are satisfied since for $\varrho\leq\varrho_0/(2k)$ we have that
\begin{equation*}
\sum_{\e_n x\in\e_n\Lw\cap B_{k\varrho}(x_0)}\e_n^d|\nabla_{e,\e_n}T_{k\varrho}u_{\e_n}|^p(\e_n x) \leq  CE_{\e_n}(\w)(T_{k\varrho}u_{\e_n},B_{2k\varrho}(x_0))\leq C E_{\e_n}(\w)(u_{\e_n},B_{\varrho_0}(x_0)).
\end{equation*}
Hence we find a subsequence $\e_{n_j}$ and a sequence $v_j\in\mathcal{PC}_{\e_{n_j}}^{\w}$ (both depending on $\varrho$) such that, setting $\e_j=\e_{n_j}$, the sequence $|\nabla_{e,\e_j}v_j|^p$ is equiintegrable on $B_{2\varrho}(x_0)$ and
\begin{equation}\label{eq:smallmod3}
\lim_j\e_{j}^d\#\left\{\e_{j}x\in\e_{j}\Lw\cap B_{2\varrho}(x_0):\,T_{k\varrho}u_{\e_j}\not\equiv v_j\text{ on }\e_jB_G(x,1)\right\}=0.
\end{equation}
By truncation we may further assume that $\|v_j\|_{\infty}\leq 9k\varrho$. Fix $\eta>0$ and for $t>0$ define the sets $S_j(t)$ by
\begin{equation*}
S_j(t):=\left\{\e_jx\in\e_j\Lw\cap B_{\varrho}(x_0):\;|\nabla_{e,\e_j}v_j|^p(\e_jx)>t\right\}.
\end{equation*}
We choose $t_{\eta}>0$ (possibly depending on $\varrho$) such that, keeping in mind the inequality $f(\mathfrak{p})\leq C_f\|\mathfrak{p}\|_1$, for $j$ large enough it holds that
\begin{equation}\label{equi1}
\varrho^{-d}\sum_{\e_jx\in S_j(t_{\eta})}\e_j^{d-1}f\Big(\e_j|\nabla_{\w,\e_j}(v_j,B_{\varrho}(x_0))|^p(\e_jx)\Big)
\leq C\rho^{-d}\int_{B_{2\varrho}(x_0)\cap\{|\nabla_{e,\e_j}v_j|^p>t_{\eta}\}}|\nabla_{e,\e_j }v_j|^p(z)\,\mathrm{d}z\leq \eta,
\end{equation}
which is possible due to the equiintegrability of $|\nabla_{e,\e_j}v_j|^p$ on $B_{2\varrho}(x_0)$. Denoting by $W_j$ the set in \eqref{eq:smallmod3}, the same arguments yield $j_{\eta}\in\mathbb{N}$ such that for all $j\geq j_{\eta}$ we have
\begin{align}\label{equi2}
\varrho^{-d}\sum_{\e_jx\in W_j\cap B_{\varrho}(x_0)}\e_j^{d-1}f\Big(\e_j|\nabla_{\w,\e_j}(v_j,B_{\varrho}(x_0))|^p(\e_jx)\Big)\leq \eta.
\end{align}
Similar to \eqref{smallgrad}, for $\e_j x\in\e_j\Lw\cap B_{\varrho}(x_0)\backslash (W_j\cup S_j(t_{\eta}))$ we know that $\|\e_j|\nabla_{w,\e_j}(v_j,B_{\varrho}(x_0))|^p\|_1\leq\e_jt_{\eta}$. Hence assumption (\ref{linearatzero}) and the bounds (\ref{equi1}) and (\ref{equi2}) imply for large enough $j$ the estimate
\begin{equation*}
\varrho^{-d}E_{\e_j}(\w)(T_{k\varrho}u_{\e_j},B_{\varrho}(x_0))\geq \varrho^{-d}(1-\eta)F_{\e_j}(\w)(v_j,B_{\varrho}(x_0))-2\eta.
\end{equation*}
From \eqref{eq:smallmod3} and the uniform boundedness of $v_j$  we infer that $v_j\to u_{x_0,\xi}$ in $L^1(B_{\varrho}(x_0),\R^m)$. By a modification on $\e_j\Lw\backslash B_{\varrho}(x_0)$ not affecting the energy, this convergence also holds in $L^1(D,\R^m)$ and therefore we deduce from $\Gamma$-convergence along the subsequence $\e_j$ and the previous inequality that
\begin{equation*}
\varrho^{-d}E(\w)(u_{x_0,\xi},B_{\varrho}(x_0))=\lim_j\varrho^{-d}E_{\e_j}(\w)(T_{k\varrho}u_{\e_j},B_{\varrho}(x_0))\geq\varrho^{-d}(1-\eta)F(\w)(u_{x_0,\xi},B_{\varrho}(x_0))-2\eta.
\end{equation*}
In view of Remark \ref{alsopointwise} and the arbitrariness of $\eta$ we conclude that
\begin{equation*}
|B_1|q(x_0,\xi)=\lim_{\varrho\to 0}\varrho^{-d}E(\w)(u_{x_0,\xi},B_{\varrho}(x_0))\geq \lim_{\varrho\to 0}\varrho^{-d}F(\w)(u_{x_0,\xi},B_{\varrho}(x_0)).
\end{equation*}
Combined with (\ref{bulklb}), this estimate yields the claim along the chosen subsequence. In the general case, we obtain that along any subsequence of $\e_n$ the $\Gamma$-limit of $E_{\e}(\w)$ is uniquely defined by the integrand $h(x,\xi)$, so that the $\Gamma$-limit along the sequence $\e_n$ exists by the Urysohn-property of $\Gamma$-convergence, although the integrand might differ on a negligible set depending on the subsequence.
\end{proof}

\subsection{Characterization of the surface density and conclusion} 

Having identified the bulk term, we now show that the computation of the surface integrand $\varphi(x,a,\nu)$ can be performed with the discrete functional $F_{\e}(\w)$ restricted to functions taking only the two values $a$ and $0$. Then we prove that the surface density $\varphi$ agrees with the function $s$ defined in Remark \ref{r.blowupformulas} via the energy $I_{\e}(\w)$. 

We study the asymptotic minimization problems given by Proposition \ref{limitsbvp} and their connection to boundary value problems for the discrete functionals $F_{\e}(\w)$. More precisely, as a first step we compare the two quantities
\begin{equation*}
\begin{split}
m^{\delta}_{\e}(\w)(\bar{u},A)&=\inf\{F_{\e}(\w)(v,A):\;v\in\mathcal{PC}^{\w}_{\e,\delta}(\bar{u},A)\},
\\
m(\w)(\bar{u},A)&=\inf\{F(\w)(v,A):\;v\in SBV^p(A,\R^m),\,v=\bar{u}\text{ in a neighborhood of }\partial A\},
\end{split}
\end{equation*}
where the limit functional $F(\w)$ is given (up to subsequences) by Proposition \ref{limitsbvp} and the set $\mathcal{PC}_{\e,\delta}^{\w}(\bar{u},A)$, which takes into account discrete boundary conditions, is defined in \eqref{bdryclass}. We restrict the class of boundary conditions to pointwise well-defined functions $\bar{u}\in SBV^p(D,\R^m)\cap L^{\infty}(D,\R^m)$ such that, setting $\bar{u}_{\e}\in \mathcal{PC}_{\e}^{\w}$ as $\bar{u}_{\e}(\e x)=\bar{u}(\e x)$, it holds that
\begin{equation}\label{boundarydata}
\begin{split}
&\limsup_{\e\to 0}F_{\e}(\bar{u}_{\e},B)\leq C\int_B|\nabla \bar{u}|^p\,\mathrm{d}x+C\mathcal{H}^{d-1}(S_{\bar{u}}\cap \overline{B}),\\
&\bar{u}_{\e}\to \bar{u} \text{ in }L^1(D,\R^m),\quad\quad\mathcal{H}^{d-1}(S_{\bar{u}}\cap\partial A)=0
\end{split}
\end{equation}
for some $C>0$ uniformly for $B\in\Ard$. In particular, as seen in the proof of Lemma \ref{bounds}, we allow for piecewise smooth functions with polyhedral jump set that has no mass on $\partial A$. We have the following convergence result.
\begin{lemma}[Approximation of minimum values]\label{approxminprob}
	Let $\e_n$ and $F(\w)$ be as in Proposition \ref{limitsbvp}. Then, for any $A\in\Ard$ and ${\bar{u}}$ as in (\ref{boundarydata}), it holds that
	\begin{equation*}
	\lim_{\delta\to 0}\liminf_nm^{\delta}_{\e_n}(\w)({\bar{u}},A)=\lim_{\delta\to 0}\limsup_{n}m^{\delta}_{\e_n}(\w)({\bar{u}},A)=m(\w)({\bar{u}},A).
	\end{equation*}
\end{lemma}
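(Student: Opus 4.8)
The plan is to establish the two inequalities
\[
\lim_{\delta\to 0}\limsup_n m^{\delta}_{\e_n}(\w)(\bar u,A)\ \le\ m(\w)(\bar u,A)\ \le\ \lim_{\delta\to 0}\liminf_n m^{\delta}_{\e_n}(\w)(\bar u,A).
\]
Since $\delta\mapsto m^{\delta}_{\e_n}(\w)(\bar u,A)$ is non-decreasing (a larger $\delta$ only enlarges the boundary constraint), both one-sided limits in $\delta$ exist as infima, and combining the displayed inequalities with $\liminf\le\limsup$ forces all three quantities to equal $m(\w)(\bar u,A)$. I would also record at the outset that $\bar u_{\e}$ is itself an admissible competitor for $m^{\delta}_{\e}(\w)(\bar u,A)$, so \eqref{boundarydata} yields the uniform a priori bound $\limsup_n m^{\delta}_{\e_n}(\w)(\bar u,A)\le C$.

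\emph{Upper bound.} I would fix a near-optimal $v\in SBV^p(A,\R^m)$ for $m(\w)(\bar u,A)$, equal to $\bar u$ on some neighbourhood $U$ of $\partial A$, and truncate it via Lemma \ref{trunc} and Lemma \ref{truncation} so that additionally $v\in L^{\infty}$; since $\|\bar u\|_{\infty}<\infty$ this preserves the boundary identity and does not increase $F(\w)(v,A)$. Then I would take a uniformly bounded recovery sequence $v_{\e_n}\to v$ for $F(\w)(\cdot,A)$ from Proposition \ref{limitsbvp} and glue it to the discretisation $\bar u_{\e_n}$ in a collar of $\partial A$ contained in $U$, where $v=\bar u$. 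The gluing is carried out exactly as in Proposition \ref{subadd}: split the collar into $N$ sublayers, interpolate with discrete cut-offs, keep the best sublayer; the transition energy is then bounded by $\tfrac{C}{N}\bigl(F_{\e_n}(\w)(v_{\e_n},\cdot)+F_{\e_n}(\w)(\bar u_{\e_n},\cdot)\bigr)+CN^p\|v_{\e_n}-\bar u_{\e_n}\|_{L^p(\mathrm{collar})}^p$, and the last term vanishes as $n\to\infty$ because both sequences are bounded and share the $L^1$-limit $\bar u$ on the collar. Passing to the limit in $n$, then $N\to\infty$, then shrinking the collar, the only leftover term is $\limsup_n F_{\e_n}(\w)(\bar u_{\e_n},\mathrm{collar})$, which by \eqref{boundarydata} is dominated by $C\int_{\mathrm{collar}}|\nabla\bar u|^p+C\mathcal{H}^{d-1}(S_{\bar u}\cap\overline{\mathrm{collar}})$ and tends to $C\,\mathcal{H}^{d-1}(S_{\bar u}\cap\partial A)=0$. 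This gives $\lim_{\delta\to 0}\limsup_n m^{\delta}_{\e_n}(\w)(\bar u,A)\le F(\w)(v,A)$, and the infimum over $v$ yields the first inequality.

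\emph{Lower bound.} Fix $\delta>0$ and choose near-minimisers $v_n\in\mathcal{PC}^{\w}_{\e_n,\delta}(\bar u,A)$. I would truncate them at height $\|\bar u\|_{\infty}$, which is admissible since the values of $\bar u_{\e_n}$ have modulus $\le\|\bar u\|_{\infty}$, so that $\|v_n\|_{\infty}$ is uniformly bounded and, by Lemma \ref{trunc} and the a priori bound above, $F_{\e_n}(\w)(v_n,A)\le C$. The $L^{\infty}$-bound makes $v_n$ equiintegrable on $A$, so Lemma \ref{compact} gives, along a subsequence, $v_n\to v$ in $L^1(A,\R^m)$ with $v\in GSBV^p(A,\R^m)$; being bounded, $v$ in fact lies in $SBV^p(A,\R^m)$. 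Since $v_n\equiv\bar u_{\e_n}$ on every Voronoi cell whose nucleus lies in $\partial_{\delta}A$ and $\bar u_{\e_n}\to\bar u$ in $L^1$, the limit satisfies $v=\bar u$ a.e.\ on the open collar $\{x\in A:\dist(x,\partial A)<\delta\}$, hence $v$ is admissible for $m(\w)(\bar u,A)$. After a harmless modification of $v_n$ away from $A$ (not affecting $F_{\e_n}(\w)(\cdot,A)$) making it $L^1(D,\R^m)$-convergent, the $\Gamma$-$\liminf$ inequality of Proposition \ref{limitsbvp} yields $m(\w)(\bar u,A)\le F(\w)(v,A)\le\liminf_n F_{\e_n}(\w)(v_n,A)\le\liminf_n m^{\delta}_{\e_n}(\w)(\bar u,A)$, and letting $\delta\to 0$ gives the second inequality.

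The main obstacle I expect is the upper bound: one must perform the boundary gluing so that it respects the discrete constraint on the \emph{whole} strip $\partial_{\delta}A$ while keeping the transition energy negligible, which requires combining the layer-averaging estimate of Proposition \ref{subadd} with the quantitative control of the discretised data from \eqref{boundarydata}, and handling the three limits $n\to\infty$, $N\to\infty$, $\delta\to 0$ in the correct order. The lower bound, by contrast, is essentially a truncation plus the compactness statement of Lemma \ref{compact}; the only point of care there is transferring the discrete boundary value to the $L^1$-limit.
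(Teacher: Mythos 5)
Your proof is correct and follows essentially the same strategy as the paper's: the lower bound truncates near-minimizers, invokes Lemma \ref{compact} for compactness, transfers the discrete boundary condition to the $L^1$-limit, and applies the $\Gamma$-$\liminf$ inequality; the upper bound glues a recovery sequence to the discretised datum via the layer-averaging cut-off argument of Proposition \ref{subadd}, with the residual boundary-layer energy controlled by \eqref{boundarydata} and made negligible by shrinking the collar. The only difference is that you make explicit the preliminary $L^{\infty}$-truncation of the near-optimal competitor before taking a recovery sequence (so that the transition term converges in $L^p$), a step the paper treats implicitly under the word ``equiboundedness.''
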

\begin{proof}
	First note that by monotonicity the limits with respect to $\delta$ exist. Moreover, from the first assumption in (\ref{boundarydata}) it follows that $m^{\delta}_{\e}(\w)({\bar{u}},A)$ is equibounded. For $n\in\mathbb{N}$ let $u_n\in\mathcal{PC}^{\w}_{\e_n,\delta}({\bar{u}},A)$ be such that $m^{\delta}_{\e_n}(\w)({\bar{u}},A)=F_{\e_n}(\w)(u_n,A)$. Since ${\bar{u}}\in L^{\infty}$ we can apply Lemma \ref{trunc} and assume without loss of generality that $|u_n(\e_n x)|\leq 3\|{\bar{u}}\|_{\infty}$ for all $x\in\Lw$. By Lemma \ref{compact} we know that, up to a subsequence (not relabeled), $u_n\to u$ in $L^1(A,\R^m)$ for some $u\in L^1(A,\R^m)\cap GSBV^p(A,\R^m)$. Using Remark \ref{voronoi} and again (\ref{boundarydata}) we infer that $u={\bar{u}}$ on $\partial_{\delta}A$. Note that $u\in L^{\infty}(A,\R^m)$, which implies $u\in SBV^p(A,\R^m)$. Up to extension we can assume that $u$ is admissible in the infimum problem defining $m(\w)({\bar{u}},A)$ and Proposition \ref{limitsbvp} yields
	\begin{equation*}
	m(\w)({\bar{u}},A)\leq F(\w)(u,A)\leq\liminf_n F_{\e_n}(\w)(u_n,A)\leq\liminf_n m^{\delta}_{\e_n}(\w)({\bar{u}},A).
	\end{equation*}
	As $\delta$ was arbitrary, we conclude that $m(\w)({\bar{u}},A)\leq\lim_{\delta\to 0}\liminf_nm^{\delta}_{\e_n}(\w)({\bar{u}},A)$.
	
	In order to prove the remaining inequality, for given $\theta>0$ we let $u\in SBV^p(A,\R^m)$ be such that $u={\bar{u}}$ in a neighborhood of $\partial A$ and $F(\w)(u,A)\leq m(\w)({\bar{u}},A)+\theta$. Take $u_{n}\in\mathcal{PC}^{\w}_{\e_n}$ converging to $u$ in $L^1(D,\R^m)$ and satisfying
	\begin{equation}\label{rec}
	\lim_{n}F_{\e_n}(\w)(u_n,A)=F(\w)(u,A).
	\end{equation}
	We will modify $u_n$ such that it fulfills the discrete boundary conditions. The argument follows the proof of Proposition \ref{subadd}. Since $u=\bar{u}$ in a neighborhood of $\partial A$, there exist sets $A^{\prime}\subset\subset A^{\prime\prime}\subset\subset A$ such that $A^{\prime},A^{\prime\prime}\in\Ard$ and
	\begin{equation}\label{boundarylayer}
	u={\bar{u}} \quad\text{ on }A\backslash A^{\prime}. 
	\end{equation}
	Fix $N\in\mathbb{N}$. For $h\leq \dist(A^{\prime},\partial A^{\prime\prime})$ and $i\in\{1,\dots,N\}$ we define the sets
	\begin{equation*}
	A_i=\left\{x\in A:\;\dist(x,A^{\prime})<i\frac{h}{2N}\right\}.
	\end{equation*}
	Let $\Theta_i$ be a cut-off function between the sets $A_i$ and $A_{i+1}$ with $\|\nabla\Theta_i\|_{\infty}\leq \frac{4N}{h}$ and define $u^i_{n}\in \mathcal{PC}_{\e_n}^{\w}$ by
	\begin{equation*}
	u^i_{n}(\e_n x)=\Theta_i(\e_n x)u_{n}(\e_n x)+(1-\Theta_i(\e_n x)){\bar{u}}(\e_n x).
	\end{equation*}
	Up to extending $u$ on $D\backslash A$ via $u_{|D\backslash A}={\bar{u}}$, by (\ref{boundarydata}) and (\ref{boundarylayer}) we can assume that for each $i\in\{1,\dots,N\}$ it holds that $u^i_{n}\to u$ in $L^1(D,\R^m)$. Setting $S_{n}^{i}:=\{x\in A:\;\dist(x,A_{i+1}\backslash{A_{i-1}})<3M\e_n\}$, we have
	\begin{equation}\label{splitineq}
	F_{\e_n}(\w)(u^i_{n},A)\leq F_{\e_n}(\w)(u_{n},A)+F_{\e_n}(\w)({\bar{u}}_{\e_n},A\backslash\overline{A^{\prime}})
	+F_{\e_n}(\w)(u_n^i,S_n^i).
	\end{equation}
	Using the same arguments as in the proof of Proposition \ref{subadd} we infer that
	\begin{equation*}
	F_{\e}(\w)(u_n^i,S_n^i)\leq C\left(F_{\e_n}(\w)(u_{n}, S_{n}^i)+F_{\e_n}(\w)({\bar{u}}_{\e_n},S_{n}^i)\right)
	+CN^ph^{-p}\sum_{\e_n x\in \e_n\Lw\cap S^i_{n}}\e_n^d|u_{n}(\e x)-{\bar{u}}_{\e_n}(\e x)|^p.
	\end{equation*}
	By construction $S_{n}^i\cap S_{n}^j=\emptyset$ for $|i-j|\geq 3$ and $S_{n}^i\subset\subset A\backslash A^{\prime}$ for $i\geq 2$. Averaging the previous inequality and using (\ref{boundarydata}) and (\ref{rec}) yields
	\begin{equation*}
	\frac{1}{N-1}\sum_{i=2}^NF_{\e}(\w)(u_n^i,S_n^i)
	\leq\frac{C}{N}+CN^{p-1}h^{-p}\|u_{n}-{\bar{u}}_{\e_n}\|^p_{L^p(A\backslash A^{\prime})}.
	\end{equation*}
	By equiboundedness, properties (\ref{boundarydata}) and (\ref{boundarylayer}) imply that $u_{n}-{\bar{u}}_{\e_n}\to 0$ in $L^p(A\backslash A^{\prime},\R^m)$. For every $n$ we choose $i_{n}\in\{2,\dots,N\}$ such that
	\begin{equation}\label{belowaverage}
	F_{\e_n}(\w)(u_n^{i_n},S_n^i)\leq\frac{C}{N}+CN^{p-1}h^{-p}\|u_{n}-{\bar{u}}_{\e_n}\|^p_{L^p(A\backslash A^{\prime})}. 
	\end{equation}
	Note that $u^{i_n}_{n}$ still converges to $u$ in $L^1(D,\R^m)$. Moreover, $u_n^{i_n}(\e_n x)={\bar{u}}(\e_n x)$ for all $\e_nx\in \e_n\Lw\cap A\backslash A^{\prime\prime}$. Hence $u^{i_n}_n\in\mathcal{PC}_{\e_n,\delta}^{\w}({\bar{u}},A)$ for all $\delta>0$ small enough. From (\ref{rec}), (\ref{splitineq}) and (\ref{belowaverage}) we obtain
	\begin{align*}
	\limsup_n m^{\delta}_{\e_n}(\w)({\bar{u}},A)&\leq\limsup_n F_{\e_n}(\w)(u^{i_n}_n,A)\leq F(\w)(u,A)+\limsup_nF_{\e_n}(\w)({\bar{u}}_{\e_n},A\backslash\overline{A^{\prime}})+ \frac{C}{N}\\
	&\leq m(\w)({\bar{u}},A)+\theta+C\int_{A\backslash\overline{A^{\prime}}}|\nabla {\bar{u}}|^p\,\mathrm{d}x+C\mathcal{H}^{d-1}(S_{\bar{u}}\cap A\backslash A^{\prime})+\frac{C}{N},
	\end{align*}
	where we used (\ref{boundarydata}) with $B=A\backslash\overline{A^{\prime}}$. As $\theta>0$ was arbitrary, the claim follows letting first $\delta\to 0$, then $N\to +\infty$ and finally $A^{\prime}\uparrow A$.
\end{proof}

In view of Proposition \ref{limitsbvp} and Lemma \ref{approxminprob} we can further characterize the surface densities of possible $\Gamma$-limits of the family $F_{\e}(\w)$ by analyzing the quantities $m^{\delta}_{\e}(\w)(u_{x_0,\nu}^{a,0},Q_{\nu}(x_0,\varrho))$.
To this end, we define the class of interfaces
\begin{equation*}
\mathcal{S}_{\e,\delta}^{\w}(u_{x_0,\nu}^{a,0},Q_{\nu}(x_0,\varrho))=\{u\in \mathcal{PC}_{\e,\delta}^{\w}(u_{x_0,\nu}^{a,0},Q_{\nu}(x_0,\varrho)):\;u(\e x)\in \{a,0\}\text{ for all }x\in\Lw\}.
\end{equation*}
We have the following important result, which also implies that the minimization defining the surface energy density can be performed on characteristic functions of sets of finite perimeter instead of general Caccioppoli partitions.
\begin{proposition}[Separation of surface effects]\label{separationofscales1}
	Let $\e_n\to 0$. Then, for all $x_0\in D$, all $a\in\R^m$ and all $\nu\in S^{d-1}$ it holds that
	\begin{align*}
	\limsup_{\varrho\to 0}\varrho^{1-d}\lim_{\delta\to 0}\limsup_{n}&\big(\inf \left\{F_{\e_n}(\w)(u,Q_{\nu}(x_0,\varrho)):\; u\in \mathcal{S}_{\e_n,\delta}^{\w}(u^{a,0}_{x_0,\nu},Q_{\nu}(x_0,\varrho))\right\}\big)
	\\
	&=\limsup_{\varrho\to 0}\varrho^{1-d}\lim_{\delta\to 0}\limsup_{n}m^{\delta}_{\e_n}(\w)(u_{x_0,\nu}^{a,0},Q_{\nu}(x_0,\varrho)).
	\end{align*}
\end{proposition}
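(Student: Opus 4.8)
The inequality ``$\geq$'' is immediate and requires no construction: for every $\e,\delta,\varrho$ one has the inclusion $\mathcal{S}_{\e,\delta}^{\w}(u_{x_0,\nu}^{a,0},Q_{\nu}(x_0,\varrho))\subset\mathcal{PC}_{\e,\delta}^{\w}(u_{x_0,\nu}^{a,0},Q_{\nu}(x_0,\varrho))$, so the infimum over two-valued functions dominates $m^{\delta}_{\e}(\w)(u_{x_0,\nu}^{a,0},Q_{\nu}(x_0,\varrho))$, and the claim follows by taking first $\limsup_n$, then $\lim_{\delta\to0}$, then $\limsup_{\varrho\to0}\varrho^{1-d}$. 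Hence the whole content is the reverse inequality, and for that it suffices to produce, for fixed small $\varrho,\delta$, arbitrary $\theta>0$ and each $n$, a two-valued function $v_n\in\mathcal{S}_{\e_n,\delta}^{\w}(u_{x_0,\nu}^{a,0},Q_{\nu}(x_0,\varrho))$ with
\begin{equation*}
F_{\e_n}(\w)(v_n,Q_{\nu}(x_0,\varrho))\le m^{\delta}_{\e_n}(\w)(u_{x_0,\nu}^{a,0},Q_{\nu}(x_0,\varrho))+\theta\varrho^{d-1}+\eta_n(\varrho,\delta),
\end{equation*}
where the error satisfies $\lim_{\delta\to0}\limsup_n\eta_n(\varrho,\delta)=o(\varrho^{d-1})$ as $\varrho\to0$.

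The plan is to start from a near-optimal competitor $u_n\in\mathcal{PC}_{\e_n,\delta}^{\w}(u_{x_0,\nu}^{a,0},Q_{\nu}(x_0,\varrho))$ for $m^{\delta}_{\e_n}(\w)$ and, using Lemma \ref{trunc}, assume $|u_n(\e_n x)|\le3|a|$, so that all finite differences are bounded by $6|a|$; the total energy is then bounded by $C\varrho^{d-1}$, since the discretised function $u_{x_0,\nu}^{a,0}$ itself is admissible and its energy is controlled by Lemma \ref{bounds}. As in the proof of Lemma \ref{compact} I would split $\e_n\Lw$ into the set $L_{0,\e_n}$ where $\|\e_n|\nabla_{\w,\e_n}(u_n,Q_{\nu}(x_0,\varrho))|^p(\e_n x)\|_1\le1$ and its complement $L_{1,\e_n}$; by \eqref{realcut} the latter satisfies $\e_n^{d-1}\#(L_{1,\e_n}\cap Q_{\nu}(x_0,\varrho))\le c_f^{-1}F_{\e_n}(\w)(u_n,Q_{\nu}(x_0,\varrho))\le C\varrho^{d-1}$. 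The two-valued competitor is then obtained by thresholding: I pick a $1$-Lipschitz $\psi\colon\R^m\to[0,1]$ with $\psi(a)=1$, $\psi(0)=0$ and, for $t\in(0,1)$, set $v^{(t)}_n(\e_n x)=a\,\mathds{1}_{\{\psi(u_n(\e_n x))\ge t\}}$; since $u_n$ equals $u_{x_0,\nu}^{a,0}$ on $\e_n\Lw\cap\partial_{\delta}Q_{\nu}(x_0,\varrho)$, each $v^{(t)}_n$ lies in $\mathcal{S}_{\e_n,\delta}^{\w}(u_{x_0,\nu}^{a,0},Q_{\nu}(x_0,\varrho))$.

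The reason this does not lose the sharp constant is a pointwise comparison. If $(x,y)\in\mathcal{E}(\w)$ is an edge on which $v^{(t)}_n$ jumps, then $\psi(u_n(\e_n x)),\psi(u_n(\e_n y))$ straddle $t$, so $|u_n(\e_n x)-u_n(\e_n y)|\ge|a|\,|\psi(u_n(\e_n x))-\psi(u_n(\e_n y))|$. Along such ``cut edges'' on which $|u_n(\e_n x)-u_n(\e_n y)|$ diverges (say it is $\ge|a|$), the monotonicity \eqref{monotone} and the definition \eqref{limitatinf} of $\beta$ force both $f(\e_n|\nabla_{\w,\e_n}(v^{(t)}_n,\cdot)|^p(\e_n x))$ and $f(\e_n|\nabla_{\w,\e_n}(u_n,\cdot)|^p(\e_n x))$ to lie within a modulus $\omega(\e_n)\to0$ of $\beta(l_x,k_x)$, where $l_x$ is the number of cut edges at $\e_n x$ and $k_x$ the number of admissible neighbours in $Q_{\nu}(x_0,\varrho)$, while at points adjacent to no cut edge $v^{(t)}_n$ contributes zero; as such points are $O(\varrho^{d-1}\e_n^{1-d})$ in number their total excess energy is $O(\varrho^{d-1}\omega(\e_n))\to0$ as $n\to\infty$. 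The only remaining contribution comes from cut edges carrying a small finite difference, which sit in the ``bulk'' part of the profile; for a threshold $t=t_n$ chosen by averaging over $(0,1)$, their number must be shown negligible at the scale $\varrho^{d-1}$. This I would control by the sub-$p$-growth decay of the bulk part — after rescaling $Q_{\nu}(x_0,\varrho)$ to the unit cube, the $W^{1,p}$-energy of the piecewise affine interpolant of $u_n$ from Lemma \ref{compact} is $O(\varrho^{p-1})$, hence by Hölder its $W^{1,1}$-energy is $o(1)$ — together with a co-area estimate on the level sets of $\psi(u_n)$ and the outer normalisation $\varrho^{1-d}$. With $v_n:=v^{(t_n)}_n$ the displayed inequality then holds, and taking the limits in the prescribed order gives ``$\leq$''; since the competitors are characteristic-type functions, this also yields the asserted reduction to sets of finite perimeter.

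The main obstacle is exactly this last point: the delicate balancing of the two vanishing parameters $\e_n$ and $\varrho$, so that the portion of the discrete interface on which $u_n$ neither jumps by $\ge|a|$ nor is constant is seen to be negligible at the surface scale. This requires keeping all error terms uniform in $n$ before the outer limits in $\delta$ and $\varrho$ are taken, and is the reason a direct coarea argument (which only gives a bound by a non-sharp constant) has to be coupled with the pointwise $f$-versus-$\beta$ comparison above.
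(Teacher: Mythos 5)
Your overall architecture matches the paper's: near-optimal competitor, truncation, thresholding to a two-valued competitor, a pointwise $f$-versus-$\beta$ comparison on ``large'' cut edges, and an averaged threshold selected by a coarea estimate. The single scalar $\psi$ (as opposed to the paper's component-by-component thresholds $t^i_\e$) is in fact a tidy choice since it lands the competitor directly in $\mathcal S_{\e,\delta}^{\w}$; just note that $\psi(0)=0,\psi(a)=1$ is incompatible with $\|\nabla\psi\|_\infty\le1$ unless $|a|\ge1$, so one should take $\mathrm{Lip}(\psi)=1/|a|$, and then the inequality you write, $|u_n(\e x)-u_n(\e y)|\ge|a||\psi(u_n(\e x))-\psi(u_n(\e y))|$, becomes correct.

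The genuine gap is in the very step you flag as the ``main obstacle'', and it is not merely a balancing issue but a missing dichotomy. You split cut edges into those with $|u_n(\e x)-u_n(\e y)|\ge|a|$ (handled by the $f$-$\beta$ comparison) and the rest, and you propose to bound the latter via the coarea formula for the piecewise affine interpolant of Lemma~\ref{compact}. But that interpolant is set to zero near the set $L_{1,\e}$ (points with $\|\e|\nabla_{\w,\e}u_n|^p\|_1>1$), so its $W^{1,p}$ bound is blind to cut edges whose finite difference sits in the intermediate range $[\e^{(p-1)/p},|a|)$. Those edges are precisely where the surface energy concentrates: $\e^{d-1}\#L_{1,\e}\lesssim F_{\e}(\w)(u_n,Q_\varrho)=O(\varrho^{d-1})$, i.e.\ they contribute at the \emph{leading} order $\varrho^{d-1}$, not $o(\varrho^{d-1})$; and the $f$-$\beta$ comparison on them only gives a constant-factor loss (since their $u_n$-entries need not exceed $L_\theta$), so they cannot be absorbed into the $(1+\theta)$-error either. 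The paper avoids this by never passing to the continuum interpolant: it introduces an $\e$-dependent ``large'' set $\mathcal J_{u_\e}=\{(x,y):\e^{1-p}|u_\e(\e x)-u_\e(\e y)|^p\ge L_\theta\}$ and runs the coarea and H\"older estimates \emph{directly on the lattice}, over all edges \emph{not} in $\mathcal J_{u_\e}$. The crucial technical ingredient you would need is the paper's inequality, from \eqref{realcut}, bounding $\sum_{(x,y)\notin\mathcal J_{u_\e}}\e|\tfrac{u_\e(\e x)-u_\e(\e y)}{\e}|^p\le CL_\theta\,f(\e|\nabla_{\w,\e}(u_\e,Q_\varrho)|^p(\e x))$, which converts the discrete $\ell^p$-sum over the intermediate-and-small edges into the actual energy, and then $F_\e(u_\e,Q_\varrho)\le C\varrho^{d-1}$ closes the loop with a factor $\varrho^{(p-1)/p}$. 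Replacing the continuum coarea step with this discrete one, and replacing the fixed threshold $|a|$ with the $\e$-dependent threshold $\e^{(p-1)/p}L_\theta^{1/p}$, would repair the argument; as written, the proposal does not control the intermediate cut edges.
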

\begin{proof}
	Note that it suffices to bound the first term by the second one. To reduce notation, we set $Q_{\varrho}:=Q_{\nu}(x_0,\varrho)$ and write $\e$ instead of $\e_n$. If $a=0$ then both sides are zero. Thus assume that $a\neq 0$. Fix $u_{\e}\in\mathcal{PC}_{\e,\delta}^{\w}(u^{a,0}_{x_0,\nu},Q_{\varrho})$ such that
	\begin{equation}\label{almostopt}
	F_{\e}(\w)(u_{\e},Q_{\varrho})\leq F_{\e}(\w)(u^{a,0}_{x_0,\nu},Q_{\varrho})\leq C\varrho^{d-1},
	\end{equation}
	where the last inequality is a consequence of Remark \ref{voronoi} and the boundedness of the discrete density $f$ (provided $\e$ is small enough). In what follows we construct a sequence $v_{\e}\in \mathcal{S}_{\e,\delta}^{\w}(u^{a,0}_{x_0,\nu},Q_{\varrho})$ that has almost the same energy. Given $\theta>0$, due to the monotonicity (\ref{monotone}) one can choose $L_{\theta}\geq 1$ such that
	\begin{equation}\label{atinf}
	|\beta(k,k)-f(\mathfrak{p})|<\theta f(\mathfrak{p})\quad\forall \mathfrak{p}\in \mathcal{P}_+(M)\text{ with }\mathfrak{p}^{-1}(\mathbb{N})\subset[L_{\theta},+\infty)\;\text{ and }\sum_{v\in \mathfrak{p}^{-1}(\mathbb{N})}\mathfrak{p}(v)=k
	\end{equation}
	for all $1\leq k\leq M$, where $\beta(k,k)$ is given by (\ref{limitatinf}). We fix $L_{\theta}$ from now on and consider the set of edges
	\begin{equation*}
	\mathcal{J}_{u_{\e}}=\{(x,y)\in\mathcal{E}(\w):\;\e x,\e y\in Q_{\varrho}\text{ and } \e^{1-p}|u_{\e}(\e x)-u_{\e}(\e y)|^p\geq L_{\theta}\}.
	\end{equation*} 
	Denote by $u_{\e}^i$ the $i^{th}$ component of $u_{\e}$. If $a_i=0$ we set $v_{\e}^i(\e x)=0$ for all $x\in\Lw$. Otherwise, we assume that $a_i>0$. The remaining case requires only minor modifications. For $t\in\R$ we define
	\begin{equation*}
	S^i_{\e}(t):=\{\e x\in\e\Lw\cap Q_{\varrho}:\;u^i_{\e}(\e x)>t\}.
	\end{equation*}
	To reduce notation, we also introduce the set 
	\begin{equation*}
	\mathcal{R}^i_{\e}(t)=\{(x,y)\in\mathcal{E}(\w):\;\e x\in Q_{\varrho}\cap S_{\e}^i(t),\,\e y\in Q_{\varrho}\backslash S_{\e}^i(t)\text{ or vice versa}\}.
	\end{equation*}
	Observe that for $(x,y)\in\mathcal{E}(\w)$ with $\e x,\e y\in Q_{\varrho}$ we have $(x,y)\in \mathcal{R}^i_{\e}(t)$ if and only if $t\in [u^i_{\e}(\e x),u^i_{\e}(\e y))$ or $t\in [u^i_{\e}(\e y),u^i_{\e}(\e x))$. Hence for such $x,y$ the following coarea-type estimate holds true:
	\begin{equation*}
	\int_{0}^{a_i}\mathds{1}_{\{(x,y)\in\mathcal{R}^i_{\e}(t)\}}\,\mathrm{d}t\leq|u_{\e}(\e x)-u_{\e}(\e y)|. 
	\end{equation*}
	Summing this estimate, we infer from (\ref{neighbours}) and H\"older's inequality that
	\begin{align*}\label{coarea}
	\int_{0}^{a_i}\e^{d-1}\#\big(\mathcal{R}_{\e}^i(t)\backslash  \mathcal{J}_{u_{\e}}\big)\,\mathrm{d}t&\leq \sum_{\substack{ (x,y)\in\mathcal{E}(\w)\backslash\mathcal{J}_{u_{\e}}\\ \e x,\e y\in Q_{\varrho}}}\e^{d-1}|u_{\e}(\e x)-u_{\e}(\e y)|
	\\
	&\leq C\e^{\frac{dp-d}{p}}(\#(\e\Lw\cap Q_{\varrho}))^{\frac{p-1}{p}}\bigg(\sum_{\substack{ (x,y)\in\mathcal{E}(\w)\backslash\mathcal{J}_{u_{\e}}\\ \e x,\e y\in Q_{\varrho}}}\e^{d}\Big|\frac{u_{\e}(\e x)-u_{\e}(\e y)}{\e}\Big|^p\Big|\bigg)^{\frac{1}{p}}.
	\end{align*}
	In order to estimate the last sum, recall that $L_{\theta}\geq 1$. Thus the definition of the set $\mathcal{J}_{u_{\e}}$, (\ref{neighbours}) and assumption (\ref{realcut}) imply for $\e x\in \e\Lw\cap Q_{\varrho}$ the uniform bound
	\begin{align*}
	&\sum_{\substack{\e y\in\e\Lw\cap Q_{\varrho}\\ (x,y)\in\mathcal{E}(\w)\backslash\mathcal{J}_{u_{\e}}}}\e\Big|\frac{u_{\e}(\e x)-u_{\e}(\e y)}{\e}\Big|^p\leq CL_{\theta}\min\Big\{\e\sum_{\substack{\e y\in\e\Lw\cap Q_{\varrho}\\ (x,y)\in\mathcal{E}(\w)\backslash\mathcal{J}_{u_{\e}}}}\Big|\frac{u_{\e}(\e x)-u_{\e}(\e y)}{\e}\Big|^p,1\Big\}
	\\
	\leq &CL_{\theta}\min\{\|\e|\nabla_{\w,\e}(u_{\e},Q_{\varrho})|^p(\e x)\|_1,1\}
	\leq CL_{\theta}f\left(\e|\nabla_{\w,\e}(u_{\e},Q_{\varrho})|^p(\e x)\right).
	\end{align*}
	Moreover, for $\e=\e(\varrho)$ small enough the cardinality term can be bounded by $\#(\e\Lw\cap Q_{\varrho})\leq C(\varrho\e^{-1})^d$, so that 
	\begin{equation*}
	\int_{0}^{a_i}\e^{d-1}\#\big(\mathcal{R}_{\e}^i(t)\backslash  \mathcal{J}_{u_{\e}}\big)\,\mathrm{d}t\leq C\varrho^{\frac{dp-d}{p}}\big(L_{\theta}F_{\e}(\w)(u_{\e},Q_{\varrho})\big)^{\frac{1}{p}}\leq CL_{\theta}\varrho^{\frac{dp-1}{p}},
	\end{equation*}
	where we applied (\ref{almostopt}) in the second inequality. Hence there exists $t^i_{\e}\in (0,a_i)$ such that
	\begin{equation}\label{goodchoice}
	\e^{d-1}\#\big(\mathcal{R}_{\e}^i(t^i_{\e})\backslash  \mathcal{J}_{u_{\e}}\big)\leq C|a_i|^{-1}L_{\theta}\varrho^{\frac{dp-1}{p}}.
	\end{equation}
	Define $v^i_{\e}$ by its values on $\e\Lw$ setting
	\begin{equation*}
	v^i_{\e}(\e x)=
	\begin{cases}
	0 &\mbox{if $u_{\e}(\e x)\leq t^i_{\e}$,}\\
	a_i &\mbox{if $u_{\e}(\e x)>t^i_{\e}$.}
	\end{cases}
	\end{equation*}
	As $t^i_{\e}\in (0,a_i)$, the boundary conditions imposed on $u_{\e}$ imply that the function $v_{\e}$ satisfies $v_{\e}(\e x)=u_{x_0,\nu}^{a,0}(\e x)$ for all $\e x\in\e\Lw\cap\partial_{\delta}Q_{\varrho}$, so that $v_{\e}\in\mathcal{S}_{\e,\delta}^{\w}(u_{x_0,\nu}^{a,0},Q_{\varrho})$. In order to estimate the energy difference, let $\e x\in\e\Lw\cap Q_{\varrho}$ be such that $\|\e|\nabla_{\w,\e}(v_{\e},Q_{\varrho})|^p(\e x)\|_1\neq 0$. We distinguish two exhaustive cases: either $(x,y)\in\mathcal{J}_{u_{\e}}$ for all $\e y\in\e\Lw\cap Q_{\varrho}$ with $(x,y)\in\mathcal{E}(\w)$, so that (\ref{monotone}) and (\ref{atinf}) yield  
	\begin{equation}\label{gradlarge}
	f\left(\e|\nabla_{\w,\e}(v_{\e},Q_{\varrho})|^p(\e x)\right)\leq (1+\theta)f\left(\e|\nabla_{\w,\e}(u_{\e},Q_{\varrho})|^p(\e x)\right),
	\end{equation}
	or there exists $y\in\Lw$ with $(x,y)\in\mathcal{R}^i_{\e}(t^i_{\e})\backslash\mathcal{J}_{u_{\e}}$ for some $i$. In this case we can use the estimate (\ref{goodchoice}) to bound the number of such $x$. Since $f$ is bounded by assumption, we deduce from (\ref{gradlarge}) that
	\begin{equation*}
	\varrho^{1-d}F_{\e}(\w)(v_{\e},Q_{\varrho})\leq (1+\theta)\varrho^{1-d}F_{\e}(\w)(u_{\e},Q_{\varrho})+C\sum_{i: a_i\neq 0}|a_i|^{-1}L_{\theta}\varrho^\frac{p-1}{p}.
	\end{equation*}
	Taking the appropriate infimum on each side, then letting $\e\to 0$ before $\delta\to 0$ and $\varrho\to 0$, we conclude the proof as $\theta>0$ was arbitrary.
\end{proof} 
Now we can relate the surface density $\varphi$ to the $\Gamma$-limit of the functionals $I_{\e}(\w)$ defined in \eqref{Ising}.
\begin{proposition}\label{p.surfacedensityequal}
Let $\e_n$ and $F(\w)$ be as in Proposition \ref{limitsbvp}. Then for every $x_0\in D$, $a\in\R^m\setminus\{0\}$ and $\nu\in S^1$ it holds that
\begin{equation*}
\varphi(x_0,a,\nu)=s(x_0,\nu),
\end{equation*}	
where $s$ is the surface tension of the $\Gamma$-limit of $I_{\e_n}(\w)(\cdot,D)$, which in particular exists.
\end{proposition}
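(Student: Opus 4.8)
The plan is to prove $\varphi(x_0,a,\nu)=s(x_0,\nu)$ by a chain of identifications that reduces both sides to the \emph{same} discrete cell formula, the key point being that $F_\e(\w)$ restricted to competitors taking only the values $a$ and $0$ agrees, up to a multiplicative error tending to $1$, with $I_\e(\w)$ evaluated on the relabelled $\{\pm e_1\}$-valued function. Fix $x_0\in D$, $a\in\R^m\setminus\{0\}$ and $\nu\in S^{d-1}$, and, keeping $F(\w)$ and the subsequence $\e_n$ of Proposition \ref{limitsbvp}, pass to a further subsequence (not relabelled) along which in addition $I_{\e_n}(\w)$ $\Gamma$-converges with surface tension $s$ given by Remark \ref{r.blowupformulas}; note that $F_{\e_n}(\w)$ still $\Gamma$-converges to the same $F(\w)$, so $\varphi$ is unchanged. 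The function $\bar u:=u_{x_0,\nu}^{a,0}$ satisfies \eqref{boundarydata} for every cube $Q_\varrho:=Q_\nu(x_0,\varrho)\subset D$: indeed $\nabla\bar u=0$, the set $S_{\bar u}$ is the hyperplane through $x_0$ orthogonal to $\nu$ so that $\mathcal{H}^{d-1}(S_{\bar u}\cap\partial Q_\varrho)=0$, $\bar u_\e\to\bar u$ in $L^1$ by Remark \ref{voronoi}, and $\limsup_\e F_\e(\bar u_\e,B)\le C\mathcal{H}^{d-1}(S_{\bar u}\cap\overline B)$ follows from boundedness of $f$ and the Minkowski-content estimate for the number of lattice points within distance $\sim M\e$ of $S_{\bar u}$, exactly as in the proof of Lemma \ref{bounds}. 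Hence Proposition \ref{limitsbvp}, Lemma \ref{approxminprob} and Proposition \ref{separationofscales1} combine to give
\begin{equation*}
\varphi(x_0,a,\nu)=\limsup_{\varrho\to 0}\varrho^{1-d}\lim_{\delta\to 0}\limsup_{n}\,\inf\bigl\{F_{\e_n}(\w)(u,Q_\varrho):\,u\in\mathcal{S}_{\e_n,\delta}^{\w}(u_{x_0,\nu}^{a,0},Q_\varrho)\bigr\}.
\end{equation*}

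The core of the proof is to compare, for fixed $\varrho$ and $\delta$, the discrete infimum on the right with $\inf\{I_{\e_n}(\w)(v,Q_\varrho):\,v\in\mathcal{PC}_{\e_n,\delta}^{\w}(u_{x_0,\nu}^{-e_1,e_1},Q_\varrho)\}$, which after applying $\limsup_\varrho\varrho^{1-d}\lim_\delta\limsup_n$ equals $s(x_0,\nu)$ by Remark \ref{r.blowupformulas} (the $I_\e$-infimum being automatically restricted to $\{\pm e_1\}$-valued functions). I would use the relabelling bijection $u\leftrightarrow v$ characterised by $\{u=a\}=\{v=e_1\}$, which maps $\mathcal{S}_{\e,\delta}^{\w}(u_{x_0,\nu}^{a,0},Q_\varrho)$ onto the set of $\{\pm e_1\}$-valued elements of $\mathcal{PC}_{\e,\delta}^{\w}(u_{x_0,\nu}^{-e_1,e_1},Q_\varrho)$. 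For a corresponding pair one computes $\e|\nabla_{\w,\e}(u,Q_\varrho)|^p(\e x)=l_x\mathds{1}_{\{\e^{1-p}|a|^p\}}+(k_x-l_x)\mathds{1}_{\{0\}}$ and $I_\e(\w)(v,Q_\varrho)=\sum_{\e x\in\e\Lw\cap Q_\varrho}\e^{d-1}\beta(l_x,k_x)$, where $k_x$ is the number of edges issued from $x$ that stay in $Q_\varrho$ and $l_x\le k_x$ the number of those along which $v$ jumps. Since $\e^{1-p}|a|^p\to+\infty$ and $(l_x,k_x)$ ranges over the finite set $\{(l,k):\,1\le l\le k\le M\}$, definition \eqref{limitatinf} gives, for each $\theta>0$, a threshold $\e_0=\e_0(\theta,|a|,M)$ independent of $\varrho$ with $|f(l\mathds{1}_{\{\e^{1-p}|a|^p\}}+(k-l)\mathds{1}_{\{0\}})-\beta(l,k)|<\theta$ whenever $\e<\e_0$; since $\beta(0,k)=0$ and $f$ vanishes on the zero multiset, this yields $|F_\e(\w)(u,Q_\varrho)-I_\e(\w)(v,Q_\varrho)|\le\theta\,\e^{d-1}\#\{\e x\in\e\Lw\cap Q_\varrho:\,l_x\ge 1\}$, while \eqref{realcut} (which forces $\beta(l,k)\ge c_f>0$ for $l\ge 1$) gives $\e^{d-1}\#\{\e x\in\e\Lw\cap Q_\varrho:\,l_x\ge 1\}\le c_f^{-1}I_\e(\w)(v,Q_\varrho)$. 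Hence $(1-\theta/c_f)\,I_\e(\w)(v,Q_\varrho)\le F_\e(\w)(u,Q_\varrho)\le(1+\theta/c_f)\,I_\e(\w)(v,Q_\varrho)$ for $\e<\e_0$; taking the infima along the bijection and then $n\to\infty$, $\delta\to 0$, $\varrho\to 0$ together with the prefactor $\varrho^{1-d}$, and finally $\theta\to 0$, gives $\varphi(x_0,a,\nu)=s(x_0,\nu)$.

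It remains to note that $\varphi$ is intrinsic to the $\Gamma$-limit $F(\w)$ of Proposition \ref{limitsbvp} and hence does not depend on the additional subsequence extracted above; therefore $s(x_0,\nu)=\varphi(x_0,a,\nu)$ is the same for every such subsequence, and the Urysohn property of $\Gamma$-convergence implies that $I_{\e_n}(\w)(\cdot,D)$ $\Gamma$-converges along the original subsequence, its surface tension being $s=\varphi$. The one genuinely substantive step is the two-valued comparison between $F_\e(\w)$ and $I_\e(\w)$; the subtle point there is that the error term $\theta\,\e^{d-1}\#\{l_x\ge 1\}$ must be controlled uniformly in $\varrho$, which is why it is absorbed into $I_\e(\w)$ through \eqref{realcut} rather than merely estimated by the (vanishing but $\varrho$-dependent) boundary energy — all the remaining work is an assembly of Proposition \ref{limitsbvp}, Lemma \ref{approxminprob}, Proposition \ref{separationofscales1} and Remark \ref{r.blowupformulas}.
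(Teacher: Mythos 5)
Your proof is correct and follows essentially the same route as the paper: both derive the blow-up formula for $\varphi$ from Propositions \ref{limitsbvp} and \ref{separationofscales1} together with Lemma \ref{approxminprob}, use the relabelling bijection between $\{a,0\}$- and $\{\pm e_1\}$-valued competitors, pass to $I_{\e_n}(\w)$ via the asymptotics of $f$ near infinity, and conclude with the Urysohn property. The one small technical difference is in how the multiplicative sandwich between $F_{\e_n}(\w)(u,\cdot)$ and $I_{\e_n}(\w)(v,\cdot)$ is obtained: the paper uses the monotonicity \eqref{monotone} to get the exact one-sided bound $F_{\e_n}(\w)(u,\cdot)\le I_{\e_n}(\w)(v,\cdot)$ and a $\min$ of ratios $f(\mathfrak{p})/\beta(l,k)\to 1$ for the other side, whereas you establish a symmetric additive error $\theta\,\e^{d-1}\#\{l_x\ge 1\}$ and absorb it through the uniform lower bound $\beta(l,k)\ge c_f$ from \eqref{realcut} — both yield sandwich factors tending to $1$.
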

\begin{proof}
Choosing any subsequence of $\e_n$ (not relabeled), the $\Gamma$-limit of $F_{\e_n}(\w)$ remains the same. Hence, combining Propositions \ref{limitsbvp} and \ref{separationofscales1} with Lemma \ref{approxminprob} yields the formula
\begin{equation}\label{eq:blowupphi}
\varphi(x_0,a,\nu)=\limsup_{\varrho\to 0}\varrho^{1-d}\lim_{\delta\to 0}\limsup_n\big(\inf \left\{F_{\e_n}(\w)(u,Q_{\nu}(x_0,\varrho)):\; u\in \mathcal{S}_{\e_n,\delta}^{\w}(u^{a,0}_{x_0,\nu},Q_{\nu}(x_0,\varrho))\right\}\big).
\end{equation}
For any $u\in \mathcal{S}_{\e_n,\delta}^{\w}(u^{a,0}_{x_0,\nu},Q_{\nu}(x_0,\varrho))$ we define the function $v\in \mathcal{S}_{\e_n,\delta}^{\w}(u^{-e_1,e_1}_{x_0,\nu},Q_{\nu}(x_0,\varrho))$ by
\begin{equation*}
v(\e x)=
\begin{cases}
-e_1 &\mbox{if $u(\e x)=a$},
\\
e_1 &\mbox{otherwise.}	
\end{cases}
\end{equation*}
Let $I_{\e}(\w)$ be the Ising-type energy defined in \eqref{Ising}. Then by the monotonicity (\ref{monotone}) it holds that
\begin{align*} I_{\e_n}(\w)(v,Q_{\nu}(x_0,\varrho))&\geq F_{\e_n}(\w)(u,Q_{\nu}(x_0,\varrho))
\\
&\geq\min_{1\leq l\leq k\leq M}\Big\{\frac{f(p)}{\beta(l,k)}:\,p=l\mathds{1}_{\{|a|^p\e_n^{1-p}\}}+(k-l)\mathds{1}_{\{0\}}\Big\}I_{\e_n}(\w)(v,Q_{\nu}(x_0,\varrho)).
\end{align*}
Since we have chosen a subsequence at the beginning of the proof, we may assume that $I_{\e_n}(\w)$ $\Gamma$-converges to some surface integral functional $I(\w)$ as in Theorem \ref{t.ACRmain} with density $s(x,\nu)$. Since $|a|\neq 0$, the definition of $\beta(l,k)$ in (\ref{limitatinf}) and Remark \ref{r.blowupformulas} imply
\begin{equation*}
\varphi(x_0,a,\nu)=\limsup_{\varrho\to 0}\varrho^{1-d}\lim_{\delta\to 0}\limsup_n\inf \left\{I_{\e_n}(\w)(v,Q_{\nu}(x_0,\varrho)):\, v\in \mathcal{S}_{\e_n,\delta}^{\w}(u^{-e_1,e_1}_{x_0,\nu},Q_{\nu}(x_0,\varrho))\right\}
=s(x_0,\nu).
\end{equation*}
Since the subsequence was arbitrary, the Urysohn-property of $\Gamma$-convergence yields that $I_{\e_n}(\w)$ indeed $\Gamma$-converges and the surface densities of the limits $F(\w)$ and $I(\w)$ agree.	
\end{proof}
Eventually, we can prove our first main result.
\begin{proof}[Proof of Theorem \ref{t.sepofscales}]
If $F_{\e_n}(\w)$ $\Gamma$-converges, then due to Propositions \ref{p.gradientpartsequal} and \ref{p.surfacedensityequal} both $E_{\e_n}(\w)$ and $I_{\e_n}(\w)$ $\Gamma$-converge, too. Also the reverse statement follows from the same propositions, since (up to subsequences) any $\Gamma$-limit of $F_{\e_n}$ is characterized by the $\Gamma$-limits of $E_{\e_n}(\w)$ and $I_{\e_n}(\w)$, which do not depend on further subsequences. Taking also into account Proposition \ref{limitsbvp} and Lemma \ref{compact} the $\Gamma$-limit is given by
\begin{equation*}
F(\w)(u)=
\begin{cases}
\int_D q(x,\nabla u)\dx+\int_{S_u}s(x,\nu_u)\,\mathrm{d}\mathcal{H}^{d-1} &\mbox{if $u\in SBV^p(D,\R^m)$,}
\\
+\infty &\mbox{if $u\in L^1(D,\R^m)\backslash GSBV^p(D,\R^m)$.}
\end{cases}
\end{equation*}
Hence it remains to characterize the functional $F(\w)(u)$ for $u\in L^1(D,\R^m)\cap GSBV^p(D,\R^m)$ such that $u\notin SBV^p(D,\R^m)$.
This will be achieved via truncation. Given $k>0$ we have that $T_ku\in SBV^p(D,\R^m)$. Lemma \ref{trunc} implies that $F(\w)(u)=\lim_{k\to +\infty}F(\w)(T_ku)$. In order to pass to the limit in the integral formula, we use Lemma \ref{truncation} and the symmetry $s(x,\nu)=s(x,-\nu)$, which yield
	\begin{equation*}
	F(\w)(T_k,A)=\int_{D\cap \{u\leq k\}}q(x,\nabla u)\,\mathrm{d}x+\int_{D\cap\{u>k\}}q(x,\nabla T_ku)\,\mathrm{d}x+\int_{S_{T_ku}}s(x,\nu_u)\,\mathrm{d}\mathcal{H}^{d-1}.
	\end{equation*}
	Since $S_u=\bigcup_k S_{T_ku}\cup N$ with $\mathcal{H}^{d-1}(N)=0$ and the second term vanishes due to dominated convergence, we can pass to the limit and conclude the proof.
\end{proof}

\section{Stochastic homogenization: proof of Theorems \ref{mainthm1} and \ref{convfull}}\label{s.stochhom}
In this section we derive the results in the random setting.

\begin{proof}[Proof of Theorem \ref{mainthm1}]
By \cite[Theorem 2]{ACG2} and \cite[Theorem 5.5]{ACR}\footnote{As noted before, in \cite{ACR} the proofs were given only for pairwise interactions. Nevertheless the same arguments apply in our setting (see also \cite[Theorem 6.7]{ACR}). Note that stationarity of the edges is important to apply the subadditive ergodic theorem of \cite{AkKr}.}, the $\Gamma$-limits of the two functionals $E_{\e}(\w)$ and $I_{\e}(\w)$ defined in \eqref{auxelastic} and \eqref{Ising} exist almost surely and have deterministic, spatially homogeneous densities. Hence the claim on the existence and form of the $\Gamma$-limit follows from Theorem \ref{t.sepofscales}. It remains to establish the properties of the integrands. Convexity and $p$-homogeneity of $h$ follow from the fact that the $\Gamma$-limit of the sequence of convex and $p$-homogeneous functionals $E_{\e}(\w)$ is again convex and $p$-homogeneous \cite[Theorem 11.1 and Proposition 11.6]{DM}, whereas convexity of the one-homogeneous extension of $\varphi$ follows from standard $L^1$-lower-semicontinuity results for functionals defined on sets of finite perimeter (see for instance \cite[Theorem 3.1]{AmBrI}).
\end{proof}
Finally we prove Theorem \ref{convfull}. For the convergence of minimizers we exploit the notion of biting convergence, which we recall here for reader's convenience.
\begin{definition}[Biting convergence]\label{biting}
Let $u_n\in L^1(D)$ be such that $\sup_n\|u_n\|_{L^1(D)}<+\infty$. We say that $u_n$ converges weakly to $u\in L^1(D)$ in the biting sense and write $u_n\overset{b}{\rightharpoonup}u$, if there exists a decreasing sequence $S_j\subset D$ of measurable sets such that $|S_j|\to 0$ and $u_n\rightharpoonup u$ in $L^1(D\backslash S_j)$ for all $j\in\mathbb{N}$.	
\end{definition}

\begin{remark}\label{bitingproperty}
Note that if $u_n\overset{b}{\rightharpoonup}u$ and $u_n\to v$ a.e., then $u=v$. This is a consequence of the uniqueness of the biting limit and equiintegrability of $L^1$-weakly convergent sequences.	
\end{remark}
\begin{proof}[Proof of Theorem \ref{convfull}]
We first construct a candidate for the constant $\gamma$. Define the sequence of non-negative equibounded functions $\gamma_{\e}(\w)\in L^{\infty}(D)$ by 
\begin{equation}\label{defbeps}
\gamma_{\e}(\w)(z)=\sum_{x\in\Lw}\frac{1}{|\mathcal{C}(x)|}\mathds{1}_{\e \mathcal{C}(x)}(z).
\end{equation}
We apply the ergodic theorem in order to establish weak$^*$-convergence of $\gamma_{\e}(\w)$. To this end, we introduce the family of half-open boxes with integer vertices $\mathcal{I}:=\{[a,b):a,b\in\mathbb{Z}^d, a_i<b_i\;\text{for all }i\}$ and define the rescaled integral averages $\tilde{\gamma}:\mathcal{I}\to L^1(\Omega)$ by
\begin{equation*}
\tilde{\gamma}(I,\w)=\int_I\sum_{x\in\Lw}\frac{1}{|\mathcal{C}(x)|}\mathds{1}_{ \mathcal{C}(x)}(z)\,\mathrm{d}z=\sum_{x\in \Lw}\frac{|\mathcal{C}(x)\cap I|}{|\mathcal{C}(x)|}.
\end{equation*}
The following three properties can be verified:
\begin{itemize}
	\item[(i)] $0\leq \tilde{\gamma}(I,\w)\leq C|I|$ for all $I\in\mathcal{I}$,
	\item[(ii)] If $I=\bigcup_{i}I_i\in\mathcal{I}$ with finitely many, pairwise disjoint $I_i\in\mathcal{I}$, then
	$\tilde{\gamma}(I,\w)=\sum_i\tilde{\gamma}(I_i,\w)$,	\item[(iii)] $\tilde{\gamma}(I,\tau_z\w)=\tilde{\gamma}(I-z,\w)$ for all $z\in\mathbb{Z}^d$.
\end{itemize}
Moreover, arguing as in \cite[Lemma A.1]{CiRu}, one can show that $\w\mapsto \tilde{\gamma}(I,\w)$ is $\mathcal{F}$-measurable. Hence we can apply the multi-parameter additive ergodic theorem (see \cite[Chapter 6, Theorem 2.8]{Krengel}) and conclude that $\mathbb{P}$-a.s. and for all $I\in\mathcal{I}$
\begin{equation*}
\gamma:=\mathbb{E}\left[\tilde{\gamma}\right]=\lim_{n\to +\infty} \frac{\tilde{\gamma}(nI,\w)}{|nI|},
\end{equation*}
where $\mathbb{E}$ denotes the expectation. It is straightforward to extend this convergence to all sequences $t_n\to +\infty$ and then to all cubes in $\R^d$ by a continuity argument. Now we identify the weak$^*$-limit of $\gamma_{\e}(\w)$. By a density argument it is enough to compute averages on cubes $Q\subset D$. A change of variables yields 
\begin{equation*}
\int_Q\gamma_{\e}(\w)(z)\,\mathrm{d}z=\e^d\tilde{\gamma}(Q/\e,\w)\to \gamma|Q|,
\end{equation*}
whence $\gamma_{\e}(\w)\overset{*}{\rightharpoonup}\gamma$ in $L^{\infty}(D)$ almost surely. 
	
Next we prove the lower bound for the $\Gamma$-convergence. Passing to a subsequence, for the $\liminf$-inequality it suffices to consider $u\in L^1(D,\R^m)$ and a sequence $u_{\e}\in\mathcal{PC}_{\e}^{\w}$ such that $u_{\e}\to u$ in $L^1(D,\R^m)$ and
\begin{equation}\label{boundedenergy}
\liminf_{\e\to 0}F_{\e,g}(\w)(u_{\e})=\lim_{\e\to 0}F_{\e,g}(\w)(u_{\e})\leq C<+\infty.
\end{equation}
Without affecting the convergence properties or the functional we redefine $g_{\e}(\w)(\e x)=u_{\e}(\e x)=0$ for all $\e x\in\e\Lw\backslash D$. Then by Remark \ref{voronoi} we have
\begin{equation*}
\|u_{\e}-g_{\e}(\w)\|^q_{L^q(D)}\leq C\sum_{\e x\in\e\Lw\cap D}\e^d|u_{\e}(\e x)-g_{\e}(\w)(\e x)|^q\leq C,
\end{equation*}
which in combination with (\ref{approxassumption}) implies that $u_{\e}$ is bounded in $L^q(D,\R^m)$. Thus we obtain $u\in L^q(D,\R^m)$, while Theorem \ref{mainthm1} and (\ref{boundedenergy}) yield $u\in GSBV^p(D,\R^m)$. Moreover, for any $1\leq r<q$ we deduce the following convergence properties:
\begin{equation}\label{weakly}
u_{\e}\to u\quad\text{in }L^r(D,\R^m),\quad u_{\e}\rightharpoonup u \quad\text{in }L^q(D,\R^m).
\end{equation}
Observe that by the definition of the function $\gamma_{\e}(\w)$ it holds that
\begin{equation*}
\sum_{\e x\in\e\Lw\cap D}\e^d|u_{\e}(\e x)-g_{\e}(\w)(\e x)|^q\geq \int_{D}|\gamma_{\e}(\w)(z)|\,|u_{\e}(z)-g_{\e}(\w)(z)|^q\,\mathrm{d}z.
\end{equation*} 
Due to (\ref{approxassumption}) and (\ref{weakly}), the sequence $u_{\e}-g_{\e}(\w)$ converges to $u-g$ in $L^r(D,\R^m)$ for any $1\leq r<q$. The lower semicontinuity result for pairs of weak-strong convergent sequences in \cite[Theorem 7.5]{FoLe} and the $\Gamma$-convergence of Theorem \ref{mainthm1} imply
\begin{align}\label{separateliminf}
\liminf_{\e\to 0}F_{\e,g}(\w)(u_{\e})&\geq \liminf_{\e\to 0} F_{\e}(\w)(u_{\e})+\liminf_{\e\to 0}\sum_{\e x\in\e\Lw\cap D}\e^d|u_{\e}(\e x)-g_{\e}(\w)(\e x)|^q\nonumber
\\
&\geq F(u)+\gamma\int_{D}|u-g|^q\,\mathrm{d}z,
\end{align}
where we used that $\gamma>0$ to avoid the modulus. This finishes the proof of the lower bound.
	
For the upper bound, it suffices to consider $u\in L^q(D,\R^m)\cap GSBV^p(D,\R^m)$. Note that for such $u$ we can equivalently compute the $\Gamma$-limit of $F_{\e}(\w)$ with respect to convergence in $L^q(D,\R^m)$. Indeed, by Lemma \ref{trunc} this is true for all truncated functions $T_ku$ with $k>0$ and by lower semicontinuity with respect to $L^q$-convergence and again Lemma \ref{trunc} we obtain
\begin{equation*}
\Gamma(L^q(D))\hbox{-}\limsup_{\e\to 0}F_{\e}(\w)(u)\leq \liminf_{k\to +\infty}\Big(\Gamma(L^q(D))\hbox{-}\limsup_{\e\to 0}F_{\e}(\w)(T_ku)\Big)\leq\liminf_{k\to +\infty}F(T_ku)=F(u).
\end{equation*}
Hence we find a sequence $u_{\e}\in\mathcal{PC}_{\e}^{\w}$ such that $u_{\e}\to u$ in $L^q(D,\R^m)$ and
\begin{equation}\label{recoverysequence}
\lim_{\e\to 0}F_{\e}(\w)(u_{\e})=F(u).
\end{equation}
Since $D$ has Lipschitz boundary, it satisfies an interior cone condition. Thus we find $c_D>0$ such that
\begin{equation*}
|\e\mathcal{C}(x)\cap D|\geq c_D\e^d\quad\text{for all }\e x\in\e\Lw\cap D. 
\end{equation*} 
Setting $D_{\e}=\{z\in D:\,\dist(z,\partial D)\leq 2R\e\}$, we deduce from the above estimate that 
\begin{align*}
\sum_{\e x\in\e\Lw\cap D}\e^d|u_{\e}(\e x)-g_{\e}(\w)(\e x)|^q\leq& \int_D\gamma_{\e}(\w)(z)|u_{\e}(z)-g_{\e}(\w)(z)|^q\,\mathrm{d}z
\\
&+C\int_{D_{\e}}|u_{\e}(z)-g_{\e}(\w)(z)|^q\,\mathrm{d}z.
\end{align*}
The last term vanishes when $\e\to 0$ since the sequence $|u_{\e}-g_{\e}|^q$ is equiintegrable on $D$. Moreover, by its product structure the sequence $\gamma_{\e}(\w)|u_{\e}-g_{\e}(\w)|^q$ converges weakly in $L^1(D,\R^m)$ to $\gamma|u-g|^q$. Therefore the last inequality implies
\begin{equation*}
\limsup_{\e\to 0}\sum_{\e x\in\e\Lw\cap D}\e^d|u_{\e}(\e x)-g_{\e}(\w)(\e x)|^p\leq \gamma\int_D|u(z)-g(z)|^p\,\mathrm{d}z.
\end{equation*}
Combined with (\ref{recoverysequence}) we obtain the upper bound.

Now we come to the second claim of the theorem. Existence of minimizers for fixed $\e$ follows from $L^{\infty}$-coercivity of the fidelity term in $F_{\e,g}(\w)$ and the lower semicontinuity assumption (\ref{flsc}). The last statement is true due to the fundamental property of $\Gamma$-convergence except that we have to prove compactness in $L^q(D,\R^m)$. As shown for the lower bound, any sequence $u_{\e}$ as in the statement is bounded in $L^q(D,\R^m)$ and therefore Lemma \ref{compact} yields that, up to subsequences, $u_{\e}\to u$ in $L^1(D,\R^m)$ for some $u\in L^q(D,\R^m)$. Clearly $u_{\e}$ is a recovery sequence for this $u$, so that
\begin{equation*}
F_g(u)= \lim_{\e\to 0}F_{\e,g}(\w)(u_{\e}).
\end{equation*}
Repeating the reasoning for (\ref{separateliminf}) we conclude from the above limit that
\begin{equation}\label{normconserved}
\lim_{\e\to 0}\int_D\gamma_{\e}(\w)(z)\,|u_{\e}(z)-g_{\e}(\w)(z)|^q\,\mathrm{d}z=\int_D\gamma|u(z)-g(z)|^q\,\mathrm{d}z,
\end{equation}
where $\gamma_{\e}(\w)$ is defined in (\ref{defbeps}). Now consider the non-negative sequence $a_{\e}:=|u_{\e}-g_{\e}(\w)|^q$. By (\ref{normconserved}) and the qualitative lower bound $\gamma_{\e}(\w)(z)\geq c$ this sequence is bounded in $L^1(D)$. By the biting lemma (see \cite[Lemma 2.63]{FoLe}) and Remark \ref{bitingproperty} we find a subsequence (not relabeled) such that $a_{\e}\overset{b}{\rightharpoonup}|u-g|^q$. Taking the same sets $S_j$ as for the biting convergence of $a_{\e}$ one can prove that the product $\gamma_{\e}(\w)a_{\e}$ converges in the biting sense to $\gamma|u-g|^q$. Indeed, on $D\backslash S_j$ the sequence $a_{\e}$ is equiintegrable by the Dunford-Pettis theorem and thus strongly convergent in $L^1(D\backslash S_j)$. Then by the usual product rules we obtain $\gamma_{\e}(\w)a_{\e}\rightharpoonup \gamma|u-g|^q$ in $L^1(D\backslash S_j)$, which shows biting convergence. Now we use that $\gamma_{\e}(\w)a_{\e}$ is nonnegative. By (\ref{normconserved}) and \cite[Proposition 2.67]{FoLe} this yields that $\gamma_{\e}(\w)a_{\e}\rightharpoonup \gamma|u-g|^q$ also in $L^1(D)$. Thus both sequences $\gamma_{\e}(\w)a_{\e}$ and $a_{\e}$ are equiintegrable on $D$. By Vitali's convergence theorem we obtain that
\begin{equation*}
\lim_{\e\to 0}\|u_{\e}-g_{\e}\|_{L^q(D)}=\|u-g\|_{L^q(D)},
\end{equation*}
which, by uniform convexity of $L^q(D,\R^m)$ and (\ref{approxassumption}), implies that $u_{\e}\to u$ in $L^q(D,\R^m)$ as claimed.
\end{proof}
\appendix
\section{}
In this appendix we provide a suitable framework to use diagonal arguments along $\Gamma$-converging discrete energies. We need this step since the general theory \cite[Chapter 10]{DM} to construct a metric for $\Gamma$-convergence requires an $L^p$-coercive lower bound for the discrete energies.

Given any function $G:L^p(B_1,\R^m)\to [0,+\infty]$ not identically $+\infty$ we define its Moreau-Yosida approximation for $\gamma>0$ as
\begin{equation*}
G^{\gamma}(u)=\inf_{v\in L^p(B_1,\R^m)}\Big(G(v)+\gamma\|u-v\|_{L^p(B_1)}^p\Big).
\end{equation*}
For $p\geq 1$ the functional $G^{\gamma}$ is locally Lipschitz-continuous on $L^p(B_1,\R^m)$ (see \cite[Theorem 9.15]{DM}). Let $\{w_k\}_{k\in\mathbb{N}}$ be a dense subset of $L^p(B_1,\R^m)$ containing $0$. Given two lower semicontinuous functions $G,H:L^p(B_1,\R^m)\to [0,+\infty]$ not identically $+\infty$ we define their distance by
\begin{equation*}
\mathfrak{d}(G,H)=\sum_{i,k\in\mathbb{N}}\frac{1}{2^{i+k}}|\arctan(G^{i}(w_k))-\arctan(H^{i}(w_k))|.
\end{equation*}
Note that on lower semicontinuous functions $\mathfrak{d}$ is indeed a distance, since $\mathfrak{d}(G,H)=0$ implies by local Lipschitz continuity that $G^i=H^i$ for all $i\in\mathbb{N}$. Letting $i\to +\infty$ it follows by lower semicontinuity that $G=H$ (see \cite[Remark 9.11]{DM}).
In order to state our result we need further notation: let $h:B_1\times\R^{m\times d}\to [0,+\infty)$ be a Carath\'eodory-function such that $\xi\mapsto h(x,\xi)$ is quasiconvex for a.e. $x\in B_1$ and
\begin{equation*}
\frac{1}{C}|\xi|^p-C\leq h(x,\xi)\leq C(|\xi|^p+1). 
\end{equation*}
Define the functional $\mathfrak{E}_h:L^p(B_1,\R^m)\to[0,+\infty]$ by
\begin{equation*}
\mathfrak{E}_h(u)=
\begin{cases}
\int_{B_1} h(x,\nabla u(x))\,\mathrm{d}x &\mbox{if $u\in W^{1,p}(B_1,\R^m)$,}\\
+\infty &\mbox{otherwise.}
\end{cases}
\end{equation*}

Then we have the following result.
\begin{lemma}\label{metric}
Consider a sequence $\e_j\to 0$ and let $x_0\in D$ and $\varrho_j>0$ be such that $B_{\varrho_j}(x_0)\subset D$ and $\e_j/\varrho_j\to 0$. Let further $h:B_1\times\R^{m\times d}\to[0,+\infty)$ be a Carath\'eodory-function as above and define $G_{\e_j,\varrho_j}(x_0,\w)$ as in Lemma \ref{changeofvar}. Then the following are equivalent: 
\begin{itemize}
	\item[(i)] $\Gamma(L^p(B_1,\R^m))\hbox{-}\lim_jG_{\e_j,\varrho_j}(x_0,\w)=\mathfrak{E}_h$,
	\item[(ii)] $\lim_j\mathfrak{d}(G_{\e_j,\varrho_j}(x_0,\w),\mathfrak{E}_h)=0$.
\end{itemize} 
\end{lemma}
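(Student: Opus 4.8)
The plan is to reduce the statement to a ``commutation'' property between $\Gamma$-convergence and the Moreau--Yosida regularisation, and then to conclude via the compactness and Urysohn properties of $\Gamma$-convergence. Write $G_j:=G_{\e_j,\varrho_j}(x_0,\w)$. First I would note that $\arctan$ is a homeomorphism of $[0,+\infty)$ onto $[0,\pi/2)$ and that, $G^{i}$ being finite (indeed locally Lipschitz) for every lower semicontinuous $G\not\equiv+\infty$, each term in $\mathfrak{d}(G_j,\mathfrak{E}_h)$ is bounded by $\pi\,2^{-(i+k)}$, so the series is dominated; hence $\mathfrak{d}(G_j,\mathfrak{E}_h)\to0$ is equivalent to $G_j^{i}(w_k)\to\mathfrak{E}_h^{i}(w_k)$ for every $i,k\in\mathbb N$. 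The lemma therefore amounts to the equivalence between $\Gamma(L^p(B_1,\R^m))\text{-}\lim_jG_j=\mathfrak{E}_h$ and $G_j^{i}(w_k)\to\mathfrak{E}_h^{i}(w_k)$ for all $i,k$.

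The heart of the matter is the claim: \emph{if $G_j$ $\Gamma(L^p(B_1,\R^m))$-converges to some lower semicontinuous $G\not\equiv+\infty$, then $G_j^{\gamma}(u)\to G^{\gamma}(u)$ for every $\gamma>0$ and every $u\in L^p(B_1,\R^m)$}. The inequality $\limsup_jG_j^{\gamma}(u)\le G^{\gamma}(u)$ is free: given $v$ almost optimal in $G^{\gamma}(u)$, a recovery sequence $v_j\to v$ with $\limsup_jG_j(v_j)\le G(v)$ and the bound $G_j^{\gamma}(u)\le G_j(v_j)+\gamma\|u-v_j\|_{L^p(B_1)}^p$ do the job. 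For the reverse inequality I would take near-minimizers $v_j$ of $G_j^{\gamma}(u)$ and establish their strong $L^p$-precompactness. Here the structure of the discrete energies is used twice. Since the zero function is admissible with $G_j(0)=0$, one has $G_j^{\gamma}(u)\le\gamma\|u\|_{L^p(B_1)}^p$ uniformly in $j$, so the infimum defining $G_j^{\gamma}$ on a given ball of $L^p$ may be restricted to a fixed larger ball; this makes the $G_j^{\gamma}$ equi-locally Lipschitz, and by truncating $u$ with the $1$-Lipschitz operators of Lemma~\ref{truncation} — which do not increase $G_j$ and do not increase the distance to a bounded target — one reduces the claim to $u\in L^{\infty}(B_1,\R^m)$ and to near-minimizers $v_j$ uniformly bounded in $L^{\infty}$. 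Moreover $G_j(v_j)\le G_j^{\gamma}(u)+o(1)$ furnishes a uniform bound on the $p$-th power of the discrete gradients of $v_j$, so the interpolation procedure of Lemma~\ref{compact} — in the simpler purely Sobolev situation at hand, the energy $G_j$ being convex and without cut-off — yields continuous piecewise affine interpolants bounded in $W^{1,p}$ on compact subsets of $B_1$ and converging to $v_j$ in $L^p$; combined with the $L^{\infty}$-bound (which takes care of the $O(\e_j/\varrho_j)$-thin layer near $\partial B_1$) and Rellich--Kondrachov this gives, along a subsequence, $v_j\to\bar v$ strongly in $L^p(B_1,\R^m)$. Then the $\Gamma$-$\liminf$ inequality for $G_j$ and lower semicontinuity of the $L^p$-norm give $\liminf_j\big(G_j(v_j)+\gamma\|u-v_j\|^p\big)\ge G(\bar v)+\gamma\|u-\bar v\|^p\ge G^{\gamma}(u)$, proving the claim.

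Granting the claim, the implication (i)$\Rightarrow$(ii) is the claim with $G=\mathfrak{E}_h$. For (ii)$\Rightarrow$(i) I would argue by compactness: $L^p(B_1,\R^m)$ being a separable metric space, every subsequence of $(G_j)$ has a further subsequence $\Gamma$-converging to some lower semicontinuous $G$, necessarily proper since $G_j(0)=0$ forces $G(0)=0$. By the claim, $G^{i}(w_k)=\lim_lG_{j_l}^{i}(w_k)=\mathfrak{E}_h^{i}(w_k)$ for all $i,k$, i.e. $\mathfrak{d}(G,\mathfrak{E}_h)=0$, whence $G=\mathfrak{E}_h$ since $\mathfrak{d}$ is a distance on lower semicontinuous functions. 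As this limit is independent of the subsequence, the Urysohn property of $\Gamma$-convergence gives $\Gamma\text{-}\lim_jG_j=\mathfrak{E}_h$.

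The step I expect to be the main obstacle is precisely the strong $L^p(B_1,\R^m)$-compactness of the near-minimizers $v_j$: the lack of an $L^p$-coercive lower bound for the $G_j$ — the very feature that prevents a direct appeal to \cite[Chapter~10]{DM} — means the compactness must be produced by hand, by combining the $L^p$-bound coming from the Moreau--Yosida penalisation with the discrete gradient bound and with the reduction to bounded competitors, and by discretising the truncation/interpolation machinery carefully enough that the vanishing boundary layer of width $\sim\e_j/\varrho_j$ does not obstruct convergence up to $\partial B_1$.
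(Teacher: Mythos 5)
Your proposal is correct and the overall strategy — reduce to pointwise convergence of Moreau--Yosida transformations, then commute with $\Gamma$-convergence via ad hoc compactness — is the same as the paper's. The two proofs diverge in how each implication is carried out, and both differences are genuine. For (ii)$\Rightarrow$(i), the paper upgrades the pointwise convergence $G^i_j(w_k)\to\mathfrak{E}_h^i(w_k)$ to local uniform convergence via the equi-Lipschitz bounds of \cite[Theorem 9.15]{DM} (using $G_j^i(0)=0$) and then directly invokes the characterization of $\Gamma$-convergence in \cite[Theorem 9.5]{DM}; you instead extract subsequential $\Gamma$-limits by compactness, identify them with $\mathfrak{E}_h$ via the commutation claim and the fact that $\mathfrak d$ is a metric, and conclude by Urysohn. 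Your route avoids citing Theorem~9.5 but requires the commutation claim to hold for arbitrary subsequential $\Gamma$-limits $G$, not just for $\mathfrak{E}_h$; this is in fact guaranteed by the discrete compactness lemma, but makes the logical dependency chain longer. For (i)$\Rightarrow$(ii), the paper takes near-minimizers $u_j$, gets $L^1$-strong plus $L^p$-weak compactness (no $L^\infty$ bound), truncates the sequence and the limit $v$ to exploit the $\Gamma$-$\liminf$ on $T_ku_j\to T_kv$, and handles the penalty term with the weak lower semicontinuity of the $L^p$-norm together with dominated convergence in $k$; you first reduce the claim to $L^\infty$ targets via the uniform local Lipschitz constant of $G_j^\gamma$ and $G^\gamma$, then truncate the near-minimizers so they are uniformly bounded, and get \emph{strong} $L^p$-compactness (either by Rellich as you outline, or, more simply, by interpolating the $L^1$ convergence of Lemma~\ref{compact} against the $L^\infty$ bound). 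Your strong-compactness route buys a cleaner final passage to the limit (no weak lower semicontinuity or dominated-convergence step), at the cost of the extra reduction to bounded targets; the paper's weak-convergence route is shorter but needs the remark that the two $\liminf$s add up along a suitable subsequence. Both arguments are complete.
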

\begin{proof}
(ii) $\Rightarrow$ (i): First note that both functionals are lower semicontinuous on $L^p(B_1,\R^m)$ and not identically $+\infty$. Assumption (ii) implies that 
\begin{equation*}
\lim_j G^i_{\e_j,\varrho_j}(x_0,\w)(w_k)=\mathfrak{E}^{i}_h(w_k)
\end{equation*}
for all $i,k\in\mathbb{N}$. Since $0\in\{w_k\}$ and $G^i_{\e_j,\varrho_j}(x_0,\w)(0)=0$, we deduce from \cite[Theorem 9.15]{DM} that the sequence $G^i_{\e_j,\varrho_j}(x_0,\w)$ is locally equicontinuous, so that the convergence extends to $L^p(B_1,\R^m)$ by density. The claim then follows from a general characterization of $\Gamma$-convergence (see \cite[Theorem 9.5]{DM}) when we let $i\to +\infty$.

(i) $\Rightarrow$ (ii): Clearly $G^{\gamma}_{\e_j,\varrho_j}(x_0,\w)(u)\leq \gamma\|u\|_{L^p(B_1)}$, so that given a sequence $u_j$ such that
\begin{equation}\label{eq:almostYosida}
G_{\e_j,\varrho_j}(x_0,\w)(u_j)+\gamma \|u_j-u\|_{L^p(B_1)}^p\leq G^{\gamma}_{\e_j,\varrho_j}(x_0,\w)(u)+\frac{1}{j},
\end{equation}
$u_j$ is bounded in $L^p(B_1,\R^m)$. Since the discrete density $f$ satisfies $f(\mathfrak{p})\leq C_f\|\mathfrak{p}\|_1$, we have by definition
\begin{equation*}
G_{\e_j,\varrho_j}(x_0,\w)(u_j)=E_{\e_j/\varrho_j}(\w)(u_j(\cdot-x_0/\varrho_j),B_1(x_0))\geq \frac{1}{C}F_{\e_j/\varrho_j}(\w)(u_j(\cdot-x_0/\varrho_j),B_1(x_0)).
\end{equation*}
From Lemma \ref{compact} and a change of variables we conclude that $u_j$ is compact in $L^1(B_1,\R^m)$. Hence there exists $v\in L^p(B_1,\R^m)$ such that, up to subsequences, $u_j\to v$ in $L^1(B_1,\R^m)$ and additionally $u_j\rightharpoonup v$ in $L^p(B_1,\R^m)$. In order to use the assumption (i), we use a truncation argument. Note that $T_ku_{j}\to T_kv$ in $L^p(D,\R^m)$. Then by $\Gamma$-convergence in $L^p(D,\R^m)$ and decrease by truncation of $G_{\e_j,\varrho_j}(x_0,\w)$ we obtain
\begin{equation*}
\liminf_{j}G_{\e_j,\varrho_j}(x_0,\w)(u_{j})\geq\liminf_{j}G_{\e_j,\varrho_j}(x_0,\w)(T_k u_{j})\geq\int_{B_1}h(x,\nabla T_kv),\mathrm{d}x.
\end{equation*}
Using Lemma \ref{truncation} we can pass to the limit in $k$ by dominated convergence. Combined with the weak lower semicontinuity of the $L^p$-norm we infer from \eqref{eq:almostYosida} that
\begin{equation*}
\mathfrak{E}_h(v)+\gamma \|v-u\|^p_{L^p(B_1)}\leq\liminf_j G^{\gamma}_{\e_j,\varrho_j}(x_0,\w)(u).
\end{equation*}
Moreover, given any $\tilde{u}\in L^p(B_1,\R^m)$ let us consider a sequence $\tilde{u}_j\to \tilde{u}$ in $L^p(B_1,\R^m)$ such that $\lim_j G_{\e_j,\varrho_j}(x_0,\w)(\tilde{u}_j)= \mathfrak{E}_h(\tilde{u})$. Then by the definition of the Moreau-Yosida transformation
\begin{equation*}
\limsup_j G_{\e_j,\varrho_j}^{\gamma}(x_0,\w)(u)\leq \lim_j(G_{\e_j,\varrho_j}(x_0,\w)(\tilde{u}_j)+\gamma\|\tilde{u}_j-u\|^p_{L^p(B_1)})= \mathfrak{E}_h(\tilde{u})+\gamma \|\tilde{u}-u\|^p_{L^p(B_1)}.
\end{equation*}
Combined with the previous inequality we obtain that $\mathfrak{E}_h^{\gamma}(u)=\mathfrak{E}_h(v)+\gamma \|v-u\|^p_{L^p(B_1)}$ and by setting $\tilde{u}=v$ we showed that
\begin{equation*}
\lim_j G_{\e_j,\varrho_j}^{\gamma}(x_0,\w)(u)=\mathfrak{E}_h^{\gamma}(u)
\end{equation*}
for all $u\in L^p(B_1,\R^m)$ and all $\gamma>0$. This property implies (ii) by the definition of the metric.
\end{proof}
\begin{remark}\label{continuumcase}
The equivalence of Lemma \ref{metric} remains valid for two functionals $\mathfrak{E}_{h_j}$ and $\mathfrak{E}_{h}$ provided the Carath\'eodory functions $h_j$ satisfy growth conditions uniformly in $j$. In this case the proof simplifies since the Moreau-Yosida transformations are equicoercive in $L^p(B_1,\R^m)$ due to the Sobolev embedding.	
\end{remark}

\section*{}
\subsection*{Acknowledgment}
\noindent MR acknowledges financial support from the European Research Council under
the European Community's Seventh Framework Programme (FP7/2014-2019 Grant Agreement
QUANTHOM 335410). The author further appreciates the helpful suggestions of two anonymous referees.

\subsection*{Conflict of interest}
\noindent The author declares that he has no conflict of interest.

\end{document}